\documentclass[11pt]{article}
\usepackage[a4paper,margin=2.5cm]{geometry}
\usepackage{amsmath}
\usepackage{amsthm}
\usepackage{makecell}
\usepackage{empheq}
\usepackage{amsfonts}
\usepackage{enumitem}
\usepackage{amssymb}
\usepackage{booktabs}
\usepackage{longtable}
\usepackage[ruled]{algorithm}
\usepackage{algpseudocode}
\usepackage{multirow}
\usepackage{rotating}
\usepackage{color}
\usepackage{graphics}
\usepackage{graphicx}

\usepackage{epsfig}
\usepackage{float}
\usepackage{subfigure}
\usepackage{latexsym,bm}
\usepackage{hhline}
\usepackage[greek,english]{babel}
\usepackage{mathrsfs}
\usepackage{threeparttable}
\usepackage[strings]{underscore}
\usepackage{authblk}
\usepackage{setspace}
\usepackage{xcolor}
\usepackage{appendix}
\usepackage{float}
\usepackage{array}
\usepackage{booktabs}
\usepackage{tabularx}
\usepackage{diagbox}
\usepackage{comment}
\usepackage{siunitx}

\usepackage{fix-cm} 
\usepackage{tikz}
\usetikzlibrary{calc,arrows.meta,fadings,decorations.markings}
\usepackage{listofitems}
\usepgflibrary{shapes.geometric}

\numberwithin{equation}{section}
\usepackage[]{hyperref}
\hypersetup{
     colorlinks=true,
     linkcolor=blue,
     filecolor=gray,
     urlcolor=blue,
     citecolor=blue,
 }
\usepackage{bookmark}

\newtheoremstyle{mystyle}
  {3pt}
  {3pt}
  {\itshape}
  {}
  {\bfseries}
  {}
  {0.5em}
  {}

\theoremstyle{mystyle}
\newtheorem{theorem}{Theorem}
\newtheorem{assumption}{Assumption}
\newtheorem{lemma}{Lemma}

\newtheorem{remark}{Remark}
\newtheorem{example}{Example}

\newlist{fwisteps}{enumerate}{1}
\setlist[fwisteps]{
  label=\arabic*:,
  leftmargin=2.2em,
  labelsep=0.55em,
  align=right,
  itemsep=5pt,
  parsep=0pt,
  topsep=6pt,
  partopsep=0pt
}

\makeatletter
\newenvironment{breakablealgorithm}[1]
{%
  \par
  \begingroup
  \normalsize
  \setlength{\parindent}{0pt}
  \setlength{\parskip}{0pt}
  \setlength{\abovedisplayskip}{3pt}
  \setlength{\belowdisplayskip}{3pt}
  \setlength{\abovedisplayshortskip}{2pt}
  \setlength{\belowdisplayshortskip}{2pt}
  \setlength{\jot}{2pt}

  \refstepcounter{algorithm}%
  \addcontentsline{loa}{algorithm}
  {\protect\numberline{\thealgorithm}#1}%

  \hrule height 1.2pt
  \kern 3pt
  {\raggedright
   \textbf{Algorithm \thealgorithm:} #1\par}%
  \kern 3pt
  \hrule height 0.4pt
  \nopagebreak[4]
  \kern 6pt
}
{%
  \par
  \kern 5pt
  \hrule height 0.8pt
  \endgroup
  \par
}
\makeatother

\begin{document}
\renewcommand*{\Authsep}{, }
\renewcommand*{\Authand}{, }
\renewcommand*{\Authands}{, }
\renewcommand*{\Affilfont}{\small\it}
\newcommand{\filliation}[5]{\affil[#1]{\textbf{#2}\vskip 0pt \textbf{#3}\vskip 0pt #4\vskip 0pt #5\vspace{10pt}}}

\title{\textbf{Bayesian Geometric Inversion for Wave Equation: Well-Posedness and Multiscale Multimodal Posterior Approximation}}

\author[a]{Xiong-Bin~Yan}
\author[a,$\ast$]{Ming-Jia~Pei}
\author[b]{Guang-Hui~Zheng}
\author[a]{Ting~Wei}
\affil[a]{School of Mathematics and Statistics, Lanzhou University, Lanzhou 730000, China.}
\affil[b]{School of Mathematics, Hunan University, Changsha 410082, China.}

\date{}
\maketitle
\begingroup
\renewcommand{\thefootnote}{\fnsymbol{footnote}}
\footnotetext[1]{Corresponding author.}
\endgroup
\let\thefootnote\relax\footnote{Email addresses:
\begin{minipage}[t]{0.8\linewidth}
\href{mailto:yanxb2015@163.com}{yanxb2015@163.com }(Xiong-Bin~Yan)
\href{mailto:zhenggh2012@hnu.edu.cn}{zhenggh2012@hnu.edu.cn}(Guang-Hui~Zheng),\\
\href{mailto:tingwei@lzu.edu.cn}{tingwei@lzu.edu.cn }(Ting~Wei),
\href{mailto:pmja1998@163.com}{pmja1998@163.com }(Ming-Jia~Pei) .
\end{minipage}}

\begin{abstract}
Geometric full-waveform inversion aims to recover sharp subsurface interfaces from noisy waveform observations. We formulate this problem within a Bayesian level-set framework, where the unknown interface is represented as zero level set of a latent function endowed with a Gaussian prior. Although the level-set-to-medium-parameter map is discontinuous, we establish that the level-set-to-observation map is continuous almost surely with respect to the prior, thereby guaranteeing that the posterior measure is well-posed and locally Lipschitz continuous with respect to the data in the Hellinger metric. To enable tractable posterior computation under the severe nonlinearity of the inverse problem and the resulting multimodality, we develop a multiscale multimodal Laplace approximation. A frequency-continuation strategy incorporates waveform data progressively from low to high frequencies, identifying and propagating multiple candidate modes across successive scales. At the final frequency level, local Gaussian approximations are constructed around the retained modes and combined into a Gaussian mixture, whose weights are determined by least-squares fitting to the unnormalized posterior density. Numerical experiments demonstrate that the method accurately reconstructs anomalous interfaces while effectively capturing posterior multimodality and the local uncertainty associated with distinct plausible geometries.
\end{abstract}

\noindent{Keywords: Bayesian inverse problems; full-waveform inversion; level-set methods; geometric inverse problems; posterior well-posedness;
multimodal Laplace approximation; frequency continuation.}
\section{Introduction}
Seismic imaging is a fundamental technique for imaging subsurface structures from recorded wavefield data. Various approaches have been developed for this purpose, including traveltime tomography \cite{bording1987applications,clement2001migration,zelt1992seismic},  the Born approximation \cite{hudson1981use, gelis2007two}, and full-waveform inversion (FWI) \cite{fichtner2013multiscale,xu2012full}. Among these methods, FWI reconstructs subsurface properties by iteratively minimizing the misfit between observed and simulated seismic waveforms. By utilizing the complete waveform rather than selected data attributes, FWI simultaneously exploits traveltime, phase, and amplitude information, thereby yielding high-resolution models of subsurface structures.

Conventional full-waveform inversion (FWI) typically represents the subsurface using spatially distributed parameters, such as wave speed, slowness, or density, defined at individual grid points \cite{tarantola1984inversion, liu2015improved}. This representation is natural for smoothly varying media. However, when the subsurface consists of a limited number of distinct geological regions separated by sharp interfaces, the main reconstruction targets are the location, geometry, and boundaries of the anomalous bodies rather than the parameter value at every grid point. In this situation, a pointwise parameterization may not preserve sharp material contrasts \cite{bunks1995multiscale} effectively, because the limited resolution of finite-frequency data and the use of regularization tend to produce overly smooth interfaces. Shape-based FWI provides an alternative by representing the subsurface through the geometry of the anomalous region and directly updating its shape and boundary.

The level-set method \cite{aghasi2011parametric,de2024single,zheng2024new,deng2025level,osher1988fronts} represents the unknown geometry implicitly via a scalar function $\phi:\Omega\rightarrow\mathbb{R}$. The anomalous region and its interface are defined, respectively, as
\begin{equation*}
D_\phi=\{x\in\Omega:\phi(x)\geq 0\},
\qquad \partial D_\phi=\{x\in\Omega:\phi(x)=0\}.
\end{equation*}
For a medium consisting of two materials, the material coefficient is assigned based on the sign of $\phi$. Consequently, interface reconstruction reduces to estimating a scalar function on the fixed computational domain  $\Omega$, rather than the direct optimization of the interface itself. Unlike traditional shape-based inversion methods that explicitly parameterize the boundary and iteratively estimate its parameters \cite{kirsch1993domain,kress1994quasi}, the level-set approach implicitly represents the interface as the zero level set of a scalar function, thereby eliminating explicit interface tracking \cite{osher1988fronts}. A key advantage of this implicit representation is its natural ability to handle irregular geometries and topological changes (e.g., the merging or splitting of anomalies), which renders it highly effective for geometric reconstruction in FWI.

While the level-set method elegantly represents the unknown geometry implicitly, the inverse problem remains inherently ill-posed. Most level-set approaches for FWI \cite{lanznaster2021level, de2024single, emmendoerfer2023full} are framed deterministically, minimizing a regularized data-misfit functional to yield a single estimated geometry. This estimate is highly sensitive to measurement noise, the amount and distribution of available data, and the initial level-set function. Furthermore, non-uniqueness arises because different geometries can generate similar waveform observations. As a result, a single deterministic estimate fails to quantify the uncertainty in the recovered geometry. These limitations motivate a Bayesian formulation, in which the level-set function, or its parameterization, is treated as an unknown random quantity and the available prior information and waveform observations are combined to obtain its posterior distribution.

Applying Bayesian inference \cite{stuart2010inverse,dashti2013bayesian} to level-set FWI poses specific theoretical challenges. \emph{The map from the level-set function to the corresponding piecewise-constant medium is not continuous. Since standard well-posedness results for Bayesian inverse problems typically assume continuity of the parameter-to-data map, they cannot be directly applied here. To address this, we adopt the almost-sure continuity framework developed for Bayesian level-set inversion \cite{iglesias2016bayesian} and demonstrate that the exceptional set has zero prior probability under the chosen Gaussian prior. This establishes that the posterior measure is well-defined and depends continuously on the observed waveform data.}

Even with a well-defined posterior, numerical characterization remains computationally prohibitive. Evaluating the likelihood typically requires solving the forward wave equation for multiple seismic sources. Sampling methods such as Markov chain Monte Carlo (MCMC) may therefore involve a large number of forward simulations \cite{hoang2013complexity,chen2018dimension,hu2024mcmc,bui2016fem}. The Laplace approximation provides a less expensive alternative by approximating the posterior near a maximum a posteriori (MAP) point with a Gaussian distribution. The covariance of this Gaussian is given by the inverse Hessian of the negative log-posterior evaluated at the MAP point \cite{helin2022non,rue2009approximate,ding2021determination}. This approximation describes the uncertainty only in the neighborhood of the selected MAP point. If the FWI posterior has several well-separated modes associated with different plausible geometries, a single Gaussian approximation may ignore other modes and consequently underestimate the overall uncertainty.

In this work, we develop a Bayesian level-set framework for geometric shape reconstruction in full-waveform inversion. The unknown interface is represented as the zero level set of a latent function that characterizes the subsurface medium, while the wavefield, governed by the time-domain wave equation, yields the observed seismic waveforms. We establish the well-posedness of the resulting posterior measure and propose a multiscale multimodal Laplace approximation to enable efficient posterior computation. The main contributions of this paper are summarized as follows.

\begin{itemize}
\item We formulate geometric FWI within a Bayesian level-set framework, in which the unknown geometry is represented implicitly by a level-set function. The zero level set of this function defines the interface, while its sign distinguishes the anomalous region from the background.
The posterior distribution of the level-set function then provides a probabilistic description of this geometry, quantifying the reconstruction uncertainty rather than reducing it to a single estimate.

\item We establish the well-posedness of the resulting Bayesian inverse problem. The level-set-to-medium map is not globally continuous, so the standard well-posedness theory for Bayesian inverse problems cannot be applied directly. To address this, we show that the level-set-to-observation map is almost surely continuous with respect to the chosen Gaussian prior, and that the associated potential satisfies local boundedness and local Lipschitz conditions. Consequently, the posterior measure is well-defined and depends locally Lipschitz continuously on the observed data in the Hellinger distance.

\item We propose a multiscale, multimodal Laplace approximation for characterizing the posterior. A frequency-continuation strategy progressively introduces waveform information from low to high frequencies. At each frequency level, multiple candidate modes are identified, duplicate solutions are removed, and the retained modes are propagated to the next level. At the final level, a local Gaussian approximation is built around each retained mode, and these components are assembled into a Gaussian-mixture representation of the posterior, whose nonnegative weights are obtained by least-squares fitting to the unnormalized target posterior and then rescaled to sum to one. Unlike the standard single-mode Laplace approximation, this construction captures both the local uncertainty around individual modes and the variability among distinct plausible geometries.

\item We validate the proposed method using four two-dimensional examples. The two-parameter test confirms its ability to retain separated posterior modes, while the remaining examples demonstrate the reconstruction of a single nonconvex inclusion, multiple disconnected inclusions, and a BP2004-derived salt-body geometry. In the comparison tests, the method provides more accurate boundary localization and more informative uncertainty estimates than the standard single-component Laplace approximation, while requiring substantially less computational time than gPCN-based MCMC.
\end{itemize}

The remainder of this paper is organized as follows. Section~\ref{sec：wellposed_wave} introduces the time-domain wave propagation model and establishes the well-posedness, regularity, and uniform estimates of the forward problem needed for the boundary observations. Section~\ref{sec:bayesian-level-set} formulates the geometric inverse problem within a Bayesian level-set framework and proves that the resulting posterior measure is well defined and stable with respect to the observed data. Section~\ref{sec:Inversion_algorithm} introduces a finite-dimensional representation of the level-set function and presents the proposed multiscale multimodal Laplace approximation, including frequency continuation, mode selection and propagation, local Gaussian approximations, and mixture-weight estimation. Section~\ref{sec:Numerical experiments} reports numerical experiments that evaluate the geometric reconstructions and the associated uncertainty estimates. Finally, Section~\ref{sec:conclusions} summarizes the main conclusions. The appendices provide the regularized parameterization, likelihood derivatives, prior and posterior-mixture implementation, and evaluation metrics.

\section{Well-posedness, regularity, and energy estimates for the wave equation}\label{sec：wellposed_wave}
Let $\Omega\subset\mathbb{R}^2$ be a bounded Lipschitz domain. Let
$\Gamma_{D}\subset\partial\Omega$ be a connected subset on which we impose the homogeneous Dirichlet boundary condition, and let $\Gamma_N:=\partial\Omega\setminus\Gamma_{D}$, where we impose the homogeneous Neumann boundary condition. We denote $H_{\Gamma_{D}}^1(\Omega)$ the space of functions in $H^1(\Omega)$ with vanishing trace on $\Gamma_{D}$. 

We consider the space-time field $u(\boldsymbol{x},t)$ in a spatial domain $\Omega\subset\mathbb{R}^2$, satisfying the following initial--boundary value problem, which models wave propagation in a heterogeneous medium with damping: 
\begin{align}
\left\{
\begin{aligned}
    &\mu(\boldsymbol{x})u_{tt} + \sigma(\boldsymbol{x}) u_{t} = \Delta u + S, \quad \forall (\boldsymbol{x},t)\in\Omega\times(0,T], \\
    &u(\boldsymbol{x},0)=0, \quad \forall \boldsymbol{x}\in\Omega, \\
    &u_t(\boldsymbol{x},0)=0, \quad \forall \boldsymbol{x}\in\Omega, \\
    &u(\boldsymbol{x},t)=0, \quad \forall (\boldsymbol{x},t)\in\Gamma_D\times(0,T], \\
    &\nabla u(\boldsymbol{x},t)\cdot\mathbf{n}=0, \quad \forall (\boldsymbol{x},t)\in\Gamma_N\times(0,T],
\end{aligned}
\right.
\label{eq:forward_problem}
\end{align}

To ensure the well-posedness of the forward problem, we impose the following assumptions on the medium parameters.
\begin{assumption}\label{assumption1}
    Let $\mu,~\sigma\in L^{\infty}(\Omega)$, $\sigma\ge 0$, and  $\mu_{\min}\le \mu(x)\le \mu_{\max}$, here $\mu_{\min}$ and
    $\mu_{\max}$ are positive constants.
\end{assumption}

Following \cite{albuquerque2021level}, we adopt the mild solution framework for our theoretical analysis. A function $u\in C([0,T]; H_{\Gamma_D}^1(\Omega))\cap C^1([0,T];L^2(\Omega))$ is named a mild solution to the hyperbolic 
forward problem \eqref{eq:forward_problem} if and only if

\begin{empheq}[left=\empheqlbrace]{equation}
\begin{aligned}
    &\frac{d}{dt} \int_{\Omega} \mu u_t(t)\psi \,d\boldsymbol{x} + \int_{\Omega} \bigl( \nabla u(t) \cdot \nabla \psi + \sigma u_t(t)\psi \bigr) \,d\boldsymbol{x} = \int_{\Omega} S(t)\psi \,d\boldsymbol{x}, \\[1ex]
    &\qquad \text{for all } \psi \in H^1_{\Gamma_D}(\Omega) \text{ and a.e. } t \in (0,T), \\[1ex]
    &\qquad u(0) = u_t(0) = 0,
\end{aligned}\label{eq:mild_eq}
\end{empheq}
and for $\psi\in H_{\Gamma_{D}}^1(\Omega)$, the mapping $t \to (\mu u_t(t),\psi)_{L^2(\Omega)}$ is absolutely continuous on
$[0,T]$.

The following lemma, which provides the existence, uniqueness, and energy estimates for the mild solution, is taken from \cite{albuquerque2021level}.
\begin{lemma}[\cite{albuquerque2021level}]\label{lem:forward_wellposedness}
Suppose that Assumption~\ref{assumption1} holds. Then, for every
$S \in L^{1}\bigl(0,T;L^{2}(\Omega)\bigr)$,
the forward hyperbolic problem~\eqref{eq:forward_problem} admits a unique
mild solution $u \in C\bigl([0,T];H_{\Gamma_D}^{1}(\Omega)\bigr)
    \cap C^{1}\bigl([0,T];L^{2}(\Omega)\bigr).$
Moreover, for every \(t\in[0,T]\), the solution satisfies
\begin{equation}\label{eq:forward_L2_estimate}
\int_{\Omega} \mu(\boldsymbol{x})\lvert u(\boldsymbol{x},t)\rvert^{2}\,\mathrm{d}\boldsymbol{x}
\leq t\int_{0}^{t} \left\| \mu^{1/2}S(\cdot,\tau)
\right\|_{L^{2}(\Omega)}\,\mathrm{d}\tau,
\end{equation}
and
\begin{equation}\label{eq:forward_energy_estimate}
\left(\int_{\Omega} \mu(\boldsymbol{x})\lvert u_t(\boldsymbol{x},t)\rvert^{2}\,\mathrm{d}\boldsymbol{x}
+\int_{\Omega} |\nabla u(\boldsymbol{x},t)|^2\,\mathrm{d}\boldsymbol{x}
\right)^{1/2}
\leq\int_{0}^{t}\left\|\mu^{1/2}S(\cdot,\tau)
\right\|_{L^{2}(\Omega)}\,\mathrm{d}\tau.
\end{equation}

If, in addition, $S\in W^{1,1}\bigl(0,T;L^{2}(\Omega)\bigr)$,
then the mild solution possesses the higher regularity
$u\in C^{2}\bigl([0,T];L^{2}(\Omega)\bigr)$,
and satisfies
\begin{equation}\label{eq:forward_strong_equation}
\mu u_{tt}(t) -\Delta u(t)+\sigma u_t(t)
=S(t),\qquad t\in[0,T],
\end{equation}
in $L^{2}(\Omega)$. Consequently, $u$ is a solution of the
forward problem \eqref{eq:forward_problem}.
\end{lemma}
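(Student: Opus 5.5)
The plan is the standard semigroup/Galerkin argument for damped second-order hyperbolic problems with bounded coefficients. I would work in the Hilbert space $\mathcal{H}=H^1_{\Gamma_D}(\Omega)\times L^2(\Omega)$ with the $\mu$-weighted energy inner product
\[
\langle (u,v),(w,z)\rangle_{\mathcal{H}}=\int_\Omega \nabla u\cdot\nabla w\,d\boldsymbol{x}+\int_\Omega \mu\, v z\,d\boldsymbol{x}.
\]
This is an equivalent norm on $\mathcal{H}$. The Poincaré inequality holds on $H^1_{\Gamma_D}(\Omega)$ because $\Gamma_D$ has positive measure and $\Omega$ is connected Lipschitz. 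Equivalence of the $L^2$ part follows from $\mu_{\min}\le\mu\le\mu_{\max}$.

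On $\mathcal{H}$ I write the problem as $U'=\mathcal{A}U+F$, with $U=(u,u_t)$, $F=(0,\mu^{-1}S)$, and
\[
\mathcal{A}(u,v)=\bigl(v,\ \mu^{-1}(\Delta u-\sigma v)\bigr).
\]
The domain of $\mathcal{A}$ consists of pairs with $v\in H^1_{\Gamma_D}$ and $\Delta u\in L^2$ weakly, with the Neumann condition understood variationally.

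\textbf{Step 1: dissipativity.} Green's formula, together with the mixed boundary conditions, gives
\[
\langle \mathcal{A}U,U\rangle_{\mathcal{H}}=-\int_\Omega \sigma v^2\le 0,
\]
using $\sigma\ge0$.

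\textbf{Step 2: maximality.} For $\lambda>0$ I must solve $\lambda U-\mathcal{A}U=G$. Eliminating $v=\lambda u-g_1$ reduces this to
\[
\lambda^2\mu u+\lambda\sigma u-\Delta u = \text{data in } \bigl(H^1_{\Gamma_D}\bigr)'.
\]
The associated bilinear form is coercive on $H^1_{\Gamma_D}$, so Lax--Milgram applies.

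\textbf{Step 3: mild solution.} By Lumer--Phillips, $\mathcal{A}$ generates a contraction semigroup on $\mathcal{H}$. Since $F\in L^1(0,T;\mathcal{H})$ whenever $S\in L^1(0,T;L^2)$, Duhamel's formula yields a unique mild solution $U\in C([0,T];\mathcal{H})$. This gives $u\in C([0,T];H^1_{\Gamma_D})\cap C^1([0,T];L^2)$.

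\textbf{Step 4: weak formulation and uniqueness.} I then check that this semigroup mild solution satisfies the weak formulation \eqref{eq:mild_eq}, with $t\mapsto(\mu u_t,\psi)$ absolutely continuous. To do so, I approximate $S$ by smooth-in-time sources, for which the solution is classical, and pass to the limit in the integrated-in-time identity. For uniqueness in the class defined by \eqref{eq:mild_eq}, I use a Lions--Magenes type argument: test the homogeneous equation with $\psi=\int_t^s u(\tau)\,d\tau$. Alternatively, one can identify any weak solution with the semigroup solution.

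\textbf{Step 5: energy estimates.} Since the semigroup is a contraction, Duhamel gives $\|U(t)\|_{\mathcal H}\le\int_0^t\|F\|_{\mathcal H}$. Moreover $\|F\|_{\mathcal H}=\|\mu^{1/2}\mu^{-1}S\|_{L^2}$, so the $\mu$-weighting produces \eqref{eq:forward_energy_estimate}. Formally this is the estimate obtained by testing with $u_t$, where the damping term is discarded by sign. The $\mu$-scaling of $S$ has to be tracked to get exactly the stated form, and the normalization convention of \cite{albuquerque2021level} may need to be matched. For \eqref{eq:forward_L2_estimate}, I write $u(t)=\int_0^t u_t$. Minkowski's inequality in $L^2_\mu$ and the energy bound then give
\[
\|\mu^{1/2}u(t)\|\le t\int_0^t\|\mu^{1/2}S\|.
\]
Squaring gives a $t^2(\cdot)^2$ bound. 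I would reconcile this with the stated right-hand side, which appears to have a typo or a different normalization, rather than trust the literal form.

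\textbf{Step 6: higher regularity.} Assume $S\in W^{1,1}(0,T;L^2)$. Then $F\in W^{1,1}(0,T;\mathcal H)$ and $U(0)=0\in D(\mathcal A)$. The standard semigroup regularity theorem (Pazy, Cor.~4.2.5) gives $U\in C^1([0,T];\mathcal H)\cap C([0,T];D(\mathcal A))$. Hence $u_t\in C^1([0,T];L^2)$, so $u\in C^2([0,T];L^2)$, and $\Delta u=\mu u_{tt}+\sigma u_t-S\in C([0,T];L^2)$. This is \eqref{eq:forward_strong_equation} holding in $L^2$.

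\textbf{Main obstacle.} The delicate point is the precise characterization of $D(\mathcal A)$ under merely Lipschitz $\partial\Omega$ and mixed boundary conditions, where no $H^2$ regularity is available. The way around it is to define $\mathcal A$ variationally through the form $\int\nabla u\cdot\nabla\psi$, so that $\Delta u\in L^2$ is meant in the weak sense and the Neumann condition is natural. This avoids elliptic regularity entirely.
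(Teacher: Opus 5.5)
The paper gives no proof of this lemma; it quotes it from \cite{albuquerque2021level}. Your semigroup argument is therefore a self-contained substitute rather than a competing route, and its qualitative part (Steps 1--4 and 6) is sound.

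Your dissipativity computation with the variationally defined Laplacian is correct. In Step 2, Lax--Milgram works even without Poincar\'e, because $\lambda^2\mu_{\min}>0$. Poincar\'e is needed only for the norm equivalence on $\mathcal H$, which requires $\Gamma_D$ of positive length; that holds in the paper's setting.

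Two small repairs are needed.
\begin{itemize}
\item Every mild solution satisfies $U(t)=\mathcal A\int_0^t U\,ds+\int_0^t F\,ds$. Its first component gives $u(t)=\int_0^t v\,ds$, hence $u_t=v$, which you assert without proof. Its second component, tested with $\psi\in H^1_{\Gamma_D}(\Omega)$, gives the integrated form of \eqref{eq:mild_eq} together with absolute continuity of $t\mapsto(\mu u_t,\psi)$. So the approximation argument in Step 4 is unnecessary.
\item In Step 6, the Pazy corollary you cite is stated for $F\in C^1([0,T];\mathcal H)$. For $F\in W^{1,1}$, differentiate $\int_0^t e^{s\mathcal A}F(t-s)\,ds$ directly; the derivative is $e^{t\mathcal A}F(0)+\int_0^t e^{s\mathcal A}F'(t-s)\,ds\in C([0,T];\mathcal H)$.
\end{itemize}

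\textbf{The genuine gap is in Step 5}, where you say the Duhamel bound ``produces'' \eqref{eq:forward_energy_estimate}. It does not. Since $\|F(s)\|_{\mathcal H}=\|\mu^{-1/2}S(s)\|_{L^2(\Omega)}$, what you actually prove is
\[
\Bigl(\int_\Omega\mu|u_t(t)|^2\,d\boldsymbol{x}+\int_\Omega|\nabla u(t)|^2\,d\boldsymbol{x}\Bigr)^{1/2}\le\int_0^t\|\mu^{-1/2}S(\tau)\|_{L^2(\Omega)}\,d\tau .
\]
No choice of normalization turns this into the stated bound with $\mu^{1/2}S$, because that bound is false in general.

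Here is a counterexample with constant $\mu\equiv c\in(0,1)$. Take $\sigma\equiv0$, let $g$ be an eigenfunction $-\Delta g=\lambda g$ of the mixed problem, and let $S=\varepsilon^{-1}\mathbf 1_{[0,\varepsilon]}(t)\,g$. Then $u=a(t)g$ with $ca''+\lambda a=\varepsilon^{-1}\mathbf 1_{[0,\varepsilon]}$. The energy is conserved for $t\ge\varepsilon$. As $\varepsilon\to0$, the left-hand side of \eqref{eq:forward_energy_estimate} tends to $c^{-1/2}\|g\|_{L^2(\Omega)}$, while the right-hand side equals $c^{1/2}\|g\|_{L^2(\Omega)}$.

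This is not cosmetic: in the paper's experiments $\mu=v^{-2}$ is of order $10^{-6}$. You should state and prove the $\mu^{-1/2}$ version instead. It implies the stated bound only after multiplying the right-hand side by $\mu_{\min}^{-1}$. It is also exactly what the paper itself rederives in \eqref{eq:energy-u-bound} and uses afterwards.

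Your Minkowski argument for \eqref{eq:forward_L2_estimate} is fine once it is fed this corrected bound. It yields $\int_\Omega\mu|u(t)|^2\,d\boldsymbol{x}\le t^2\bigl(\int_0^t\|\mu^{-1/2}S\|_{L^2(\Omega)}\,d\tau\bigr)^2$. You were right to distrust the literal \eqref{eq:forward_L2_estimate}, which is not even homogeneous in $S$. The same distrust should have been extended to \eqref{eq:forward_energy_estimate}.
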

\begin{assumption}\label{assum:domain}
    We consider a specific rectangular domain 
    $\Omega = (0, L_x) \times (0, L_y) \subset \mathbb{R}^2$. 
    The boundary $\partial\Omega$ is partitioned into a Neumann boundary 
    $\Gamma_N = (0, L_x) \times \{0\}$ and a Dirichlet boundary 
    $\Gamma_D = \partial\Omega \setminus\Gamma_N$ (which consists of the remaining three sides). 
\end{assumption}

\begin{theorem}\label{thm:spatial-regularity}
 Let Assumption \ref{assum:domain} hold, and let
$S\in W^{1,1}\bigl(0,T;L^2(\Omega)\bigr).$
Then the unique mild solution $u$ of problem~\eqref{eq:forward_problem}
satisfies
\begin{equation*}
    u\in C\bigl([0,T];H^2(\Omega)\bigr)
    \cap C^2\bigl([0,T];L^2(\Omega)\bigr).
\end{equation*}
Moreover, there exists a constant $C_\Omega>0$, depending only on
$\Omega$, such that, for every $t\in[0,T]$,
\begin{equation}\label{eq:H2-estimate-u}
    \|u(t)\|_{H^2(\Omega)}
    \leq C_\Omega\Bigl(
        \|S(t)\|_{L^2(\Omega)}
        +\mu_{\max}\|u_{tt}(t)\|_{L^2(\Omega)}
        +\|\sigma\|_{L^\infty(\Omega)} \|u_t(t)\|_{L^2(\Omega)}\Bigr).
\end{equation}
In particular,
\begin{equation*}
    u\in C\bigl([0,T];C^{0,\alpha}(\overline{\Omega})\bigr),\qquad 0<\alpha<1.
\end{equation*}
\end{theorem}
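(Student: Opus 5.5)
The plan is to reduce the time-dependent problem, for each fixed $t$, to an elliptic mixed boundary value problem on the rectangle. Then we apply $H^2$ regularity for the Laplacian with the specific mixed conditions of Assumption~\ref{assum:domain}.

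By Lemma~\ref{lem:forward_wellposedness}, since $S\in W^{1,1}(0,T;L^2(\Omega))$, we already know $u\in C^2([0,T];L^2(\Omega))$ and that \eqref{eq:forward_strong_equation} holds in $L^2(\Omega)$ for every $t$. Fix $t$ and set
\[
f(t):=S(t)-\mu u_{tt}(t)-\sigma u_t(t)\in L^2(\Omega).
\]
Then $u(t)\in H^1_{\Gamma_D}(\Omega)$ is the weak solution of $-\Delta w=f(t)$ with $w=0$ on $\Gamma_D$ and $\partial_n w=0$ on $\Gamma_N$. Indeed, the mild formulation \eqref{eq:mild_eq}, together with $u\in C^2([0,T];L^2)$, lets us rewrite the time-derivative term as $(\mu u_{tt},\psi)$. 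This gives
\[
\int_\Omega \nabla u(t)\cdot\nabla\psi\,d\boldsymbol{x}=\int_\Omega f(t)\psi\,d\boldsymbol{x}\quad\text{for all }\psi\in H^1_{\Gamma_D}(\Omega).
\]
Since $\Gamma_D$ has positive measure, Poincar\'e's inequality makes this problem coercive, so the weak solution is unique.

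The key step, and the main obstacle, is the a priori bound $\|w\|_{H^2}\le C_\Omega\|f\|_{L^2}$ for the mixed problem on a rectangle. Such bounds fail in general at Dirichlet--Neumann junctions, where singularities like $r^{1/2}\sin(\theta/2)$ appear. Here, however, the junctions are the corners $(0,0)$ and $(L_x,0)$, where the Dirichlet and Neumann sides meet at angle $\pi/2$. The singular exponents are then $(k+\tfrac12)\pi/(\pi/2)=2k+1\ge 1$, and at the Dirichlet--Dirichlet corners (angle $\pi/2$) they are $2k\ge2$, so no $H^2$-obstructing singularity arises.

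I would make this rigorous by reflection rather than by citing Grisvard's corner theory. Reflect evenly across $y=0$ to the rectangle $(0,L_x)\times(-L_y,L_y)$, which turns the Neumann condition into an interior line. Extend $f$ evenly as well. The extended $\tilde w$ then solves a pure homogeneous Dirichlet problem on a convex rectangle, and the standard $H^2$ estimate for convex domains (Grisvard) gives $\|\tilde w\|_{H^2}\le C\|\tilde f\|_{L^2}$. For the weak formulation, even test functions recover the original problem and odd ones pair trivially, so $\tilde w$ is indeed the weak solution there. Restricting back gives $\|u(t)\|_{H^2}\le C_\Omega\|f(t)\|_{L^2}$, and the triangle inequality with $\|\mu u_{tt}\|\le\mu_{\max}\|u_{tt}\|$ and $\|\sigma u_t\|\le\|\sigma\|_{L^\infty}\|u_t\|$ yields \eqref{eq:H2-estimate-u}.

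For time continuity, apply the same linear estimate to $u(t)-u(s)$ with data $f(t)-f(s)$. The map $t\mapsto f(t)$ is continuous into $L^2$: $u_t,u_{tt}\in C([0,T];L^2)$, and $S\in W^{1,1}(0,T;L^2)\subset C([0,T];L^2)$. Hence $u\in C([0,T];H^2(\Omega))$. Finally, in two dimensions the Sobolev embedding $H^2(\Omega)\hookrightarrow C^{0,\alpha}(\overline\Omega)$ holds for every $0<\alpha<1$ on the Lipschitz rectangle. Composing this continuous linear embedding with the previous step gives $u\in C([0,T];C^{0,\alpha}(\overline\Omega))$.
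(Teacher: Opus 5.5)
Your proposal is correct and follows the paper's proof essentially step for step. The shared steps are the fixed-$t$ reduction to the mixed elliptic problem with data $S-\mu u_{tt}-\sigma u_t$, and the even reflection across $y=0$ to a homogeneous Dirichlet problem on the convex rectangle $(0,L_x)\times(-L_y,L_y)$, where Grisvard's $H^2$ estimate applies. After that, the same linear estimate applied to $u(t)-u(s)$ gives time continuity, and the two-dimensional Sobolev embedding finishes the argument. Your even/odd splitting of test functions is equivalent to the paper's use of $v_\psi(x,y)=\psi(x,y)+\psi(x,-y)$, and your corner-exponent remark is a correct but unneeded aside.
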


\begin{proof}
Set $V:=H^1_{\Gamma_D}(\Omega)$.
By Lemma~\ref{lem:forward_wellposedness}, the unique mild solution satisfies
\begin{equation*}
u\in C([0,T];V)\cap C^2([0,T];L^2(\Omega)).
\end{equation*}
In particular, $u_t, u_{tt} \in C([0,T]; L^2(\Omega))$. Since $W^{1,1}(0,T; L^2(\Omega))$ embeds continuously into $C([0,T]; L^2(\Omega))$, the source term satisfies $S \in C([0,T]; L^2(\Omega))$. Thus, we may define
\begin{equation*}
g(t):=S(t)-\mu u_{tt}(t)-\sigma u_t(t).
\end{equation*}
Since $\mu, \sigma \in L^\infty(\Omega)$ are independent of $t$, it follows that
\begin{equation}\label{eq:reflected-source-g}
    g\in C([0,T]; L^2(\Omega)).
\end{equation}

At each fixed $t\in[0,T]$, the variational formulation of problem \eqref{eq:forward_problem} can be written as
\begin{equation}\label{eq:original-mixed-variational}
\int_\Omega\nabla u(t)\cdot\nabla v\,dx\,dy
=\int_\Omega g(t)v\,dx\,dy,
\qquad v\in V.
\end{equation}
Thus, for each fixed $t$, the function $u(t)$ is the weak solution of mixed elliptic problem
\begin{equation}\label{eq:mixed-elliptic-problem}
\begin{cases}
    -\Delta u(t)=g(t),
        & \text{in }\Omega,\\
    u(t)=0,
        & \text{on }\Gamma_D,\\
    \partial_{\boldsymbol n}u(t)=0,
        & \text{on }\Gamma_N.
\end{cases}
\end{equation}

We next establish the $H^2$-regularity for problem \eqref{eq:mixed-elliptic-problem}. 
 Let $h\in L^2(\Omega)$ and
$w\in V$ satisfy
\begin{equation}\label{eq:generic-mixed-variational}
\int_\Omega\nabla w\cdot\nabla v\,dx\,dy
=\int_\Omega hv\,dx\,dy,
\qquad v\in V.
\end{equation}
To this end, we perform an even reflection across the Neumann boundary on $y = 0$. Specifically, we define

\begin{equation*}
\widetilde\Omega=(0,L_x)\times(-L_y,L_y),\qquad
\widetilde w(x,y)=w(x,|y|),\qquad
\widetilde h(x,y)=h(x,|y|).
\end{equation*}
The even extension maps $H^1(\Omega)$ continuously into
$H^1(\widetilde\Omega)$. Since $w$ has zero trace on the left, right, and top
sides of $\Omega$, its reflection has zero trace on every side of
$\partial\widetilde\Omega$. Therefore
\begin{equation*}
\widetilde w\in H^1_0(\widetilde\Omega),
\qquad \widetilde h\in L^2(\widetilde\Omega).
\end{equation*}

For any
$\psi\in H^1_0(\widetilde\Omega)$, let
\begin{equation*}
v_\psi(x,y)=\psi(x,y)+\psi(x,-y),\qquad (x,y)\in\Omega.
\end{equation*}
Since $\psi \in H_0^1(\widetilde{\Omega})$, the restriction satisfies $v_\psi \in V$, and almost everywhere in $\Omega$, we have
\begin{equation*}
\partial_x v_\psi(x,y)=\partial_x\psi(x,y)+\partial_x\psi(x,-y),
\qquad
\partial_y v_\psi(x,y)=\partial_y\psi(x,y)-\partial_y\psi(x,-y).
\end{equation*}
Splitting the integral over $\widetilde{\Omega}$ into its upper and lower halves, and changing variables $y \mapsto -y$ in the lower half, we obtain
\begin{align*}
\int_{\widetilde{\Omega}} \nabla \widetilde{w} \cdot \nabla \psi \, \mathrm{d}x\mathrm{d}y 
&= \int_\Omega \left[ w_x (\psi_x(x, y) + \psi_x(x, -y)) + w_y (\psi_y(x, y) - \psi_y(x, -y)) \right] \mathrm{d}x\mathrm{d}y \notag \\
&= \int_\Omega \nabla w \cdot \nabla v_\psi \, \mathrm{d}x\mathrm{d}y = \int_\Omega h v_\psi \, \mathrm{d}x\mathrm{d}y \notag \\
&= \int_{\widetilde{\Omega}} \widetilde{h} \psi \, \mathrm{d}x\mathrm{d}y.
\end{align*}
Hence, $\widetilde{w}$ is the unique weak solution to the homogeneous Dirichlet problem
\begin{equation*}
\begin{cases}
-\Delta \widetilde{w} = \widetilde{h}, & \text{in } \widetilde{\Omega}, \\
\widetilde{w} = 0, & \text{on } \partial\widetilde{\Omega}.
\end{cases}
\end{equation*}
Since $\widetilde{\Omega}$ is a bounded convex polygon, by the $H^2$-regularity theory for elliptic problems (see, e.g., Grisvard~\cite[Theorem~3.2.1.2]{grisvard2011elliptic}), there exists a constant $C_{\widetilde{\Omega}} > 0$ such that $\widetilde{w} \in H^2(\widetilde{\Omega})$ and
\begin{equation*}
\|\widetilde{w}\|_{H^2(\widetilde{\Omega})} \le C_{\widetilde{\Omega}} \|\widetilde{h}\|_{L^2(\widetilde{\Omega})}.
\end{equation*}
By symmetry, we derive
\begin{equation}\label{eq: estimate_H2}
\|w\|_{H^2(\Omega)} \le C_\Omega \|h\|_{L^2(\Omega)}.
\end{equation}

Applying \eqref{eq: estimate_H2} to $w=u(t)$ and $h=g(t)$
in \eqref{eq:original-mixed-variational}, we obtain
\begin{equation*}
\|u(t)\|_{H^2(\Omega)}
\leq C_\Omega\|g(t)\|_{L^2(\Omega)}
\leq C_\Omega\bigl(
\|S(t)\|_{L^2(\Omega)}
+\mu_{\max}\|u_{tt}(t)\|_{L^2(\Omega)}
+\|\sigma\|_{L^\infty(\Omega)}\|u_t(t)\|_{L^2(\Omega)}
\bigr),
\end{equation*}
which proves \eqref{eq:H2-estimate-u}.

For any $s, t \in [0, T]$, the difference $w = u(t) - u(s)$ satisfies the variational problem \eqref{eq:original-mixed-variational} with the source term $h = g(t) - g(s)$. Applying the elliptic regularity estimate \eqref{eq: estimate_H2}, we obtain
\begin{equation*}
\|u(t) - u(s)\|_{H^2(\Omega)} \le C_\Omega \|g(t) - g(s)\|_{L^2(\Omega)}.
\end{equation*}
Since $g \in C([0, T]; L^2(\Omega))$ by \eqref{eq:reflected-source-g}, the right-hand side vanishes as $s \to t$, which implies that $u \in C([0, T]; H^2(\Omega))$. Finally, the continuous Sobolev
embedding $H^2(\Omega)\hookrightarrow
C^{0,\alpha}(\overline\Omega)$ in two dimensions, valid for every
$0<\alpha<1$, gives
$u\in C([0,T];C^{0,\alpha}(\overline\Omega))$.
\end{proof}

Having established the spatial regularity of the solution, we now derive energy estimates for the wave problem \eqref{eq:forward_problem}. In particular, we establish uniform bounds with respect to the medium coefficient $\mu$, which are crucial for proving the well-posedness of the posterior measure. These estimates are summarized in the following theorem.

\begin{theorem}\label{thm:uniform-energy-estimates}
Let $\Gamma_D$ non-empty and Assumption \ref{assumption1} hold.
Let $S\in W^{1,1}(0,T;L^2(\Omega))$, and let $u$ solve the wave problem
\eqref{eq:forward_problem}.
Then
\begin{equation*}
    u\in C^1([0,T];V)\cap C^2([0,T];L^2(\Omega)),
\end{equation*}
and the following estimates hold:
\begin{align}
\label{eq:estimate-ut}
    \sup_{t\in[0,T]}\|u_t(t)\|_{L^2(\Omega)}
    &\leq \frac{1}{\mu_{\min}} \|S\|_{L^1(0,T;L^2(\Omega))},\\ \label{eq:estimate-grad-u}
    \sup_{t\in[0,T]} \|\nabla u(t)\|_{L^2(\Omega)}
    &\leq \frac{1}{\sqrt{\mu_{\min}}} \|S\|_{L^1(0,T;L^2(\Omega))},\\
    \label{eq:estimate-utt}
    \sup_{t\in[0,T]}\|u_{tt}(t)\|_{L^2(\Omega)}
    &\leq \frac{1}{\mu_{\min}}\bigl(
        \|S(0)\|_{L^2(\Omega)} +\|S_t\|_{L^1(0,T;L^2(\Omega))}
\bigl).
\end{align}
Consequently, the estimates are uniform for all coefficients $\mu$ satisfying
the prescribed bounds $\mu_{\min}\leq\mu\leq\mu_{\max}$; in particular, their
constants do not depend on the particular choice of $\mu$.
\end{theorem}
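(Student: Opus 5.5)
We derive the three bounds from energy identities obtained by testing the equation with $u_t$ and with $u_{tt}$, using the time-differentiated equation for the second. We rely on Lemma~\ref{lem:forward_wellposedness} for existence and $C^2([0,T];L^2(\Omega))$-regularity. The extra regularity $u\in C^1([0,T];V)$ comes from the time-differentiated problem. Set $z:=u_t$. Formally $z$ solves
\begin{equation*}
\mu z_{tt}+\sigma z_t-\Delta z=S_t,
\end{equation*}
with the same boundary conditions, $z(0)=u_t(0)=0$, and $z_t(0)=u_{tt}(0)=\mu^{-1}\bigl(S(0)+\Delta u(0)-\sigma u_t(0)\bigr)=\mu^{-1}S(0)\in L^2(\Omega)$. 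Since $S_t\in L^1(0,T;L^2(\Omega))$, the mild-solution theory of Lemma~\ref{lem:forward_wellposedness}, extended to nonzero initial velocity, gives a unique $\tilde z\in C([0,T];V)\cap C^1([0,T];L^2(\Omega))$. We then identify $\tilde z=u_t$. One route is to check that $\int_0^t\tilde z\,ds$ is a mild solution of the original problem and invoke uniqueness. Alternatively we would regularize $S$, mollifying in time, and pass to the limit using the energy bounds. This gives $u\in C^1([0,T];V)$, and the identification step is where most of the technical care goes.

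\emph{Estimate \eqref{eq:estimate-ut}.} Define the energy
\begin{equation*}
E(t):=\tfrac12\int_\Omega\bigl(\mu|u_t|^2+|\nabla u|^2\bigr).
\end{equation*}
Testing with $\psi=u_t(t)$, which is now admissible, and using $\sigma\ge0$ gives
\begin{equation*}
E'(t)\le \|S(t)\|_{L^2}\|u_t(t)\|_{L^2}\le \mu_{\min}^{-1/2}\|S(t)\|_{L^2}\sqrt{2E(t)}.
\end{equation*}
Equivalently, we could simply take \eqref{eq:forward_energy_estimate} as given. The standard argument for $\frac{d}{dt}\sqrt{E}$ yields
\begin{equation*}
\sqrt{2E(t)}\le \mu_{\min}^{-1/2}\int_0^t\|S\|_{L^2}.
\end{equation*}
Then $\mu_{\min}\|u_t\|^2\le 2E$ gives $\|u_t\|\le \mu_{\min}^{-1}\|S\|_{L^1L^2}$, which is \eqref{eq:estimate-ut}.

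\emph{Estimate \eqref{eq:estimate-grad-u}.} From the same bound, $\|\nabla u\|^2\le 2E$ gives $\|\nabla u\|\le\mu_{\min}^{-1/2}\|S\|_{L^1L^2}$. Note that bounding $\|\mu^{1/2}S\|$ by $\mu_{\max}^{1/2}\|S\|$ via \eqref{eq:forward_energy_estimate} would give the wrong constant. So we redo the energy computation with the unweighted source, as above, to obtain the stated $\mu_{\min}$-dependence.

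\emph{Estimate \eqref{eq:estimate-utt}.} We apply the same energy argument to $z=u_t$, which now carries nonzero initial velocity. Set
\begin{equation*}
E_1(t):=\tfrac12\int_\Omega\bigl(\mu|z_t|^2+|\nabla z|^2\bigr),\qquad E_1(0)=\tfrac12\int_\Omega\mu^{-1}|S(0)|^2.
\end{equation*}
The same computation gives
\begin{equation*}
\sqrt{2E_1(t)}\le\sqrt{2E_1(0)}+\mu_{\min}^{-1/2}\int_0^t\|S_t\|_{L^2}.
\end{equation*}
Using $\sqrt{2E_1(0)}\le\mu_{\min}^{-1/2}\|S(0)\|$ and $\|u_{tt}\|\le\mu_{\min}^{-1/2}\sqrt{2E_1}$ yields \eqref{eq:estimate-utt}.

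The main obstacle is rigor rather than computation: testing with $u_t$ or $z_t$ requires regularity we do not a priori have, and $\sqrt{E}$ may vanish. We would handle the latter with $\sqrt{E+\varepsilon}$, and the former by a Galerkin or time-mollification approximation, passing to the limit in the inequalities. Uniformity in $\mu$ is then immediate, since only $\mu_{\min}$ appears in the constants.
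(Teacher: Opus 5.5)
Your proposal is correct and follows the paper's proof essentially step by step. You use the same energy identity from testing with $u_t$, the same $\sqrt{2E(t)+\varepsilon}$ device, and the same time-differentiated problem with $v(0)=0$, $v_t(0)=\mu^{-1}S(0)$, identified with $u_t$ by showing that $\int_0^t v\,ds$ solves the original problem and invoking uniqueness. The only blemish is your aside that one could ``equivalently'' take \eqref{eq:forward_energy_estimate} as given; as you note yourself a paragraph later, its $\mu^{1/2}$-weighted source does not give the stated constants, which is why both you and the paper redo the computation with the $\mu^{-1/2}$ weighting.
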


\begin{proof}
We first estimate $u_t$ and $\nabla u$. Define
\begin{equation*}
    E_u(t):=\frac{1}{2}\bigl(
        \|\mu^{1/2}u_t(t)\|_{L^2(\Omega)}^2+\|\nabla u(t)\|_{L^2(\Omega)}^2\bigl).
\end{equation*}
Testing the equation \eqref{eq:forward_problem} by $u_t$ and using the homogeneous boundary conditions
give
\begin{equation}\label{eq:basic-energy-identity}
    E_u'(t)+\|\sigma^{1/2}u_t(t)\|_{L^2(\Omega)}^2
    =(S(t),u_t(t))_{L^2(\Omega)}.
\end{equation}
Indeed, since $\mu$ is independent of time,
\begin{equation*}
    \int_\Omega \mu u_{tt}u_t\,dx
    =\frac{1}{2}\frac{d}{dt}\|\mu^{1/2}u_t\|_{L^2(\Omega)}^2,
\end{equation*}
whereas integration by parts gives
\begin{equation*}
    -\int_\Omega \Delta u\,u_t\,dx
    =\int_\Omega\nabla u\cdot\nabla u_t\,dx
    =\frac{1}{2}\frac{d}{dt}\|\nabla u\|_{L^2(\Omega)}^2.
\end{equation*}

By the Cauchy--Schwarz inequality,
\begin{align*}
    (S(t),u_t(t))_{L^2(\Omega)}
    &=\bigl(\mu^{-1/2}S(t),\mu^{1/2}u_t(t)\bigr)_{L^2(\Omega)}\\
    &\leq
    \|\mu^{-1/2}S(t)\|_{L^2(\Omega)}
    \|\mu^{1/2}u_t(t)\|_{L^2(\Omega)}\\
    &\leq \|\mu^{-1/2}S(t)\|_{L^2(\Omega)}\sqrt{2E_u(t)}.
\end{align*}
It follows from \eqref{eq:basic-energy-identity} that
\begin{equation}\label{eq:energy-differential-inequality}
    E_u'(t)\leq\|\mu^{-1/2}S(t)\|_{L^2(\Omega)}\sqrt{2E_u(t)}.
\end{equation}

To avoid dividing by zero when $E_u(t)=0$, let
\begin{equation*}
    Y_\varepsilon(t):=\sqrt{2E_u(t)+\varepsilon},
    \qquad \varepsilon>0.
\end{equation*}
Then \eqref{eq:energy-differential-inequality} yields
\begin{equation*}
    Y_\varepsilon'(t)
    =\frac{E_u'(t)}{\sqrt{2E_u(t)+\varepsilon}}
    \leq \|\mu^{-1/2}S(t)\|_{L^2(\Omega)}.
\end{equation*}
Since $E_u(0)=0$, integration over $(0,t)$ and passage to the limit
$\varepsilon\downarrow0$ give
\begin{equation}\label{eq:energy-u-bound}
    \sqrt{2E_u(t)}\leq\int_0^t\|\mu^{-1/2}S(s)\|_{L^2(\Omega)}\,ds
    \leq\frac{1}{\sqrt{\mu_{\min}}}\int_0^t\|S(s)\|_{L^2(\Omega)}\,ds.
\end{equation}
On the other hand,
\begin{equation*}
    \sqrt{\mu_{\min}}\|u_t(t)\|_{L^2(\Omega)}
    \leq\|\mu^{1/2}u_t(t)\|_{L^2(\Omega)}\leq\sqrt{2E_u(t)},
\end{equation*}
and
\begin{equation*}
    \|\nabla u(t)\|_{L^2(\Omega)}\leq\sqrt{2E_u(t)}.
\end{equation*}
Combining these inequalities with \eqref{eq:energy-u-bound} and taking the
supremum over $t\in[0,T]$ prove \eqref{eq:estimate-ut} and
\eqref{eq:estimate-grad-u}.

We next estimate $u_{tt}$. Let $v$ be the solution of
\begin{equation}\label{eq:differentiated-problem}
\begin{cases}
    \mu v_{tt}+\sigma v_t-\Delta v=S_t, &\text{in }\Omega\times(0,T),\\
    v=0,&\text{on }\Gamma_D\times(0,T),\\
    \partial_{\boldsymbol n}v=0,&\text{on }\Gamma_N\times(0,T),\\
    v(\cdot,0)=0,\qquad v_t(\cdot,0)=\mu^{-1}S(0),&\text{in }\Omega.
\end{cases}
\end{equation}
The value $S(0)$ is well defined because
$S\in W^{1,1}(0,T;L^2(\Omega))$. To see that $v=u_t$, we set
\begin{equation*}
    U(t):=\int_0^t v(s)\,ds.
\end{equation*}
Integrating the equation for $v$ from $0$ to $t$ and using the initial data in \eqref{eq:differentiated-problem}, we obtain
\begin{equation*}
    \mu U_{tt}+\sigma U_t-\Delta U=S,
    \qquad U(0)=U_t(0)=0,
\end{equation*}
and
\begin{equation*}
    \quad U=0~\text{on}~\Gamma_{D},\quad \partial_{\boldsymbol{n}}U=0 \text{ on } \Gamma_{N}.
\end{equation*}
Thus, $U$ solves \eqref{eq:forward_problem}. By the uniqueness of the solution to \eqref{eq:forward_problem}, we have $U = u$, which in turn implies that $v = u_t$ and $v_t = u_{tt}$.

Define
\begin{equation*}
    E_v(t):=\frac{1}{2}\bigl(\|\mu^{1/2}v_t(t)\|_{L^2(\Omega)}^2
        +\|\nabla v(t)\|_{L^2(\Omega)}^2\bigl).
\end{equation*}
Repeating the preceding energy argument with $v$ and source term $S_t$ yields
\begin{equation}\label{eq:energy-v-bound}
    \sqrt{2E_v(t)}\leq\sqrt{2E_v(0)}
    +\int_0^t\|\mu^{-1/2}S_t(s)\|_{L^2(\Omega)}\,ds.
\end{equation}
The initial conditions in \eqref{eq:differentiated-problem} give
\begin{equation*}
    \sqrt{2E_v(0)}=\|\mu^{-1/2}S(0)\|_{L^2(\Omega)}
    \leq\frac{1}{\sqrt{\mu_{\min}}}\|S(0)\|_{L^2(\Omega)}.
\end{equation*}
Therefore,
\begin{equation*}
    \sqrt{2E_v(t)}\leq\frac{1}{\sqrt{\mu_{\min}}}
    \bigl(\|S(0)\|_{L^2(\Omega)}
        +\int_0^t\|S_t(s)\|_{L^2(\Omega)}\,ds
    \bigl).
\end{equation*}
Finally, since $v_t=u_{tt}$,
\begin{equation*}
    \sqrt{\mu_{\min}}\|u_{tt}(t)\|_{L^2(\Omega)}
    \leq\|\mu^{1/2}v_t(t)\|_{L^2(\Omega)}
    \leq\sqrt{2E_v(t)}.
\end{equation*}
Taking the supremum over $t\in[0,T]$ proves
\eqref{eq:estimate-utt}. The regularity stated in the theorem follows from the standard energy theory applied first to $u$ and then to
\eqref{eq:differentiated-problem}.
\end{proof}

For later use, we record a uniform $H^2$-bound \eqref{eq:H2-estimate-u} for the solution. Set
\begin{equation}\label{eq:A_B}
 A:=\|S\|_{L^1(0,T;L^2(\Omega))},
 \qquad
 B:=\|S(0)\|_{L^2(\Omega)}
    +\|\partial_t S\|_{L^1(0,T;L^2(\Omega))}.
\end{equation}
Since $S\in W^{1,1}((0,T);L^2(\Omega))$, we have
\begin{equation*}
 \|S\|_{C([0,T];L^2(\Omega))}\le B.
\end{equation*}
Combining the preceding energy estimates with the $H^2$-estimate
\eqref{eq:H2-estimate-u} yields
\begin{equation}\label{eq:uniform-field-bound}
 \sup_{t\in[0,T]}\|u(t)\|_{H^2(\Omega)}
 \le C_\Omega\left[
    \left(1+\frac{\mu_{\max}}{\mu_{\min}}\right)B
    +\frac{\|\sigma\|_{L^\infty(\Omega)}}{\mu_{\min}}A
 \right]
 =:M.
\end{equation}
Here $C_\Omega>0$ depends only on the fixed domain and its prescribed
boundary decomposition. In particular, the estimate is uniform over
all coefficients $\mu$ satisfying Assumption~\ref{assumption1}, since its right-hand side depends on $\mu$ only through the fixed bounds
$\mu_{\min}$ and $\mu_{\max}$.
\section{Bayesian level-set formulation and posterior well-posedness}
\label{sec:bayesian-level-set}

This section formulates geometric full-waveform inversion within a Bayesian
level-set framework. We first introduce the geometric inverse problem and its
observation model. We then represent the unknown inclusion by a level-set
function and define the corresponding parameter-to-observation map. Finally,
we introduce the prior and posterior measures and establish the well-posedness
of the posterior.

\subsection{Geometric inverse problem and observation model}
\label{subsec:geometric-inverse-problem}

Let $D\subset\Omega$ denote an unknown inclusion. The material parameters
inside and outside $D$ are prescribed distinct positive constants
$\mu_{\mathrm{in}}$ and $\mu_{\mathrm{out}}$, respectively. The resulting
medium coefficient is
\begin{equation}\label{eq:binary-medium}
\mu_D=\mu_{\mathrm{in}}\mathbf{1}_D
+\mu_{\mathrm{out}}\mathbf{1}_{\Omega\setminus D}.
\end{equation}
Thus, the unknown is the geometry of $D$, whereas the material values on the two sides of its boundary are fixed. If $\mu_D$ represents squared slowness, the corresponding wave speed is $v_D=\mu_D^{-1/2}$. The damping coefficient $\sigma\in L^\infty(\Omega)$ is assumed to be known, nonnegative, and independent of time.

For each prescribed source $S^{(s)}$, $s=1,\ldots,N_s$, let $u_D^{(s)}$
denote the solution of the forward wave problem with coefficient $\mu_D$.
In particular,
\begin{equation}\label{eq:forward-multisource}
\begin{cases}
\mu_D u_{tt}^{(s)}+\sigma u_t^{(s)}-\Delta u^{(s)}=S^{(s)},
& \text{in }\Omega\times(0,T),\\
u^{(s)}=0,& \text{on }\Gamma_D\times(0,T),\\
\partial_{\boldsymbol n}u^{(s)}=0,& \text{on }\Gamma_N\times(0,T),\\
u^{(s)}(\cdot,0)=u_t^{(s)}(\cdot,0)=0,& \text{in }\Omega.
\end{cases}
\end{equation}
We assume that
\begin{equation*}
S^{(s)}\in W^{1,1}(0,T;L^2(\Omega)), \qquad s=1,\ldots,N_s.
\end{equation*}

Let $\boldsymbol{x}_r\in\overline{\Omega}$, $r=1,\ldots,N_r$, be the receiver locations, and let $0<t_1<\cdots<t_{N_t}=T$ be the recording times. Define the
coefficient-to-observation map by
\begin{equation*}\label{eq:coefficient-observation-map}
\mathcal G(\mu)
=\bigl(u_\mu^{(s)}(\boldsymbol{x}_r,t_n)\bigr)_{s,r,n}\in\mathbb R^M,
\qquad M=N_sN_rN_t,
\end{equation*}
where the entries are stacked in a fixed order.

The observation model is
\begin{equation}
\label{eq:data-model-coefficient}
d=\mathcal G(\mu_D)+\eta,
\end{equation}
where $\eta\in\mathbb R^M$ denotes measurement noise. The inverse problem is
to recover the inclusion $D$ from these finitely many noisy boundary
measurements.

\subsection{Level-set representation of the unknown geometry}
\label{subsec:level-set-representation}

We represent the inclusion implicitly by a level-set function
$\phi\in U:=C(\overline\Omega)$,
where $U$ is equipped with the supremum norm. Define
\begin{equation*}\label{eq:level-set-domain}
D_\phi:=\{\boldsymbol{x}\in\Omega:\phi(\boldsymbol{x})\geq0\}.
\end{equation*}
The corresponding interface satisfies
$ \partial D_\phi\cap\Omega \subseteq \{\boldsymbol{x}\in\Omega:\phi(\boldsymbol{x})=0\}.$
When the zero level set has empty interior, the two sets agree.

Define the level-set-to-medium map
\begin{equation*}\label{eq:level-set-medium-map}
F:U\longrightarrow L^\infty(\Omega),
\qquad F(\phi)=\mu_\phi,
\end{equation*}
where
\begin{equation*} \label{eq:mu-level-set}
\begin{aligned}
\mu_\phi
&=\mu_{\mathrm{in}}\mathbf{1}_{D_\phi}
+\mu_{\mathrm{out}}\mathbf{1}_{\Omega\setminus D_\phi}\\
&=\mu_{\mathrm{in}}\mathbf{1}_{\{\phi\geq0\}}
+\mu_{\mathrm{out}}\mathbf{1}_{\{\phi<0\}}.
\end{aligned}
\end{equation*}
Notice that different level-set functions may define the same inclusion.
The level-set function is an auxiliary representation; the physical quantity to be reconstructed is the inclusion $D_\phi$ and its boundary. 

By construction, $\mu_\phi$ satisfies the uniform coefficient bounds
in Assumption~\ref{assumption1} for every $\phi\in U$. More precisely,
\begin{equation*}
 0<\mu_{\min}\le \mu_\phi(\boldsymbol{x})\le\mu_{\max}
 \qquad\text{for a.e. }\boldsymbol{x}\in\Omega,
\end{equation*}
where
\begin{equation*}
 \mu_{\min}:=\min\{\mu_{\mathrm{in}},\mu_{\mathrm{out}}\},
 \qquad
 \mu_{\max}:=\max\{\mu_{\mathrm{in}},\mu_{\mathrm{out}}\}.
\end{equation*}
Consequently, all forward estimates established in the preceding
section apply to $\mu_\phi$, with constants independent of
$\phi$.

The parameter-to-observation map for the level-set formulation is the
composition
\begin{equation*}\label{eq:observation-map}
G:=\mathcal G\circ F,\qquad
G(\phi)=\bigl(u_\phi^{(s)}(\boldsymbol{x}_r,t_n)\bigr)_{s,r,n}\in\mathbb R^M.
\end{equation*}
Accordingly, the observation model becomes
\begin{equation*}
\label{eq:data-model}
d=G(\phi)+\eta.
\end{equation*}

By Theorem~\ref{thm:spatial-regularity},
\begin{equation*}
u_\phi^{(s)}\in C([0,T];H^2(\Omega)).
\end{equation*}
Since $\Omega\subset\mathbb R^2$ and
$H^2(\Omega)\hookrightarrow C(\overline\Omega)$, all receiver evaluations in
\eqref{eq:observation-map} are well defined. Furthermore, the uniform
estimate obtained in the preceding section implies that there exists a
constant $C_G>0$, independent of $\phi$, such that
\begin{equation}\label{eq:uniform-observation-bound}
\sup_{\phi\in U}\|G(\phi)\|_{\mathbb R^M}\leq C_G.
\end{equation}
This bound will be used below to control the likelihood potential.

\subsection{Bayesian formulation and posterior well-posedness}

We place a Gaussian prior
\begin{equation*}
 \pi_0=\mathcal N(m_0,\mathcal C_0)
\end{equation*}
on the level set function, where $m_0\in U$ is the prior mean.
The covariance is chosen so that the Gaussian field has continuous
sample paths on $\overline\Omega$ and
\begin{equation}\label{eq:prior-nondegeneracy}
 \operatorname{Var}_{\pi_0}[\phi(\boldsymbol{x})]>0
 \quad\text{for Lebesgue-a.e. }\boldsymbol{x}\in\Omega.
\end{equation}
Thus $\pi_0$ is regarded as a Gaussian probability measure on $U$.
The covariance operator remains general in this formulation; it specifies
the amplitude and spatial correlation of the prior. We use the
$\pi_0$-completion of the Borel sigma-algebra on $U$.

Assume that the noise is independent of $\phi$ and satisfies
$\eta\sim\mathcal N(0,\Gamma)$, where $\Gamma\in\mathbb{R}^{M\times M}$ is a known symmetric positive definite covariance matrix. Define
\begin{equation*}
 \lvert z\rvert_\Gamma:=(z^\top\Gamma^{-1}z)^{1/2}.
\end{equation*}
The likelihood and its negative logarithm, up to a constant independent
of $\phi$, are
\begin{equation}\label{eq:likelihood}
 p(d\mid\phi)\propto\exp[-\Phi(\phi;d)],
 \qquad \Phi(\phi;d):=\frac{1}{2}| d-G(\phi)|_\Gamma^2.
\end{equation}
In the case of independent noise with common variance $\delta^2$,
$\Gamma=\delta^2 I_M$ and
\begin{equation*}
 \Phi(\phi;d)=\frac{1}{2\delta^2}
  \sum_{s=1}^{N_s}\sum_{r=1}^{N_r}\sum_{n=1}^{N_t}
  \left|u_\phi^{(s)}(\boldsymbol{x}_r,t_n)-d_{srn}\right|^2.
\end{equation*}

Let $\pi_0$ denote the prior measure on $U$, and define
\begin{equation}
 Z(d):=\int_U\exp[-\Phi(\phi;d)]\,\pi_0(d\phi).
 \label{eq:normalizing-constant}
\end{equation}
Provided that $0<Z(d)<\infty$, Bayes' formula defines the posterior
measure $\pi^y$ through its Radon--Nikodym derivative
\begin{equation} \label{eq:posterior}
 \frac{d\pi^d}{d\pi_0}(\phi)
 =\frac{1}{Z(d)}\exp[-\Phi(\phi;d)].
\end{equation} 

The main analytical difficulty is that the level-set-to-medium map
$F$ is not continuous on all of $U$. In particular, continuity may
fail at a function $\phi$ whose zero level set
\begin{equation*}
 \{\boldsymbol{x}\in\Omega:\phi(\boldsymbol{x})=0\}
\end{equation*}
has positive Lebesgue measure. Consequently, global continuity of the
parameter-to-observation map $G$ cannot be assumed. To apply the
posterior well-posedness theory in \cite{dashti2013bayesian,stuart2010inverse}, we instead show
that this exceptional set has zero measure under the Gaussian prior
specified below. It follows that $G$, and hence
$\Phi(\,\cdot\,;d)$, is continuous at $\pi_0$-almost every
$\phi\in U$. The conditions required for the subsequent
well-posedness analysis are summarized in the following assumption.

\begin{assumption}[\cite{iglesias2016bayesian}]\label{assumption-3}
The function $\Phi : U \times \mathbb{R}^M \to \mathbb{R}$ and probability measure $\pi_0$ on the measure space $(U, \Sigma)$ satisfy the following properties:
\item[1.] for every $r > 0$ there is a $K = K(r)$ such that, for all $\phi \in U$ and all $d \in \mathbb{R}^M$ with $|d|_\Gamma < r$,
\begin{equation*}
0 \leq \Phi(\phi;d) \leq K;
\end{equation*}
\item[2.] for any fixed $d \in \mathbb{R}^M$, $\Phi(\cdot;d) : U \to \mathbb{R}$, is continuous $\pi_0$-almost surely on the complete probability space $(U, \Sigma, \pi_0)$;
\item[3.] for $d_1, d_2 \in \mathbb{R}^M$ with $\max\{|d_1|_\Gamma, |d_2|_\Gamma\} < r$, there exists a $C = C(r)$ such that, for all $u \in U$,
\begin{equation*}
| \Phi(\phi;d_1) - \Phi(\phi;d_2) | \leq C |d_1 - d_2|_\Gamma.
\end{equation*}
\end{assumption}

To quantify the stability of the posterior measure with respect to data perturbations, we employ the Hellinger distance. Since all posterior measures considered herein are absolutely continuous with respect to the prior $\pi_0$, we choose $\pi_0$ as a common reference measure. For any two probability measures $\pi, \pi' \ll \pi_0$ on $U$, their Hellinger distance is defined by
\begin{equation}\label{eq:Hellinger-distance}
 d_{\mathrm{Hell}}(\pi,\pi')
 :=\left[\frac12\int_U
 \left(\sqrt{\frac{d\pi}{d\pi_0}(\phi)}
 -\sqrt{\frac{d\pi'}{d\pi_0}(\phi)}\right)^2
 d\pi_0(\phi)\right]^{1/2}.
\end{equation}
Although formulated in terms of $\pi_0$, this distance is independent of the choice of the common dominating measure. The Hellinger distance provides a natural metric for assessing posterior stability and, under suitable integrability conditions, also bounds differences between posterior expectations of polynomial observables.

We are now in a position to state the main well-posedness result, which guarantees that the posterior measure is well defined and depends locally Lipschitz continuously on the observed data.

\begin{theorem}\label{thm: well_posed}
Let $\pi_0$ be a Gaussian prior measure on the Banach space $U$ satisfying $\pi_0(U) = 1$. Let $\Phi: U \times \mathbb{R}^M \to \mathbb{R}$ be the potential defined in \eqref{eq:posterior}.
Then $\pi^d \ll \pi_0$ with Radon--Nikodym derivative
\begin{equation*}
\frac{d\pi^d}{d\pi_0} = \frac{1}{Z} \exp\big(-\Phi(\phi;d)\big)
\end{equation*}
where, for $d$ almost surely,
\begin{equation*}
Z := \int_U \exp\big(-\Phi(\phi;d)\big) \mu_0(d\phi) > 0.
\end{equation*}
Furthermore $\pi^d$ is locally Lipschitz with respect to $d$, in the 
Hellinger distance: for all $d, d'$ with 
$\max\{|d|_\Gamma, |d'|_\Gamma\} < r$, there exists a $C = C(r) > 0$ such that
\begin{equation*}
d_{\mathrm{Hell}}(\pi^d, \pi^{d'}) \leq C |d - d'|_\Gamma.
\end{equation*}
This implies that, for all $f \in L^2_{\pi_0}(U; S)$ for separable Banach space $S$,
\begin{equation*}
\|\mathbb{E}^{\pi^d} f(\phi) - \mathbb{E}^{\pi^{d'}} f(\phi)\|_S 
\leq C |d - d'|.
\end{equation*}
\end{theorem}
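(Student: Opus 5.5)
The plan is to verify the three conditions of Assumption~\ref{assumption-3} for the potential $\Phi(\phi;d)=\frac12|d-G(\phi)|_\Gamma^2$ and the Gaussian prior $\pi_0$. Then Theorem~\ref{thm: well_posed} follows from the abstract well-posedness theorem of \cite{iglesias2016bayesian}, which extends \cite{dashti2013bayesian,stuart2010inverse} to $\pi_0$-a.s.\ continuous potentials.

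Conditions 1 and 3 come from the uniform observation bound \eqref{eq:uniform-observation-bound}. Nonnegativity is clear. For $|d|_\Gamma<r$ we have $\Phi(\phi;d)\le (|d|_\Gamma^2+|G(\phi)|_\Gamma^2)\le r^2+\|\Gamma^{-1}\|C_G^2=:K(r)$. For condition 3 we write
\[
\Phi(\phi;d_1)-\Phi(\phi;d_2)=\tfrac12\langle d_1-d_2,\;d_1+d_2-2G(\phi)\rangle_\Gamma .
\]
Cauchy--Schwarz then gives the bound $(r+\|\Gamma^{-1}\|^{1/2}C_G)|d_1-d_2|_\Gamma$. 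Since $K(r)$ is finite and $\Phi\ge0$, we get $Z(d)\ge e^{-K(r)}>0$ and $Z(d)\le 1$. This gives the absolute continuity and the Radon--Nikodym formula for every $d$, not only almost every $d$. The Hellinger estimate is the standard argument. One splits $d_{\mathrm{Hell}}^2$ into a term involving $|e^{-\Phi(\cdot;d)/2}-e^{-\Phi(\cdot;d')/2}|^2$ and a term involving $|Z^{-1/2}-Z'^{-1/2}|^2$. Each is bounded using $|e^{-a}-e^{-b}|\le|a-b|$ for $a,b\ge0$ together with condition 3. The bound on expectations follows from the usual inequality $\|\mathbb E^{\pi}f-\mathbb E^{\pi'}f\|\le 2(\mathbb E^\pi\|f\|^2+\mathbb E^{\pi'}\|f\|^2)^{1/2}d_{\mathrm{Hell}}$. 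The second moments are controlled by $\mathbb E^{\pi_0}\|f\|^2$ because the densities are bounded by $1/Z\le e^{K(r)}$.

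The main obstacle is condition 2, the $\pi_0$-a.s.\ continuity of $G$. I would argue in two steps.

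\textbf{Step (a).} Show that $G$ is continuous at every $\phi$ whose zero set $\{\phi=0\}$ has Lebesgue measure zero. Take $\phi_k\to\phi$ in $C(\overline\Omega)$. For a.e.\ $x$ with $\phi(x)\ne0$, the sign of $\phi_k(x)$ eventually equals the sign of $\phi(x)$. Hence $\mu_{\phi_k}\to\mu_\phi$ a.e., with uniform bounds, so $\|(\mu_{\phi_k}-\mu_\phi)f\|_{L^2}\to0$ for any fixed $f\in L^2$ by dominated convergence. Set $w=u_k-u$. It solves the wave problem with coefficient $\mu_{\phi_k}$, zero initial data and source $(\mu_\phi-\mu_{\phi_k})u_{tt}$. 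By Lemma~\ref{lem:forward_wellposedness} or the energy argument in Theorem~\ref{thm:uniform-energy-estimates}, $\sup_t\|\nabla w(t)\|$ and $\sup_t\|w_t(t)\|$ are bounded by $\int_0^T\|(\mu_\phi-\mu_{\phi_k})u_{tt}\|\,dt$. This tends to $0$ by dominated convergence in $(x,t)$, using $u_{tt}\in C([0,T];L^2)$.

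Point evaluation needs more than $H^1$, so I would interpolate. The uniform $H^2$ bound \eqref{eq:uniform-field-bound} gives $\|w(t)\|_{H^2}\le 2M$. Combined with $\|w(t)\|_{H^1}\to0$, interpolation (e.g.\ into $H^{1+s}$, $s>0$) yields $\sup_t\|w(t)\|_{C(\overline\Omega)}\to0$. Therefore $G(\phi_k)\to G(\phi)$.

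\textbf{Step (b).} Show $\pi_0(\{\phi:|\{\phi=0\}|>0\})=0$. By Fubini on $\Omega\times U$ (joint measurability holds since the samples are continuous),
\[
\mathbb E^{\pi_0}|\{\phi=0\}|=\int_\Omega \pi_0(\phi(x)=0)\,dx .
\]
For a.e.\ $x$, the variable $\phi(x)$ is a nondegenerate Gaussian by \eqref{eq:prior-nondegeneracy}, so $\pi_0(\phi(x)=0)=0$. Thus the integral vanishes, and the zero set is null $\pi_0$-a.s. Continuity of $\Phi(\cdot;d)$ then follows from continuity of $G$ off a $\pi_0$-null set. Measurability is handled by working with the $\pi_0$-completion.
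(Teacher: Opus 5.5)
Your proposal is correct and follows essentially the same route as the paper. Both verify the three conditions of Assumption~\ref{assumption-3}: the uniform bound on $G$ for local boundedness and data-Lipschitz continuity, and $\pi_0$-a.s.\ continuity via the Tonelli argument for the zero set plus the difference wave equation, energy estimate, uniform $H^2$ bound and interpolation, before invoking \cite[Theorem~2.3]{iglesias2016bayesian}; your extra details (the explicit Hellinger and expectation bounds, and $Z(d)\ge e^{-K(r)}>0$ for every $d$) simply unpack what the paper obtains by citation.
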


\begin{proof}
By the abstract well-posedness framework of Stuart~\cite[Theorem~2.3]{iglesias2016bayesian}, it suffices to verify that the forward potential $\Phi(\phi; d)$ and prior $\pi_0$ satisfy the three conditions specified in Assumption~\ref{assumption-3}:

\medskip
\noindent\textbf{(i) Local boundedness of $\Phi$.} By formula \eqref{eq:uniform-field-bound}, we know 
$\sup_{\phi\in U}\sup_{t\in[0,T]}\|u_\phi^{(s)}(t)\|_{H^2(\Omega)}\le C_{\Omega}M_s$, 
 where $M_s=\left(1+\frac{\mu_{\max}}{\mu_{\min}}\right)B_s
+\frac{\|\sigma\|_{L^\infty(\Omega)}}{\mu_{\min}}A_s$, and $A_s,~B_s$ given by \eqref{eq:A_B}.
Since $H^2(\Omega)\hookrightarrow C(\overline\Omega)$ in two dimensions,
with sobolev embedding constant $C_{\mathrm{emb}}$, we obtain
\begin{equation}\label{eq:guniform}
 \sup_{\phi\in U}|G(\phi)|_\Gamma
 \leq  C_{\mathrm{emb}} \|\Gamma^{-1/2}\|
\left(N_rN_t\sum_{s=1}^{N_s}M_s^2\right)^{1/2}
 =:K_\Gamma.
\end{equation}
Consequently, for $|d|_\Gamma\leq r_d$,
\begin{equation}\label{eq:potential-bound}
 0\leq\Phi(\phi;d)\leq
 (K_\Gamma+r_d)^2=:K_r,\qquad \phi\in U.
\end{equation}

\medskip
\noindent\textbf{(ii) $\pi_0$-almost-sure continuity in $\phi$.} Let
\begin{equation*}
 U_0=\{\phi\in U: \text{meas}_2(\{\boldsymbol{x}\in\Omega:\phi(\boldsymbol{x})=0\})=0\},
\end{equation*}
where $\text{meas}_2(\cdot)$ denotes the two-dimensional Lebesgue measure.
For the Gaussian prior used here, assume that sample paths belong to
$U$ and that $\operatorname{Var}_{\pi_0}[\phi(\boldsymbol{x})]>0$ for
Lebesgue-almost every $\boldsymbol{x}\in\Omega$. Tonelli's theorem then yields
\begin{align*}
 &E_{\pi_0}\!\left[\text{meas}_2(\{\boldsymbol{x}:\phi(\boldsymbol{x})=0\})\right]\\
 &=\int_{U}\int_{\Omega}\mathbf{1}_{\{\phi(\boldsymbol{x})=0\}}d\boldsymbol{x}\,\pi_0(d\phi)\\
 &=\int_\Omega\pi_0\{\phi:\phi(\boldsymbol{x})=0\}\,d\boldsymbol{x}=0.
\end{align*}
By Markov's inequality and straightforward computations, we obtain $\pi_0(U_0)=1$. 

Fix $\phi\in U_0$ and let $\{\phi_k\}_{k\ge1}\subset U$ satisfy
$\phi_k\to\phi$ in $C(\overline\Omega)$. To establish the continuity of
$G$ at $\phi$, we first show that
$\mu_{\phi_k}\to\mu_\phi$ almost everywhere in $\Omega$.

Set
\begin{equation*}
 Z_\phi:=\{\boldsymbol{x}\in\Omega:\phi(\boldsymbol{x})=0\},
 \qquad
 \varepsilon_k:=\|\phi_k-\phi\|_{C(\overline\Omega)}.
\end{equation*}
Since $\phi\in U_0$, we have
$\operatorname{meas}_2(Z_\phi)=0$, while the convergence gives
$\varepsilon_k\to0$. For any $\boldsymbol{x}\in\Omega\setminus Z_\phi$, we have
$\phi(\boldsymbol{x})\ne0$. Hence, there exists $k_0=k_0(\boldsymbol{x})$ such that
\begin{equation*}
 \varepsilon_k<\frac{|\phi(\boldsymbol{x})|}{2},
 \qquad k\ge k_0.
\end{equation*}
If $\phi(\boldsymbol{x})>0$, then, for every $k\ge k_0$,
\begin{equation*}
 \phi_k(\boldsymbol{x})\ge \phi(\boldsymbol{x})-|\phi_k(\boldsymbol{x})-\phi(\boldsymbol{x})|>\frac{\phi(\boldsymbol{x})}{2}>0.
\end{equation*}
Similarly, if $\phi(\boldsymbol{x})<0$, then
\begin{equation*}
 \phi_k(\boldsymbol{x})
 \le \phi(\boldsymbol{x})+|\phi_k(\boldsymbol{x})-\phi(\boldsymbol{x})|
 <\frac{\phi(\boldsymbol{x})}{2}<0.
\end{equation*}
Thus, $\phi_k$ and $\phi$ have the same sign at $x$ for all sufficiently
large $k$. Consequently,
\begin{equation*}
 \mathbf 1_{\{\phi_k\ge0\}}(\boldsymbol{x})
 \longrightarrow
 \mathbf 1_{\{\phi\ge0\}}(\boldsymbol{x}),
 \qquad \boldsymbol{x}\in\Omega\setminus Z_\phi.
\end{equation*}
Since $\operatorname{meas}_2(Z_\phi)=0$, this convergence holds almost
everywhere in $\Omega$. Recalling that
\begin{equation*}
 \mu_\phi=\mu_{\mathrm{out}} +(\mu_{\mathrm{in}}-\mu_{\mathrm{out}})
  \mathbf 1_{\{\phi\ge0\}},
\end{equation*}
we obtain
\begin{equation*}
 \mu_{\phi_k}(\boldsymbol{x})\longrightarrow\mu_\phi(\boldsymbol{x})
 \qquad\text{for a.e. }\boldsymbol{x}\in\Omega.
\end{equation*}

We now show that this convergence carries over to the corresponding wavefields. For a fixed source, we omit the source index for brevity and denote
\begin{equation*}
 u_k := u_{\phi_k},\qquad
 u := u_\phi,\qquad
 w_k := u_k - u.
\end{equation*}
Subtracting the equations satisfied by $u$ and $u_k$, we observe that the error $w_k$ solves
\begin{equation}\label{eq:wavefield-difference}
 \mu_{\phi_k}\partial_{tt}w_k
 +\sigma\partial_t w_k-\Delta w_k
 = h_k,
 \qquad
 h_k := (\mu_\phi - \mu_{\phi_k})\partial_{tt}u,
\end{equation}
equipped with zero initial conditions and the same homogeneous boundary conditions as problem \eqref{eq:forward_problem}.

It remains to analyze the right-hand side $h_k$. For each fixed $t \in [0,T]$, the almost-everywhere convergence of $\mu_{\phi_k}$ ensures that $h_k(\cdot,t) \to 0$ almost everywhere in $\Omega$. Since
\begin{equation*}
 |h_k(\boldsymbol{x},t)| \le |\mu_{\mathrm{in}} - \mu_{\mathrm{out}}|\,|\partial_{tt}u(\boldsymbol{x},t)|
\end{equation*}
with $\partial_{tt}u(\cdot,t) \in L^2(\Omega)$, the Lebesgue dominated convergence theorem implies that
\begin{equation*}
 \|h_k(t)\|_{L^2(\Omega)} \longrightarrow 0
 \qquad\text{for every } t \in [0,T].
\end{equation*}
Furthermore, the bound
\begin{equation*}
 \|h_k(t)\|_{L^2(\Omega)}
 \le |\mu_{\mathrm{in}} - \mu_{\mathrm{out}}|\,\|\partial_{tt}u(t)\|_{L^2(\Omega)}
\end{equation*}
together with the regularity $u \in C^2([0,T]; L^2(\Omega))$ guarantees an integrable majorant in time. Applying the dominated convergence theorem once more, now with respect to $t \in [0,T]$, we obtain
\begin{equation}\label{eq:forcing-difference-convergence}
 \|h_k\|_{L^1(0,T; L^2(\Omega))} \longrightarrow 0 \qquad \text{as } k \to \infty.
\end{equation}

Applying the basic energy estimate \eqref{eq:estimate-grad-u}, which requires only an $L^1(0,T; L^2(\Omega))$ source, combining with the Poincar\'e inequality, we obtain
\begin{equation*}
 \|w_k\|_{C([0,T]; H^1(\Omega))}
 \le \frac{C_\Omega}{\sqrt{\mu_{\min}}} \|h_k\|_{L^1(0,T; L^2(\Omega))}
 \longrightarrow 0 \qquad \text{as } k \to \infty.
\end{equation*}
Meanwhile, the uniform bound in \eqref{eq:uniform-field-bound} yields
$\|w_k\|_{C([0,T]; H^2(\Omega))} \le 2M$.
Fixing $\varepsilon \in (0,1)$ such that $H^{1+\varepsilon}(\Omega) \hookrightarrow C(\overline\Omega)$, the Sobolev interpolation inequality \cite{chandler2015interpolation} gives
\begin{align*}
 \sup_{t\in[0,T]} \|w_k(t)\|_{C(\overline\Omega)}
 &\le C \sup_{t\in[0,T]} \|w_k(t)\|_{H^{1+\varepsilon}(\Omega)} \\
 &\le C \left(\sup_{t\in[0,T]} \|w_k(t)\|_{H^1(\Omega)}\right)^{1-\varepsilon}
        \left(\sup_{t\in[0,T]} \|w_k(t)\|_{H^2(\Omega)}\right)^\varepsilon
 \longrightarrow 0.
\end{align*}
Since this convergence holds for each source, it follows that the forward observation satisfies $G(\phi_k) \to G(\phi)$ in the data space. Consequently, the potential $\Phi(\cdot;d)$ is continuous at every $\phi \in U_0$, and hence $\pi_0$-almost surely.

\medskip
\noindent\textbf{(iii) Local Lipschitz continuity with respect to the data.} For $|d|_\Gamma,|d'|_\Gamma\leq r_d$, the polarization identity yields
\begin{align}\label{eq:potential-lipschitz}
 |\Phi(\phi;d)-\Phi(\phi;d')|
 &=\frac12\left|
   \langle d-d',d+d'-2G(\phi)\rangle_\Gamma\right|\notag\\
 &\leq(K_\Gamma+r_d)|d-d'|_\Gamma
 =:L_r|d-d'|_\Gamma,
\end{align}
where $K_{\Gamma}$ given by in \eqref{eq:guniform}.
This verifies the third condition in Assumption \ref{assumption-3}. 
In particular, $K_{\Gamma}$ and $L_r$ are independent of the particular coefficient field;
their dependence on $\mu_\phi$ is only through $\mu_{\min}$ and
$\mu_{\max}$.

Since all conditions in Assumption \ref{assumption-3} have been verified, the conclusion of the present theorem follows from \cite[Theorem 2.3]{iglesias2016bayesian}.
\end{proof}


\section{Multiscale multimodal Laplace approximation for Bayesian level-set inversion}\label{sec:Inversion_algorithm}

Having established the function-space formulation and the well-posedness of the Bayesian level-set posterior, we now discretize the level-set function and develop a practical multiscale multimodal Laplace approximation on the coefficient space. Unlike standard frequency continuation schemes that track a single minimizer across frequencies, our approach maintains multiple plausible solution branches. At each frequency stage, candidate modes of the scale-dependent objective are identified via multistart optimization. Using local Gauss--Newton approximations, we then merge duplicate modes and generate targeted starting points for the next frequency stage. Finally, the retained modes and their local Gaussian approximations form a Gaussian mixture, with nonnegative weights determined by fitting the unnormalized target posterior in a least-squares sense. This construction captures intra-modal local uncertainty alongside inter-modal structural diversity across distinct shapes.

\subsection{Finite-dimensional representation}
Theorem~\ref{thm: well_posed} establishes the well-posedness of the Bayesian inverse problem in function space. For numerical implementation, we
represent the level-set function in a finite-dimensional space.
Let $\Psi_1,\ldots,\Psi_L\in U\cap H^1(\Omega)$ be linearly
independent basis functions, and let $m_0\in U\cap H^1(\Omega)$
be a prescribed reference function. Define a coefficient-to-fuction map
\begin{equation}\label{eq:finite_levelset_representation}
T_L:\mathbb{R}^L\to U, \qquad
\phi_{\mathbf{q}} = T_L\mathbf{q} = m_0+\sum_{j=1}^{L}q_j\Psi_j.
\end{equation}
A Gaussian coefficient prior
$\nu_{0,L}=\mathcal{N}(\bar {\mathbf{q}}_L,C_{\mathbf{q}})$, with $C_{\mathbf{q}}$ symmetric
positive definite, induces a  Gaussian field prior
\begin{equation*}
\pi_{0,L}=(T_L)_\#\nu_{0,L}.
\end{equation*}
Thus, the finite-dimensional parametrization can be incorporated
into the function-space Bayesian framework through its induced
prior measure.

In this work, we use radial basis functions of the form
\begin{equation}
\Psi_j(\boldsymbol{x})=\psi\bigl(|\boldsymbol{x}-\xi_j|\bigr),
\qquad j=1,\ldots,L,
\label{eq:radial_basis_functions}
\end{equation}
where the centers $\xi_j$ are prescribed, and the radial basis function is fixed to an exponential form, 
$\psi = \exp\left(-\|x\|^2 / (2l^2)\right)$. 

To specify the coefficient prior, we introduce the quadratic
gradient penalty
\begin{equation*}\label{eq:coefficient_prior_energy}
J_{\mathrm{prior},L}(\mathbf{q})=\frac{\mu_{\mathrm{prior}}}{2}
\int_\Omega |\nabla\phi_{\mathbf{q}}(\boldsymbol{x})|^2\,d\boldsymbol{x},\qquad
p_{0,L}(\mathbf{q})\propto \exp\bigl(-J_{\mathrm{prior},L}(\mathbf{q})\bigr),
\end{equation*}
where $\mu_{\mathrm{prior}}>0$ controls the penalty strength. This penalty suppresses
large spatial gradients of the represented level-set function.

Define
\begin{equation*}
(K_L)_{ij}=\int_\Omega\nabla\Psi_i\cdot\nabla\Psi_j\,d\boldsymbol{x},
\qquad (b_L)_i =\int_\Omega\nabla m_0\cdot\nabla\Psi_i\,d\boldsymbol{x}.
\end{equation*}
Then
\begin{equation*}
J_{\mathrm{prior},L}(\mathbf{q})
=\frac{\mu_{\mathrm{prior}}}{2}
\left(\mathbf{q}^\top K_L\mathbf{q}+2b_L^\top \mathbf{q}\right)+c_L,
\end{equation*}
where $c_L$ is independent of $\mathbf{q}$. Assuming that $K_L$ is
positive definite, completing the square yields
\begin{equation*}\label{eq:gradient_prior_covariance}
\nu_{0,L}=\mathcal{N}(\bar{\mathbf{q}}_L,Q_L), \qquad
\bar{\mathbf{q}}_L=-K_L^{-1}b_L, \qquad
C_{\mathbf{q}}=\frac{1}{\mu_{\mathrm{prior}}}K_L^{-1}.
\end{equation*}
In particular, $\bar{\mathbf{q}}_L=0$ when $m_0$ is constant.

We next relate this computational prior to the well-posedness
analysis above. Since the basis functions and $m_0$ belong to
$U$, the induced prior satisfies $\pi_{0,L}(U)=1$.
Writing
\begin{equation*}
\boldsymbol{\Psi}(\boldsymbol{x})=\bigl(\Psi_1(\boldsymbol{x}),\ldots,\Psi_L(\boldsymbol{x})\bigr)^\top,
\end{equation*}
we assume that $\boldsymbol{\Psi}(\boldsymbol{x})\neq0$ for almost every
$\boldsymbol{x}\in\Omega$. Consequently,
\begin{equation*}
\operatorname{Var}_{\pi_{0,L}}[\phi(\boldsymbol{x})]
=\boldsymbol{\Psi}(\boldsymbol{x})^\top C_{\mathbf{q}}\boldsymbol{\Psi}(\boldsymbol{x})>0
\qquad\text{for almost every }\boldsymbol{x}\in\Omega.
\end{equation*}
The same Tonelli argument used above therefore gives
\begin{equation*}
\pi_{0,L}(U_0)=1,\qquad
U_0=\left\{\phi\in U: \text{meas}_2(\{\boldsymbol{x}\in\Omega:\phi(\boldsymbol{x})=0\})=0\right\},
\end{equation*}
where $\text{meas}_2(\cdot)$ denotes Lebesgue measure. Together with the
previously established forward estimates and continuity
properties, this allows the well-posedness argument of
Theorem~\ref{thm: well_posed} to be applied to $\pi_{0,L}$ for each fixed $L$.

Using the same observation model and likelihood as in the
function-space formulation, define
\begin{equation*}
G_L(\mathbf{q})=G(T_L{\mathbf{q}}),\qquad
\Phi_L(\mathbf{q};d)=\Phi(T_L\mathbf{q};d)=\frac12|d-G_L(\mathbf{q})|_\Gamma^2.
\end{equation*}
The coefficient posterior has Lebesgue density
\begin{equation}\label{eq:computational_coefficient_posterior}
p_L(\mathbf{q}\mid d)=\frac{1}{\widetilde Z_L(d)}
\exp\left(-\Phi_L(\mathbf{q};d)-\frac{\mu_{\mathrm{prior}}}{2}
\int_\Omega|\nabla\phi_\mathbf{q}(\boldsymbol{x})|^2\,d\boldsymbol{x}\right),
\end{equation}
where $\widetilde Z_L(d)$ is the normalizing constant.
Denoting this posterior measure by $\nu_L^d$, the corresponding
field posterior is
\begin{equation*}
\pi_L^d=(T_L)_\#\nu_L^d.
\end{equation*}
The numerical inversion is therefore performed in coefficient
space, and posterior information about the level-set function
is obtained through the reconstruction map $T_L$.

\subsection{Target posterior}
\label{BLI}

Let $\mathbf{q} \in \mathbb{R}^L$ denote the coefficient vector defined in \eqref{eq:finite_levelset_representation}, and let $G^\varepsilon_{L,h}: \mathbb{R}^L \to \mathbb{R}^M$ represent the regularized discrete parameter-to-data forward map given by \eqref{eq:app-numerical-forward-map}. This forward operator encapsulates the composition of the regularized level-set parametrization, acoustic wave propagation, and observation sampling at receiver locations; the detailed definitions of the underlying maps are provided in Appendix~\ref{FAP}. Consequently, the discretized posterior density for the coefficients admits the form 
\begin{equation}
    p^\varepsilon_{L,h}(\mathbf{q} \mid d) = \frac{1}{Z} \exp\bigl[-J(\mathbf{q})\bigr],
    \label{eq:posterior_discrete}
\end{equation}
where the total potential $J(\mathbf{q})$ is defined by
\begin{equation*}
    J(\mathbf{q}) := J^\varepsilon_{L,h}(\mathbf{q}; d) = \frac{1}{2\delta^2} \bigl\| G^\varepsilon_{L,h}(\mathbf{q}) - d \bigr\|_2^2 + J_{\mathrm{prior},L,h}(\mathbf{q}).
    \label{eq:cost_functional}
\end{equation*}
Here, the observation noise is modeled as additive, zero-mean Gaussian with covariance $\Gamma = \delta^2 I_M$. Correspondingly, the discrete Gaussian prior potential is given by the quadratic form
\begin{equation}
    J_{\mathrm{prior},L,h}(\mathbf{q}) = \frac{1}{2} (\mathbf{q} - \bar{\mathbf{q}}_{L,h})^\top \Lambda_{\mathrm{prior},h} (\mathbf{q} - \bar{\mathbf{q}}_{L,h}) + \widetilde{c}_h, \quad \Lambda_{\mathrm{prior},h} \succ 0,
    \label{eq:mml-prior-potential}
\end{equation}
where the constant $\widetilde{c}_h$ is independent of $\mathbf{q}$. The prior mean $\bar{\mathbf{q}}_{L,h}$ and precision matrix $\Lambda_{\mathrm{prior},h}$ are derived from the finite-element discretization of the gradient-based penalty; their explicit formulations and the handling of the constant null-space mode are elaborated in Appendix~B.1. The discrete distribution \eqref{eq:posterior_discrete} serves as the target measure for our computational approximation.

\subsection{Branch-preserving frequency continuation for multimodal Laplace approximation}
\label{sec:multiscale-continuation}

We now introduce the main computational construction of this work. The
method couples three ingredients that are usually treated separately:
coarse-to-fine mode discovery through nested frequency filtering, local Gauss--Newton geometry for mode separation and branch propagation, and joint nonnegative least-squares (NNLS) weighting at the final frequency level. Its defining feature is a branch-preserving continuation strategy: instead of following a single minimizer from low to high frequencies, we identify and
propagate several plausible solution branches. Local Laplace models therefore serve two different purposes. At intermediate levels, their covariance matrices determine how each branch is explored and how duplicate solutions are identified; only at the final level do they become components of the posterior approximation.

The full-waveform objective exhibits severe non-convexity and multi-modality. Because simulated waveforms depend nonlinearly on the level-set parameterization, phase mismatch makes the misfit landscape increasingly oscillatory at higher frequencies, leading to severe cycle-skipping issues. While low-frequency data mainly constrain the large-scale location and overall shape of the anomaly, higher frequencies resolve fine boundary details. This complementary behavior naturally suggests a multiscale frequency continuation strategy, incorporating waveform information progressively from low to high frequencies.

Let $0<f_1<\cdots<f_{N_f}$ be increasing frequency levels. For each
source--receiver trace, let $\mathcal F_t$ denote the discrete Fourier
transform in time and let $\mathcal M_k$ be the binary mask that retains the
frequencies up to level $f_k$. The corresponding time-domain filter is
defined by
\begin{equation*}
	P_k=\mathcal F_t^{-1}\mathcal M_k\mathcal F_t.
	\label{eq:alg-frequency-filter}
\end{equation*}
The mask retains conjugate-frequency pairs, so $P_k$ maps real traces to real traces. We choose nested masks and retain all frequencies at the final level;
consequently,
\begin{equation}\label{eq:alg-frequency-projections}
	P_k^\top=P_k,\qquad
	P_k^2=P_k,\qquad
	\operatorname{ran}P_k\subseteq\operatorname{ran}P_{k+1},\qquad
	P_{N_f}=I_M.	
\end{equation}
Thus, applying $P_k$ amounts to transforming each temporal trace to the frequency domain, applying the prescribed mask, and transforming the result back to the time domain.

For independent and identically distributed Gaussian observation errors with covariance $\delta^2I_M$, we define the continuation objective at level $k$ by
\begin{equation}\label{eq:filtered-data-misfit}
	J_k(\mathbf q)
	=\frac{1}{2\delta^2}\left\|
	P_k\bigl(G_{L,h}^{\varepsilon}(\mathbf q)-d\bigr)
	\right\|_2^2+J_{\mathrm{prior},L,h}(\mathbf q).
\end{equation}
Because $P_{N_f}=I_M$, the final objective satisfies $J_{N_f}=J$. The same prior is used at every level; only the frequency content of the data-misfit term changes. The intermediate objectives $J_k$, with $k<N_f$, are auxiliary objectives used to discover and refine solution branches. They are not the posterior ultimately approximated by the method.

At each frequency level, local optimization is performed from multiple
starting points. This produces several candidate modes rather than a single continuation path. After duplicate candidates have been removed, every retained mode is carried to the next level together with perturbations drawn from its local Gaussian model. The resulting continuation therefore preserves alternative geometries that may later develop into distinct modes of the full posterior.

\subsection{Local Laplace models}\label{sec:local-laplace-models}
At frequency level $k$, multistart local minimization of $J_k$ initialized from $\mathcal Q_k^{\mathrm{start}}$ yields a set of candidate minimizers, denoted by $\{\mathbf q_{k,a}^{\mathrm{MAP}}\}_{a=1}^{M_k}$. When $k < N_f$, these points correspond to local modes of the intermediate density proportional to $\exp[-J_k]$. At the final level $k = N_f$, they represent candidate modes of the target posterior. Since $\nabla J_k(\mathbf q_{k,a}^{\mathrm{MAP}}) = \mathbf 0$, expanding $J_k$ in a local Taylor series about each mode yields the quadratic model
\begin{equation*}
    J_k(\mathbf q)
    \approx
    J_k(\mathbf q_{k,a}^{\mathrm{MAP}})
    +\frac{1}{2}
    (\mathbf q-\mathbf q_{k,a}^{\mathrm{MAP}})^\top
    H_{k,a}^{\mathrm{GN}}
    (\mathbf q-\mathbf q_{k,a}^{\mathrm{MAP}}).
    \label{eq:mml-local-quadratic}
\end{equation*}

To avoid evaluating the computationally prohibitive second-order derivatives of the forward wave physics, we adopt the Gauss--Newton approximation to the Hessian. Let the filtered Jacobian evaluated at the mode be
\begin{equation}\label{eq:alg-filtered-jacobian}
    \mathcal J_{k,a}
    :=D\!\left(P_kG_{L,h}^{\varepsilon}\right)
    (\mathbf q_{k,a}^{\mathrm{MAP}})
    = P_kDG_{L,h}^{\varepsilon}(\mathbf q_{k,a}^{\mathrm{MAP}}).
\end{equation}
The corresponding local precision matrix and covariance are given by
\begin{equation}\label{eq:alg-gn-hessian}
    H_{k,a}^{\mathrm{GN}}=
    \delta^{-2}\mathcal J_{k,a}^{\top}\mathcal J_{k,a}
    +\Lambda_{\mathrm{prior},h},\qquad
    C_{k,a}=\left(H_{k,a}^{\mathrm{GN}}\right)^{-1}.
\end{equation}
Consequently, the local posterior surrogate around mode $a$ is represented by the Gaussian distribution
\begin{equation}\label{eq:mml-local-gaussian}
    g_{k,a}(\mathbf q)=\mathcal N\!\left(
    \mathbf q\mid\mathbf q_{k,a}^{\mathrm{MAP}},C_{k,a}\right).
\end{equation}
Here, the mean $\mathbf q_{k,a}^{\mathrm{MAP}}$ represents a candidate anomaly geometry, while the covariance $C_{k,a}$ reflects the local parameter sensitivities in different directions. In intermediate stages ($k < N_f$), $C_{k,a}$ provides a metric to guide initial guesses for the next frequency level; at the final stage ($k = N_f$), it characterizes the local uncertainty around this mode. The coefficient-gradient identities, Gauss--Newton Hessian, and numerical factorization are given in Appendix~\ref{app:mml-derivatives}. 

\subsection{Curvature-aware mode selection and branch propagation}
\label{sec:mode-selection-propagation}
Different starting points may converge to the same local minimum. Such
endpoints correspond to the same mode, so we remove duplicates to ensure that each distinct mode appears only once in the candidate set. To do this, we sort the valid candidates by objective value, starting with the lowest. We retain the first candidate and then check each remaining candidate against the modes already retained. For a candidate $\mathbf q^*$ and a retained mode $r$, define
\begin{equation*}
	d_{k,r}(\mathbf q^*)
	=(\mathbf q^*-\mathbf q_{k,r}^{\mathrm{MAP}})^\top
	H_{k,r}^{\mathrm{GN}}
	(\mathbf q^*-\mathbf q_{k,r}^{\mathrm{MAP}}).
	\label{eq:alg-mode-distance}
\end{equation*}
The candidate is retained if
\begin{equation*}\label{eq:alg-mode-acceptance}
	d_{k,r}(\mathbf q^*)>F_{\chi_L^2}^{-1}(1-\alpha_{\mathrm{mode}})
	\qquad\text{for every previously retained mode }r.
\end{equation*}
Here $F_{\chi_L^2}^{-1}(1-\alpha_{\mathrm{mode}})$ is the
$(1-\alpha_{\mathrm{mode}})$ quantile of the chi-squared distribution with $L$ degrees of freedom. Thus, we accept a candidate only if it lies outside the ellipsoid associated with every retained mode. Because the test uses the local curvature matrix, it accounts for the different sensitivities of the objective to changes in different parameter directions. We stop when all candidates have been checked or when $R_{\max}$ modes have been retained. Denote the number of retained modes at level $k$ by $R_k$.

For $k<N_f$, the next starting set contains every retained centre together
with samples from its local Gaussian model:
\begin{equation}
	\begin{aligned}
		\mathcal Q_{k+1}^{\mathrm{start}}
		&=
		\bigcup_{r=1}^{R_k}
		\left(
		\{\mathbf q_{k,r}^{\mathrm{MAP}}\}
		\cup
		\{\mathbf q_{k,r}^{(\ell)}\}_{\ell=1}^{n_{k,r}}
		\right),
		\\
		\mathbf q_{k,r}^{(\ell)}
		&\sim
		\mathcal N\!\left(
		\mathbf q_{k,r}^{\mathrm{MAP}},C_{k,r}
		\right).
	\end{aligned}
	\label{eq:alg-branch-propagation}
\end{equation}
The sampling budget is allocated as evenly as possible among the $R_k$
retained branches. Keeping the centres preserves the solutions already found, whereas the covariance-guided perturbations explore directions in which each local model remains uncertain. Local optimization at level $k+1$ then refines these starting points after finer-frequency information has been introduced. No posterior weight is assigned at an intermediate level: equal allocation is used for exploration, and the mixture weights are estimated
only from the full target at the final level.

\subsection{Final multimodal posterior and joint weight estimation}
\label{sec:final-multimodal-posterior}

At the final frequency level, let $R=R_{N_f}$ and write
$\mathbf q_r^{\mathrm{MAP}}=\mathbf q_{N_f,r}^{\mathrm{MAP}}$ and
$C_r=C_{N_f,r}$. These centres and covariance matrices are fixed and define
the Gaussian components
\begin{equation}
	g_r(\mathbf q)
	=
	\mathcal N\!\left(
	\mathbf q\mid\mathbf q_r^{\mathrm{MAP}},C_r
	\right),
	\qquad r=1,\ldots,R.
	\label{eq:alg-final-components}
\end{equation}
The posterior approximation is sought in the form
\begin{equation}
	\widehat p_{L,h}^{\varepsilon}(\mathbf q\mid d)
	=
	\sum_{r=1}^{R}w_r g_r(\mathbf q),
	\qquad
	w_r\geq0,
	\qquad
	\sum_{r=1}^{R}w_r=1.
	\label{eq:alg-final-mixture}
\end{equation}

We determine the mixture weights $\{w_r\}_{r=1}^{R}$ jointly by approximating the unnormalized target density with a weighted combination of the retained Gaussian components
\begin{equation*}
	\varphi(\mathbf q)=\exp[-J(\mathbf q)].
	\label{eq:alg-unnormalized-target}
\end{equation*}
We draw the same number of samples from each $g_r$ and collect them as
$\{\mathbf q^{(i)}\}_{i=1}^{N_{\mathrm{LS}}}$. This procedure corresponds to stratified sampling from the balanced proposal:
\begin{equation*}
	\rho(\mathbf q)=\frac{1}{R}\sum_{r=1}^{R}g_r(\mathbf q).
	\label{eq:alg-balanced-proposal}
\end{equation*}
The fitting matrix and target vector are defined as follows:
\begin{equation}
	A_{ir}=g_r(\mathbf q^{(i)}),
	\qquad
	b_i=\varphi(\mathbf q^{(i)}),
	\qquad
	i=1,\ldots,N_{\mathrm{LS}},
	\quad
	r=1,\ldots,R.
	\label{eq:alg-ls-system}
\end{equation}
The nonnegative coefficients $\widetilde{\boldsymbol{w}}$ are obtained by solving a nonnegative least squares problem, and the normalized mixture weights $\{w_r\}_{r=1}^R$ are computed as:
\begin{equation}
	\begin{aligned}
		\widetilde{\boldsymbol w}
		&\in
		\operatorname*{arg\,min}_{\boldsymbol z\geq0}
		\|A\boldsymbol z-\boldsymbol b\|_2^2,
		\\
		w_r&=\widetilde w_r
		/ \displaystyle\sum_{j=1}^{R}\widetilde w_j,
		\qquad r=1,\ldots,R.
	\end{aligned}
	\label{eq:alg-nnls-weights}
\end{equation}

Since each $g_r$ is normalized, normalizing the fitted coefficients yields the probability density in \eqref{eq:alg-final-mixture}. The posterior normalizing constant is not needed: scaling $\varphi$ by a positive constant scales $\widetilde{\boldsymbol{w}}$ equally, leaving the normalized weights unchanged. Here, every target evaluation uses the full objective $J=J_{N_f}$. 

The preceding subsections introduced the main ingredients of the proposed multiscale multimodal Laplace approximation. We now assemble them into a complete computational procedure that combines multistart optimization over nested frequency levels, curvature-aware mode selection, and covariance-guided propagation of the retained solution branches. The intermediate frequency-dependent objectives are used only to discover and refine candidate modes, whereas, at the final frequency level, the retained centres and local covariances define the Gaussian components whose nonnegative weights are determined jointly by least-squares fitting to the full unnormalized posterior. The complete procedure is summarized in Algorithm~\ref{alg:multi-scale-Bayesian-inversion} and illustrated in Figure~\ref{fig:multiscale-multimodal-flowchart}.

\par\addvspace{0.5\baselineskip}
\begin{breakablealgorithm}
{Multiscale multimodal Laplace approximation for Bayesian FWI}
\label{alg:multi-scale-Bayesian-inversion}

\textbf{Input:}
Observations $d$ and noise covariance $\Gamma$;
frequency projections $P_1,\ldots,P_{N_f}$;
initial candidates $\mathcal Q_1^{\mathrm{start}}$;
$B,K,\mathbf m_{0,h},\mu_{\mathrm{prior}}$;
$\mu_{\mathrm{in}},\mu_{\mathrm{out}},H_\varepsilon$;
$\alpha_{\mathrm{mode}},R_{\max},n_{\mathrm{LS}},N_{\mathrm{sample}}$.

\smallskip
\textbf{Output:}
Retained centres and covariances, mixture weights for the
fixed components, the posterior mixture
$\widehat p_{L,h}^{\varepsilon}(\mathbf q\mid y)$,
and posterior field summaries.

\medskip
\noindent
\textbf{1. Initialization.}
Construct $K_{L,h}$, $b_{L,h}$, and
$\Lambda_{\mathrm{prior},h}$ using
\eqref{eq_appendix:KB_lambda}, and let
$\Gamma_k=P_k\Gamma P_k^\top$.

\medskip
\noindent
\textbf{2. For each frequency level
$k=1,\ldots,N_f$:}

\smallskip
\hspace*{1em}%
Use the objective $J_k$ defined in
Appendix~\ref{app:mml-derivatives}, keep the prior
unchanged, and partition
$\mathcal Q_k^{\mathrm{start}}$ into candidate blocks.
Perform the following steps.

\smallskip
\begin{list}{}{
  \setlength{\leftmargin}{2.5em}
  \setlength{\labelwidth}{1.8em}
  \setlength{\labelsep}{0.5em}
  \setlength{\itemsep}{0.4em}
  \setlength{\parsep}{0pt}
  \setlength{\topsep}{0pt}
}

\item[(a)]
\textbf{Local MAP estimation.}
Optimize $J_k$ from each candidate in
$\mathcal Q_k^{\mathrm{start}}$
to obtain local MAP estimates.

\item[(b)]
\textbf{Local Gaussian approximation.}
Compute the Jacobians \eqref{eq:total-gradient}
and Gauss--Newton Hessians
\eqref{eq:alg-local-covariance}, and construct
local Gaussian components
\eqref{eq:mml-local-gaussian}.

\item[(c)]
\textbf{Mode selection.}
Remove duplicate modes using the Mahalanobis
criterion and retain at most $R_{\max}$ components.

\item[(d)]
\textbf{Multiscale propagation.}
If $k<N_f$, generate
$\mathcal Q_{k+1}^{\mathrm{start}}$
(see \eqref{eq:alg-branch-propagation})
from the retained modes and Gaussian perturbations.

\end{list}

\smallskip
\hspace*{1em}\textbf{End for.}

\medskip
\noindent
\textbf{3. Mixture weight estimation.}
Draw $n_{\mathrm{LS}}$ fitting samples per component
and determine the mixture weights
\eqref{eq:alg-nnls-weights}.

\medskip
\noindent
\textbf{4. Posterior approximation.}
Construct
$\widehat p_{L,h}^{\varepsilon}
(\mathbf q\mid\mathbf y)$
and compute posterior field summaries using
$N_{\mathrm{sample}}$ samples and reconstruct the corresponding
fields using \eqref{eq:alg-field-samples}.
\end{breakablealgorithm}
\par\addvspace{0.5\baselineskip}

\begin{figure}[H]
\begin{center}
  \includegraphics[width=0.55\textwidth]{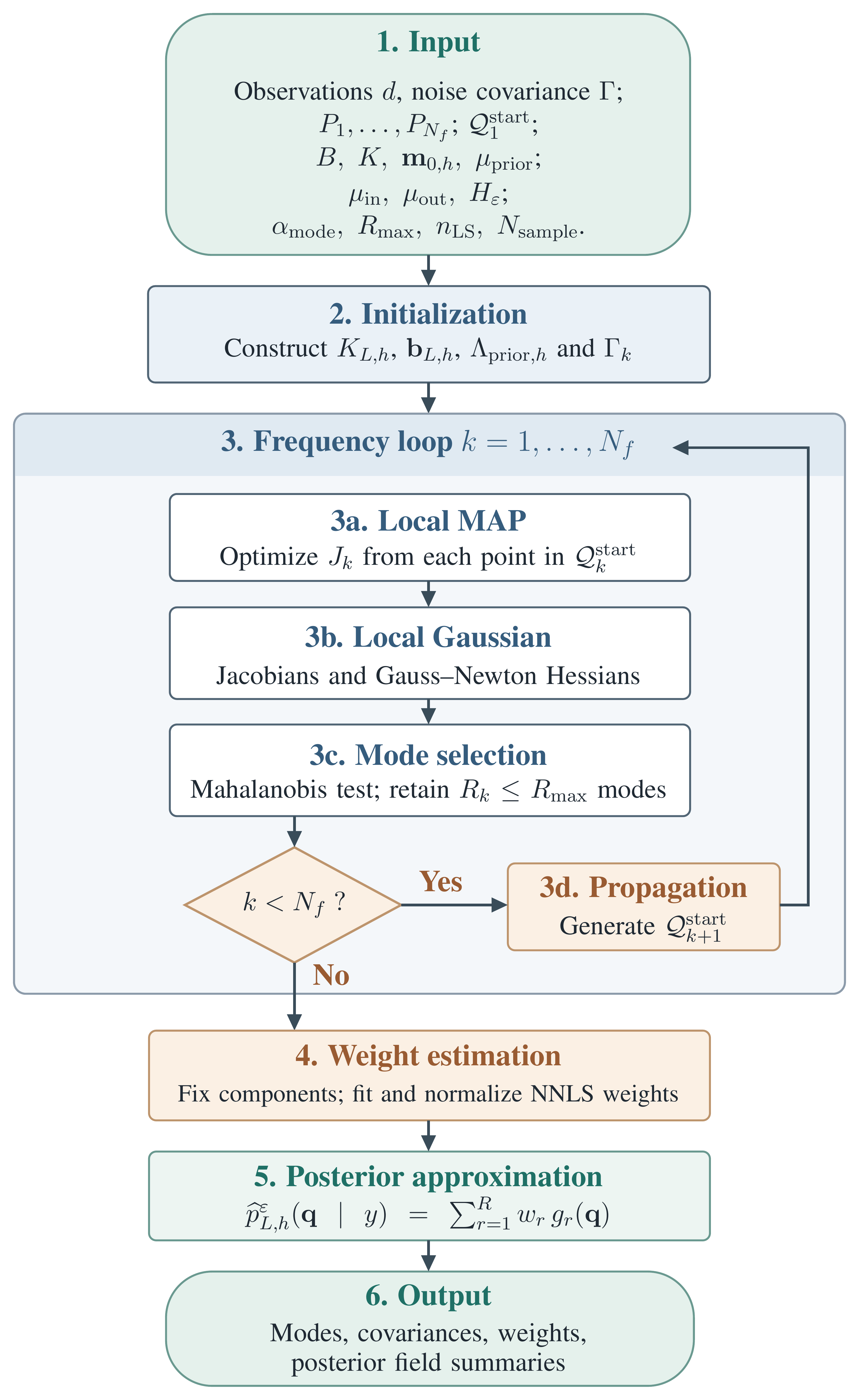}
  \caption{Flowchart of Multiscale multimodal Laplace approximation for Bayesian FWI.}
  \label{fig:multiscale-multimodal-flowchart}
\end{center}
\end{figure}

\section{Numerical experiments}\label{sec:Numerical experiments}
We evaluate the proposed multiscale multimodal Laplace method for Bayesian level-set full-waveform inversion using four two-dimensional examples. The first is a two-parameter test illustrating the need to retain multiple posterior modes. The remaining examples consider a single nonconvex inclusion, three separated inclusions of different shapes, and a geophysically motivated salt-body geometry derived from the BP2004 model. Comparisons are made with the standard single-component Laplace approximation and gPCN-based Markov chain Monte Carlo (MCMC). Reconstruction accuracy and computational cost are assessed using posterior mean errors, Hausdorff boundary distance, boundary concentration, node-based boundary coverage, and elapsed time. Pointwise posterior standard deviations are also reported to characterize reconstruction uncertainty.

\subsection{Experimental settings} \label{subsec:experimental-settings}

The computational domain is $\Omega=[0,2200]\times[0,1100]\,\mathrm{m}$, where $x\in[0,2200]~\mathrm{m}$ and $y\in[0,1100]~\mathrm{m}$ denote the
horizontal and vertical spatial coordinates, respectively. The region of interest is
$\Omega_{\mathrm{ROI}}=[100,2100]\times[100,1100]\,\mathrm{m}$.
A $100\,\mathrm{m}$-thick absorbing boundary layer surrounds the left, right, and lower sides of the region of interest, while its upper side remains unpadded. The domain is discretized on a uniform grid with $\Delta x=\Delta y=10\,\mathrm{m}$.

The forward wave problem is solved by an explicit time-domain finite-difference method with fourth-order spatial differences. In time, we use a centred second difference for $u_{tt}$ and a backward first difference for the damping term. Data-misfit gradients are computed by a discrete adjoint-state method based on the same forward discretization, with reverse-time propagation driven by the frequency-filtered, noise-weighted observation residuals. Forward and adjoint solves are batched across candidate models and source experiments. For related time-domain level-set FWI formulations and numerical treatments, see~\cite{de2024single}. The quadratic level-set prior relies on a finite-element stiffness matrix $K$ assembled from triangular elements, which is separate from the finite-difference wave discretization. Synthetic pressure data are generated by sampling the wavefield of the prescribed true medium at the receiver locations.

The velocity models feature a $1000\,\mathrm{m/s}$ background and $1500\,\mathrm{m/s}$ inclusions. 

Reconstruction accuracy is evaluated using four metrics: relative errors of the posterior mean fields, the Hausdorff distance \cite{Burago2001} between the reconstructed and exact interfaces, the concentration of posterior variance near the exact interface, and the coverage of the exact interface by the posterior uncertainty band. Their definitions and discrete implementations are given in Appendix~\ref{app:evaluation-metrics}.

All experiments were conducted on a workstation running Ubuntu 24.04.3 LTS
via Windows Subsystem for Linux 2 (WSL2) on Windows 11 Pro (64-bit).
The workstation was equipped with an Intel Core i9-14900K CPU and an
NVIDIA GeForce RTX 5090 GPU with 32 GB of memory. The implementations
used Python 3.12.3 and PyTorch 2.8.0 with CUDA 12.8.

\begin{example}[Two-mode test]\label{example0}\rm{

We first consider a two-parameter test to examine whether separated
posterior modes can be retained. The two modes correspond to a single
inclusion located at either of two horizontally separated positions. Nine Gaussian sources and forty receivers are used. The sources have a Ricker waveform with dominant frequency $4\,\mathrm{Hz}$. Data are recorded over $[0,3]\,\mathrm{s}$ with $\Delta t=0.002\,\mathrm{s}$, giving 1501 time levels.

The level set function is parameterized by
\begin{equation*}
\phi(\boldsymbol{x};\mathbf q)
=q_1\psi_1(\boldsymbol{x})+q_2\psi_2(\boldsymbol{x})
-\tau,\qquad\psi_j(\boldsymbol{x})
=\exp\!\left(
-\frac{\|\boldsymbol{x}-\mathbf c_j\|_2^2}{2\ell^2}\right),
\end{equation*}
where $\mathbf q=(q_1,q_2)^{\top},~\mathbf c_1=(700,500)\,\mathrm{m},~
\mathbf c_2=(1500,500)\,\mathrm{m},$
and $\ell=180\,\mathrm{m}$ and $\tau=0.13$. The data-generating coefficient vector is $\mathbf q_{\mathrm{true}}=(0.2,0)^{\top}$,
while its reflected counterpart is
$\mathbf q_{\mathrm{mirror}}=(0,0.2)^{\top}$.

Synthetic observations are generated from $\mathbf q_{\mathrm{true}}$ and
perturbed by independent Gaussian noise with standard deviation equal to $2\%$. We use the prior
$\mathcal N(0,25I_2)$. To produce a symmetric two-mode target, the original
data-misfit potential $\Phi$ is replaced by
\begin{equation}
\Phi_{\mathrm{sym}}(\mathbf q)
=
\min\bigl\{
\Phi(q_1,q_2),\,
\Phi(q_2,q_1)
\bigr\},
\label{eq:symmetric-two-mode-potential}
\end{equation}
with the prior term unchanged. The resulting posterior has two
symmetry-related modes associated with the two candidate inclusion
locations.

For the standard Laplace approximation, the local optimization starts
from $(0.02,0)^{\top}$ and uses 100 Adam iterations with learning rate
$0.02$. The multiscale multimodal calculation proceeds over the frequency
bands $[0,2]\,\mathrm{Hz}$ and $[0,6]\,\mathrm{Hz}$, starting from
\begin{equation*}
\mathcal Q_1^{\mathrm{start}}
=
\bigl\{
(0.020,0.002)^{\top},\
(0.002,0.020)^{\top},\
(0.006,0.040)^{\top},\
(0.040,0.006)^{\top}
\bigr\}.
\end{equation*}
Each start is updated by 100 Adam iterations per scale with learning rate $0.01$. The coefficient search is restricted to $[-0.1,0.3]^2$, the mode-selection threshold is $0.05$, and at most four modes are retained.

\begin{table}[htbp]
\centering
\caption{Computed Gaussian centres and weights for the two-mode test.
Values are rounded.}
\label{tab:two-mode-centres}
\begin{tabular}{@{}llcc@{}}
\toprule
Method
& Frequency band
& Centre $\mathbf q=(q_{1},q_{2})^{\top}$
& Weight $w_r$ \\
\midrule
Standard Laplace
& --
& $(0.200297,-0.000139)^{\top}$
& $1.0000$ \\
\midrule
\multirow{4}{*}{\shortstack{Multiscale multimodal\\Laplace}}
& \multirow{2}{*}{$0$--$2\,\mathrm{Hz}$}
& $(0.203144,0.006229)^{\top}$
& $0.5000$ \\
&
& $(0.006229,0.203144)^{\top}$
& $0.5000$ \\
\cmidrule(lr){2-4}
& \multirow{2}{*}{$0$--$6\,\mathrm{Hz}$}
& $(0.200997,-0.000002)^{\top}$
& $0.5000$ \\
&
& $(-0.000002,0.200997)^{\top}$
& $0.5000$ \\
\bottomrule
\end{tabular}
\end{table}

\begin{figure}[H]
\centering
\makebox[\textwidth][c]{%
\begin{minipage}[t]{0.45\textwidth}
\centering
\includegraphics[width=\linewidth]
{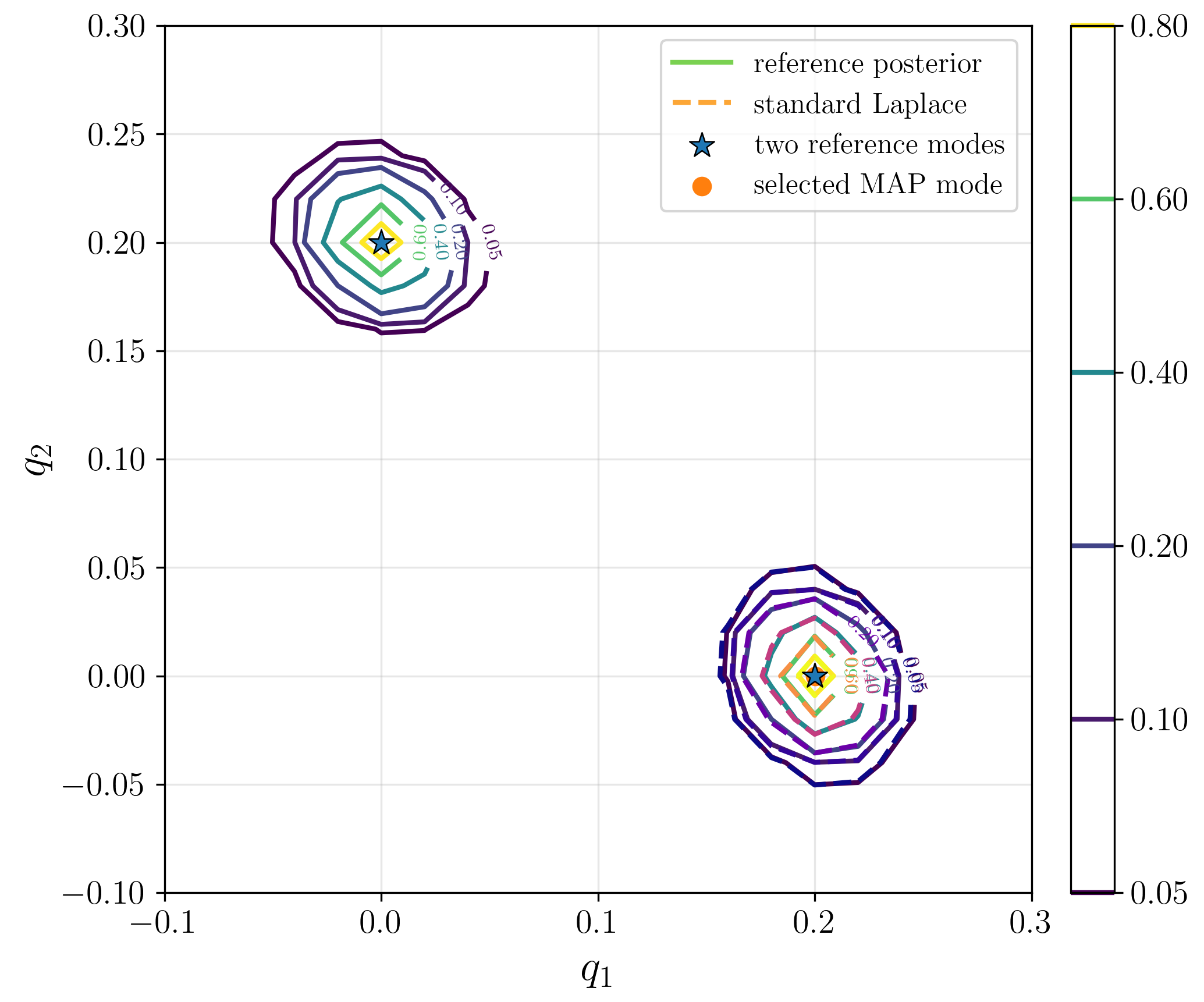}
\par\smallskip
{\small (a) Standard Laplace.\par}
\end{minipage}%
\hspace{0.01\textwidth}%
\begin{minipage}[t]{0.45\textwidth}
\centering
\includegraphics[width=\linewidth]
{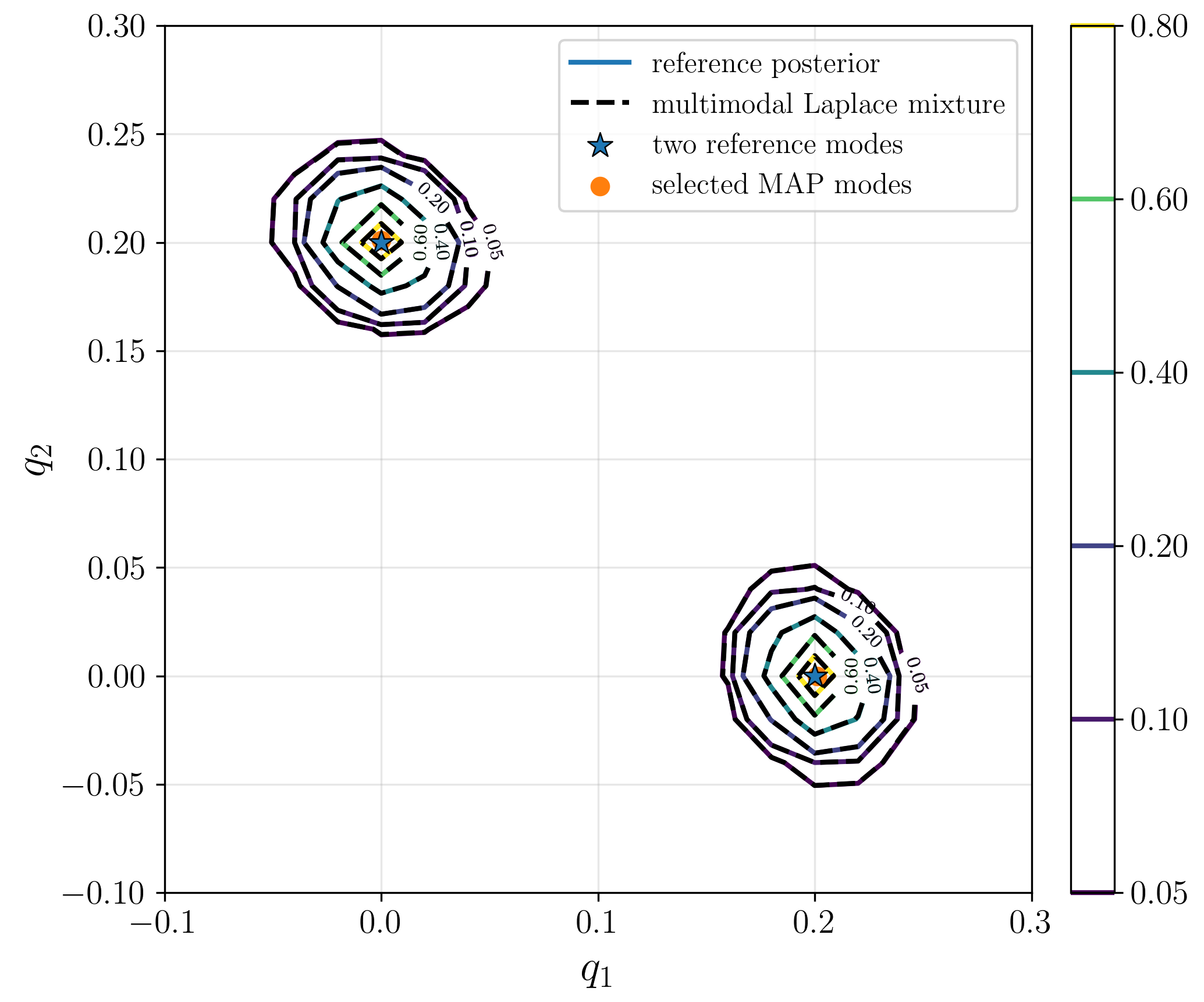}
\par\smallskip
{\small (b) Multiscale multimodal Laplace.\par}
\end{minipage}%
}
\caption{Peak-scaled contours of the reference posterior and the Laplace approximations for the two-mode test: (a) standard single-mode Laplace approximation; (b) multiscale multimodal Laplace approximation.}
\label{fig:two-mode-posteriors}
\end{figure}

Table~\ref{tab:two-mode-centres} and Figure~\ref{fig:two-mode-posteriors} jointly illustrate the limitation of the standard Laplace approximation. The standard method identifies a
centre close to $(0.2,0)^{\top}$, and its Gaussian approximation accurately follows the local posterior contours around this mode. 
However, it completely misses the second mode near 
$(0,0.2)^{\top}$, which is also present in the reference posterior. As a result, even an accurate local MAP approximation cannot fully capture the bimodal posterior.

The multiscale multimodal method retains the symmetric pair at both
frequency scales. At the final scale, the two centres are close to
$(0.2,0)^{\top}$ and $(0,0.2)^{\top}$, each with weight $0.5$, and the resulting mixture follows the reference contours around both peaks. These results show that a single Gaussian Laplace approximation fails to capture well-separated posterior modes, but the multimodal construction keeps both possible locations.}
\end{example}

\begin{example}\label{example1}\rm{

In this example, we reconstruct a single nonconvex
inclusion from time-dependent pressure measurements. 
Let $D_{\mathrm{true}} \subset \Omega$ denote the region enclosed by the smooth parameterised curve $\partial D_{\mathrm{true}} = \{(x(\theta), y(\theta)) : \theta \in [0, 2\pi)\}$, where
\begin{equation*}
\begin{aligned}
    x(\theta) &= 400 + 220 \left( \frac{3}{4}\cos\theta + \frac{1}{2}\cos(2\theta) - \frac{1}{480} \right), \\
    y(\theta) &= 500 - 220 \sin\theta.
\end{aligned}
\end{equation*}
The exact velocity and squared slowness are
\begin{equation}\label{eq:ex1-exact-medium}
v_{\mathrm{exact}}(\boldsymbol{x})=
\begin{cases}1500\,\mathrm{m/s},
& \boldsymbol{x}\in D_{\mathrm{true}},\\
1000\,\mathrm{m/s},
& \boldsymbol{x}\notin D_{\mathrm{true}},
\end{cases}
\qquad
\mu_{\mathrm{exact}}(\boldsymbol{x})=
v_{\mathrm{exact}}(\boldsymbol{x})^{-2}.
\end{equation}
The two phase velocities remain fixed during inversion; only the geometry of the inclusion is inferred.

The acquisition system consists of nine sources and forty receivers,
located at
\begin{equation*}
\begin{aligned}
\boldsymbol{x}_s
&=
(250+200(s-1),\,0)\,\mathrm{m},
&& s=1,\ldots,9,\\
\boldsymbol{x}_r
&=
(250+40(r-1),\,200)\,\mathrm{m},
&& r=1,\ldots,40.
\end{aligned}
\end{equation*}
The acquisition geometry and damping profile are shown in
Figure~\ref{fig:ex1-acquisition}. Each source has a Gaussian spatial
profile with standard deviation $20\,\mathrm{m}$ and is normalized to have
total strength $A_s=1$. Its temporal waveform is a Ricker wavelet with
dominant frequency $4\,\mathrm{Hz}$.

The pressure data are recorded over $[0,3]\,\mathrm{s}$ with
$\Delta t=0.002\,\mathrm{s}$, giving 1501 time levels. Independent
Gaussian noise with standard deviation
$\delta_{\mathrm{obs}}=10^{-4}$ is added to each measurement. The resulting observation tensor has dimensions $9\times40\times1501$, corresponding to source, receiver, and time indices, respectively.

\begin{figure}[H]
\centering

\begin{minipage}[t]{0.435\textwidth}
\centering
\includegraphics[width=\linewidth]{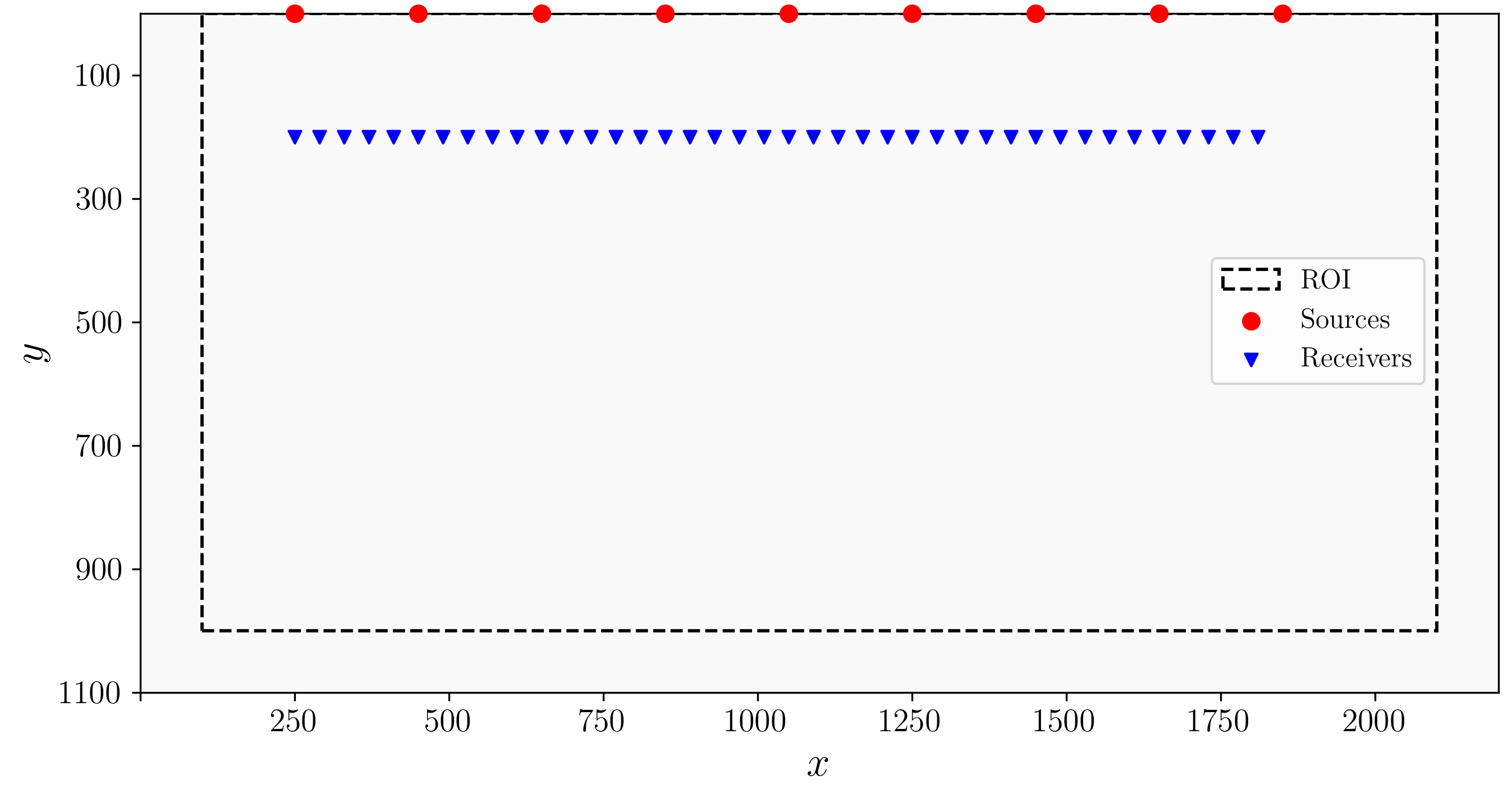}
\par\smallskip{\small (a) Source--receiver geometry and ROI.\par}
\end{minipage}
\hspace{0.01\textwidth}%
\begin{minipage}[t]{0.49\textwidth}
\centering
\includegraphics[width=\linewidth]{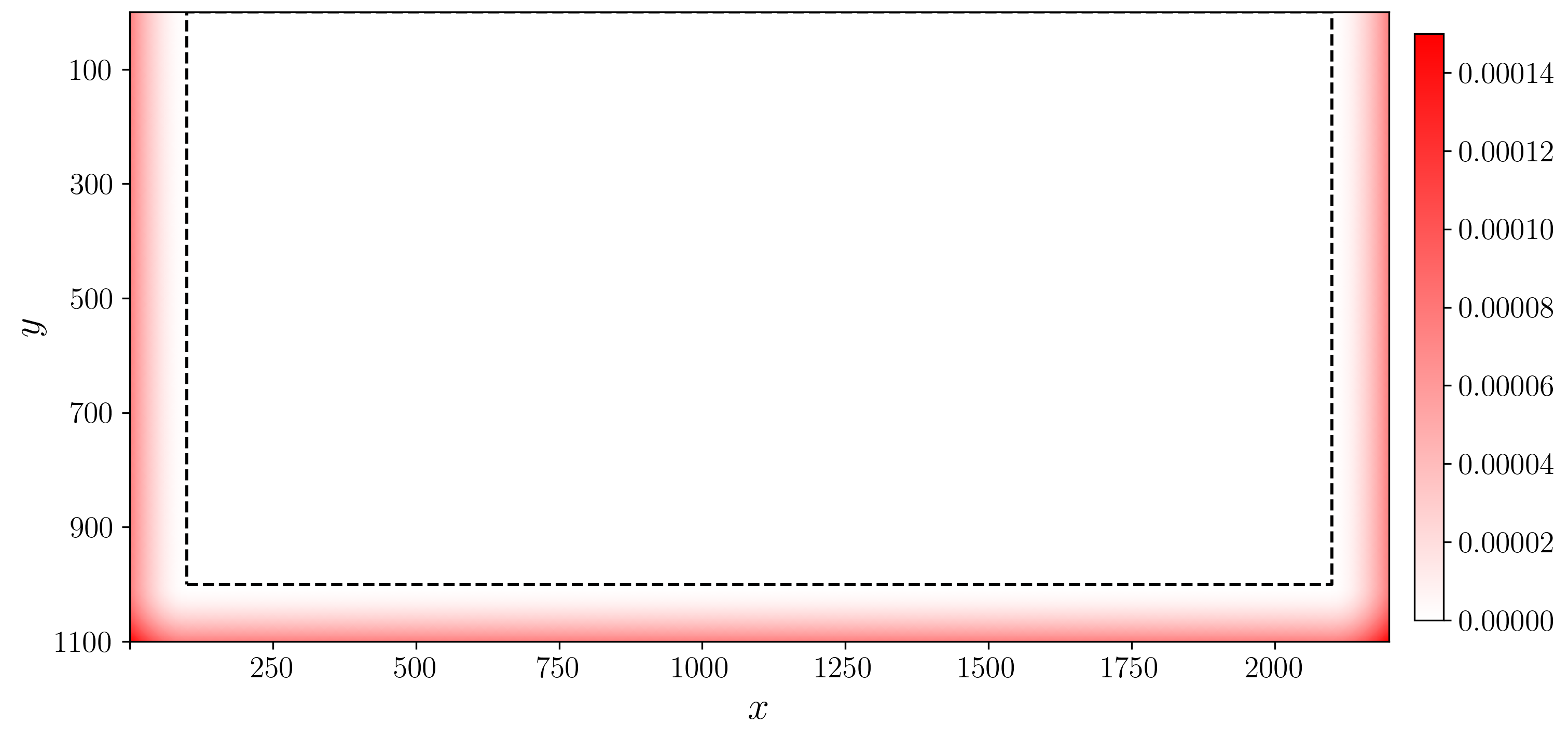}
\par\smallskip{\small (b) Absorbing-layer damping profile.\par}
\end{minipage}

\caption{Numerical setup for the nonconvex-inclusion experiment (Example~\ref{example1}): (a) source and receiver locations, with the dashed line indicating the ROI; (b) spatial distribution of the absorbing-layer damping coefficient $\sigma(\boldsymbol{x})$.}
\label{fig:ex1-acquisition}
\end{figure}

The level-set function is represented by $400$ Gaussian RBFs whose centres are arranged on a $20\times20$ grid over the ROI, together with one ROI-supported constant basis function. Thus, the coefficient dimension is $m=401$. All basis functions are set to zero outside the ROI. We choose
$\ell=70$ in \eqref{eq:radial_basis_functions}, $\mathrm{m}=-40$ in \eqref{eq:finite_levelset_representation}, $\varepsilon=20$ in \eqref{eq:alg-regularized-heaviside}, $\mu_{\mathrm{prior}}=0.08$ in \eqref{eq:computational_coefficient_posterior}.
Three initial coefficient vectors are obtained by ridge least-squares fits to circular signed-distance functions. Their induced initial geometries, shown in Figure~\ref{fig:ex1-initial}, are used at the first frequency level.

The inversion proceeds through three cumulative frequency bands,
$[0,2]$, $[0,4]$, and $[0,6]\,\mathrm{Hz}$. Adam is run for $100$, $100$, and $200$ epochs at the respective frequency levels, with learning rate $7.0$. At most four local Gaussian components are retained at each level using a retention threshold of $\alpha_{\rm{mode}}=0.05$.

\begin{figure}[H]
\centering
\begin{minipage}[t]{0.33\textwidth}
\centering
\includegraphics[width=\linewidth]{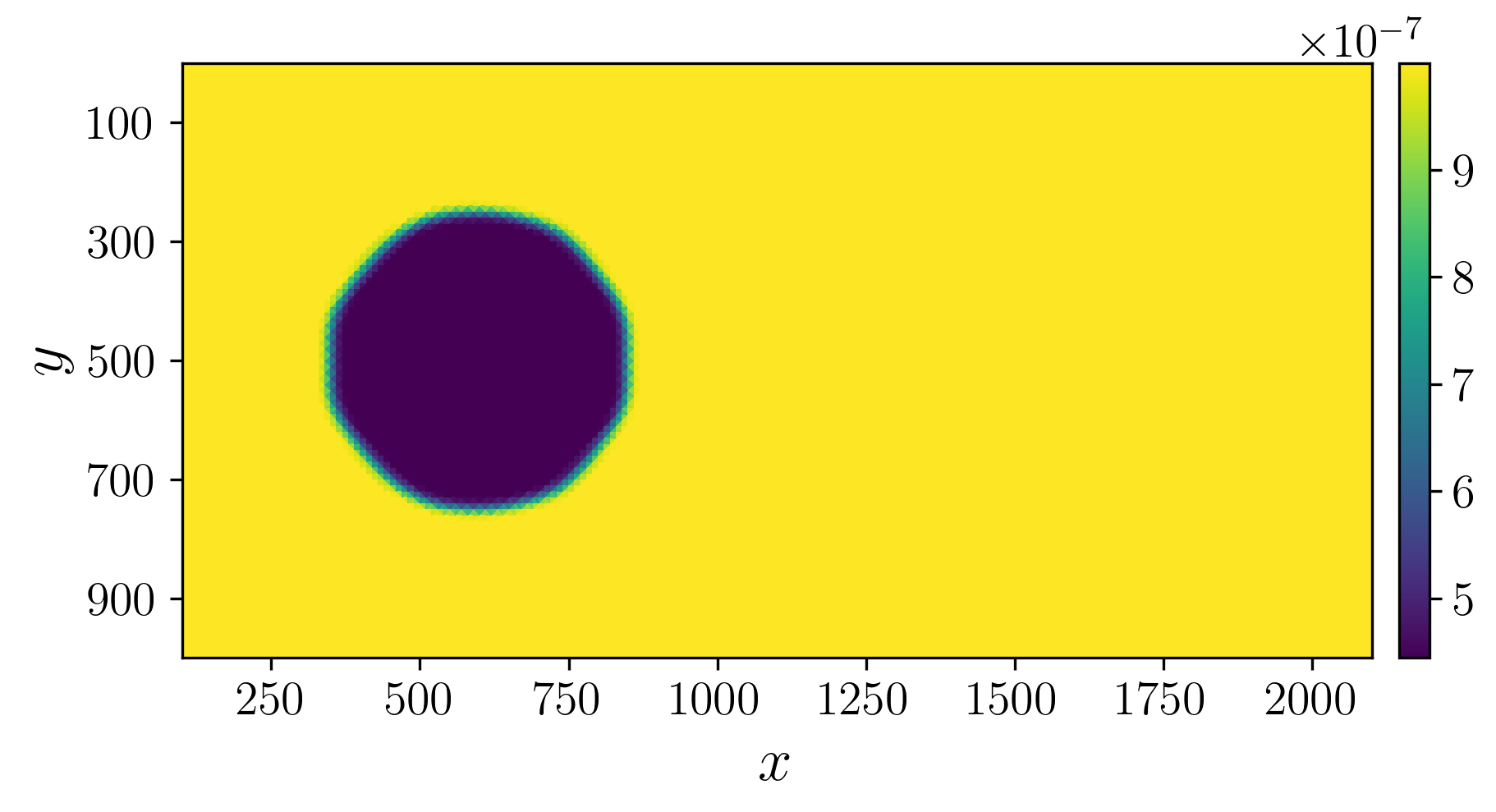}
\par\smallskip{\small (a) First initial model.\par}
\end{minipage}\hfill
\begin{minipage}[t]{0.33\textwidth}
\centering
\includegraphics[width=\linewidth]{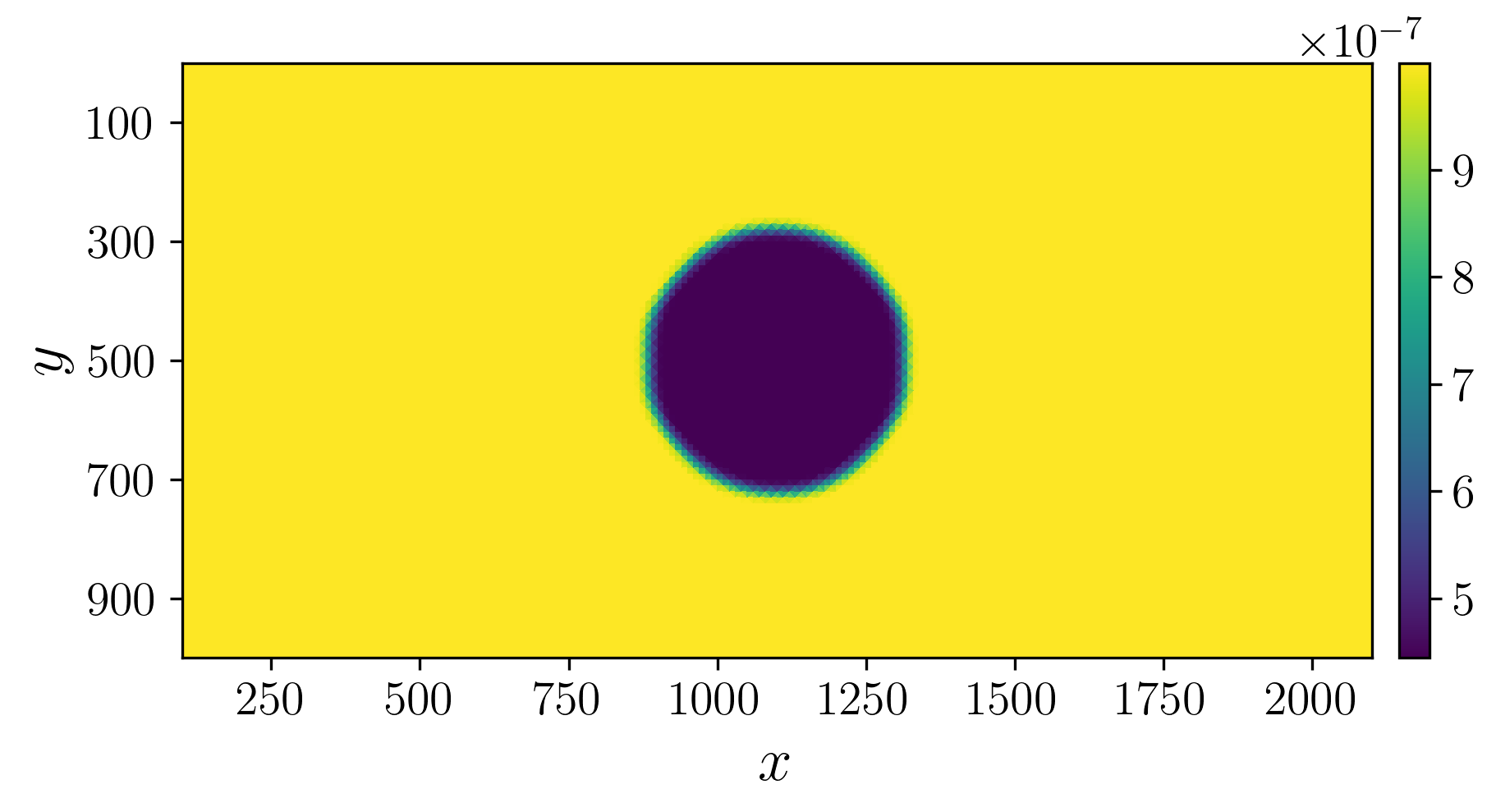}
\par\smallskip{\small (b) Second initial model.\par}
\end{minipage}\hfill
\begin{minipage}[t]{0.33\textwidth}
\centering
\includegraphics[width=\linewidth]{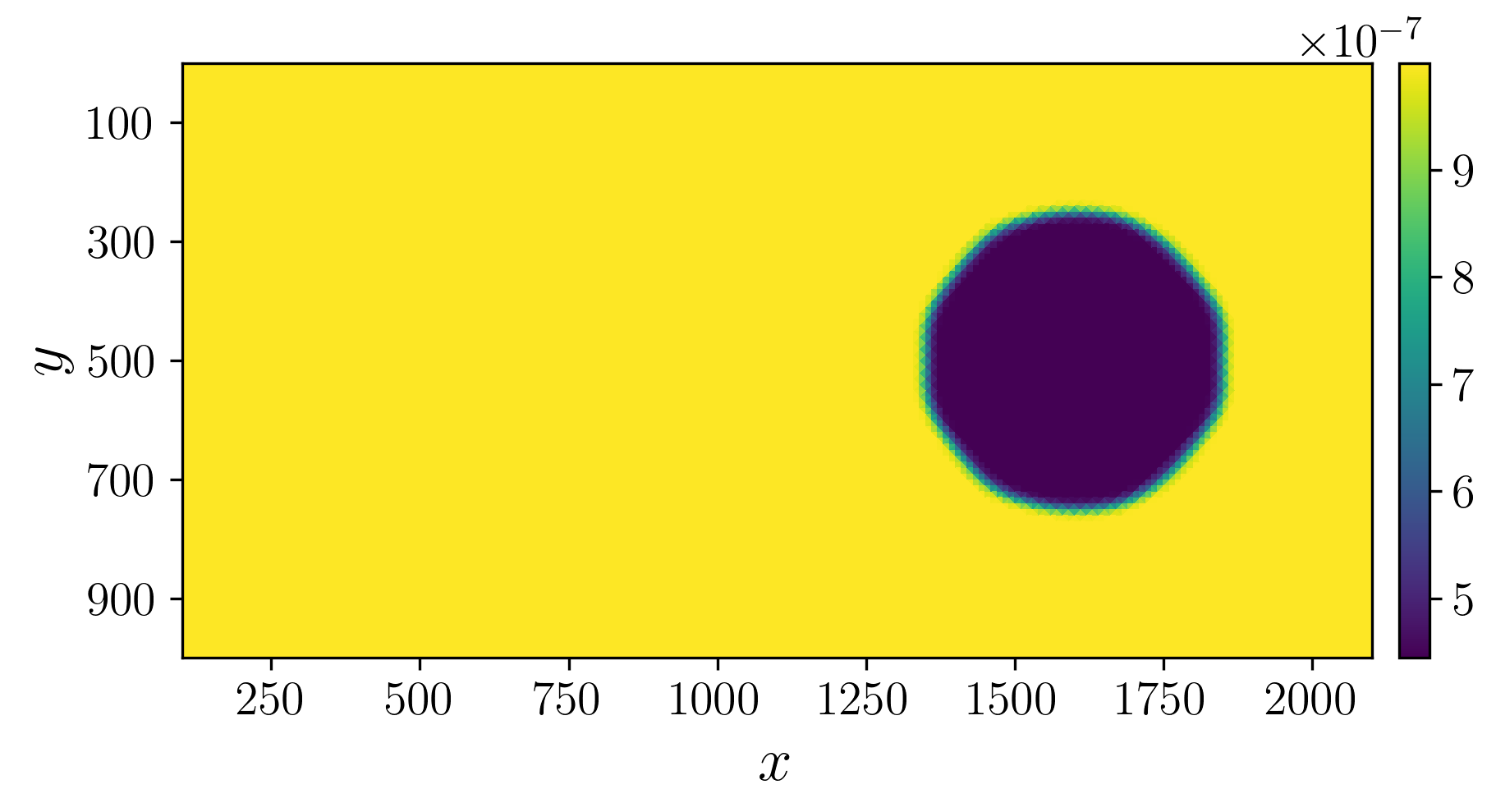}
\par\smallskip{\small (c) Third initial model.\par}
\end{minipage}
\caption{Three initial squared-slowness fields fitted for Example~\ref{example1}, shown over the region of interest (ROI). The target circles have centres $(600,500)$, $(1100,500)$, and $(1600,500)$, with radii $250$, $220$, and $250$, respectively. All coordinates and radii are given in metres.}
\label{fig:ex1-initial}
\end{figure}

At the highest frequency level, four Gaussian components are retained. The weights are
$(\pi_1,\pi_2,\pi_3,\pi_4)\approx
(0.758956,\ 0.126133,\ 0.076229,\ 0.038681).$
Figure~\ref{fig:ex1-reconstruction} compares the exact fields with the best retained MAP reconstruction and the posterior mixture means. Here, the best MAP is the retained centre with the smallest full-source objective value at the final frequency level.

Despite the circular initializations, the best retained MAP recovers the location and principal nonconvex features of the inclusion, including the pronounced indentation on the left and the rounded convex boundary on the right. The mixture mean fields preserve the same geometry, which is consistent with the dominant weight $\pi_1\approx0.759$. Furthermore, the reconstructed interfaces (both MAP and mixture mean) are slightly smoother than the exact boundary due to the smoothed level-set representation.

\begin{figure}[H]
\centering

\begin{minipage}[t]{0.33\textwidth}
\centering
\includegraphics[width=\linewidth]{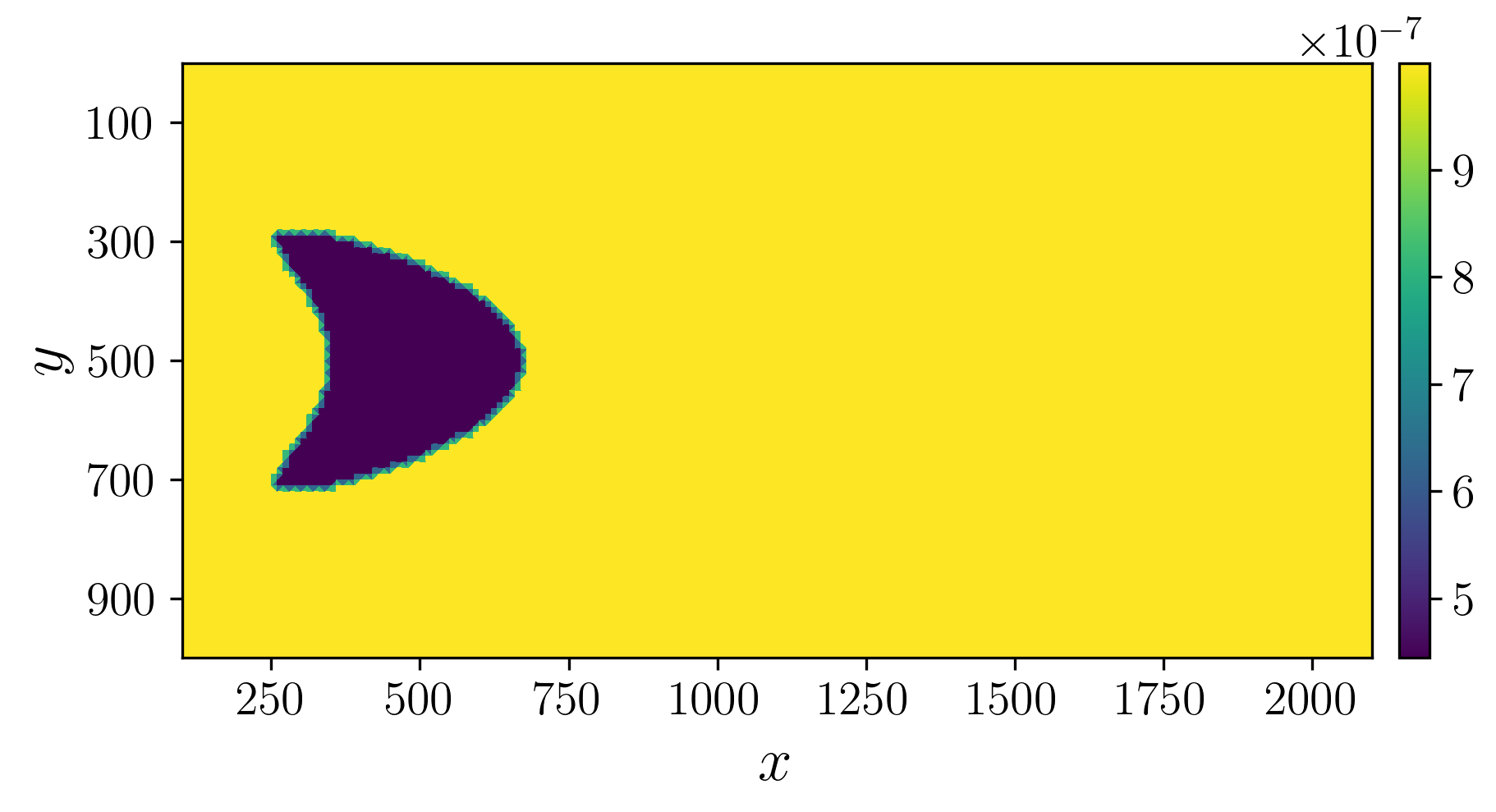}
\par\smallskip{\small (a) Exact squared slowness.\par}
\end{minipage}\hfill
\begin{minipage}[t]{0.33\textwidth}
\centering
\includegraphics[width=\linewidth]{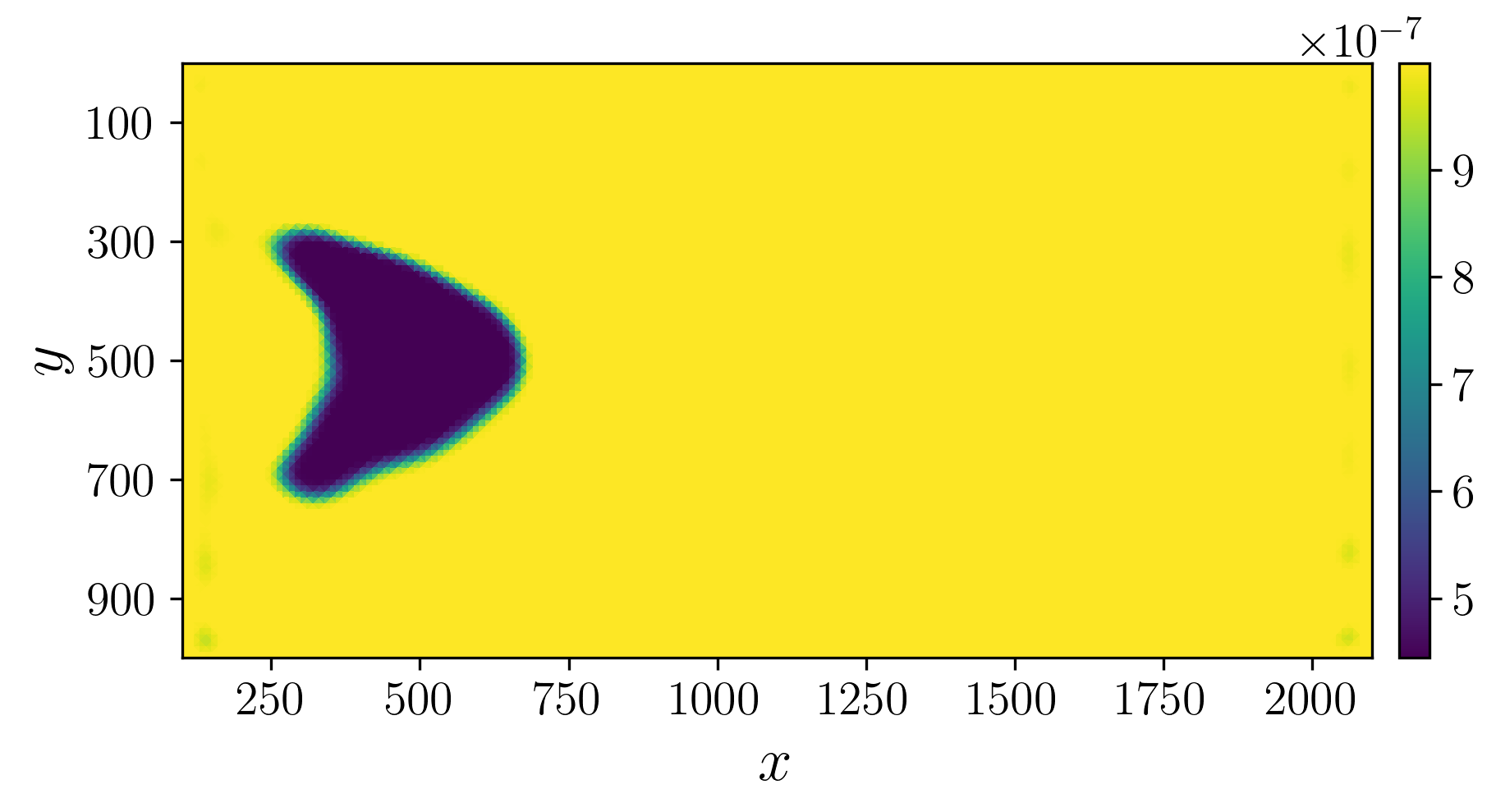}
\par\smallskip{\small (b) Best retained MAP squared slowness.\par}
\end{minipage}\hfill
\begin{minipage}[t]{0.33\textwidth}
\centering
\includegraphics[width=\linewidth]{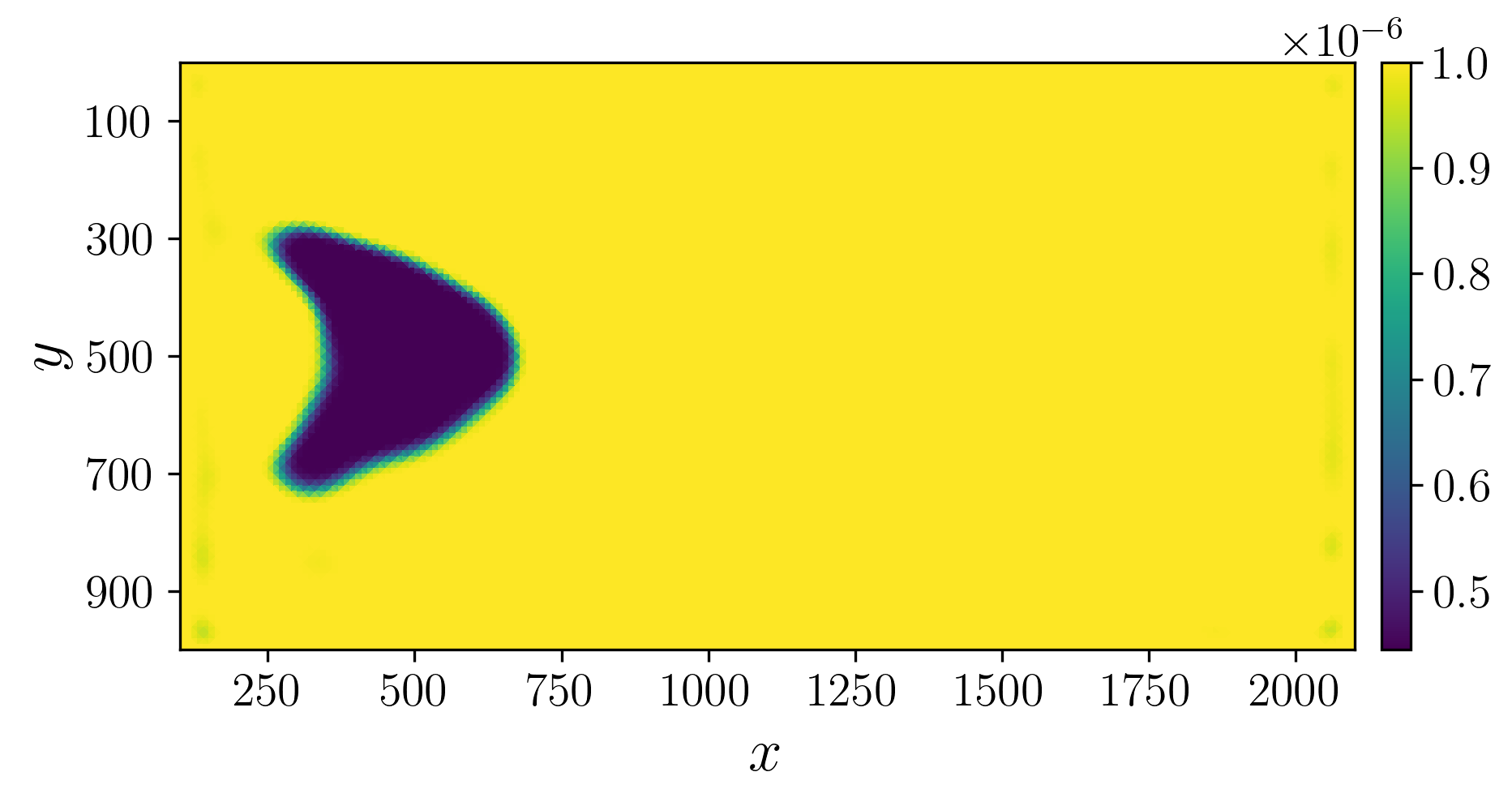}
\par\smallskip{\small (c) Mixture mean squared slowness.\par}
\end{minipage}

\par\medskip

\begin{minipage}[t]{0.33\textwidth}
\centering
\includegraphics[width=\linewidth]{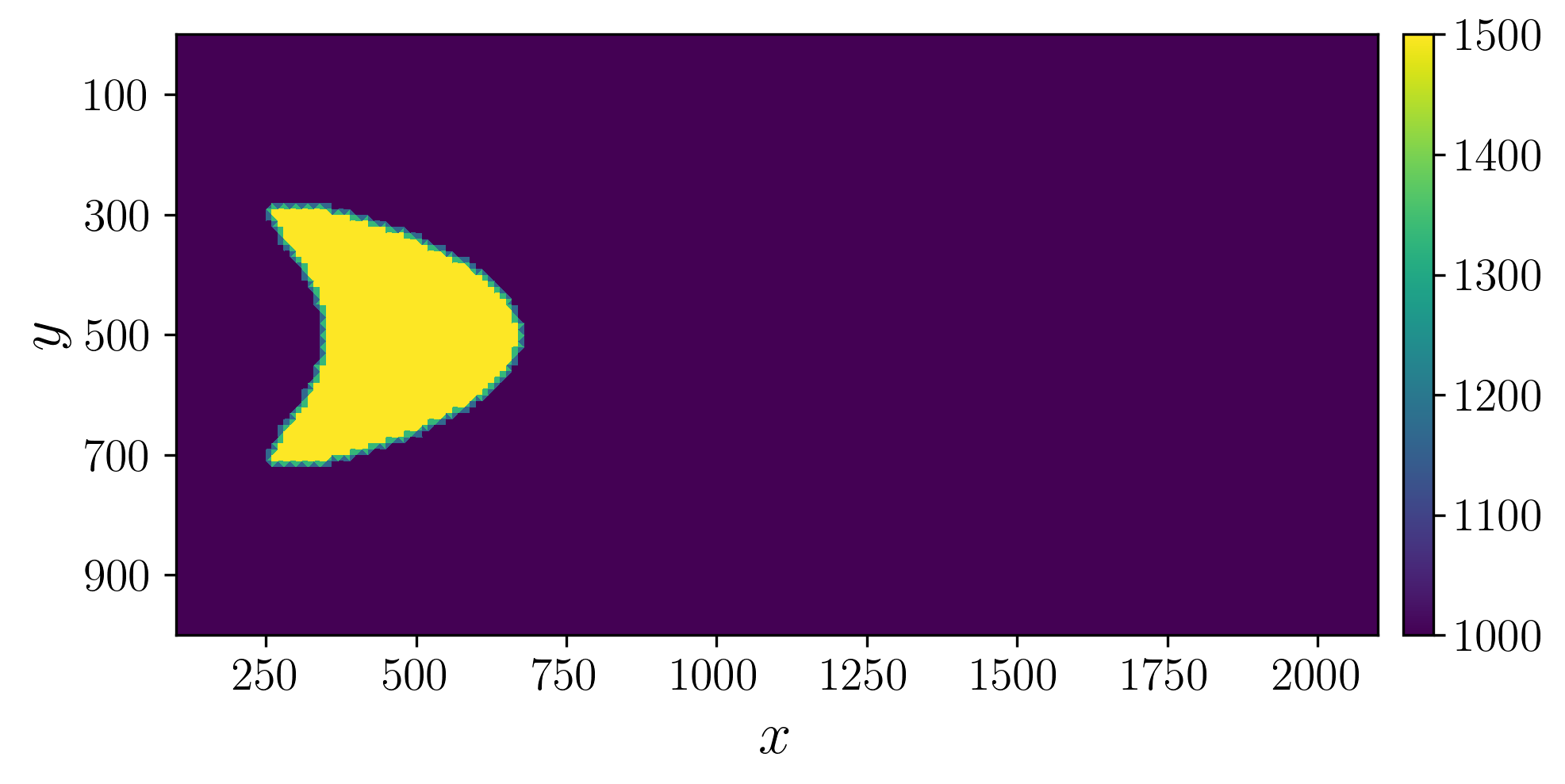}
\par\smallskip{\small (d) Exact velocity.\par}
\end{minipage}\hfill
\begin{minipage}[t]{0.33\textwidth}
\centering
\includegraphics[width=\linewidth]{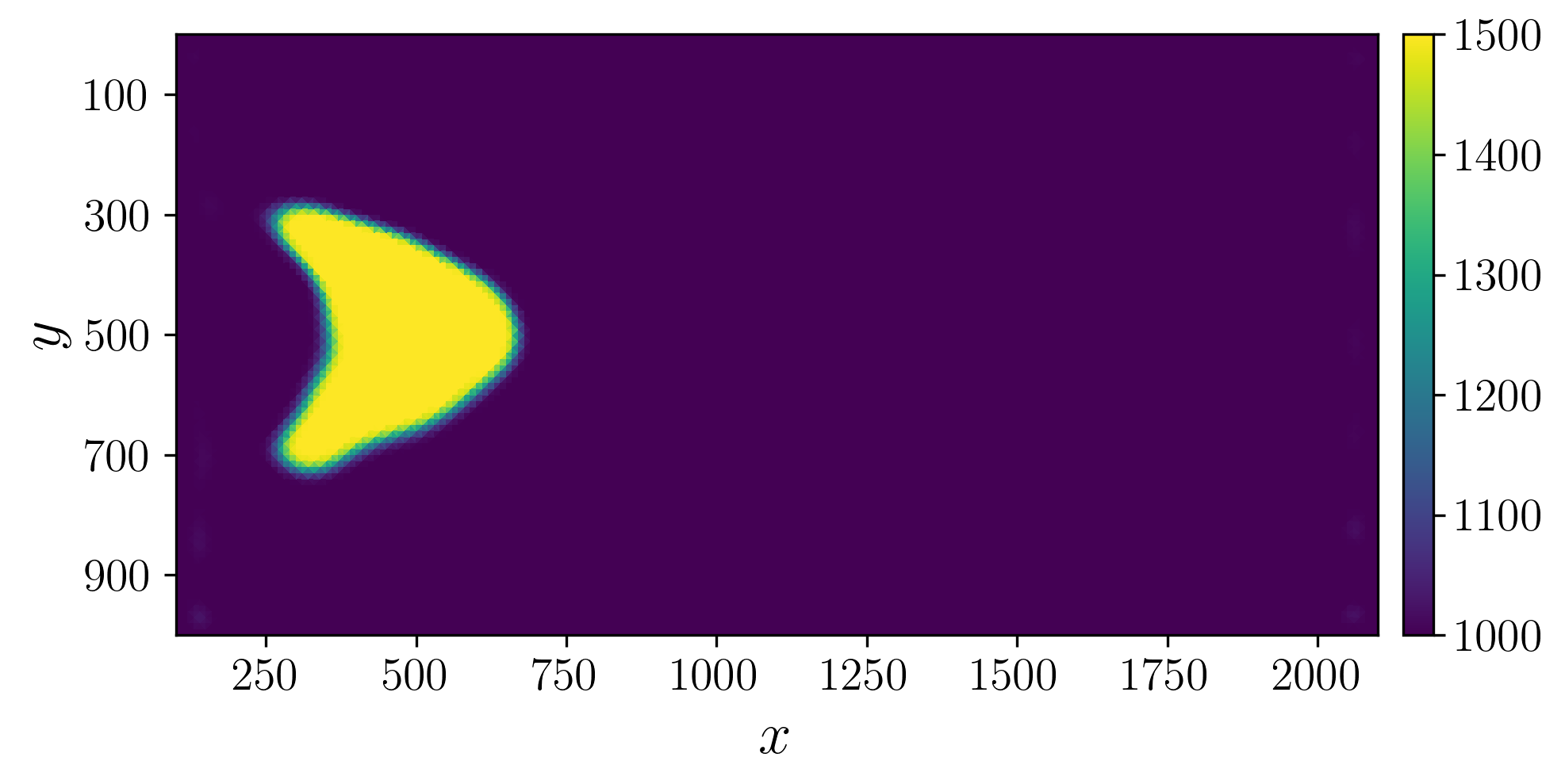}
\par\smallskip{\small (e) Best retained MAP velocity.\par}
\end{minipage}\hfill
\begin{minipage}[t]{0.33\textwidth}
\centering
\includegraphics[width=\linewidth]{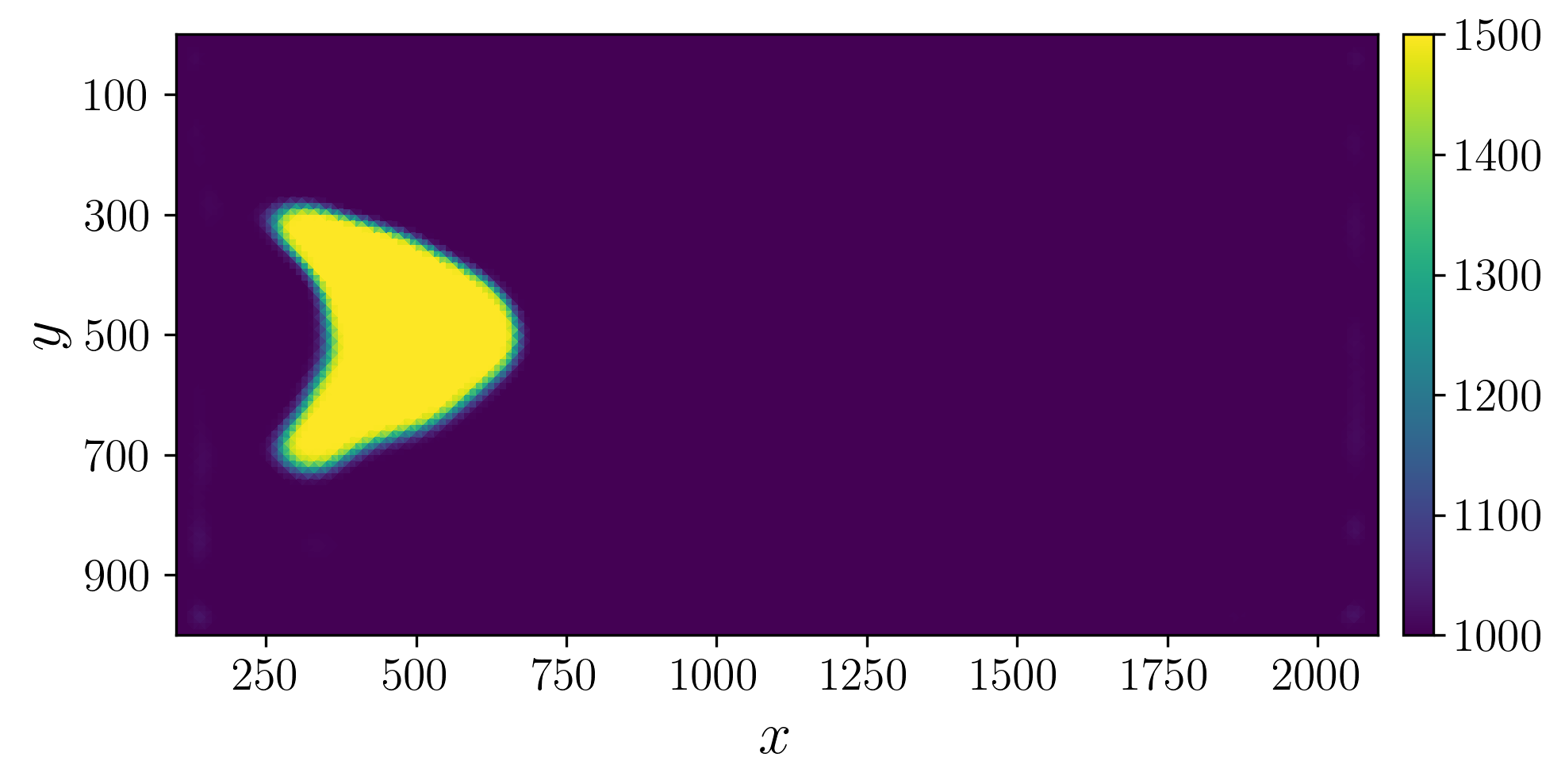}
\par\smallskip{\small (f) Mixture mean velocity.\par}
\end{minipage}

\caption{Exact, best retained MAP, and mixture mean fields for squared slowness (top row) and velocity (bottom row) in Example~\ref{example1}.}
\label{fig:ex1-reconstruction}
\end{figure}

Figure~\ref{fig:ex1-uncertainty} shows the pointwise standard deviations of the squared slowness and the velocity over the computational domain. The main uncertainty is concentrated near the boundary of the inclusion, especially around its lower-left part. In contrast, the interior of the inclusion and most of the background region exhibit much smaller uncertainty.
\begin{figure}[H]
\centering
\begin{minipage}[t]{0.33\textwidth}
\centering
\includegraphics[width=\linewidth]{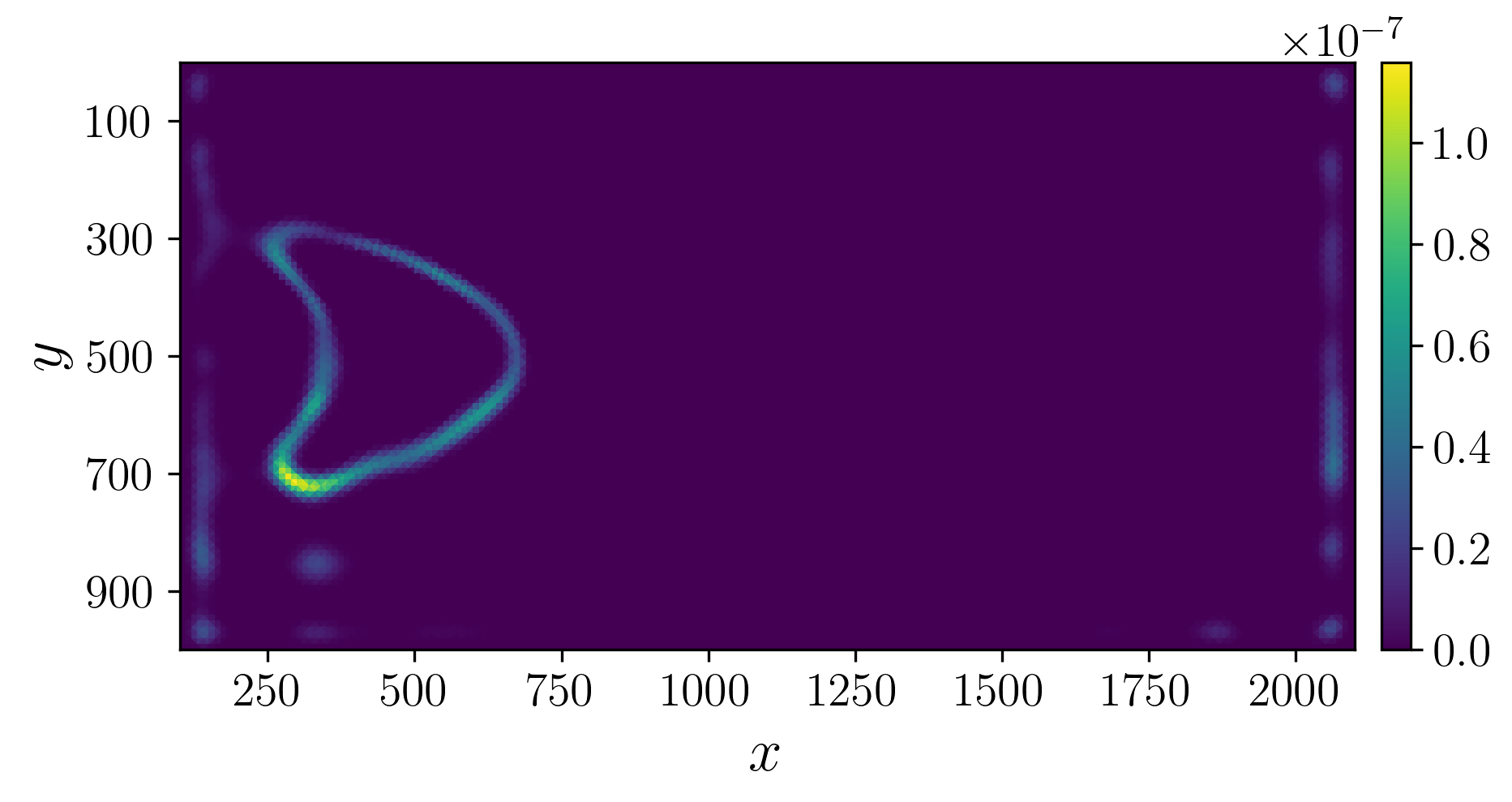}
\par\smallskip{\small (a) Squared slowness standard deviation.\par}
\end{minipage}
\begin{minipage}[t]{0.33\textwidth}
\centering
\includegraphics[width=\linewidth]{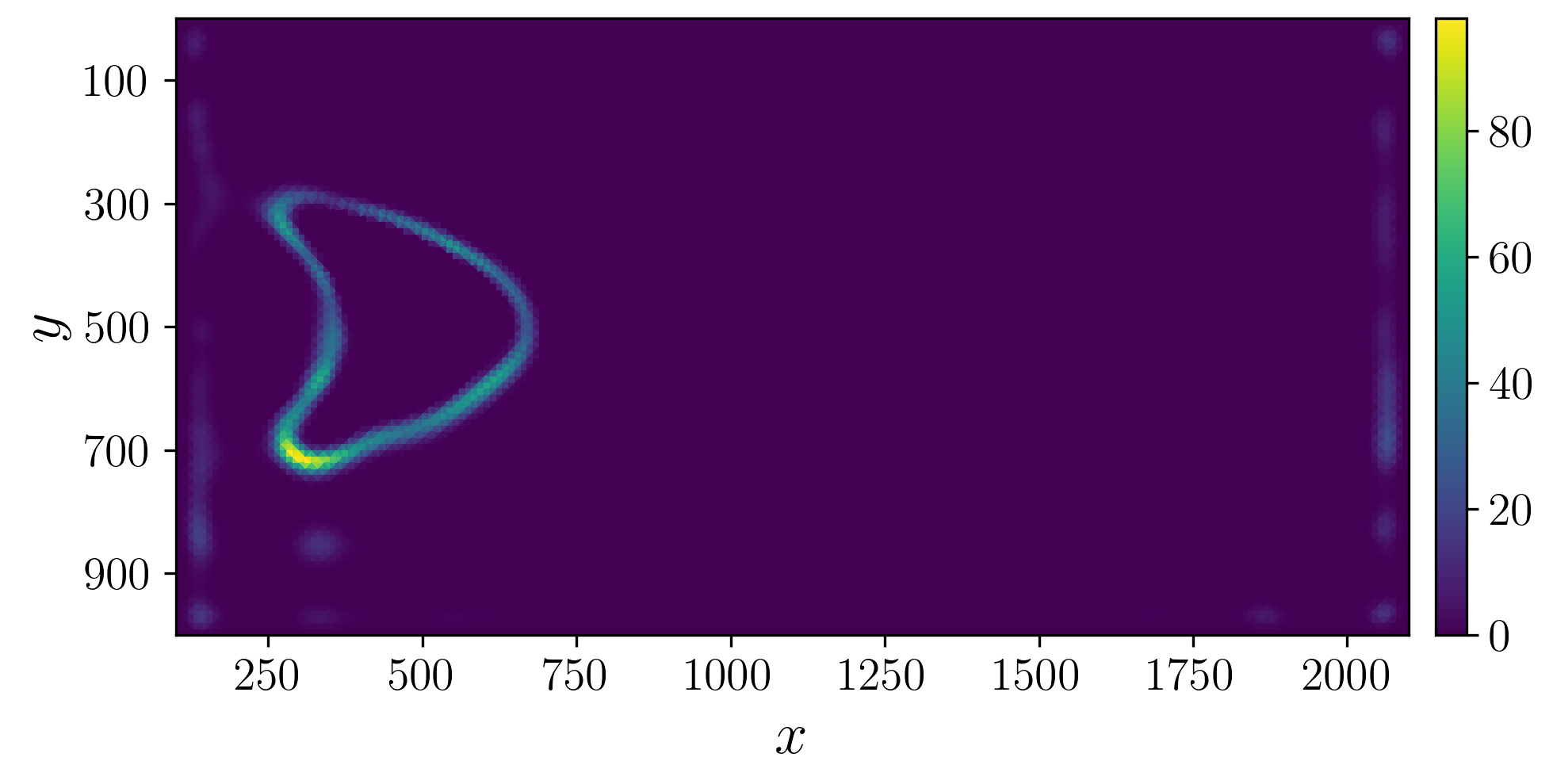}
\par\smallskip{\small (b) Velocity standard deviation.\par}
\end{minipage}
\caption{Pointwise standard deviations of squared slowness (a) and velocity (b) under the constructed Gaussian mixture.}
\label{fig:ex1-uncertainty}
\end{figure}

We compare the proposed multiscale multimodal Laplace method with a
standard single-component Laplace approximation and gPCN-MCMC. The former provides a direct baseline for assessing the effect of retaining multiple local components, while the latter serves as a sampling-based reference. All three methods use the same forward model, observations, discretization, acquisition geometry, noise level, prior, and $401$-coefficient RBF representation.

For the standard Laplace approximation, the MAP estimate is computed using $500$ Adam epochs with learning rate $0.8$. For gPCN, the chain is initialized from a fitted circular level-set profile of radius $250\,\mathrm{m}$ centred at $(1100,500)\,\mathrm{m}$. We run $40000$ samples, discard the first $10000$, and retain every tenth subsequent state, yielding $3000$ samples. The proposal parameter is $0.08$.

Figures~\ref{fig:ex2-single-laplace} and \ref{fig:ex2-gpcn-comparison} show that both methods recover the location
and overall nonconvex shape of the inclusion. For the standard Laplace approximation, the posterior means are nearly indistinguishable from the MAP-candidate fields. The lower part of the reconstructed interface is more rounded than the exact boundary, and faint artifacts appear in the background and near the ROI margins. The corresponding standard deviations are relatively small and are concentrated mainly along parts of the interface, although several isolated patches also occur away from the inclusion.

The gPCN best state and posterior means recover the same principal
geometry, but the interface transition is broader, particularly near the lower-left boundary. Noticeable velocity variation remains inside the inclusion. The gPCN standard deviations are larger and extend around most of the reconstructed interface, with the strongest variability near its lower-left part. Compared with the proposed multimodal reconstruction, the gPCN posterior means therefore appear more diffuse.

These results distinguish the two reference methods clearly. The standard Laplace approximation describes uncertainty only near one MAP candidate and produces relatively small local variability, whereas gPCN explores a broader range of interface configurations. The proposed method retains several local Gaussian components and is therefore not restricted to a single MAP neighbourhood.

\begin{figure}[H]
\centering

\begin{minipage}[t]{0.33\textwidth}
\centering
\includegraphics[width=\linewidth]{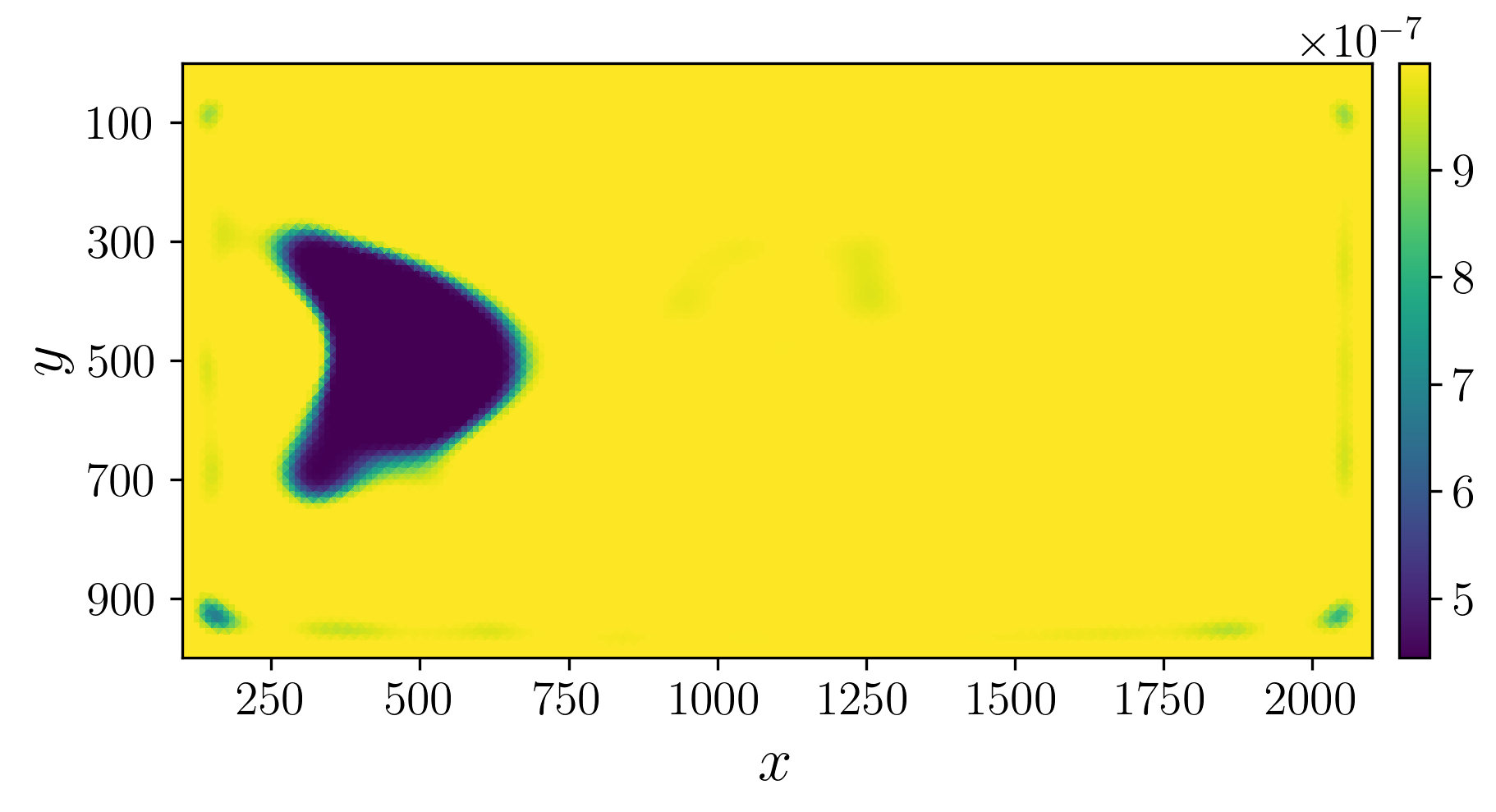}
\par\smallskip{\small (a) MAP-candidate squared slowness.\par}
\end{minipage}\hfill
\begin{minipage}[t]{0.33\textwidth}
\centering
\includegraphics[width=\linewidth]{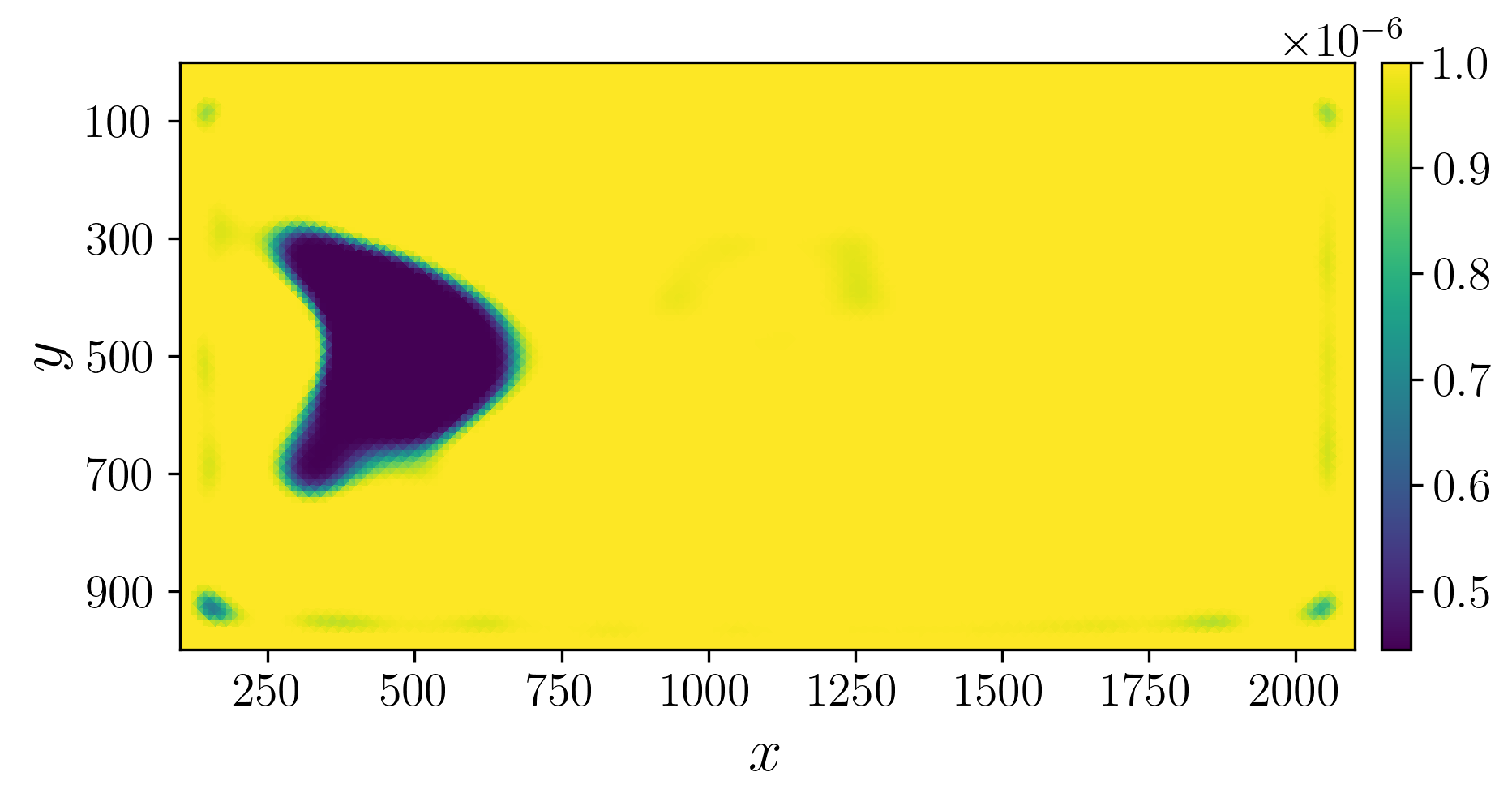}
\par\smallskip{\small (b) Posterior mean of squared slowness.\par}
\end{minipage}\hfill
\begin{minipage}[t]{0.33\textwidth}
\centering
\includegraphics[width=\linewidth]{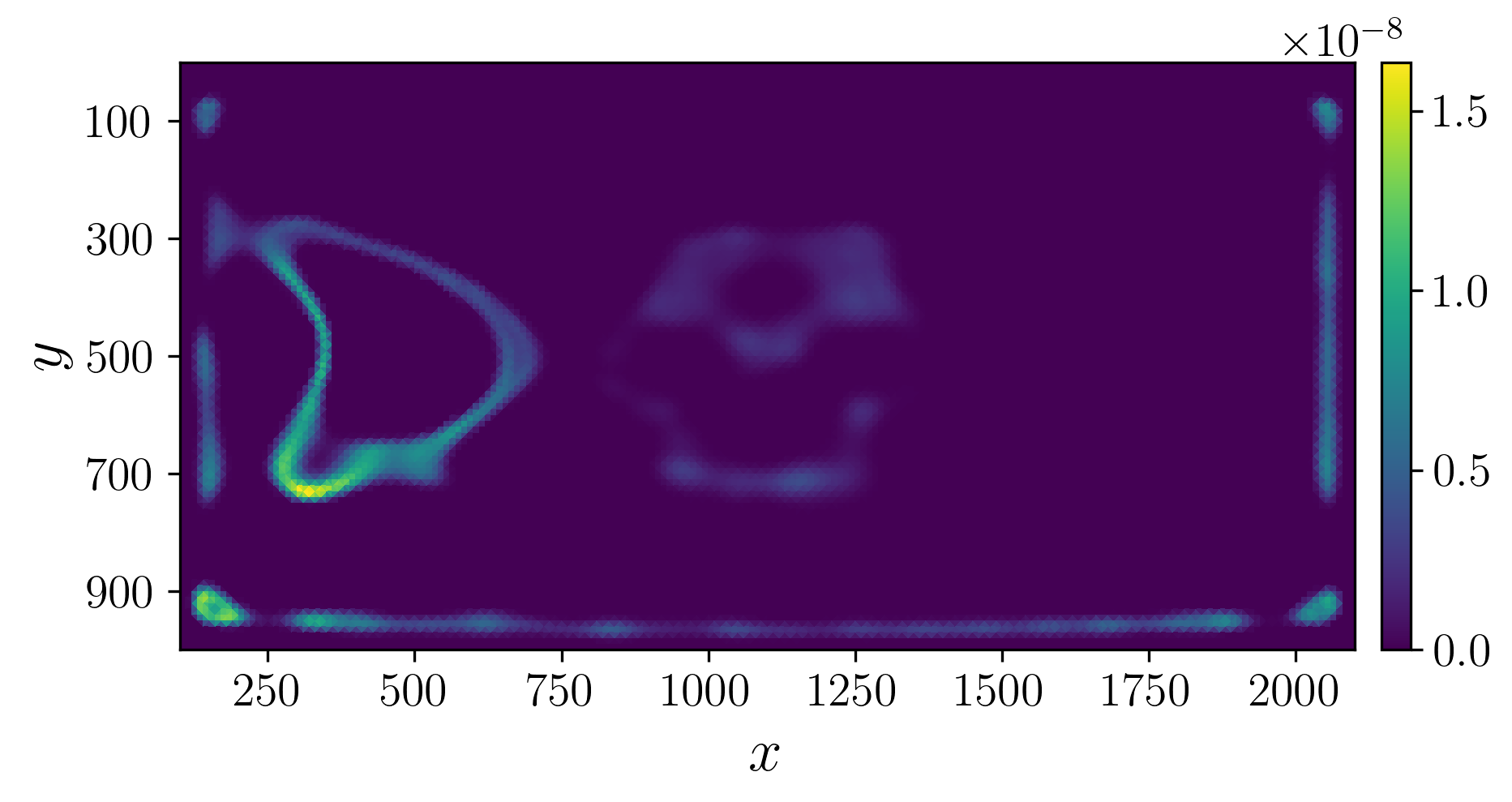}
\par\smallskip{\small (c) Posterior standard deviation of squared slowness.\par}
\end{minipage}

\par\smallskip

\begin{minipage}[t]{0.33\textwidth}
\centering
\includegraphics[width=\linewidth]{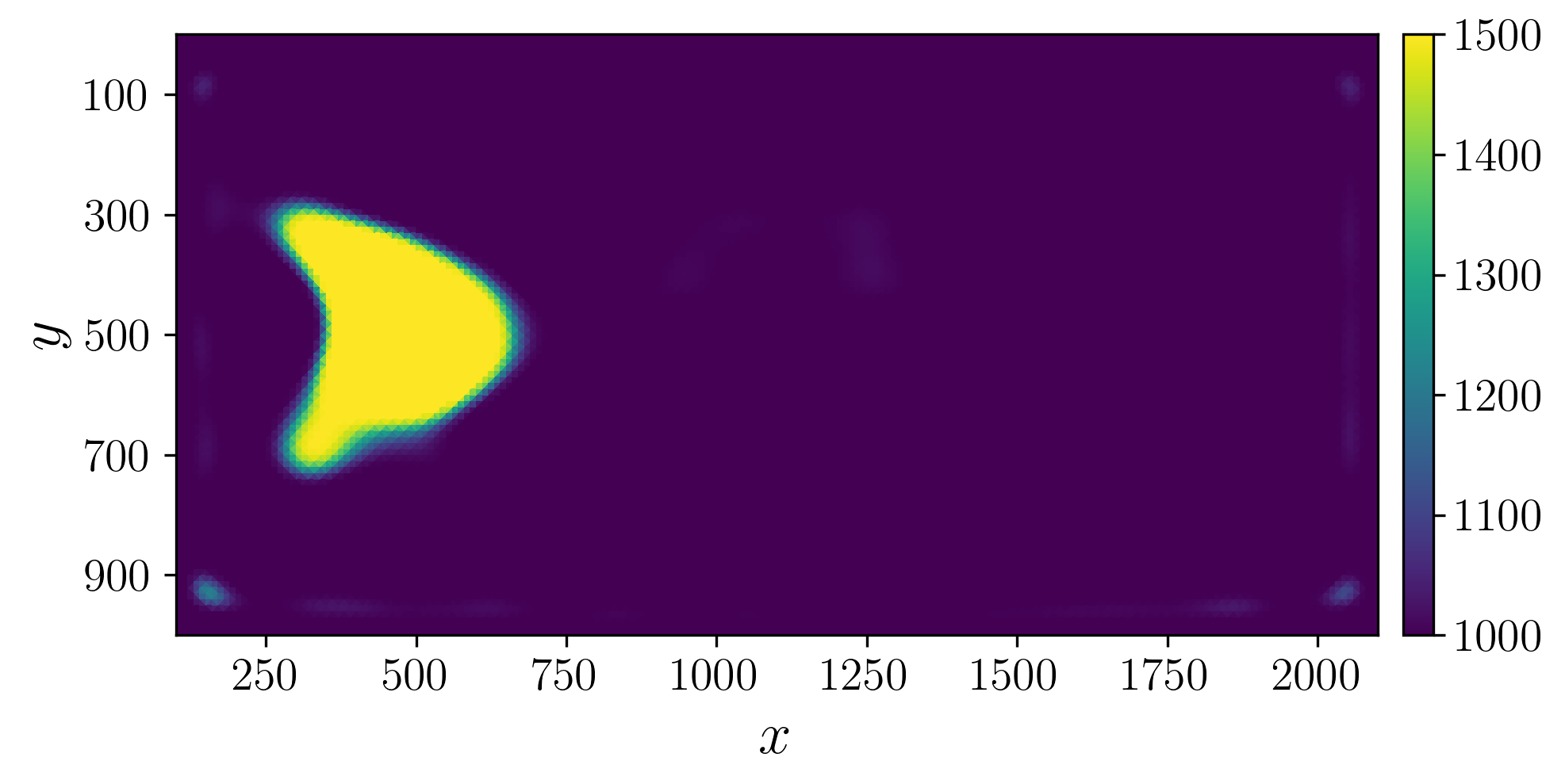}
\par\smallskip{\small (d) MAP-candidate velocity.\par}
\end{minipage}\hfill
\begin{minipage}[t]{0.33\textwidth}
\centering
\includegraphics[width=\linewidth]{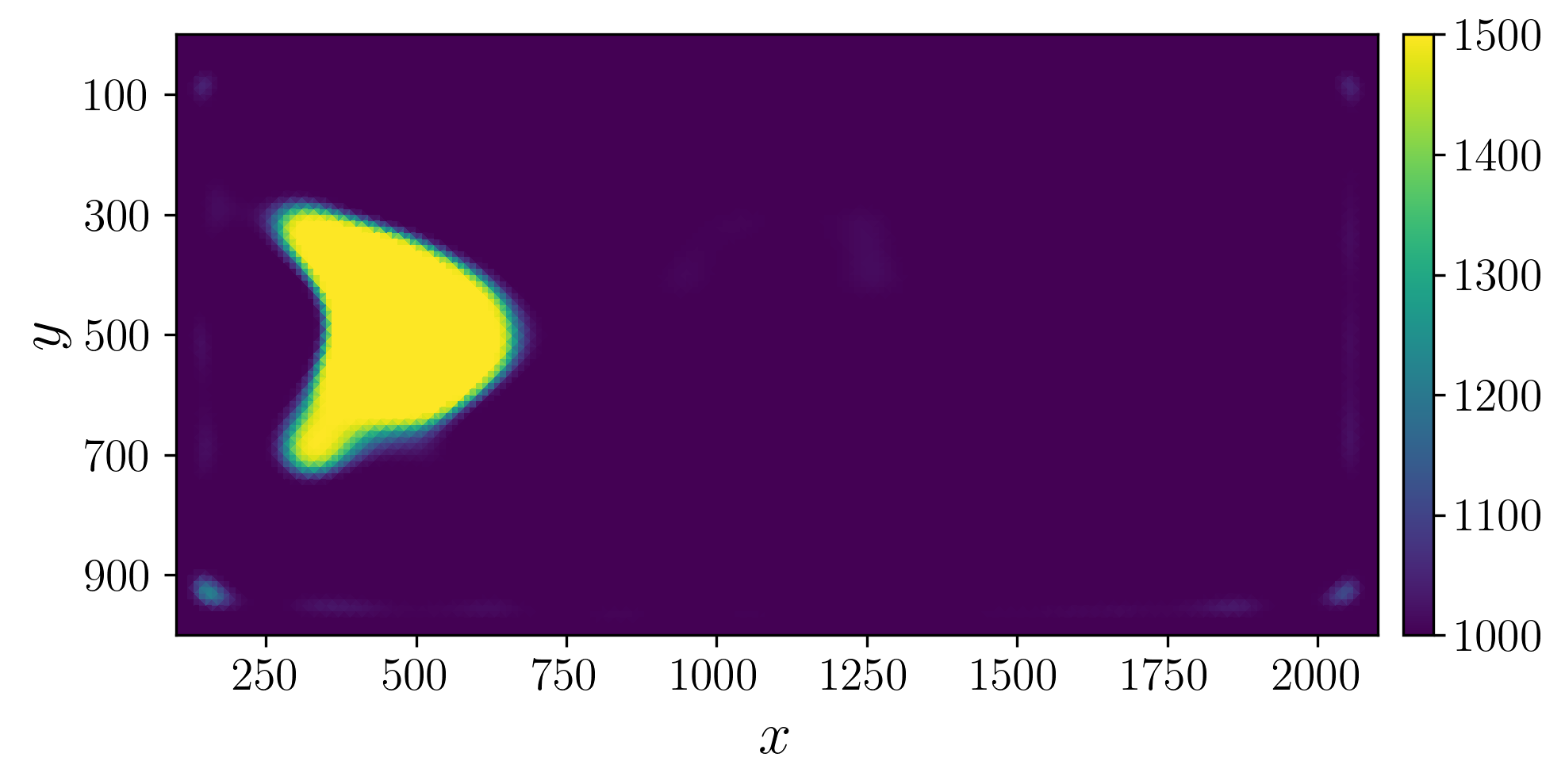}
\par\smallskip{\small (e) Posterior mean of velocity.\par}
\end{minipage}\hfill
\begin{minipage}[t]{0.33\textwidth}
\centering
\includegraphics[width=\linewidth]{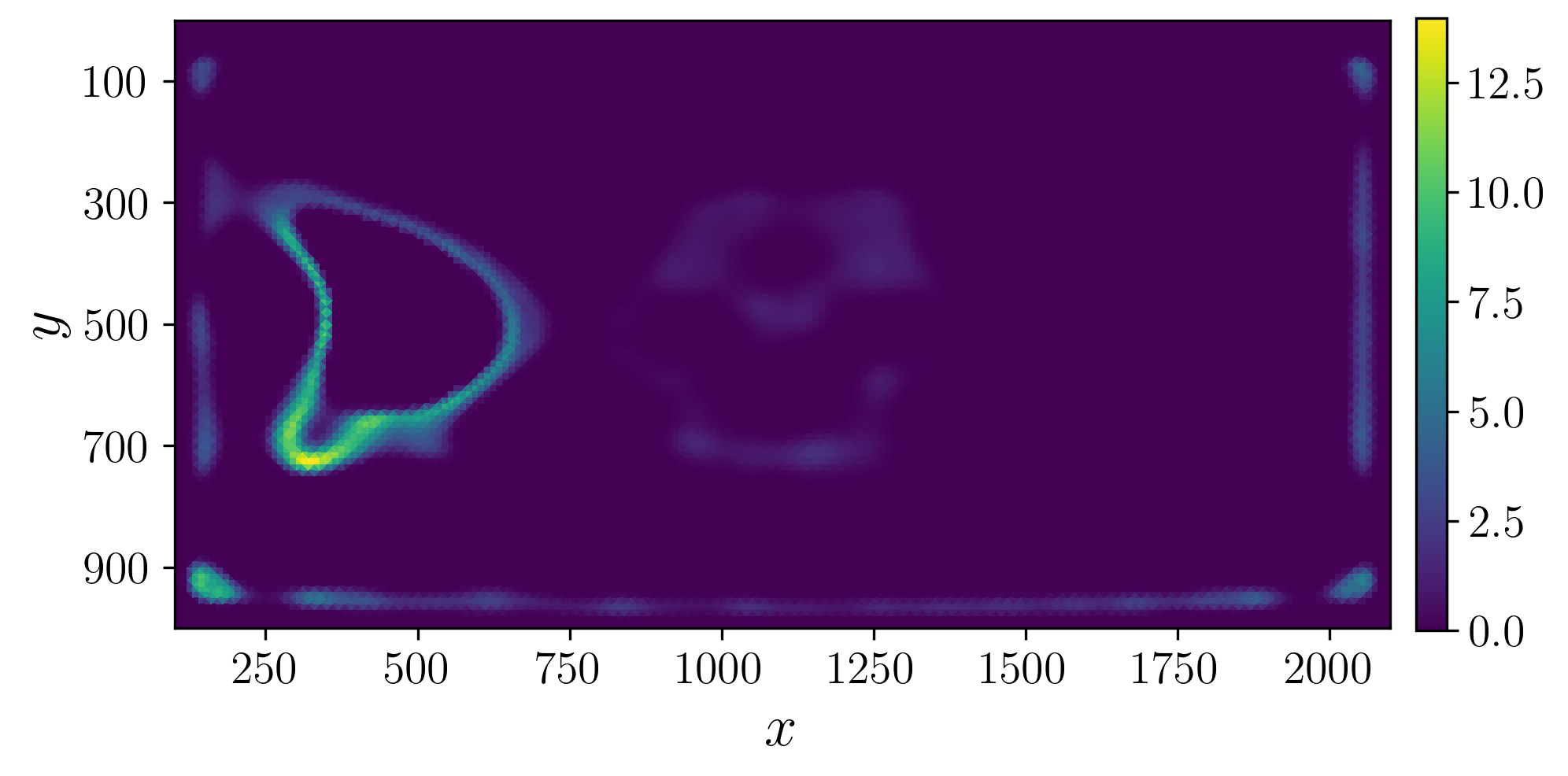}
\par\smallskip{\small (f) Posterior standard deviation of velocity.\par}
\end{minipage}

\caption{Standard single-component Laplace approximation for the
nonconvex-inclusion experiment (Example~\ref{example1}). The top and bottom rows show squared slowness and velocity, respectively. From left to right: the selected MAP candidate, posterior mean, and pointwise posterior standard deviation.}
\label{fig:ex2-single-laplace}
\end{figure}

\begin{figure}[H]
\centering

\begin{minipage}[t]{0.33\textwidth}
\centering
\includegraphics[width=\linewidth]{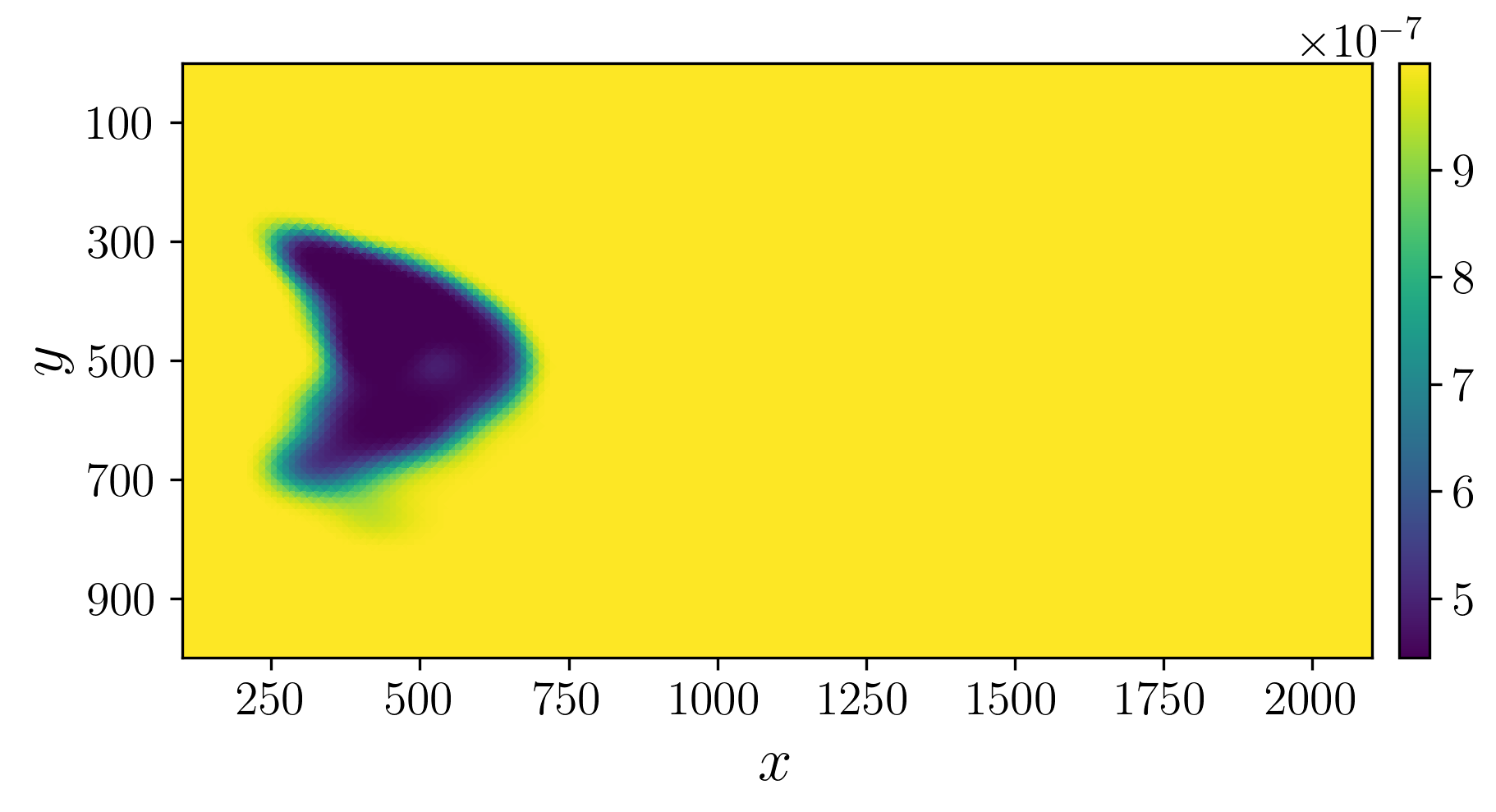}
\par\smallskip{\small (a) Lowest-objective squared slowness.\par}
\end{minipage}\hfill
\begin{minipage}[t]{0.33\textwidth}
\centering
\includegraphics[width=\linewidth]{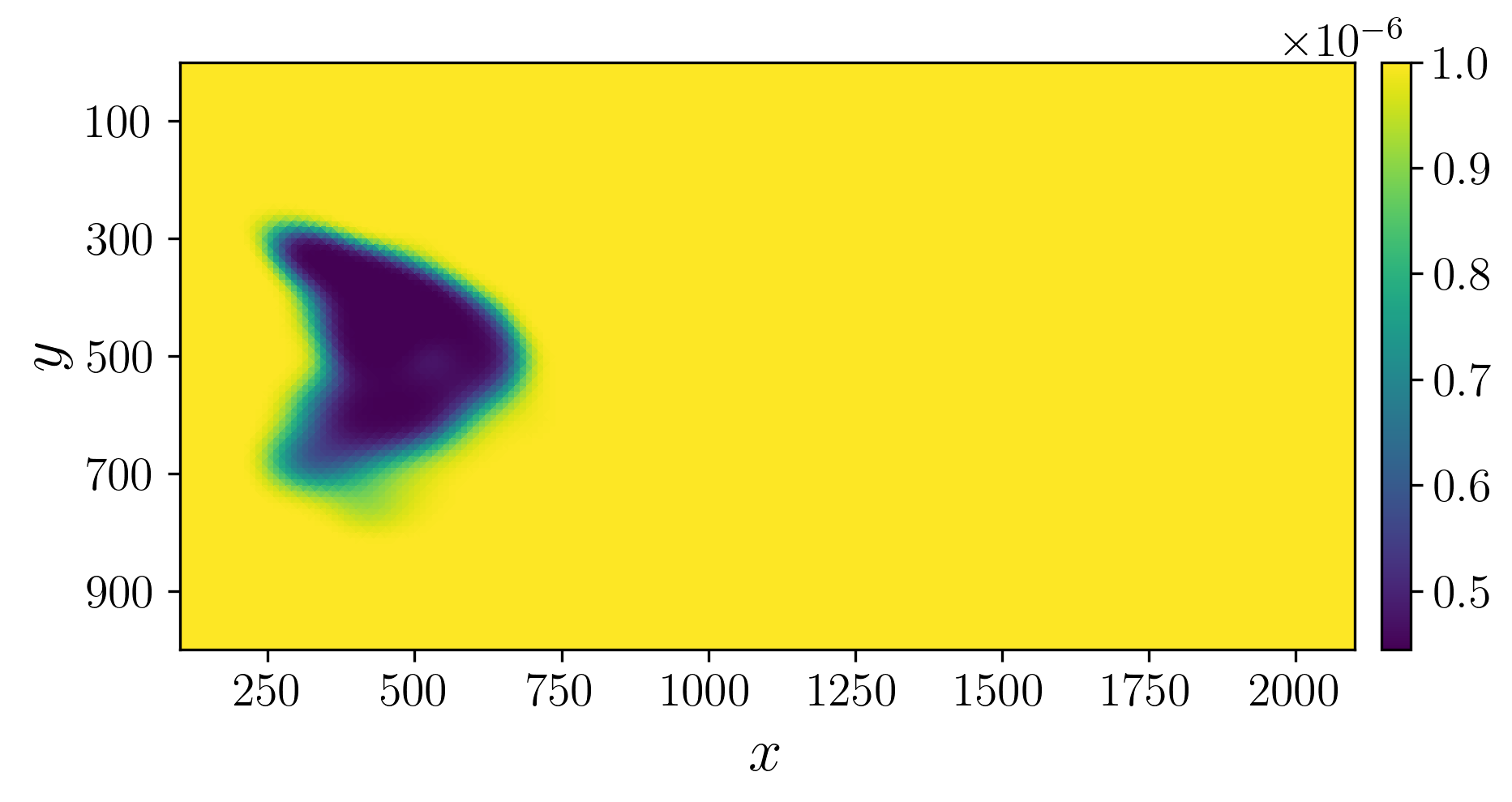}
\par\smallskip{\small (b) Posterior mean of squared slowness.\par}
\end{minipage}\hfill
\begin{minipage}[t]{0.33\textwidth}
\centering
\includegraphics[width=\linewidth]{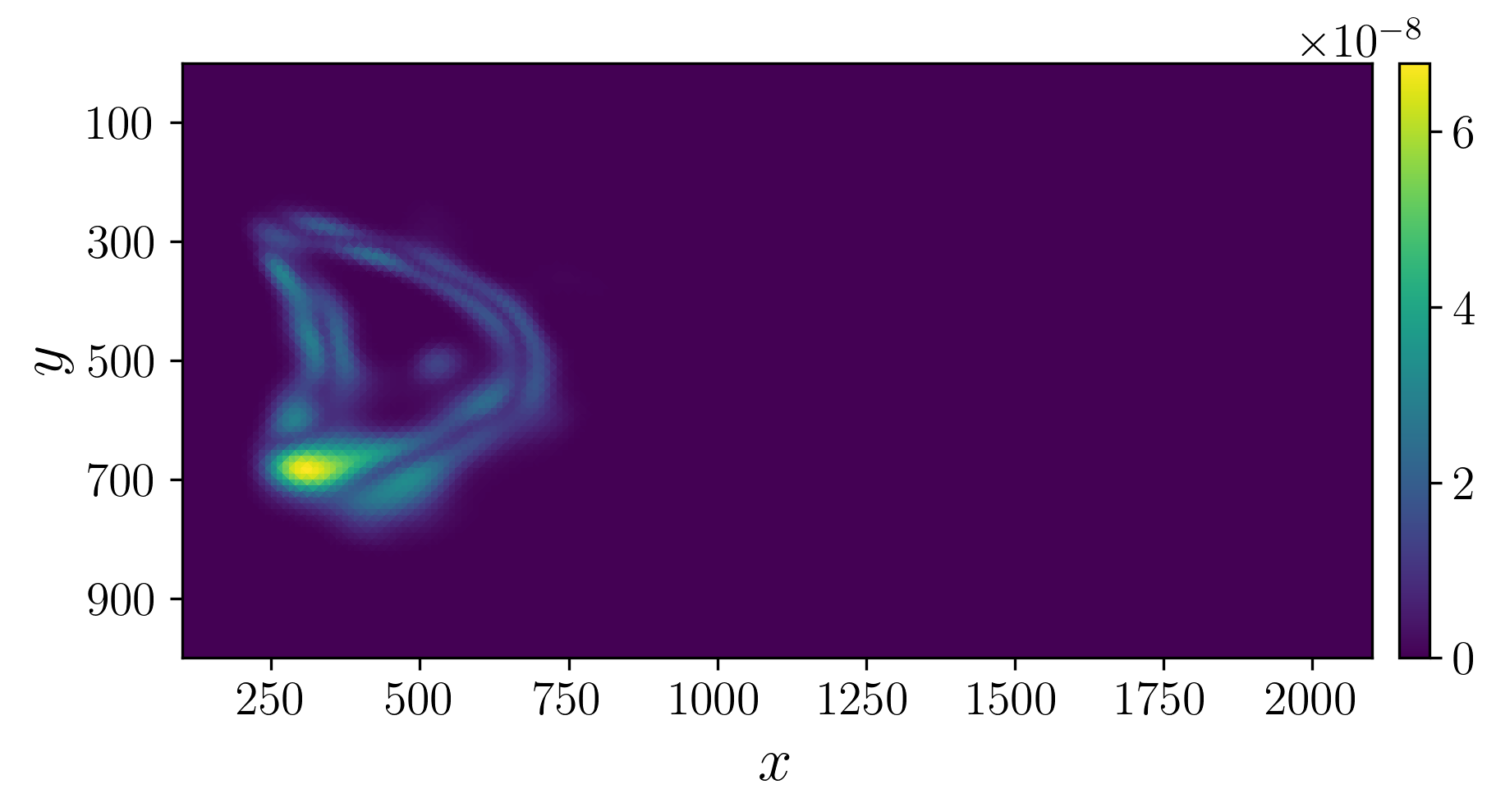}
\par\smallskip{\small (c) Posterior standard deviation of squared slowness.\par}
\end{minipage}

\par\medskip

\begin{minipage}[t]{0.33\textwidth}
\centering
\includegraphics[width=\linewidth]{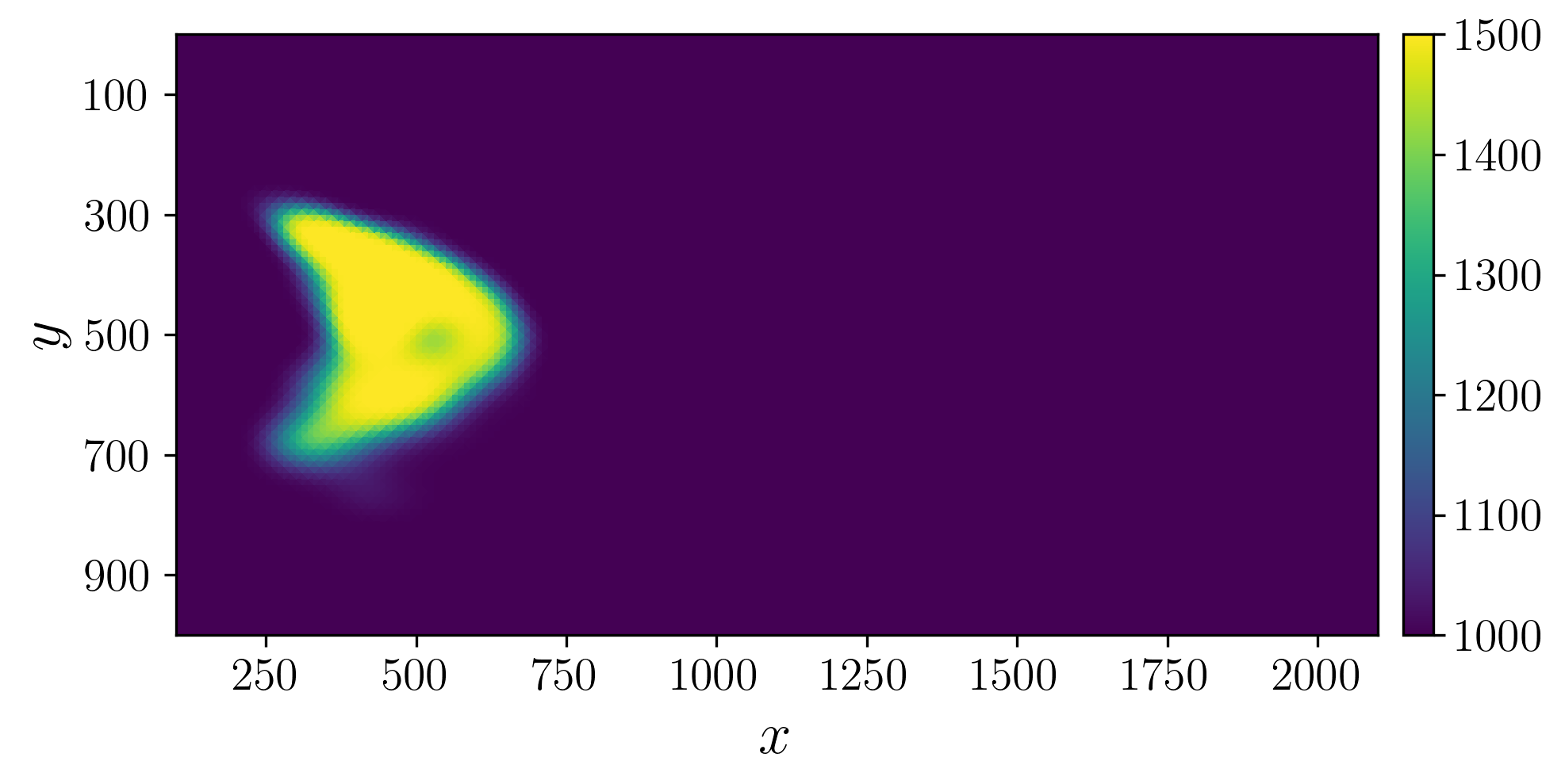}
\par\smallskip{\small (d) Lowest-objective velocity.\par}
\end{minipage}\hfill
\begin{minipage}[t]{0.33\textwidth}
\centering
\includegraphics[width=\linewidth]{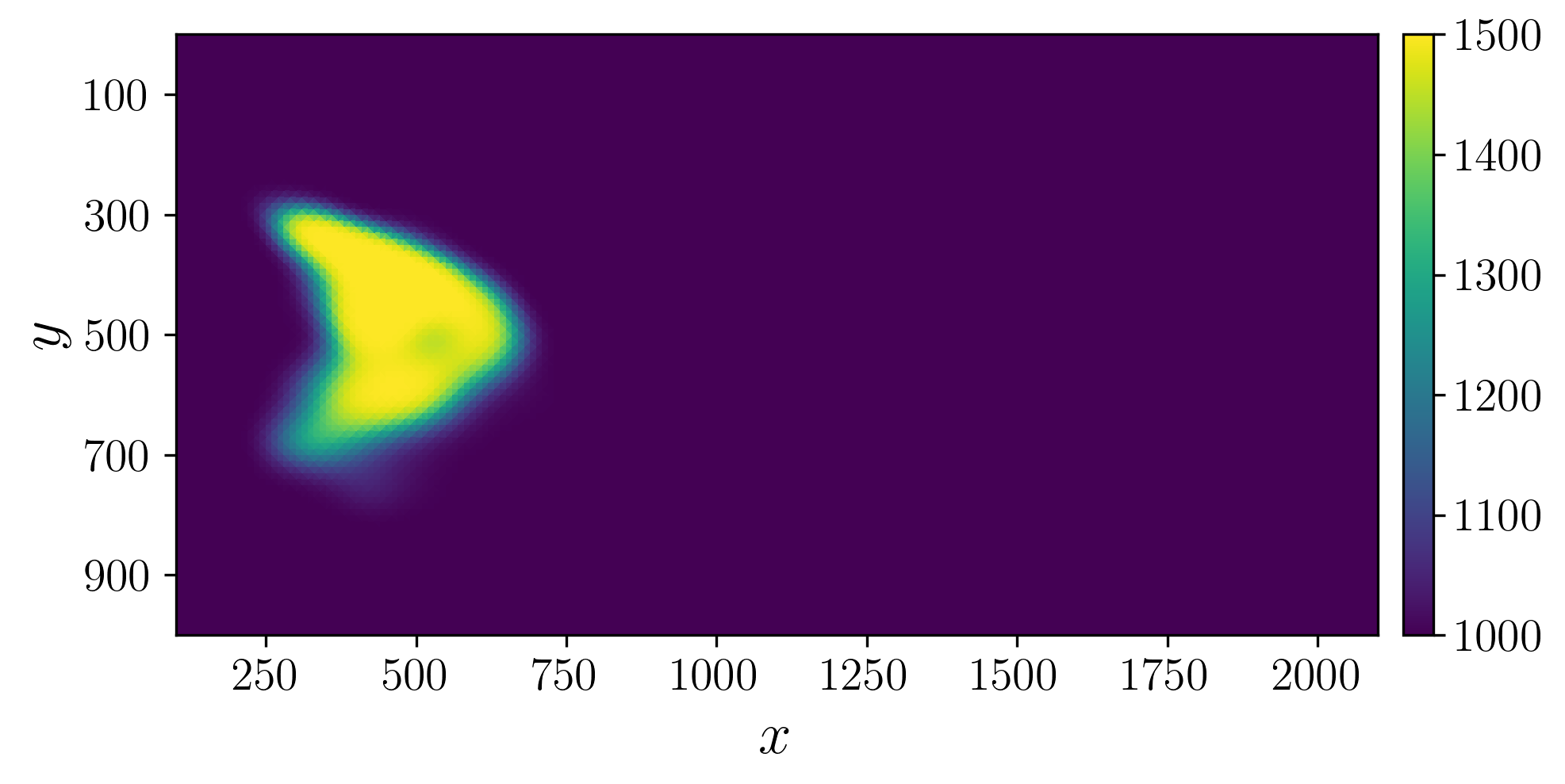}
\par\smallskip{\small (e) Posterior mean of velocity.\par}
\end{minipage}\hfill
\begin{minipage}[t]{0.33\textwidth}
\centering
\includegraphics[width=\linewidth]{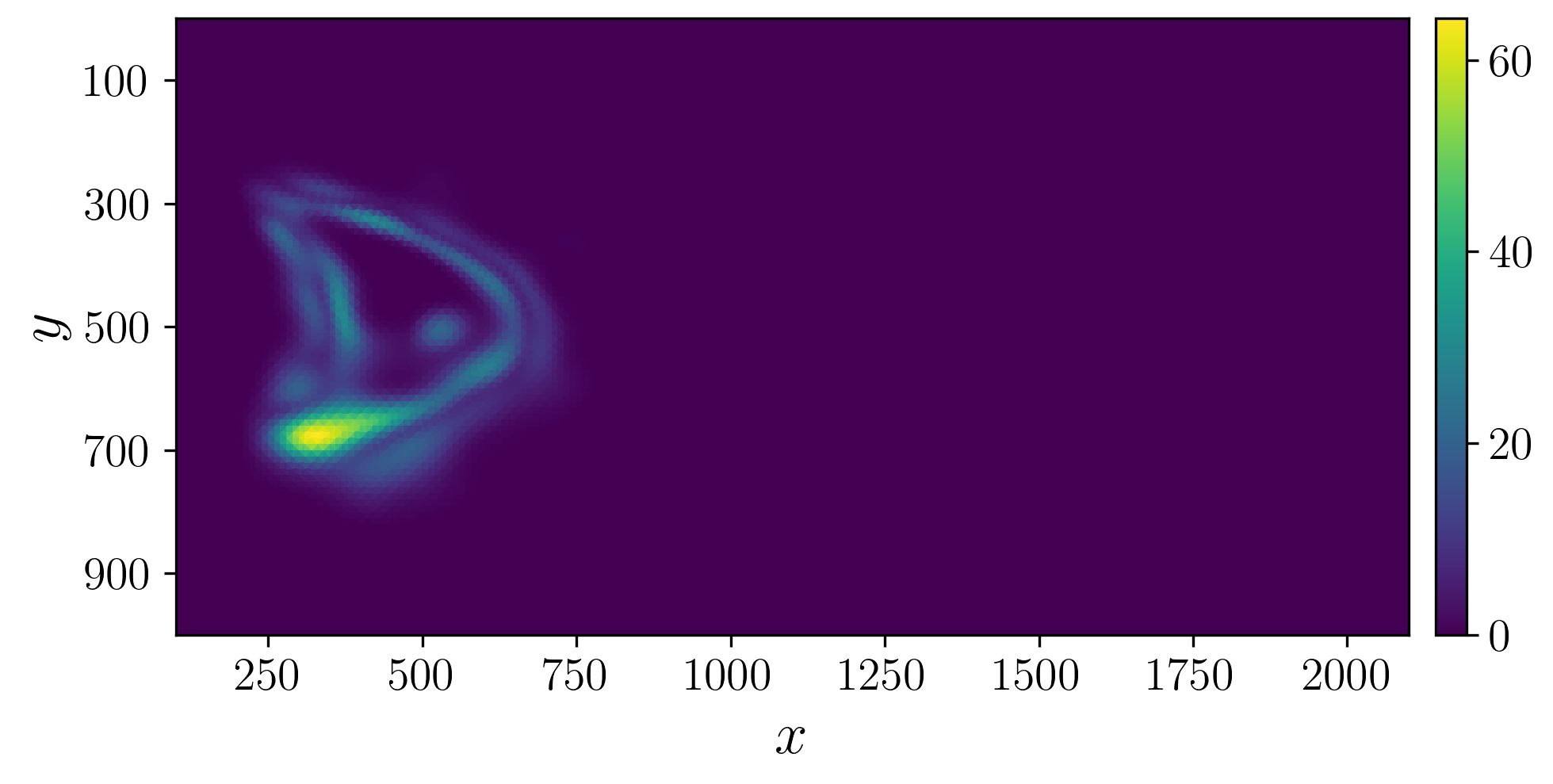}
\par\smallskip{\small (f) Posterior standard deviation of velocity.\par}
\end{minipage}

\caption{gPCN-MCMC results for the nonconvex-inclusion experiment (Example~\ref{example1}). The top and bottom rows show squared slowness and velocity, respectively. From left to right: the retained state with the smallest posterior objective, posterior
mean, and pointwise posterior standard deviation.}
\label{fig:ex2-gpcn-comparison}
\end{figure}


\begin{table}[H]
\centering
\caption{Quantitative comparison of gPCN-MCMC, the standard
single-component Laplace approximation, and the proposed multiscale
multimodal Laplace method for the nonconvex-inclusion experiment
(Example~\ref{example1}). The best value in each row is shown in bold.}
\label{tab:ex2-three-method-comparison}
\begingroup
\small
\renewcommand{\arraystretch}{1.25}
\setlength{\tabcolsep}{5pt}
\begin{tabularx}{\linewidth}{
@{}>{\raggedright\arraybackslash}p{0.43\linewidth}
*{3}{>{\centering\arraybackslash}X}@{}
}
\toprule
Evaluation metric
& \shortstack{gPCN\\(MCMC)}
& \shortstack{Single\\Laplace}
& \shortstack{Multiscale\\multimodal\\Laplace} \\
\midrule
Relative error of mean squared slowness $E_{\mathrm{rel}}^{\mu}$
& 3.3530\%
& 2.6522\%
& \textbf{2.2939\%} \\

Relative error of mean velocity $E_{\mathrm{rel}}^{v}$
& 3.2749\%
& 2.4359\%
& \textbf{2.2479\%} \\

Hausdorff distance
& 28.28~m
& 250.80~m
& \textbf{20.00~m} \\

Boundary concentration $R_{\mathrm{bdry}}(20\,\mathrm m)$
& 61.5752\%
& 52.7846\%
& \textbf{87.2991\%} \\

Node-based coverage
$\mathrm{BCov}^{\mathrm{node}}_{0.05,\,20\,\mathrm m}$
& 86.8098\%
& 31.5951\%
& \textbf{92.6380\%} \\

Elapsed time (s)
& 15219.58
& \textbf{365.59}
& 999.32 \\
\bottomrule
\end{tabularx}
\endgroup
\end{table}

Table~\ref{tab:ex2-three-method-comparison} summarizes the reconstruction errors, boundary uncertainty measures, and computational times. Relative to the single-component Laplace approximation, the proposed method reduces the squared-slowness and velocity errors from $2.65\%$ and $2.44\%$ to $2.29\%$ and $2.25\%$, respectively. The Hausdorff distance decreases from $250.80\,\mathrm{m}$ to $20.00\,\mathrm{m}$, while the boundary
concentration and coverage increase from $52.78\%$ and $31.60\%$ to
$87.30\%$ and $92.64\%$. The proposed method also gives the best values among the three methods for all five accuracy and uncertainty metrics.

The recorded runtimes are $365.59\,\mathrm{s}$ for the single-component Laplace approximation, $999.32\,\mathrm{s}$ for the proposed multiscale MAP/Laplace procedure, and $15219.58\,\mathrm{s}$ for gPCN. Thus, the multiscale method costs about $2.7$ times as much as the single-component approximation but uses less than one fifteenth of the gPCN runtime in this experiment.

\textbf{Robustness to observation noise.}
We test four absolute noise levels,$\sigma_{\mathrm{noise}}
\in \{0,\ 10^{-4},\ 5\times10^{-4},\ 10^{-3}\}.$
For noisy observations, the likelihood scale is set to
$\delta_{\mathrm{obs}}=\sigma_{\mathrm{noise}}$. In the noiseless case, no noise is added, while $\delta_{\mathrm{obs}}=10^{-4}$ is used to keep the data-misfit term well defined. The prior parameters and all other experimental settings remain unchanged.


Table~\ref{tab:ex1-noise-robustness} and
Figures~\ref{fig:ex2-noise-uncertainty-mean}--\ref{fig:ex2-noise-uncertainty}
summarize the effect of observation noise. The two lowest noise levels
produce similar mean-field errors and nearly identical reconstructions.
At $5\times10^{-4}$, the errors increase and the interface becomes more
diffuse, with visible variation inside the inclusion. At $10^{-3}$, the
inclusion remains identifiable, but the Hausdorff distance increases to
$85.44\,\mathrm m$, consistent with the increased smoothing and somewhat less precise boundary localization in the posterior mean.

The standard deviation is concentrated near the interface at the two
lowest noise levels. At $5\times10^{-4}$, it spreads into the inclusion
and background, while at $10^{-3}$ a broader band forms around the
interface, with the largest values along its lower-left portion. The
high boundary coverage at the largest noise level therefore reflects a
wider uncertainty band rather than more accurate localization.

\begin{table}[H]
\centering
\caption{Reconstruction accuracy and uncertainty diagnostics at different
observation-noise levels for Example~\ref{example1}.}
\label{tab:ex1-noise-robustness}

\begingroup
\small
\renewcommand{\arraystretch}{1.25}
\setlength{\tabcolsep}{5pt}

\begin{tabularx}{\linewidth}{
@{}>{\raggedright\arraybackslash}p{0.43\linewidth}
*{4}{>{\centering\arraybackslash}X}@{}
}
\toprule

\multirow{2}{*}{Evaluation metric}
& \multicolumn{4}{c}{Gaussian noise standard deviation
$\sigma_{\mathrm{noise}}$} \\

\cmidrule(lr){2-5}

& $0$ & $10^{-4}$ & $5\times10^{-4}$ & $10^{-3}$ \\

\midrule

Squared-slowness mean error $E_{\mathrm{rel}}^{\mu}$ (\%)
& 2.2654 & 2.2939 & 3.4057 & 3.7893 \\

Velocity mean error $E_{\mathrm{rel}}^{v}$ (\%)
& 2.1472 & 2.2479 & 3.1232 & 3.5948 \\

Hausdorff distance $d_H$ (\si{\metre})
& 28.28 & 20.00 & 28.28 & 85.44 \\

Boundary concentration $R_{\mathrm{bdry}}(\SI{20}{\metre})$ (\%)
& 81.4079 & 87.2991 & 59.3177 & 75.9247 \\

Node-based coverage
$\mathrm{BCov}^{\mathrm{node}}_{0.05,\,\SI{20}{\metre}}$ (\%)
& 82.5153 & 92.6380 & 89.2638 & 97.5460 \\

\bottomrule
\end{tabularx}

\endgroup
\end{table}

\begin{figure}[H]
\centering
\begin{minipage}[t]{0.33\linewidth}
\centering
\includegraphics[width=\linewidth]{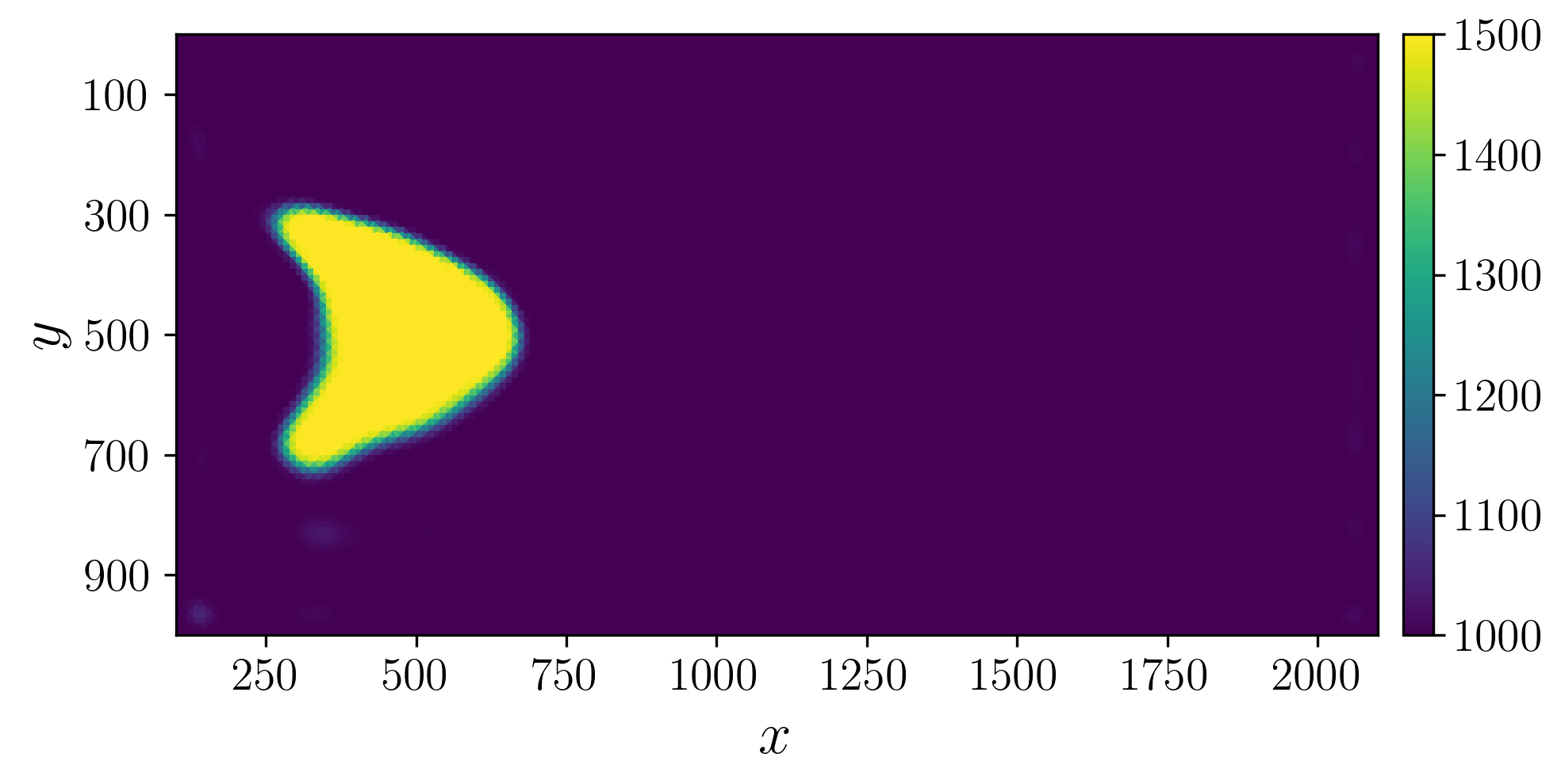}
\par\smallskip{\small (a) $\sigma_{\mathrm{noise}}=0$.\par}
\end{minipage}
\begin{minipage}[t]{0.33\linewidth}
\centering
\includegraphics[width=\linewidth]{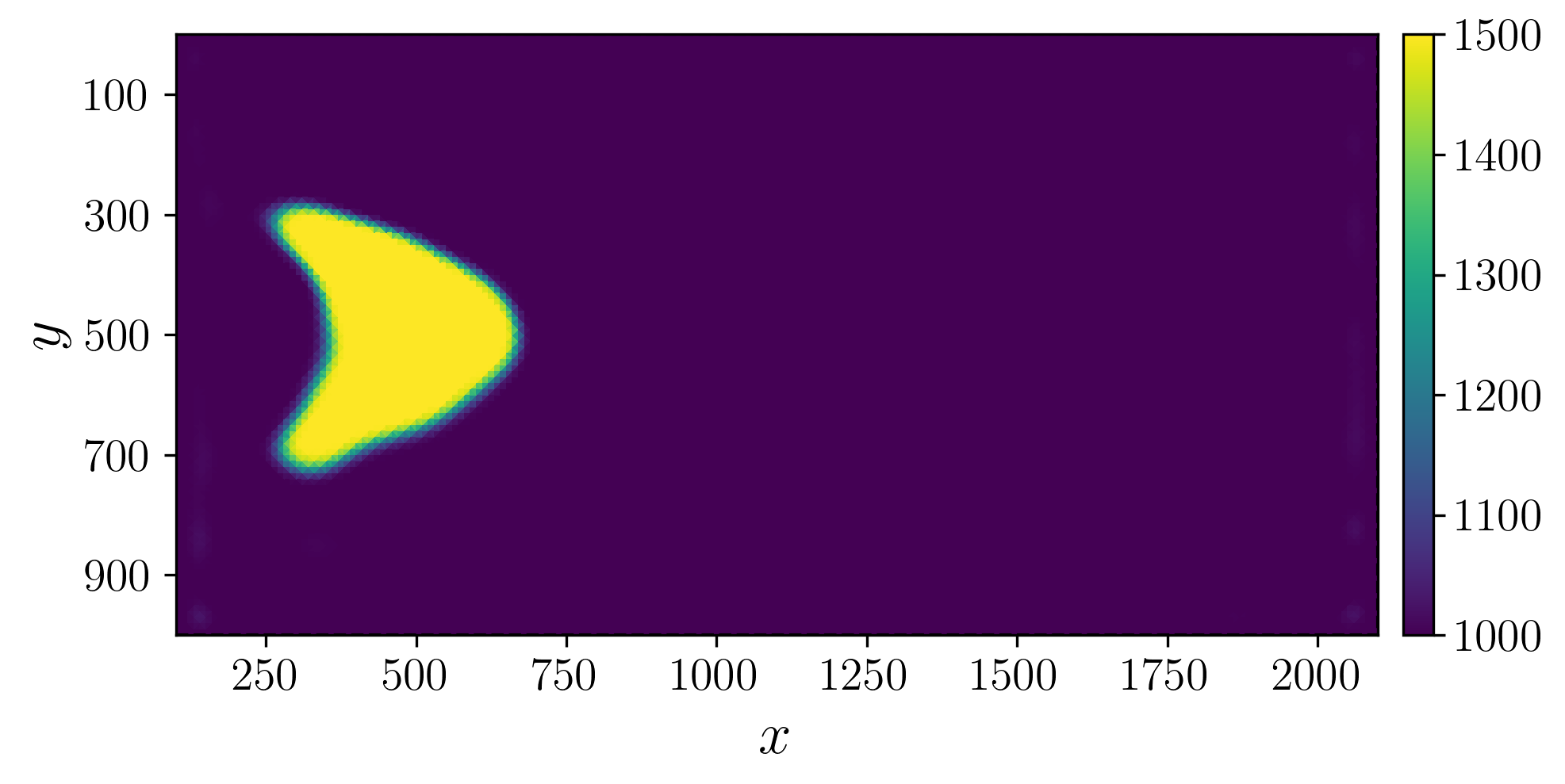}
\par\smallskip{\small (b) $\sigma_{\mathrm{noise}}=1\times10^{-4}$.\par}
\end{minipage}
\par\medskip
\begin{minipage}[t]{0.33\linewidth}
\centering
\includegraphics[width=\linewidth]{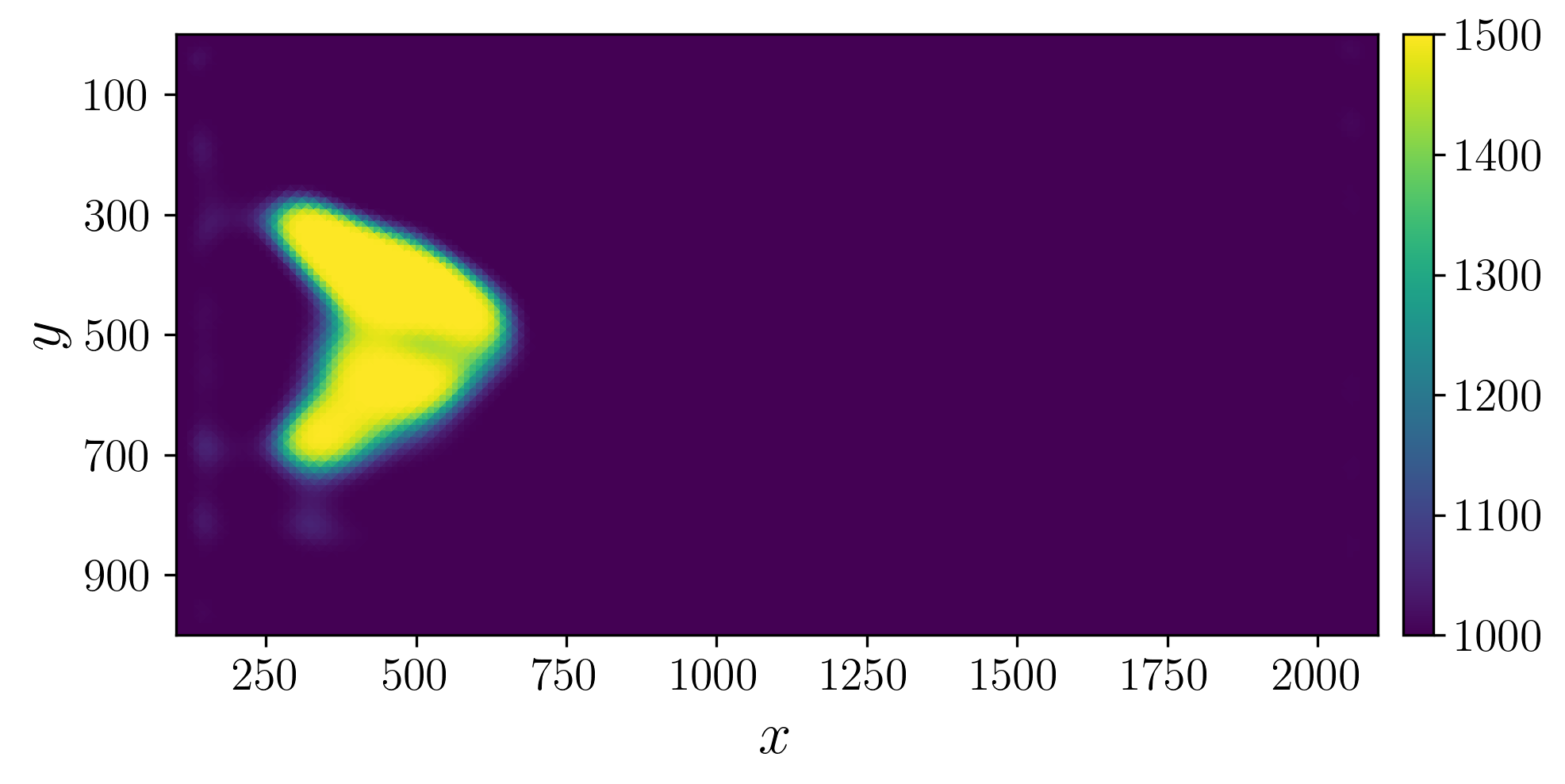}
\par\smallskip{\small (c) $\sigma_{\mathrm{noise}}=5\times10^{-4}$.\par}
\end{minipage}
\begin{minipage}[t]{0.33\linewidth}
\centering
\includegraphics[width=\linewidth]{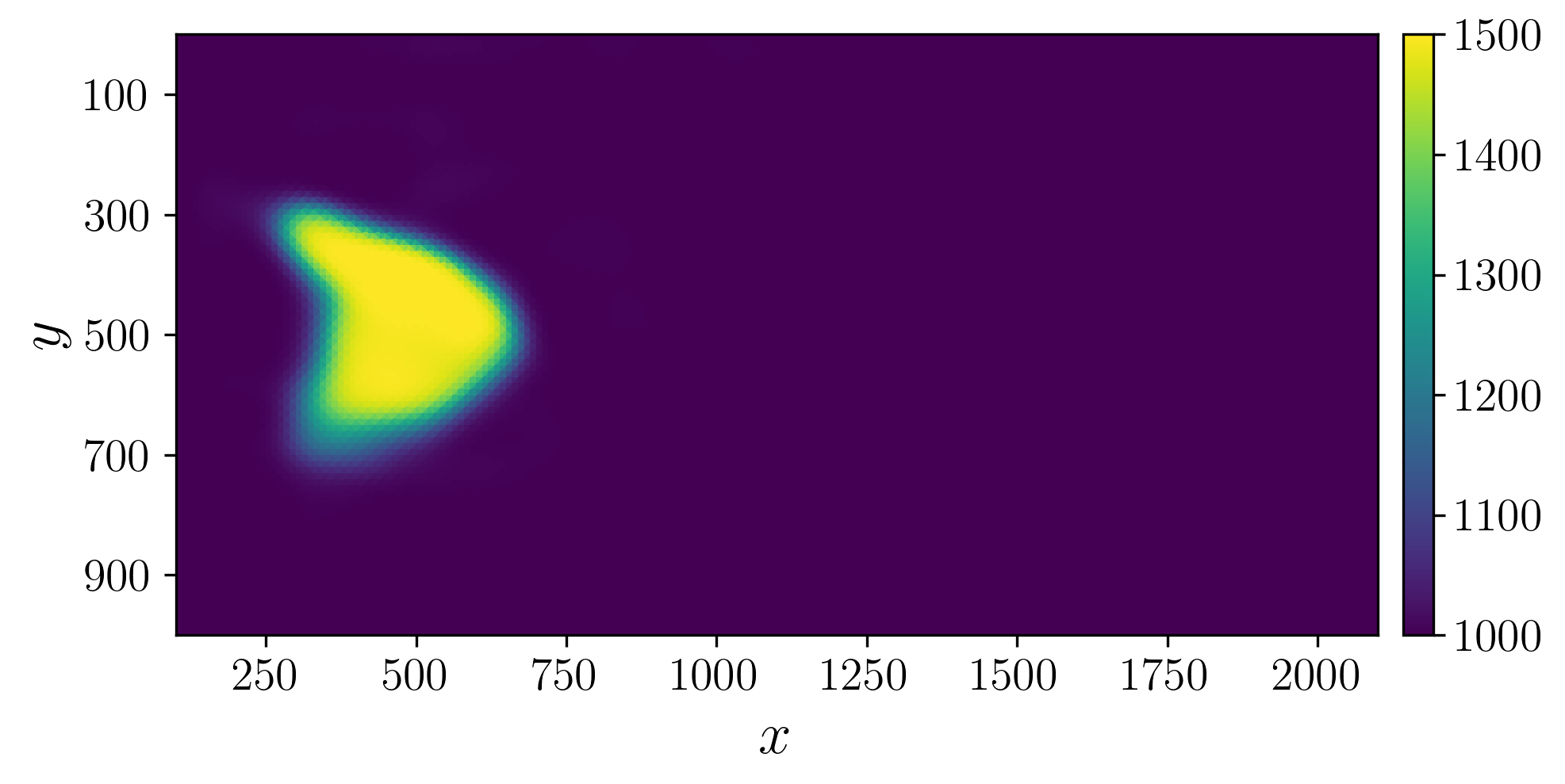}
\par\smallskip{\small (d) $\sigma_{\mathrm{noise}}=1\times10^{-3}$.\par}
\end{minipage}
\caption{Posterior mean velocity fields at different observation-noise levels for Example~\ref{example1}. These fields are computed using our proposed multi-scale multimodal Laplace method and estimated from 100 samples of the final Gaussian-mixture approximation in each run.}
\label{fig:ex2-noise-uncertainty-mean}
\end{figure}

\begin{figure}[H]
\centering
\begin{minipage}[t]{0.33\linewidth}
\centering
\includegraphics[width=\linewidth]{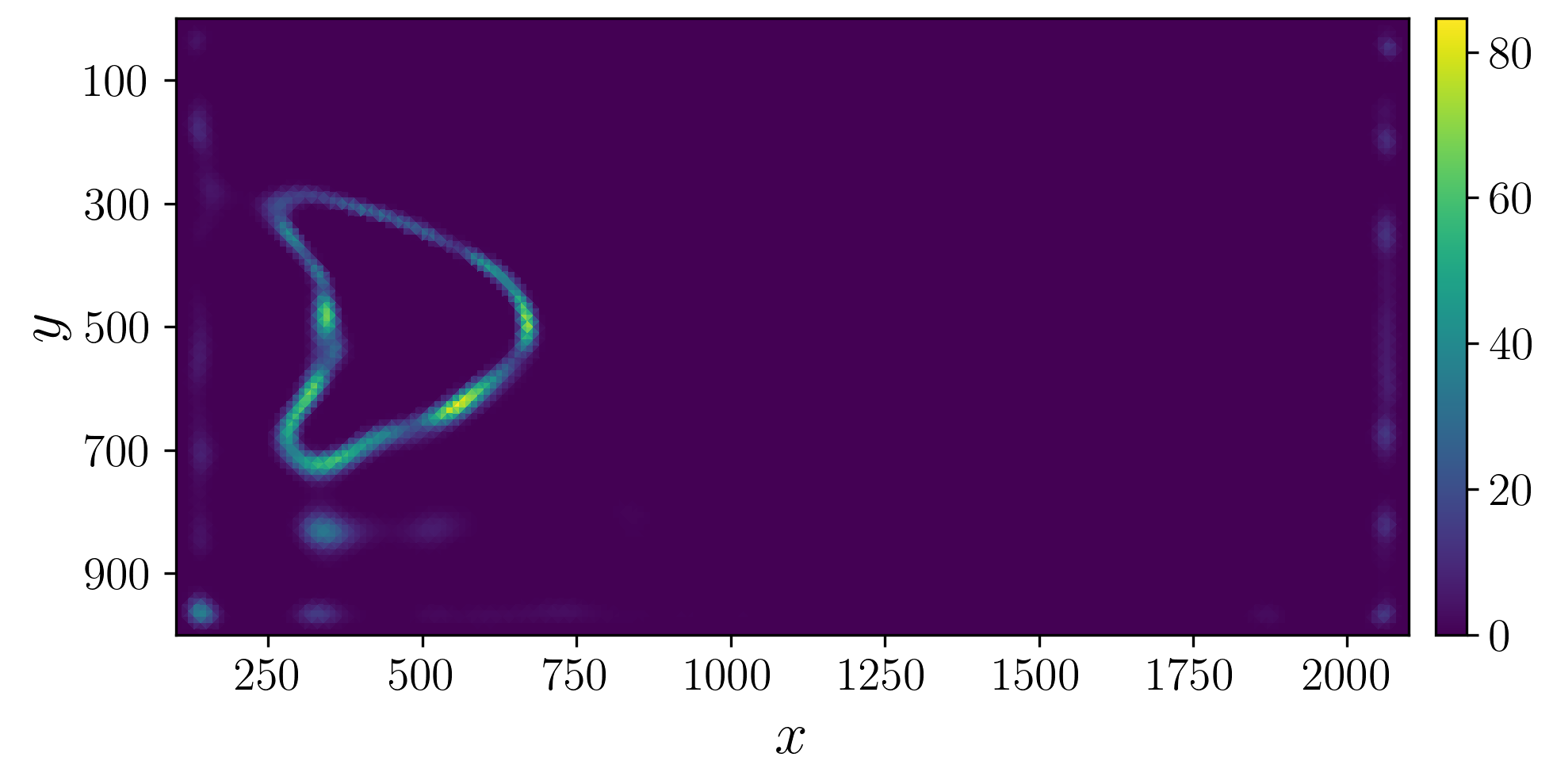}
\par\smallskip{\small (a) $\sigma_{\mathrm{noise}}=0$.\par}
\end{minipage}
\begin{minipage}[t]{0.33\linewidth}
\centering
\includegraphics[width=\linewidth]{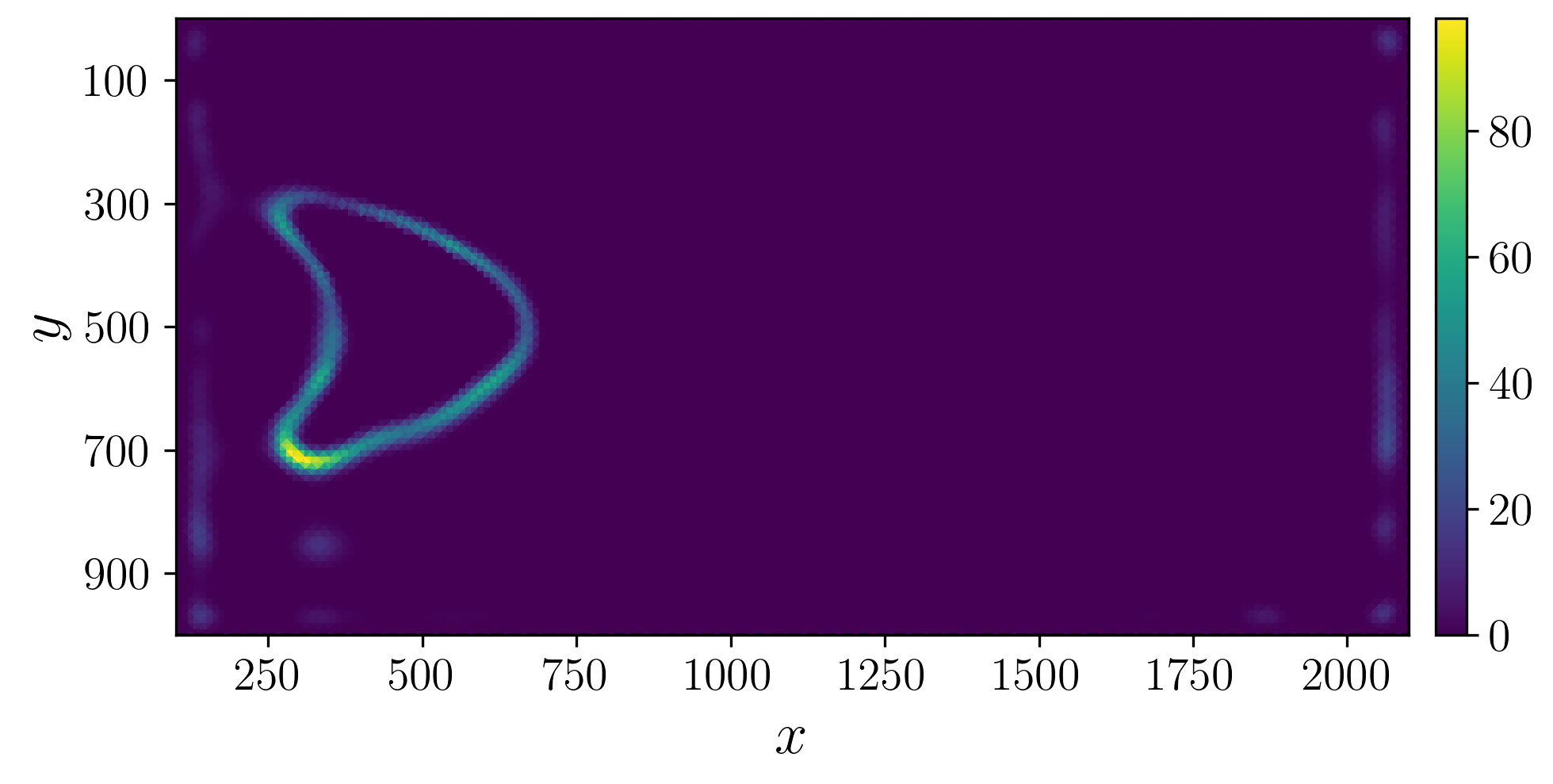}
\par\smallskip{\small (b) $\sigma_{\mathrm{noise}}=1\times10^{-4}$.\par}
\end{minipage}
\par\medskip
\begin{minipage}[t]{0.33\linewidth}
\centering
\includegraphics[width=\linewidth]{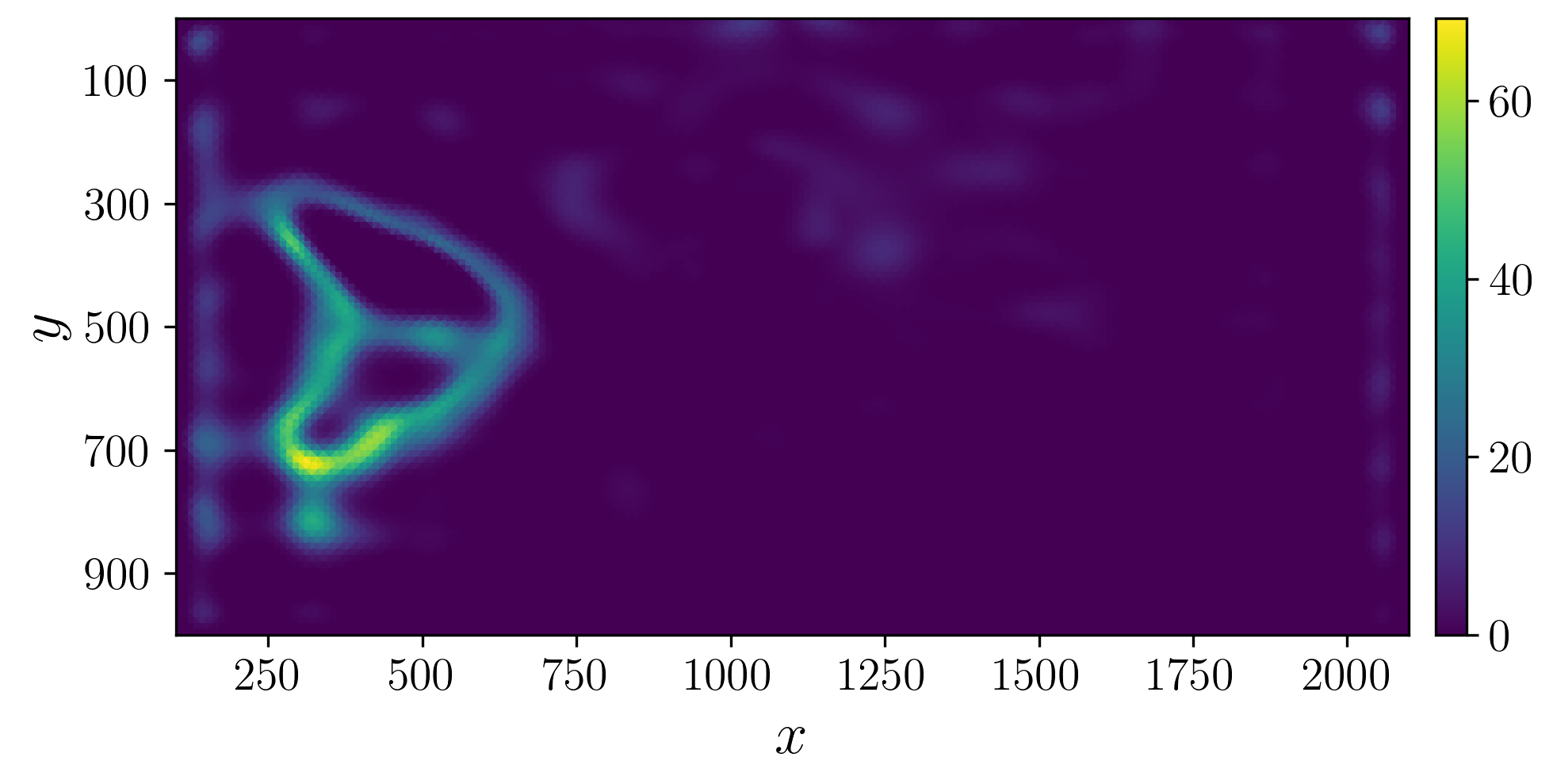}
\par\smallskip{\small (c) $\sigma_{\mathrm{noise}}=5\times10^{-4}$.\par}
\end{minipage}
\begin{minipage}[t]{0.33\linewidth}
\centering
\includegraphics[width=\linewidth]{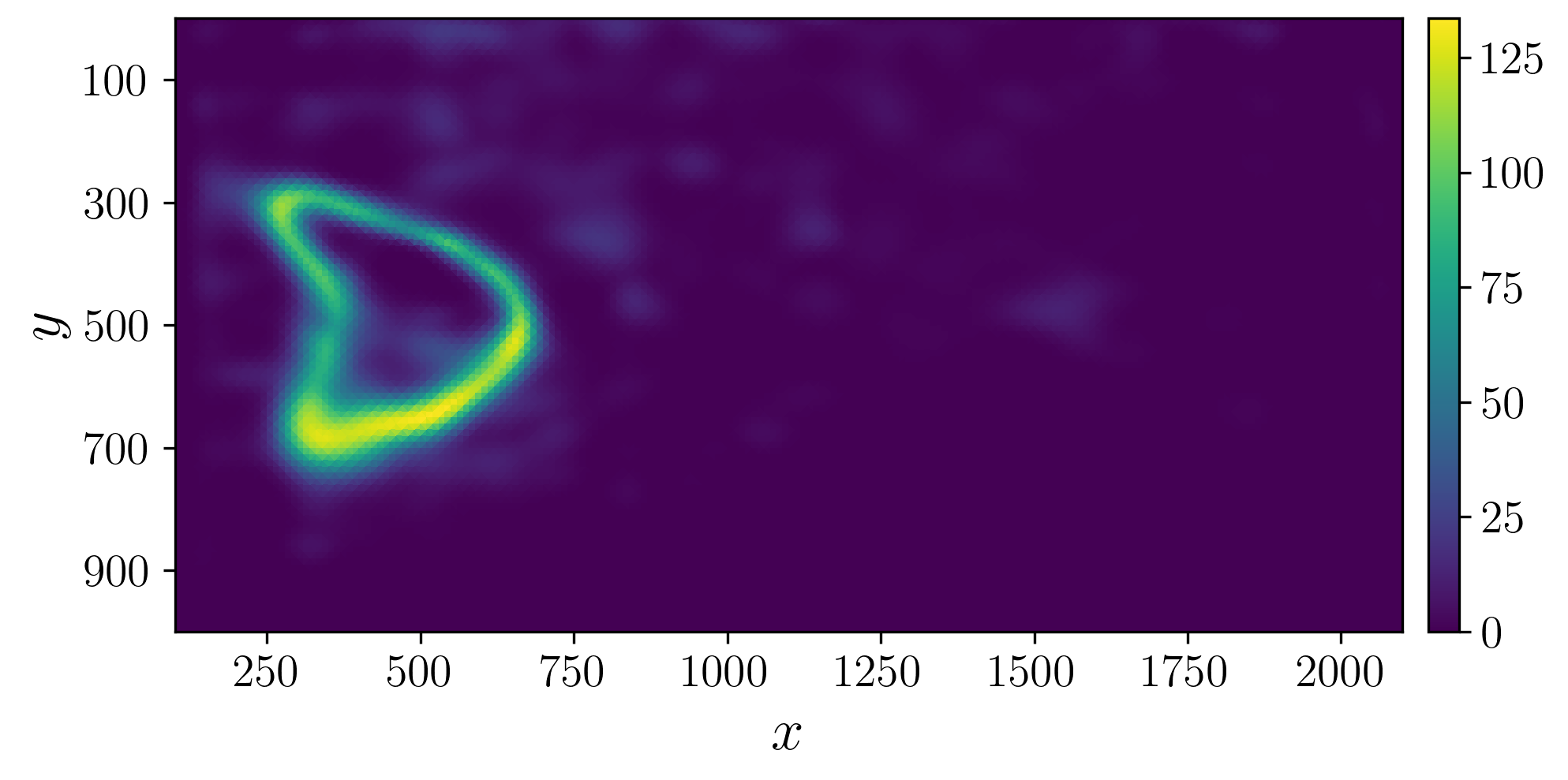}
\par\smallskip{\small (d) $\sigma_{\mathrm{noise}}=1\times10^{-3}$.\par}
\end{minipage}
\caption{Pointwise posterior standard deviations of velocity at four levels of observation noise, estimated from 100 samples of the final
Gaussian-mixture approximation in each run.}
\label{fig:ex2-noise-uncertainty}
\end{figure}

}
\end{example}

\begin{example}\label{example2}\rm{
This experiment tests the recovery of disconnected inclusions from
single-inclusion initializations. The true anomaly consists of a
nonconvex kite, a heart, and a rectangle. All computational, acquisition,
noise, RBF, and prior settings are unchanged from
Example~\ref{example1}.

Let $D_{\mathrm{kite}}$ be the kite-shaped region defined in
Example~\ref{example1}. The other two components are
\begin{equation*}
\begin{aligned}
D_{\mathrm{heart}}
&=
\left\{(x,y)\in\Omega:
\left(X_h^2+Y_h^2-1\right)^3-X_h^2Y_h^3\leq0
\right\},
\quad X_h=\frac{x-1140}{240},
\quad
Y_h=\frac{420-y}{160},\\
D_{\mathrm{rect}}
&=[1540,1900]\times[300,580].
\end{aligned}
\end{equation*}
The exact anomaly region is
\begin{equation*}
D_{\mathrm{true}}
=
D_{\mathrm{kite}}
\cup D_{\mathrm{heart}}
\cup D_{\mathrm{rect}}.
\end{equation*}
As in Example~\ref{example1}, the velocity is
$1500\,\mathrm{m/s}$ inside $D_{\mathrm{true}}$ and
$1000\,\mathrm{m/s}$ elsewhere, with
$\mu_{\mathrm{exact}}=v_{\mathrm{exact}}^{-2}$.

The same three fitted circular models are used as initializations, each containing only one connected component. The target therefore requires the reconstructed level set to split into three separated components. Frequency continuation uses cut-offs of $2$, $4$, and $6\,\mathrm{Hz}$; all other optimization and mode-propagation settings remain unchanged.

Figure \ref{fig:example2-frequency-stages} illustrates how the reconstructed geometry evolves as the frequency cut-off increases. At 0–2 Hz, the data primarily recover the inclusions’ approximate locations and overall sizes, while their boundaries remain poorly resolved and spurious patches are visible. Increasing the cut-off to 4 Hz separates the three inclusions more clearly and reduces these artifacts. At 0–6 Hz, the recovered boundaries more closely match the true shapes, with finer geometric details resolved. This progression shows that low-frequency data capture the large-scale structure, while adding higher-frequency data improves the reconstruction of boundary details.

Four Gaussian components are retained at the final frequency level, with normalized weights
\begin{equation*}
(\pi_1,\pi_2,\pi_3,\pi_4)
\approx(0.595851,\ 0.397877,\ 0.003138,\ 0.003133).
\end{equation*}
The first two components together account for $99.37\%$ of the total weight. The posterior means and pointwise standard deviations reported below are computed from $100$ samples drawn from this mixture.

\begin{figure}[H]
\centering

\begin{minipage}[t]{0.33\textwidth}
\centering
\includegraphics[width=\linewidth]{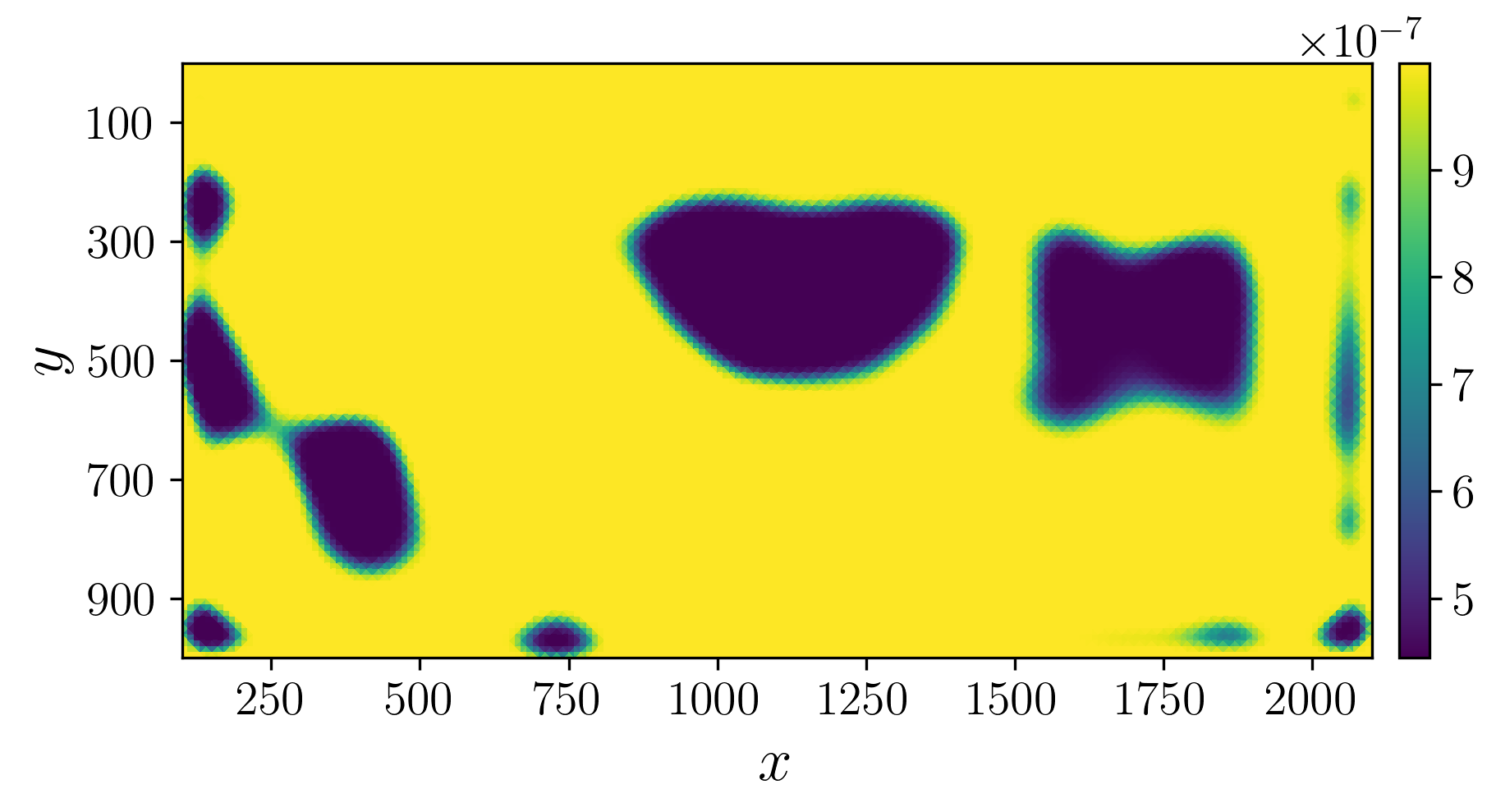}
\par\smallskip{\small (a) Squared slowness at $0$--$2\,\mathrm{Hz}$.\par}
\end{minipage}\hfill
\begin{minipage}[t]{0.33\textwidth}
\centering
\includegraphics[width=\linewidth]{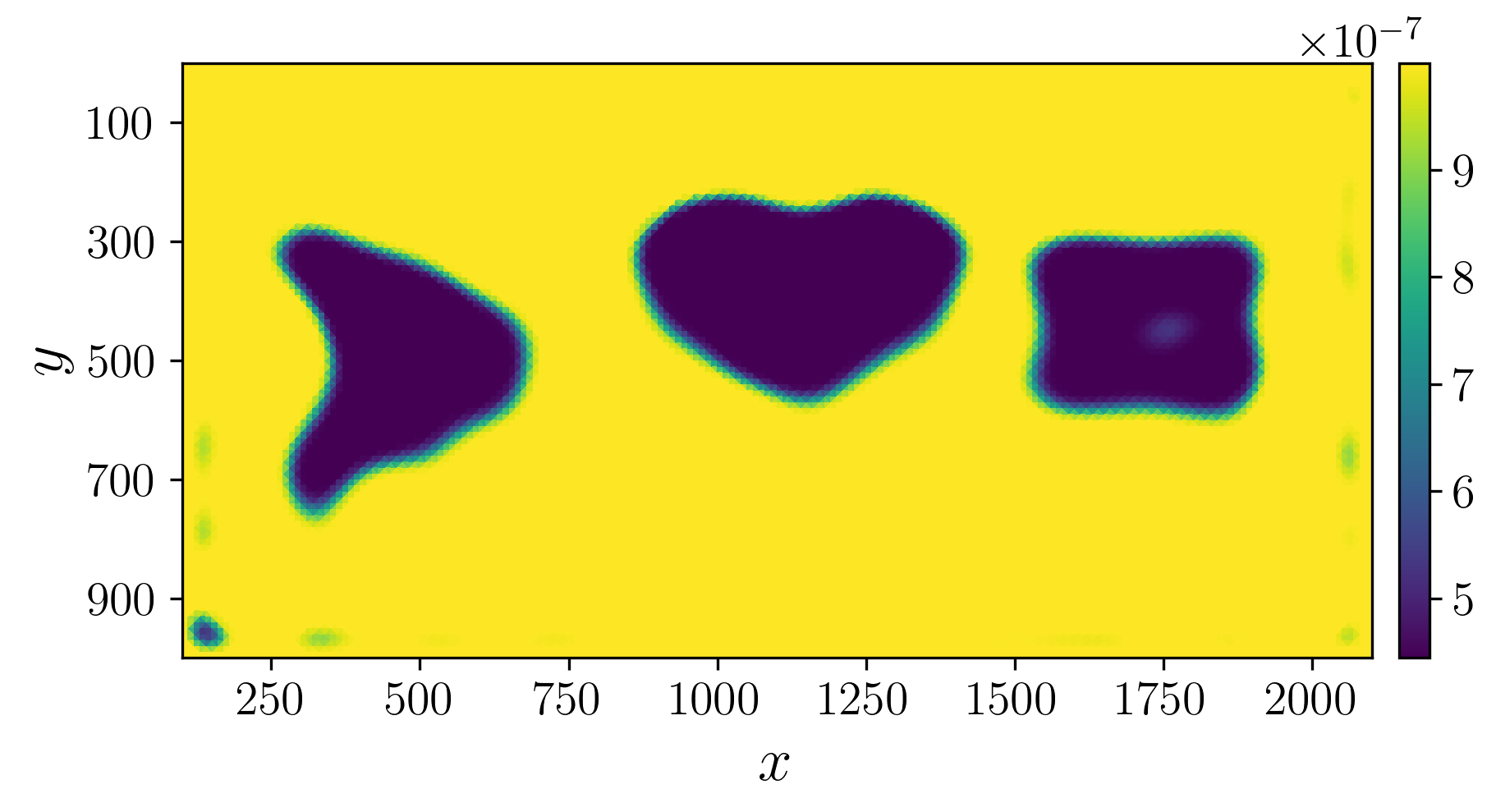}
\par\smallskip{\small (b) Squared slowness at $0$--$4\,\mathrm{Hz}$.\par}
\end{minipage}\hfill
\begin{minipage}[t]{0.33\textwidth}
\centering
\includegraphics[width=\linewidth]{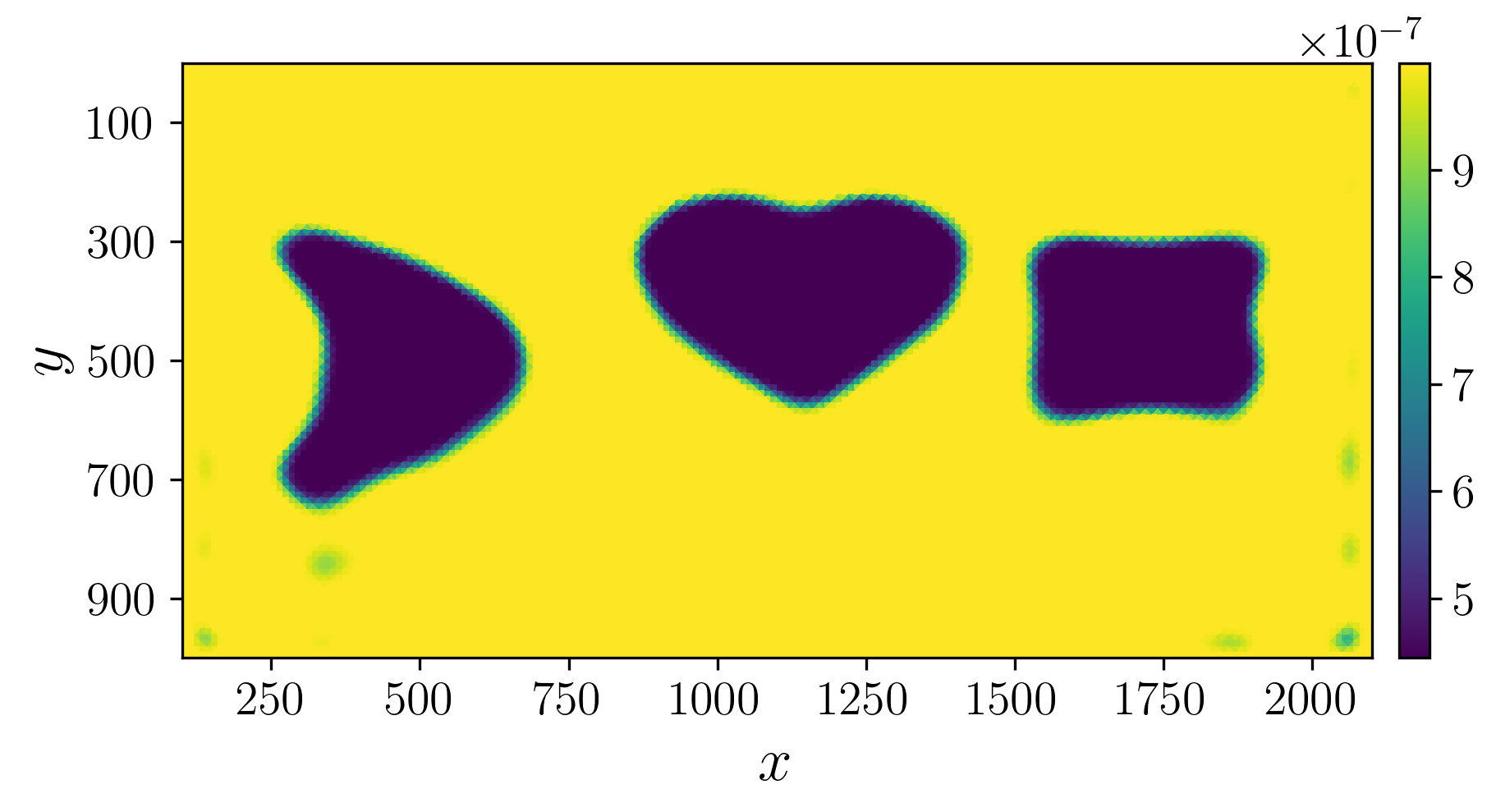}
\par\smallskip{\small (c) Squared slowness at $0$--$6\,\mathrm{Hz}$.\par}
\end{minipage}

\par\medskip

\begin{minipage}[t]{0.33\textwidth}
\centering
\includegraphics[width=\linewidth]{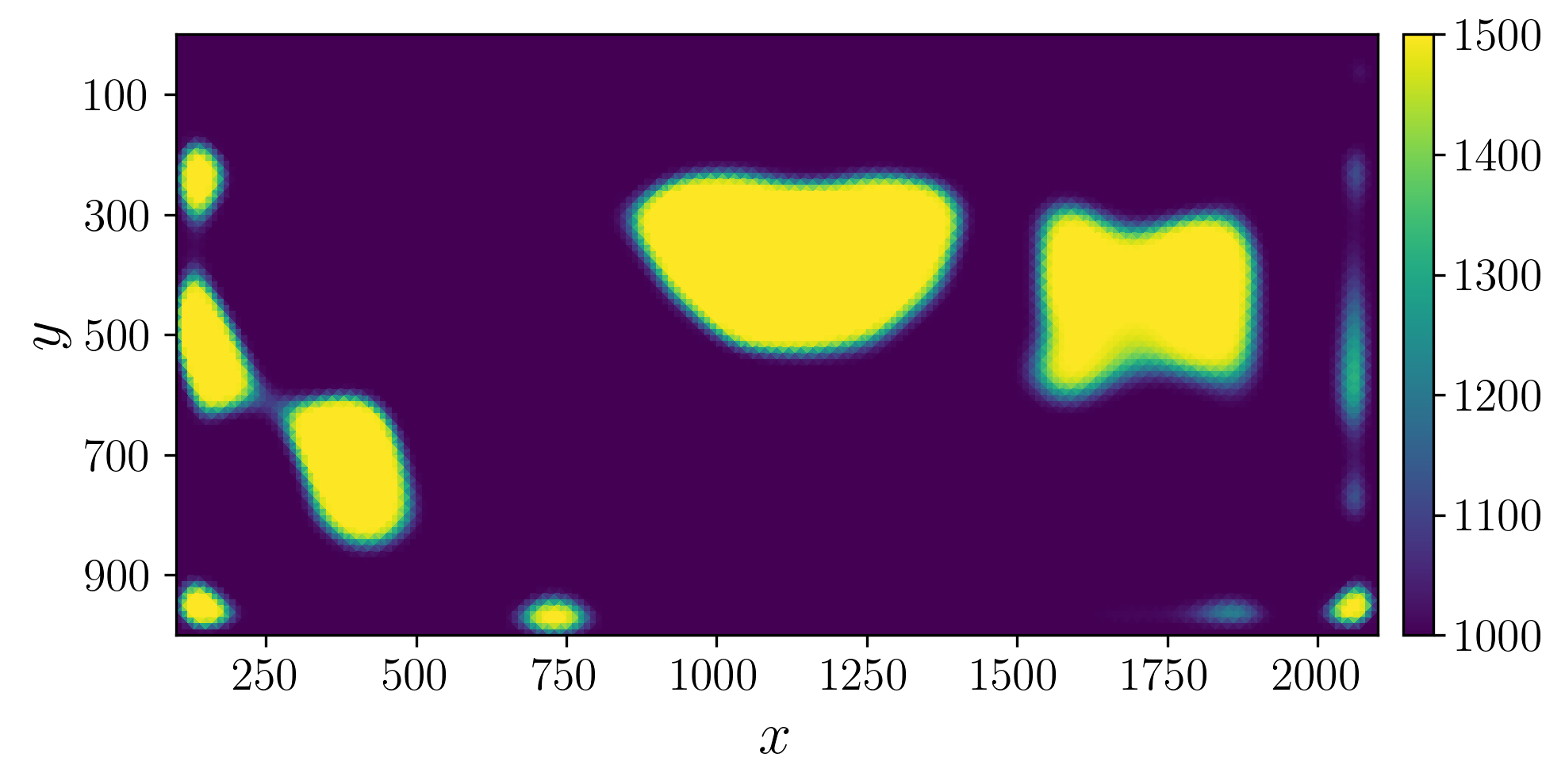}
\par\smallskip{\small (d) Velocity at $0$--$2\,\mathrm{Hz}$.\par}
\end{minipage}\hfill
\begin{minipage}[t]{0.33\textwidth}
\centering
\includegraphics[width=\linewidth]{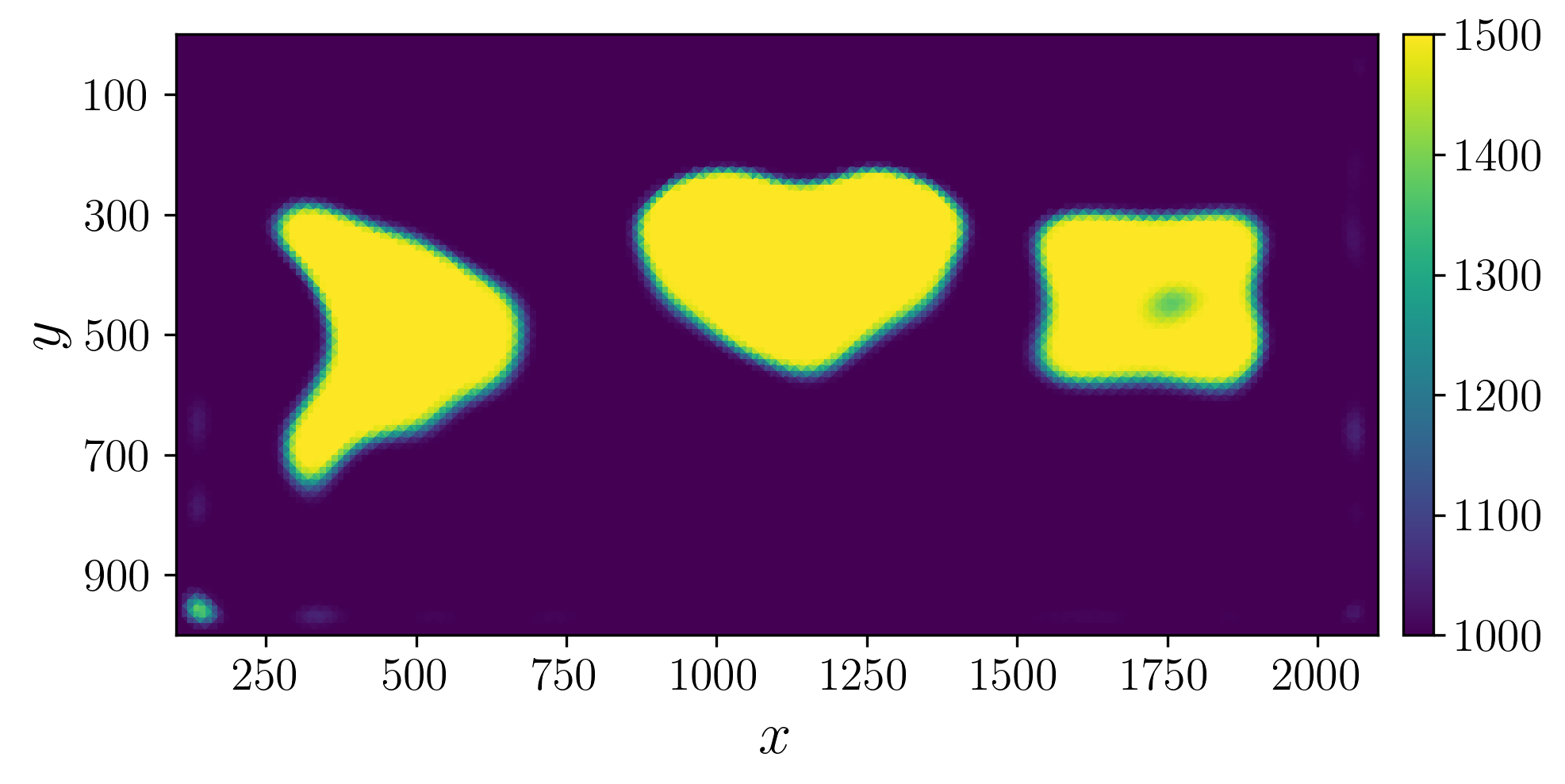}
\par\smallskip{\small (e) Velocity at $0$--$4\,\mathrm{Hz}$.\par}
\end{minipage}\hfill
\begin{minipage}[t]{0.33\textwidth}
\centering
\includegraphics[width=\linewidth]{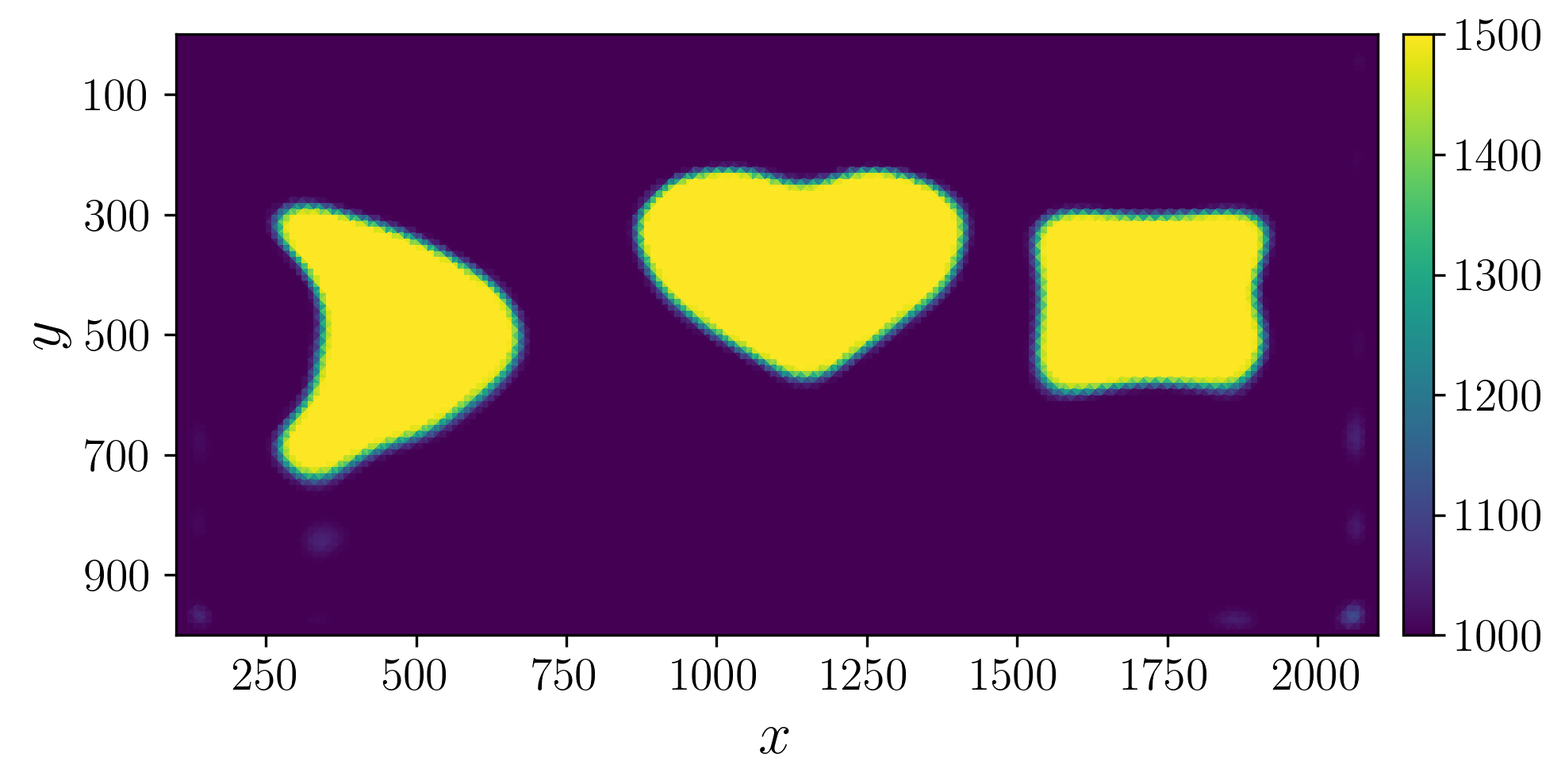}
\par\smallskip{\small (f) Velocity at $0$--$6\,\mathrm{Hz}$.\par}
\end{minipage}

\caption{Best retained MAP reconstructions at the three frequency levels in Example~\ref{example2}. The top and bottom rows show squared slowness and velocity, respectively; the columns correspond to $0$--$2$, $0$--$4$, and $0$--$6\,\mathrm{Hz}$.}
\label{fig:example2-frequency-stages}
\end{figure}

Figure~\ref{fig:example2-reconstruction} compares the true
squared-slowness and velocity fields with the corresponding best-MAP
and posterior-mean reconstructions. Although every
initial model contains only one circle, the reconstruction separates
into three distinct inclusions. Their locations and principal geometric
features are recovered, including the concave side of the kite and the
overall heart and rectangular shapes. The upper indentation of the
heart and the corners of the rectangle are smoothed, and the rectangular
edges are slightly curved. The mixture-mean squared-slowness and velocity fields are essentially identical to their MAP counterparts.

Figure~\ref{fig:example2-uncertainty} shows the corresponding pointwise
standard deviations. Variability is concentrated near the reconstructed
interfaces, most strongly along the concave side of the kite and parts
of the heart boundary. The rectangular boundary has comparatively lower
variability, while the inclusion interiors and most of the background
remain stable.

\begin{figure}[H]
\centering

\begin{minipage}[t]{0.33\textwidth}
\centering
\includegraphics[width=\linewidth]{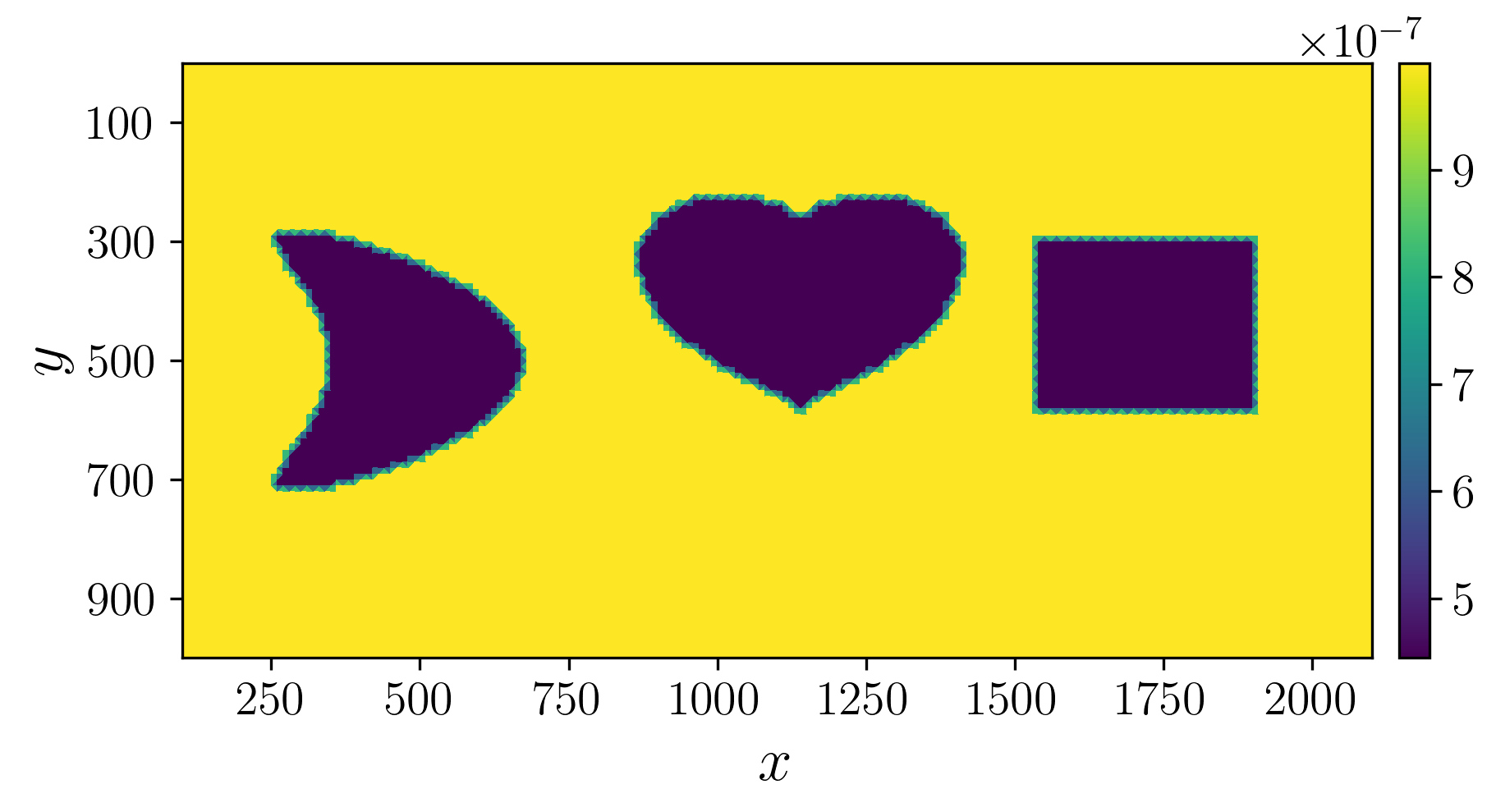}
\par\smallskip{\small (a) Exact squared slowness.\par}
\end{minipage}\hfill
\begin{minipage}[t]{0.33\textwidth}
\centering
\includegraphics[width=\linewidth]{Example3/Best_final_MAP_mux2.png}
\par\smallskip{\small (b) Best retained MAP squared slowness.\par}
\end{minipage}\hfill
\begin{minipage}[t]{0.33\textwidth}
\centering
\includegraphics[width=\linewidth]{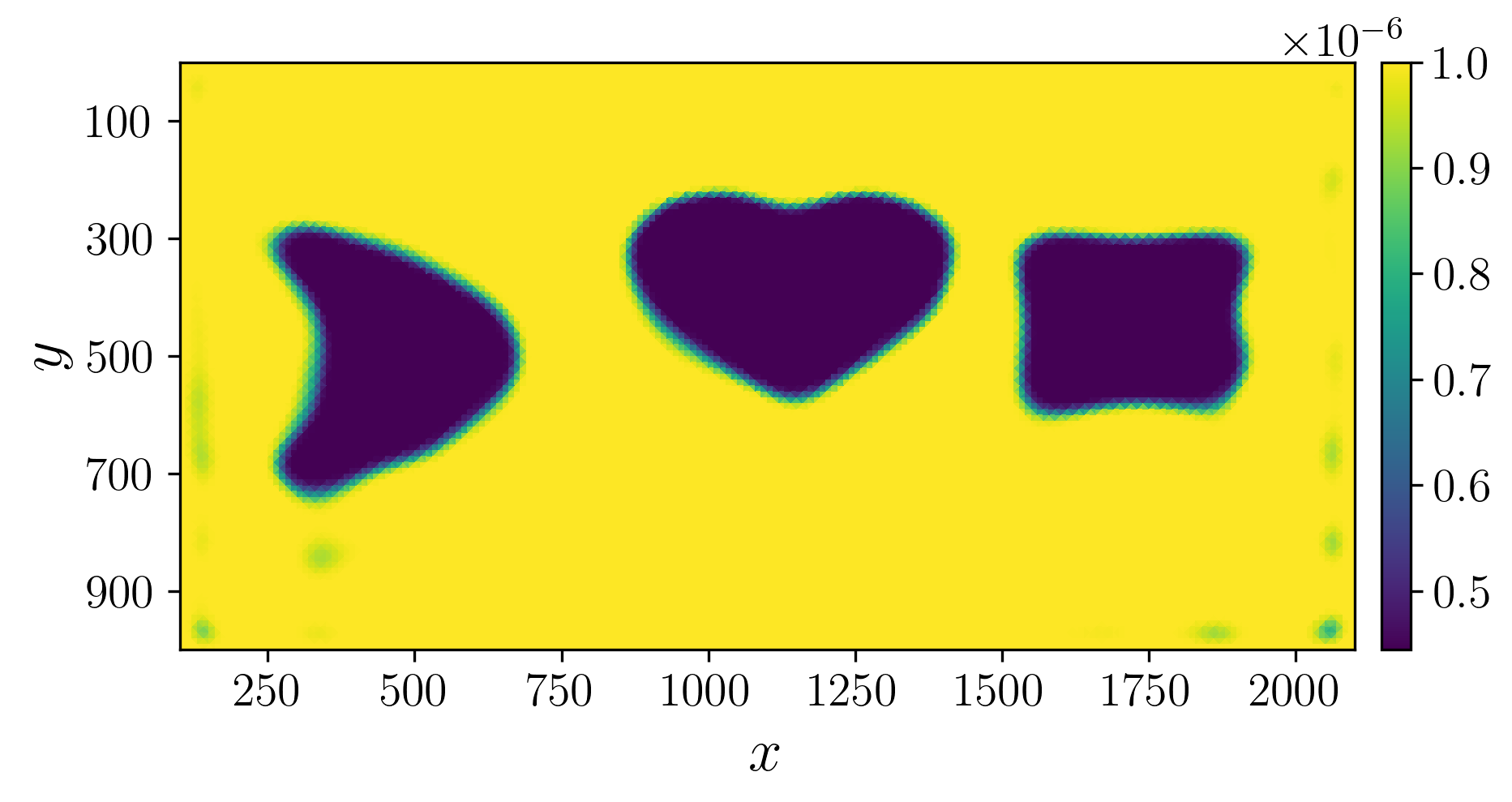}
\par\smallskip{\small (c) Mixture mean squared slowness.\par}
\end{minipage}

\par\medskip

\begin{minipage}[t]{0.33\textwidth}
\centering
\includegraphics[width=\linewidth]{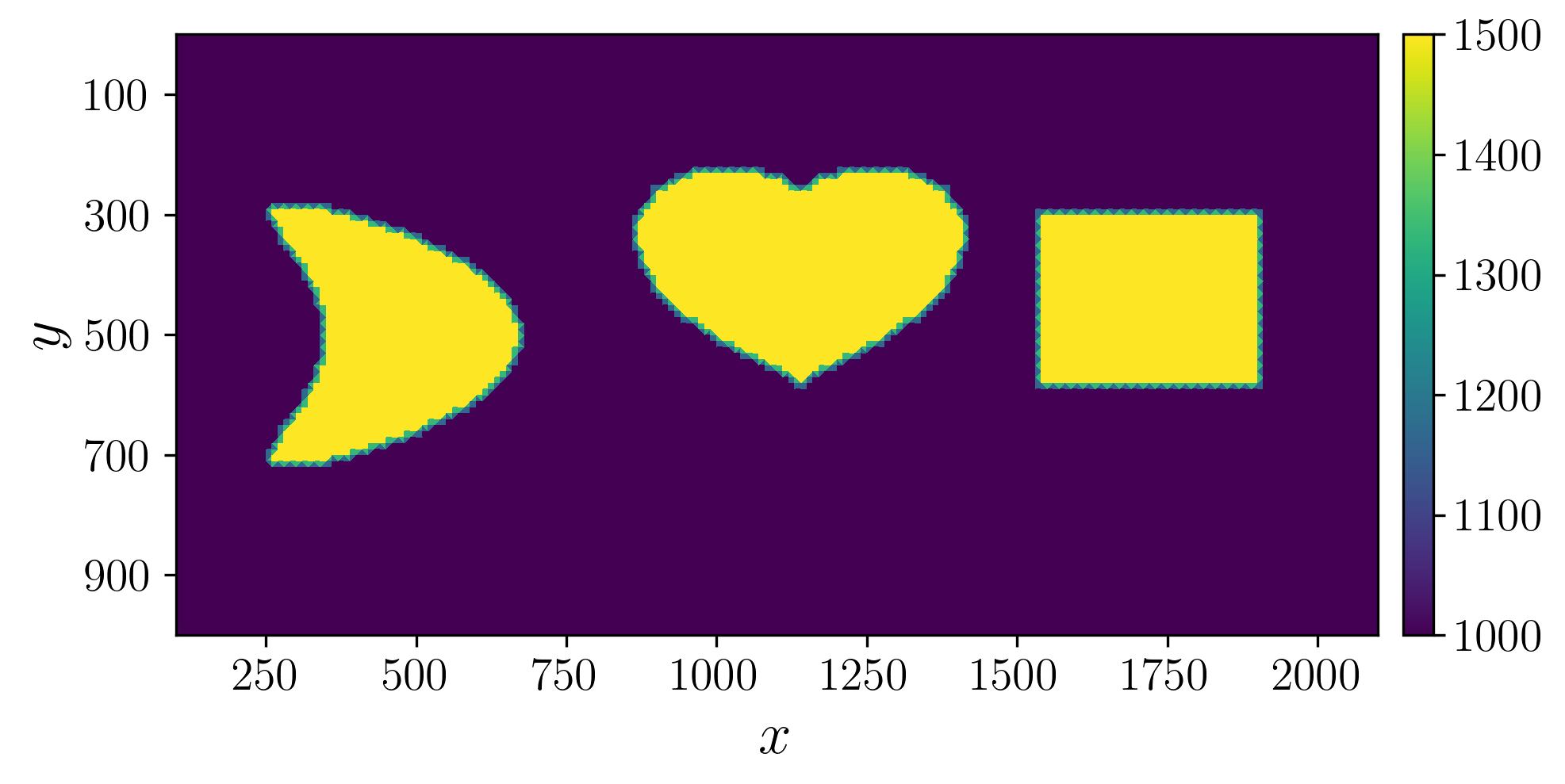}
\par\smallskip{\small (d) Exact velocity.\par}
\end{minipage}\hfill
\begin{minipage}[t]{0.33\textwidth}
\centering
\includegraphics[width=\linewidth]{Example3/Best_final_MAP_velx2.png}
\par\smallskip{\small (e) Best retained MAP velocity.\par}
\end{minipage}\hfill
\begin{minipage}[t]{0.33\textwidth}
\centering
\includegraphics[width=\linewidth]{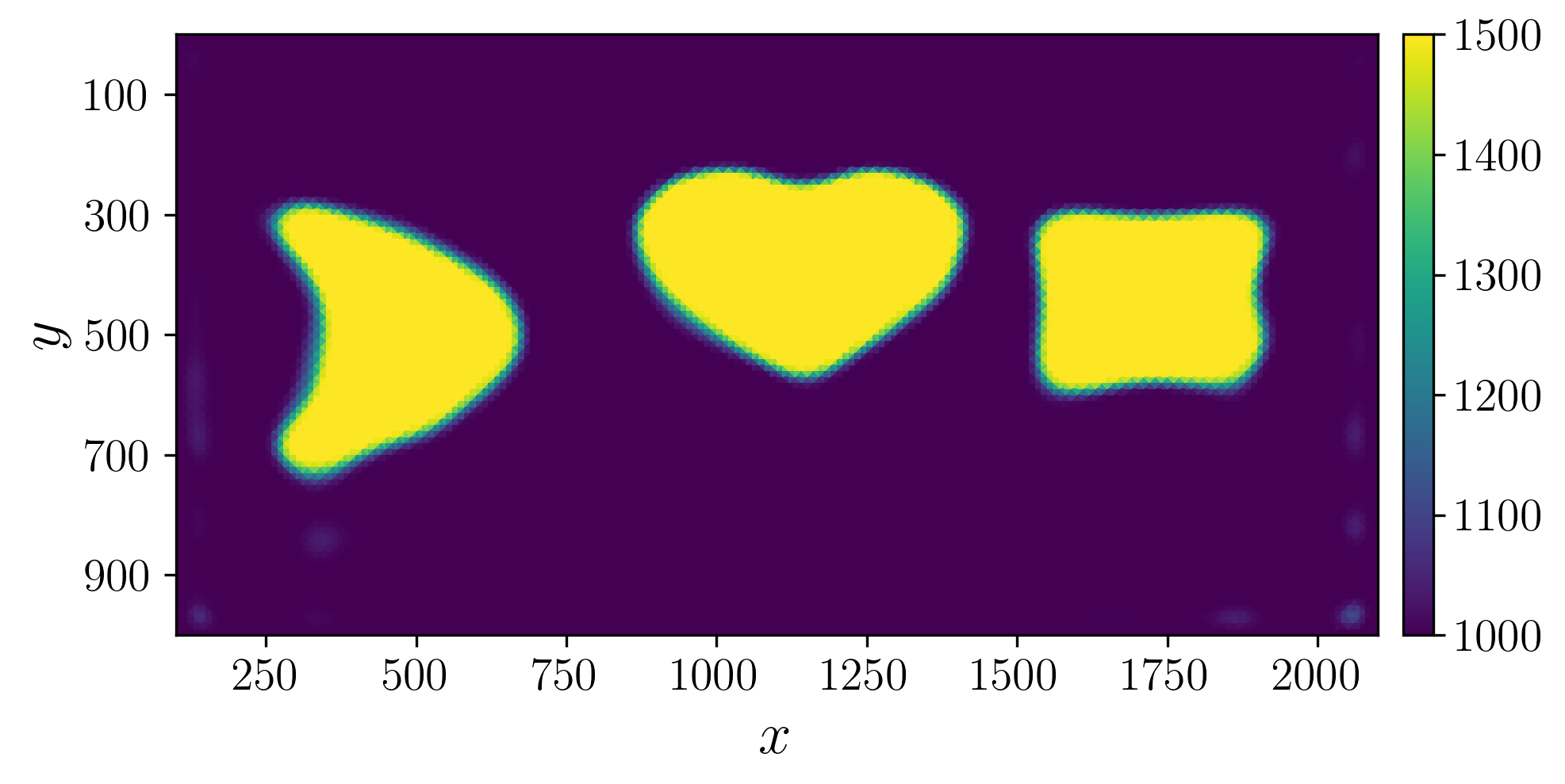}
\par\smallskip{\small (f) Mixture mean velocity.\par}
\end{minipage}

\caption{Exact, best retained MAP, and mixture mean fields for squared slowness (top row) and velocity (bottom row) in Example~\ref{example2}.}
\label{fig:example2-reconstruction}
\end{figure}

\begin{figure}[H]
\centering
\begin{minipage}[t]{0.33\textwidth}
\centering
\includegraphics[width=\linewidth]{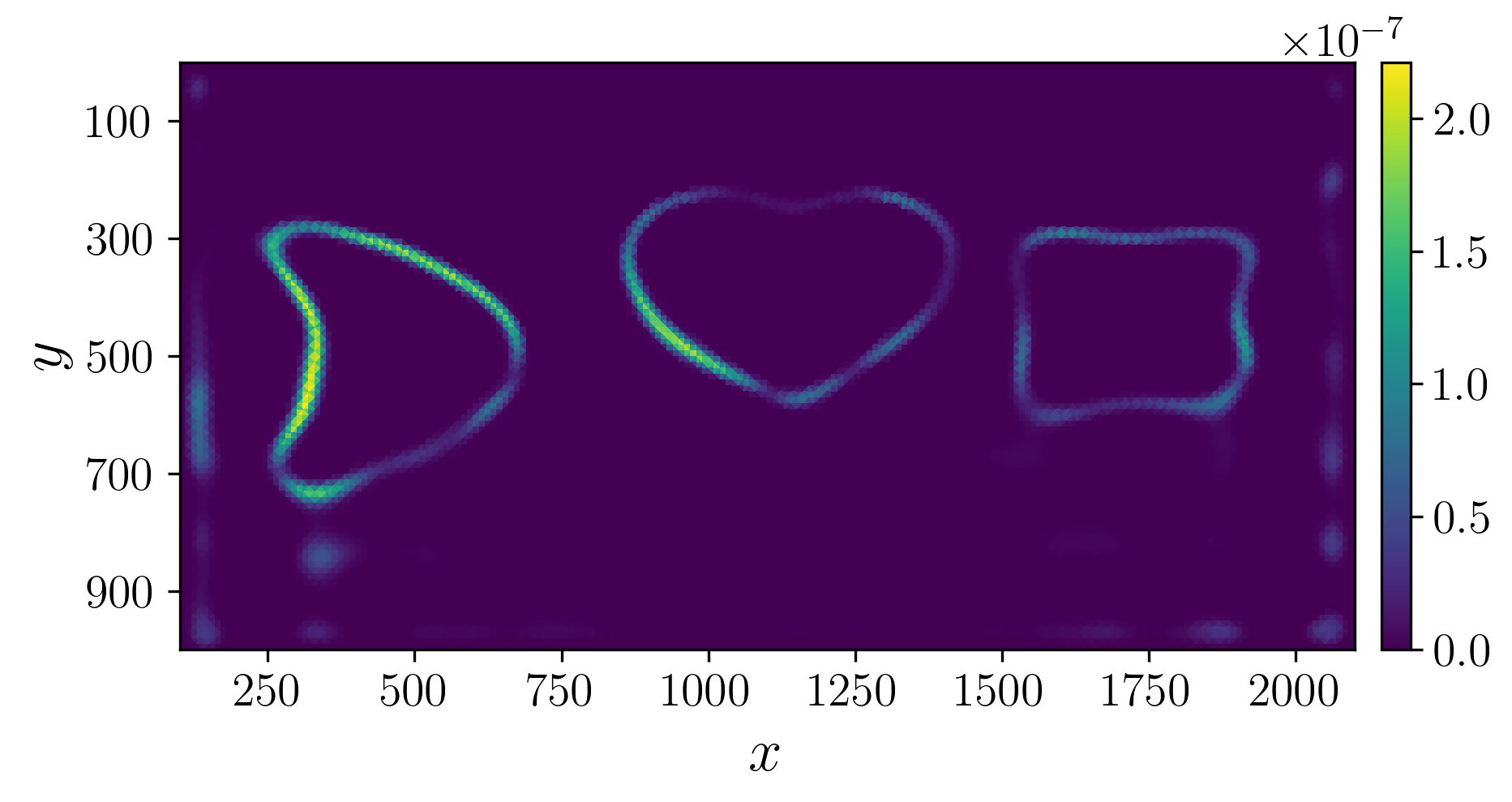}
\par\smallskip{\small (a) Squared slowness standard deviation.\par}
\end{minipage}
\begin{minipage}[t]{0.33\textwidth}
\centering
\includegraphics[width=\linewidth]{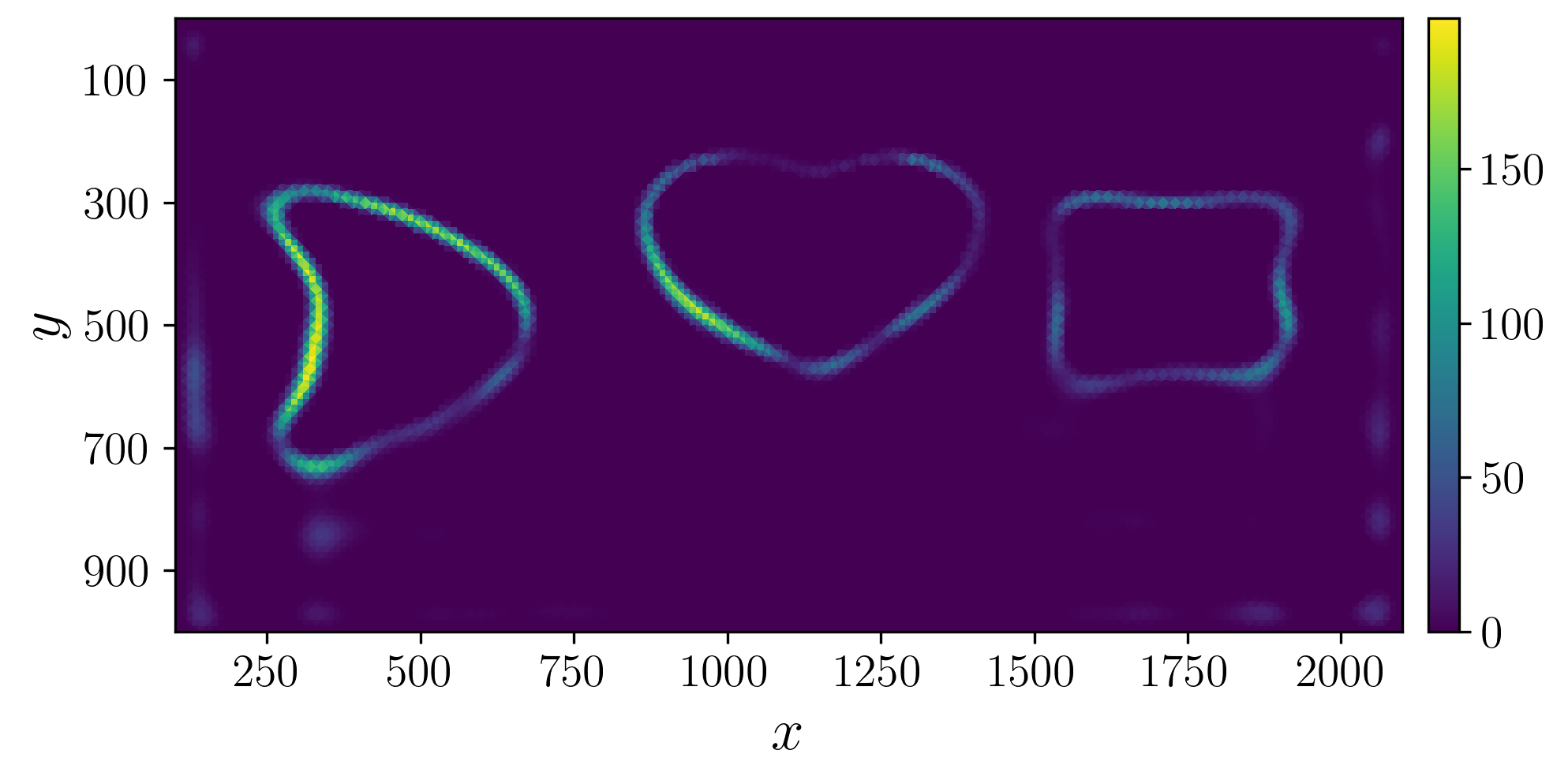}
\par\smallskip{\small (b) Velocity standard deviation.\par}
\end{minipage}
\caption{Pointwise standard deviations of squared slowness (a) and velocity (b) under the constructed Gaussian mixture. (Example~\ref{example2}).}
\label{fig:example2-uncertainty}
\end{figure}

We compare the proposed method with the standard single-component
Laplace approximation and gPCN-MCMC. The former provides the direct
baseline for assessing the multiscale multimodal extension, while gPCN
serves as a sampling-based reference. For the boundary uncertainty
metrics, all three methods use $r=20\,\mathrm m$, $\alpha=0.05$, and
$\eta=20\,\mathrm m$.

The gPCN chain is run for $100000$ iterations, with the first $20000$
discarded and every tenth subsequent state retained, yielding $8000$
samples. Of the proposed moves, $12498$ are accepted, corresponding to
an acceptance rate of $12.498\%$. The retained states are used to
compute the posterior means and standard deviations. Since no separate
MAP optimization is performed, the Hausdorff distance is evaluated
using the visited state with the smallest posterior objective.

Figure~\ref{fig:ex3-gpcn-comparison} shows that the best gPCN state
separates all three inclusions, although the interfaces and rectangular
corners remain smooth. The posterior means have broader transition
regions, particularly around the kite and rectangle. Their standard
deviations extend into the interiors of these two inclusions, whereas
the variability around the heart is more closely confined to its
boundary. Compared with the proposed method, the gPCN reconstruction is
more diffuse and its uncertainty is less sharply localized near the
interfaces.

For the single-component Laplace approximation, the same
$401$-coefficient representation, likelihood scale, and prior are used. The MAP candidate is computed using $1000$ Adam epochs with learning rate $1.0$.
Figure~\ref{fig:ex3-single-laplace} shows that the three inclusions are recovered and that the posterior means are essentially identical to the
MAP squared-slowness and velocity fields. Small spurious patches remain
near the ROI margins, especially along the lower boundary. The
standard deviations are comparatively small and are confined to
selected interface segments and the spurious patches, reflecting the
local character of the single-component approximation.

\begin{figure}[H]
\centering

\begin{minipage}[t]{0.33\linewidth}
\centering
\includegraphics[width=\linewidth]{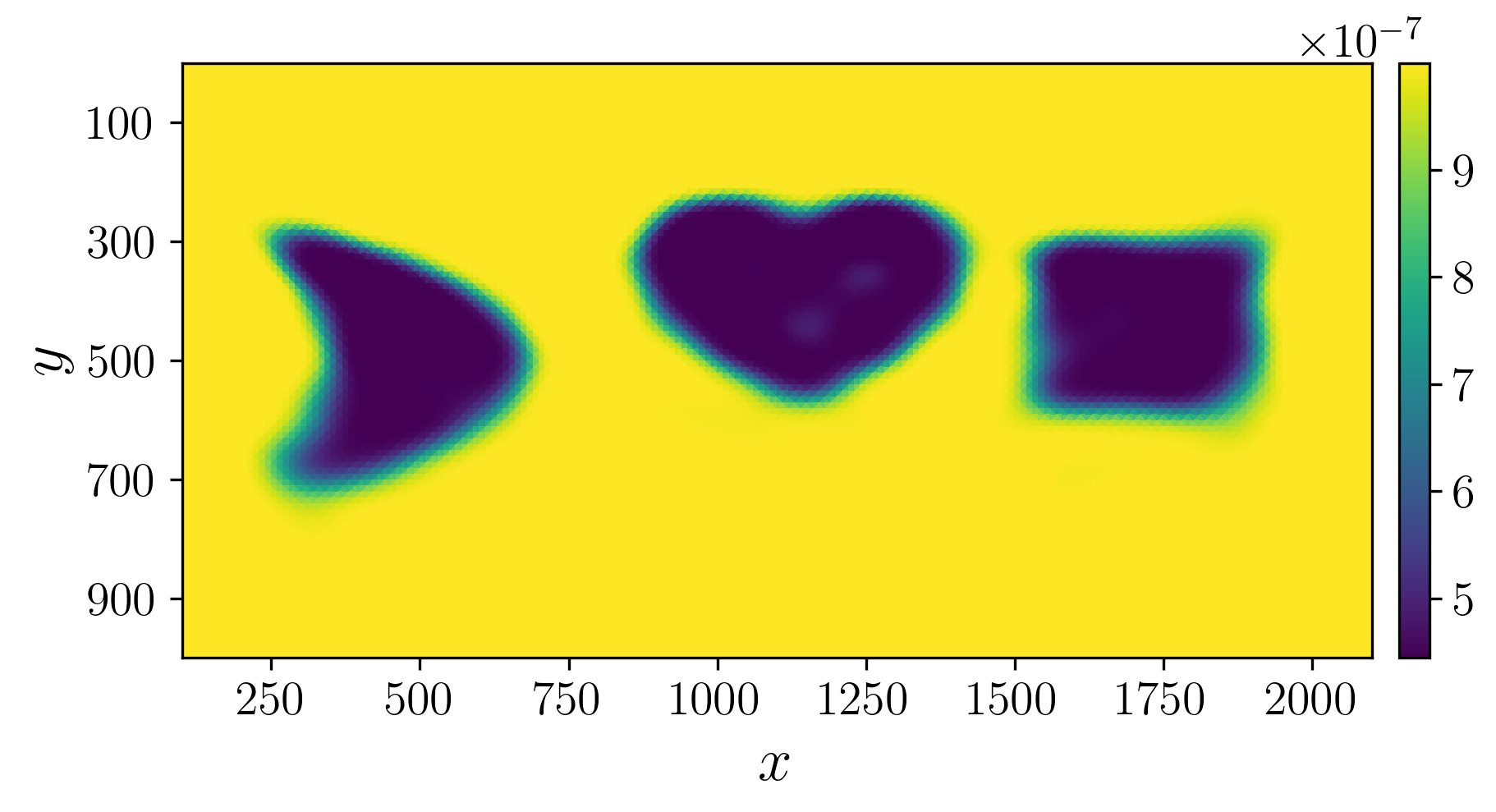}
\par\smallskip{\small (a) Loweat-objective squared slowness.\par}
\end{minipage}\hfill
\begin{minipage}[t]{0.33\linewidth}
\centering
\includegraphics[width=\linewidth]{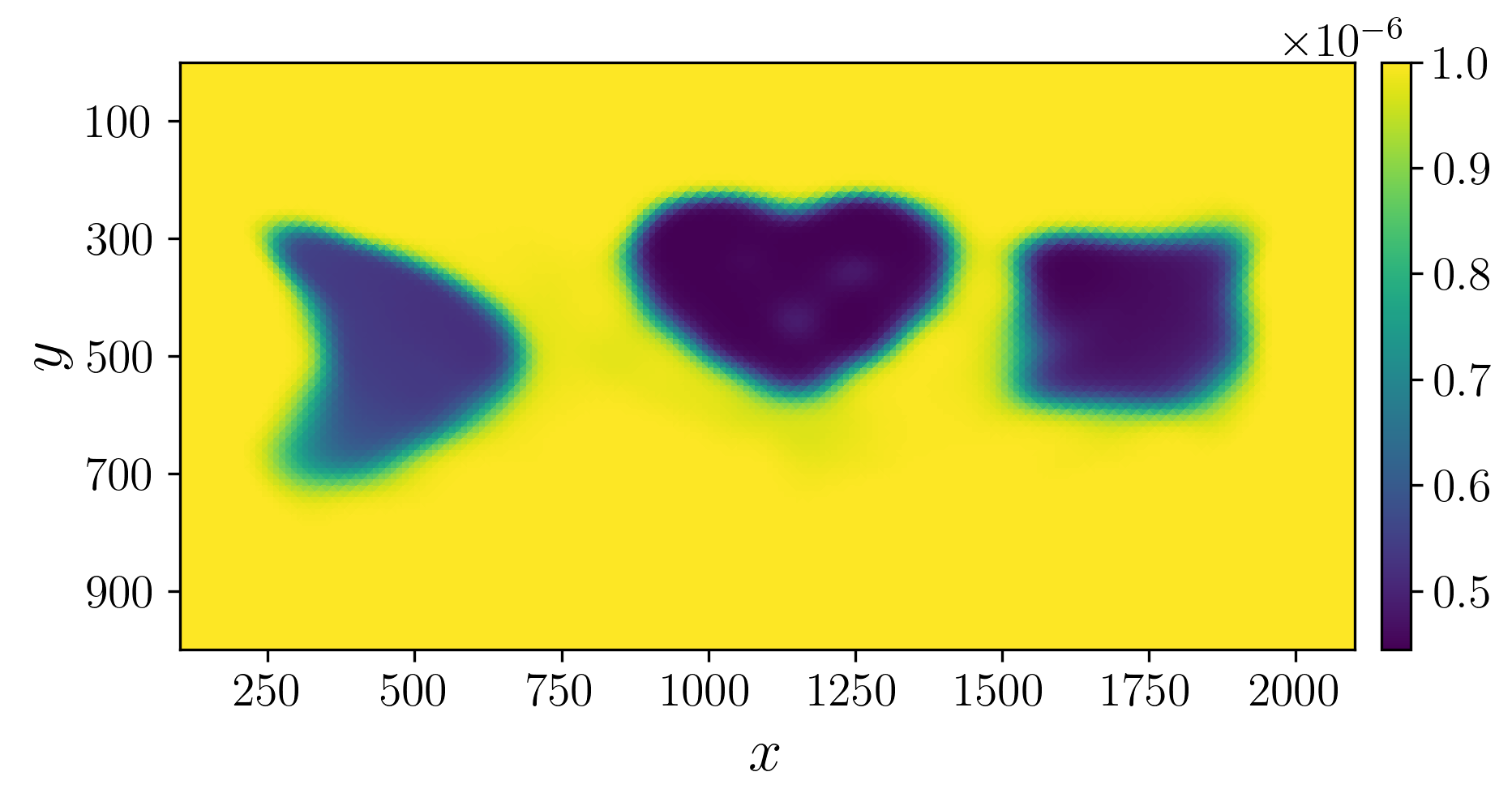}
\par\smallskip{\small (b) Posterior mean of squared slowness.\par}
\end{minipage}\hfill
\begin{minipage}[t]{0.33\linewidth}
\centering
\includegraphics[width=\linewidth]{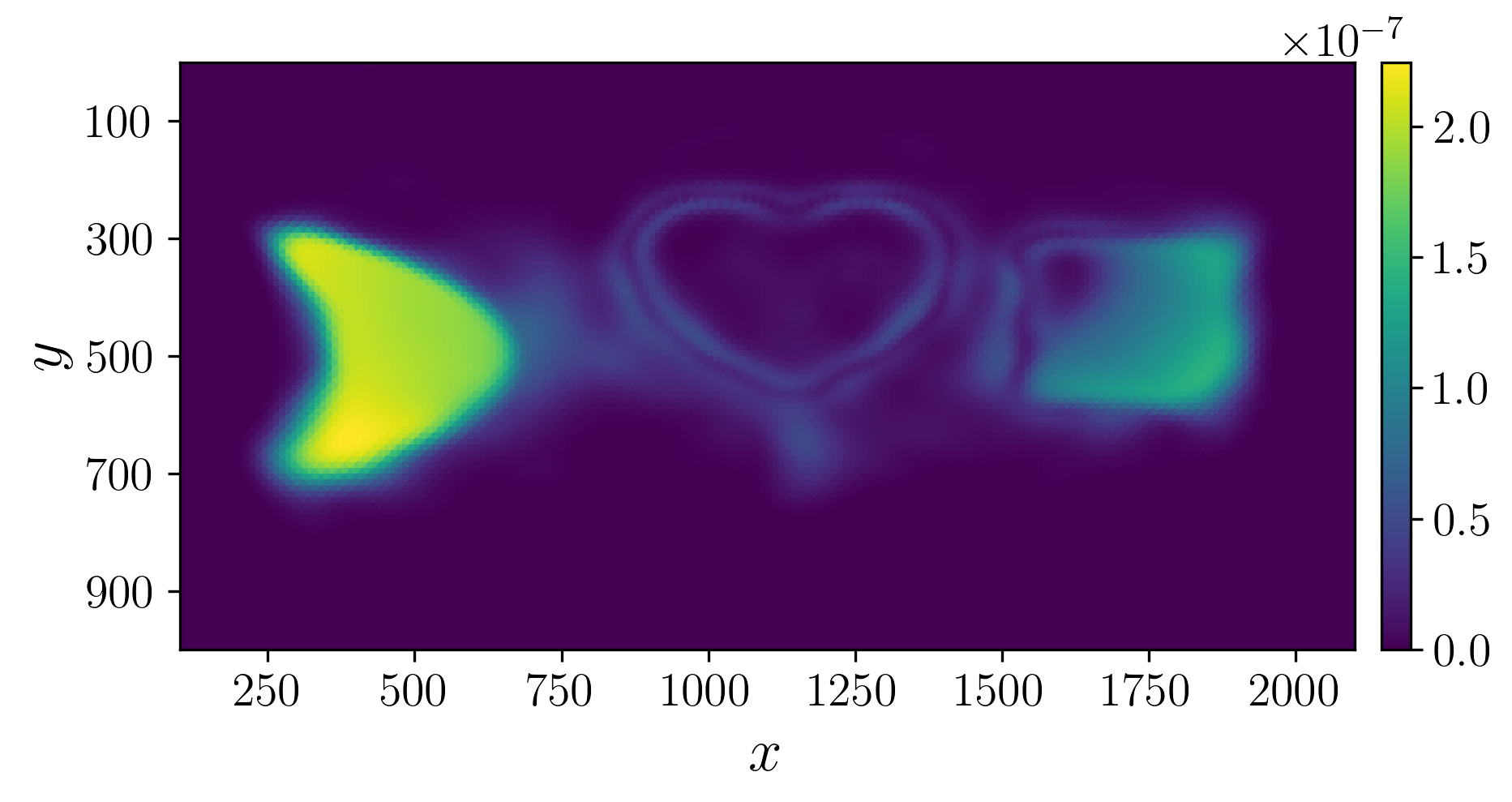}
\par\smallskip{\small (c) Posterior standard deviation of squared slowness.\par}
\end{minipage}

\par\medskip

\begin{minipage}[t]{0.33\linewidth}
\centering
\includegraphics[width=\linewidth]{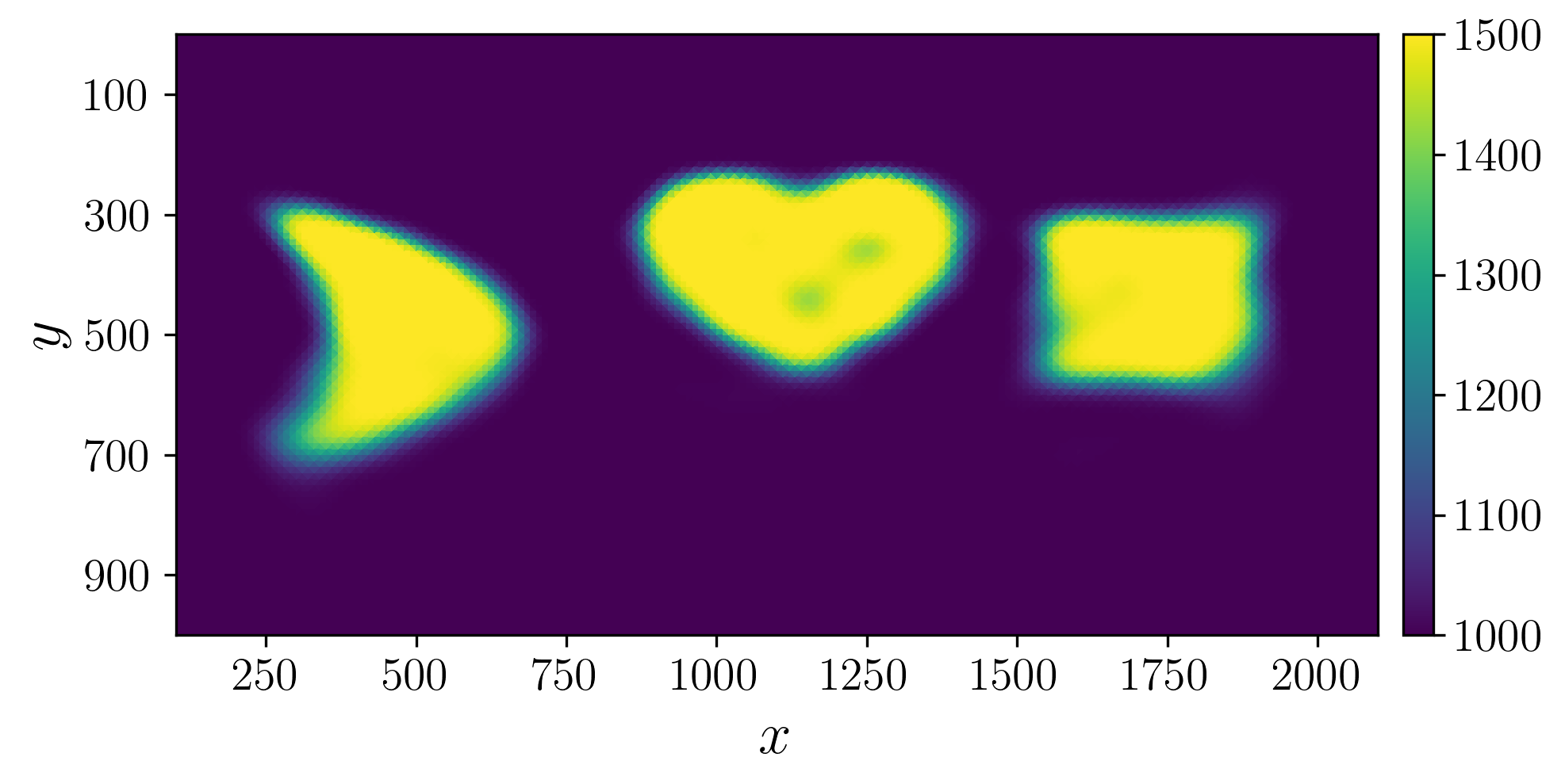}
\par\smallskip{\small (d) Loweat-objective velocity.\par}
\end{minipage}\hfill
\begin{minipage}[t]{0.33\linewidth}
\centering
\includegraphics[width=\linewidth]{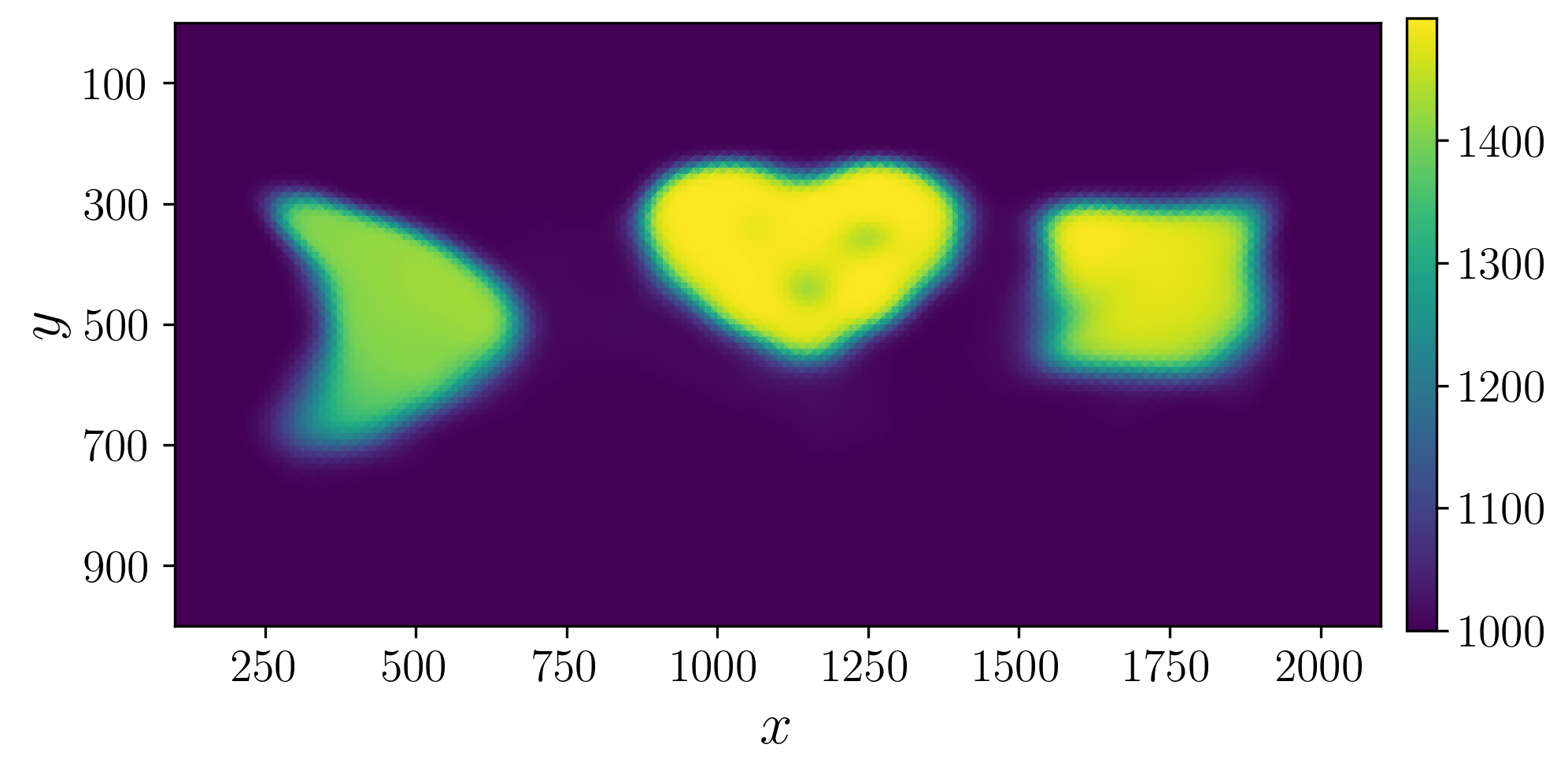}
\par\smallskip{\small (e) Posterior mean of velocity.\par}
\end{minipage}\hfill
\begin{minipage}[t]{0.33\linewidth}
\centering
\includegraphics[width=\linewidth]{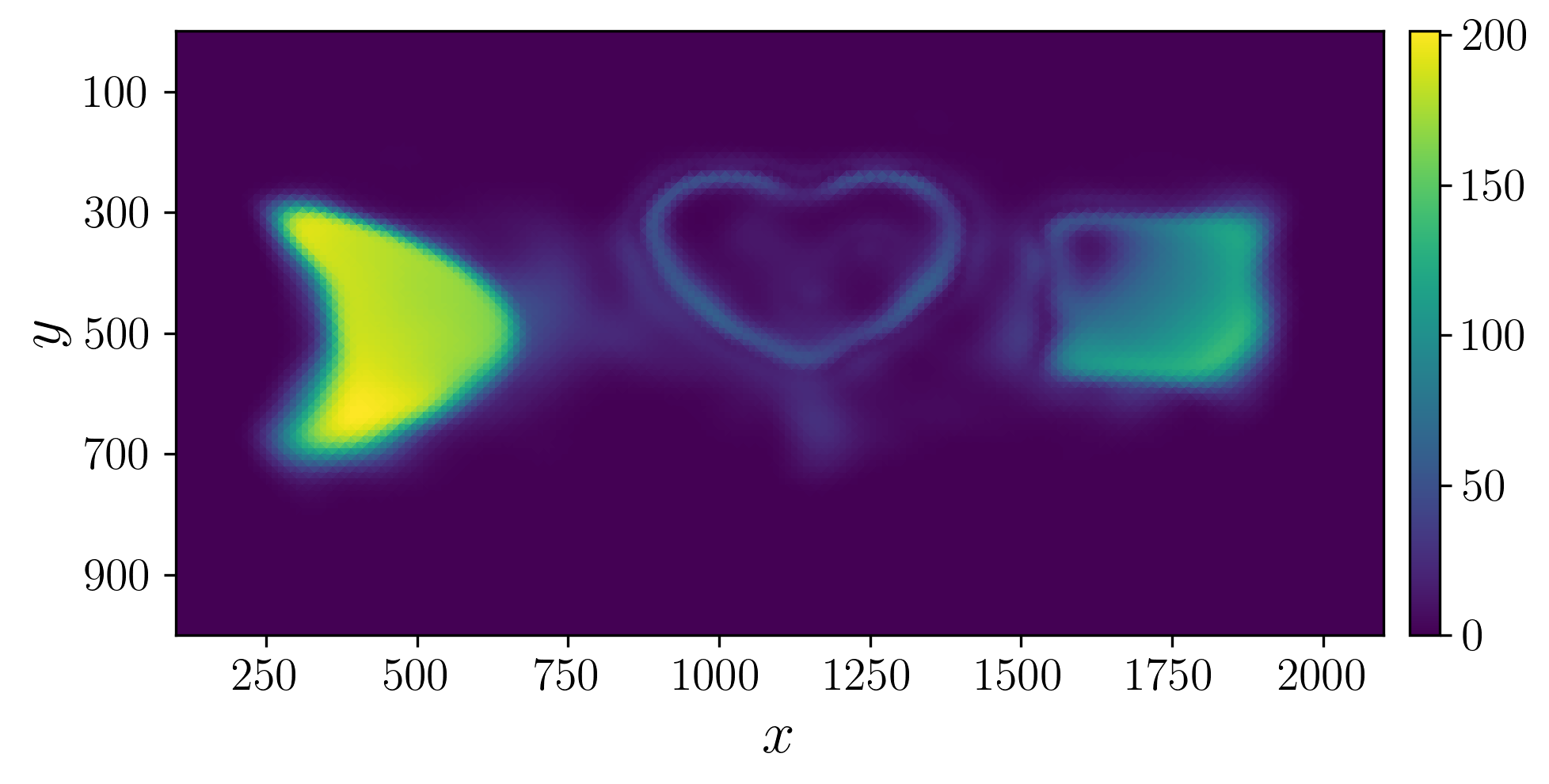}
\par\smallskip{\small (f) Posterior standard deviation of velocity.\par}
\end{minipage}

\caption{gPCN-MCMC results for the three-inclusion experiment (Example~\ref{example2}). The top and bottom rows show squared slowness and velocity, respectively. From left to right: the retained state with the smallest posterior objective, posterior mean, and pointwise posterior standard deviation.}
\label{fig:ex3-gpcn-comparison}
\end{figure}

\begin{figure}[H]
\centering

\begin{minipage}[t]{0.33\linewidth}
\centering
\includegraphics[width=\linewidth]{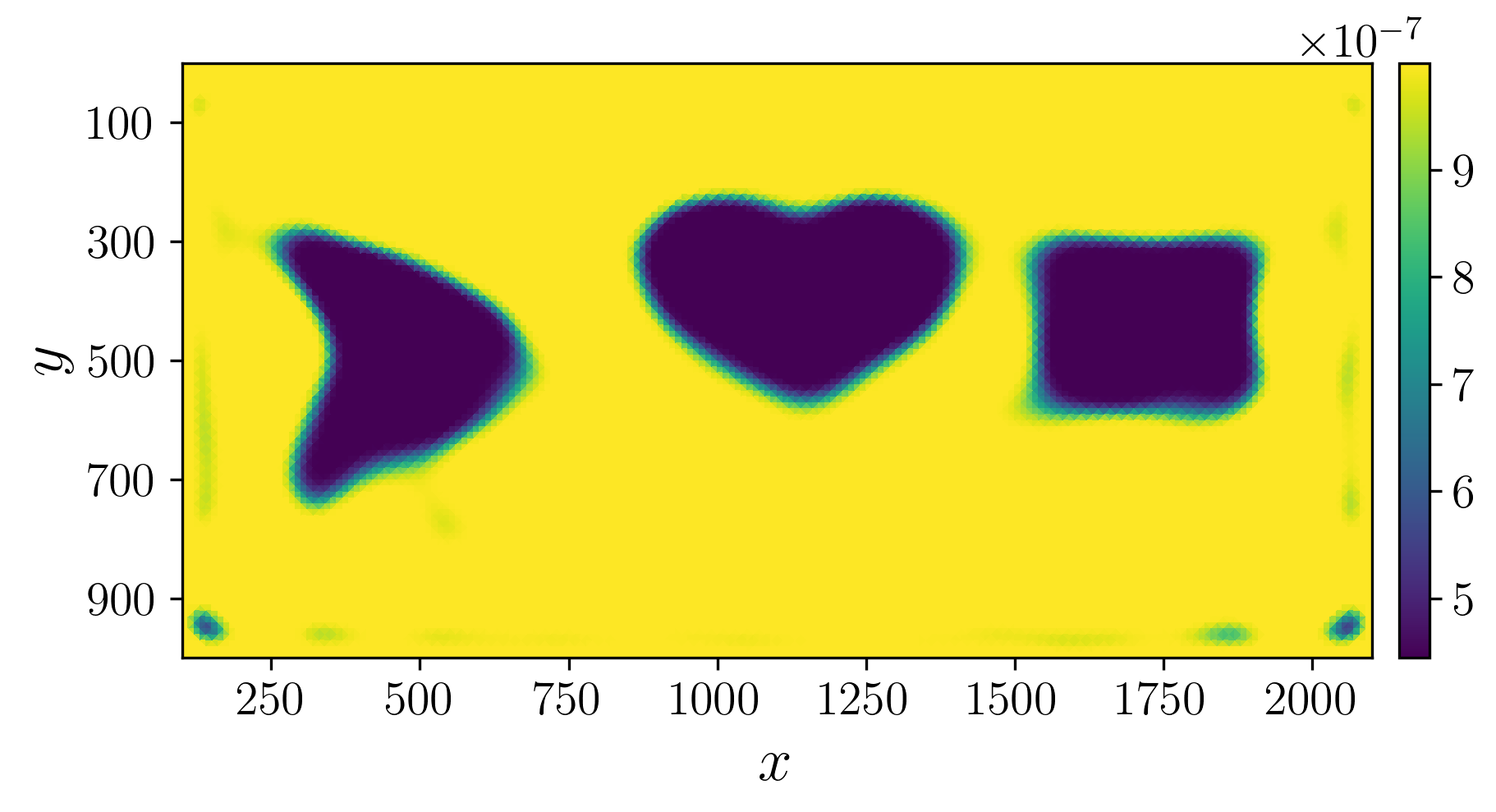}
\par\smallskip{\small (a) MAP-candidate squared slowness.\par}
\end{minipage}\hfill
\begin{minipage}[t]{0.33\linewidth}
\centering
\includegraphics[width=\linewidth]{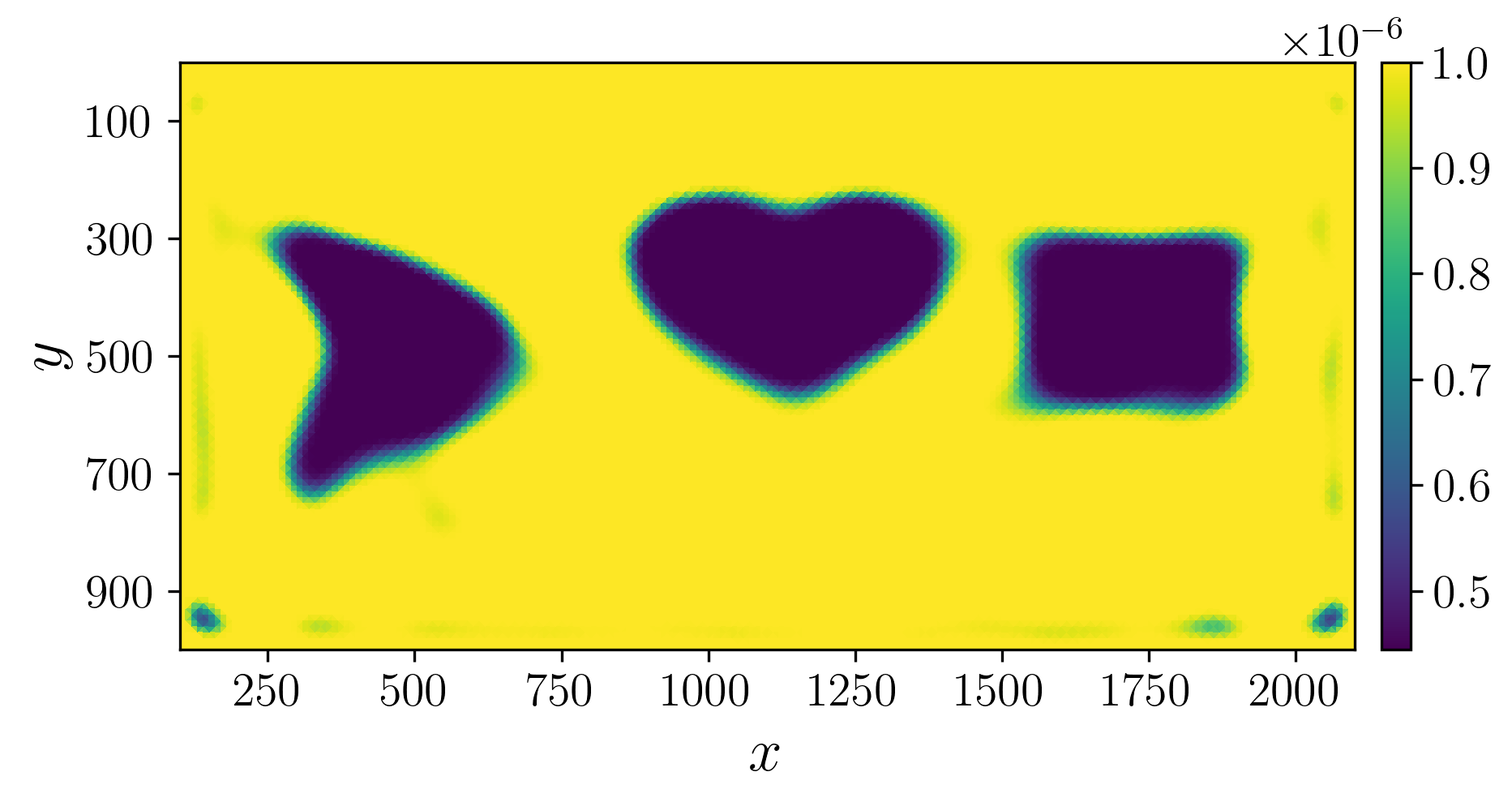}
\par\smallskip{\small (b) Posterior mean of squared slowness.\par}
\end{minipage}\hfill
\begin{minipage}[t]{0.33\linewidth}
\centering
\includegraphics[width=\linewidth]{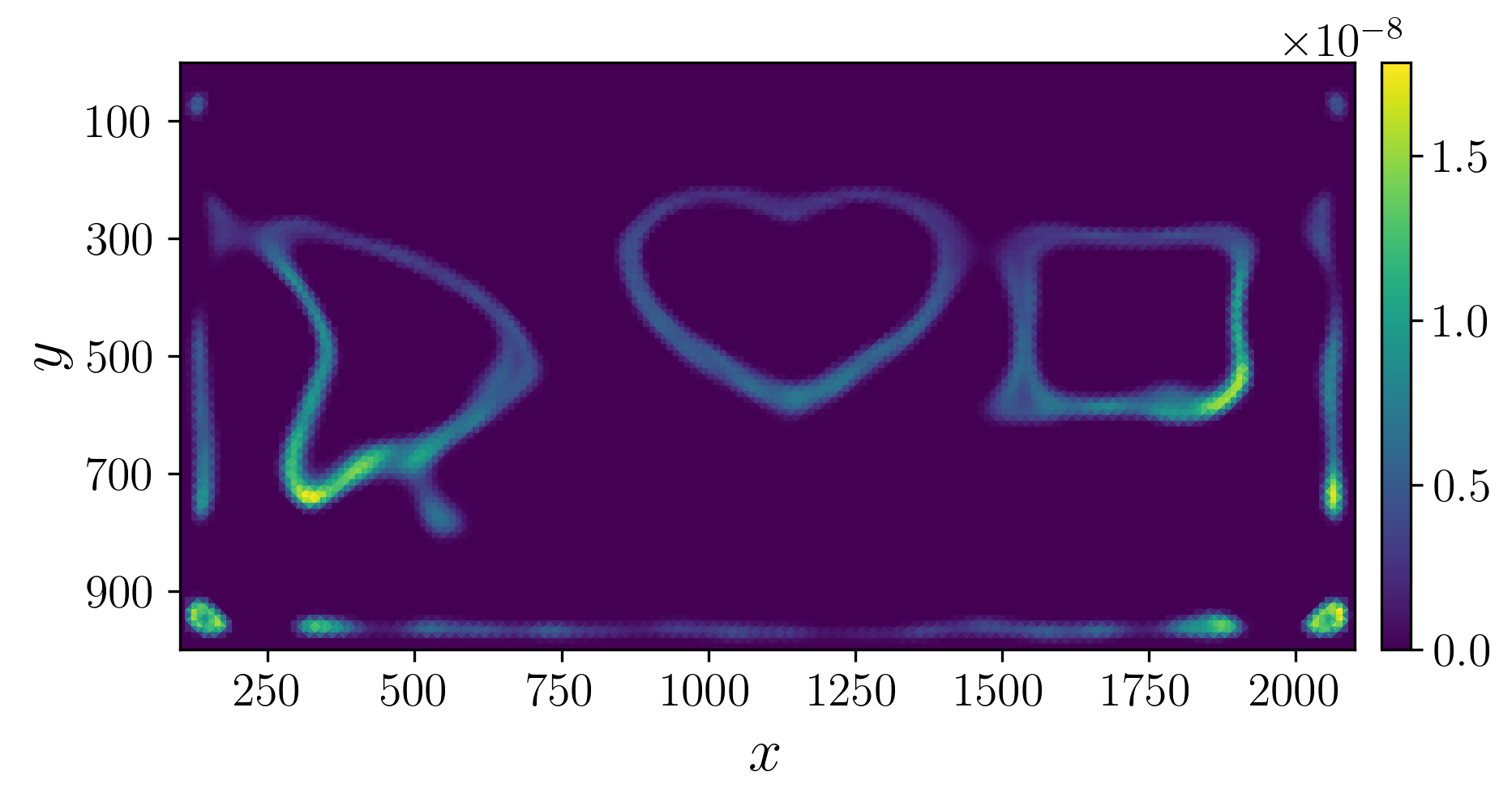}
\par\smallskip{\small (c) Posterior standard deviation of squared slowness.\par}
\end{minipage}

\par\medskip

\begin{minipage}[t]{0.33\linewidth}
\centering
\includegraphics[width=\linewidth]{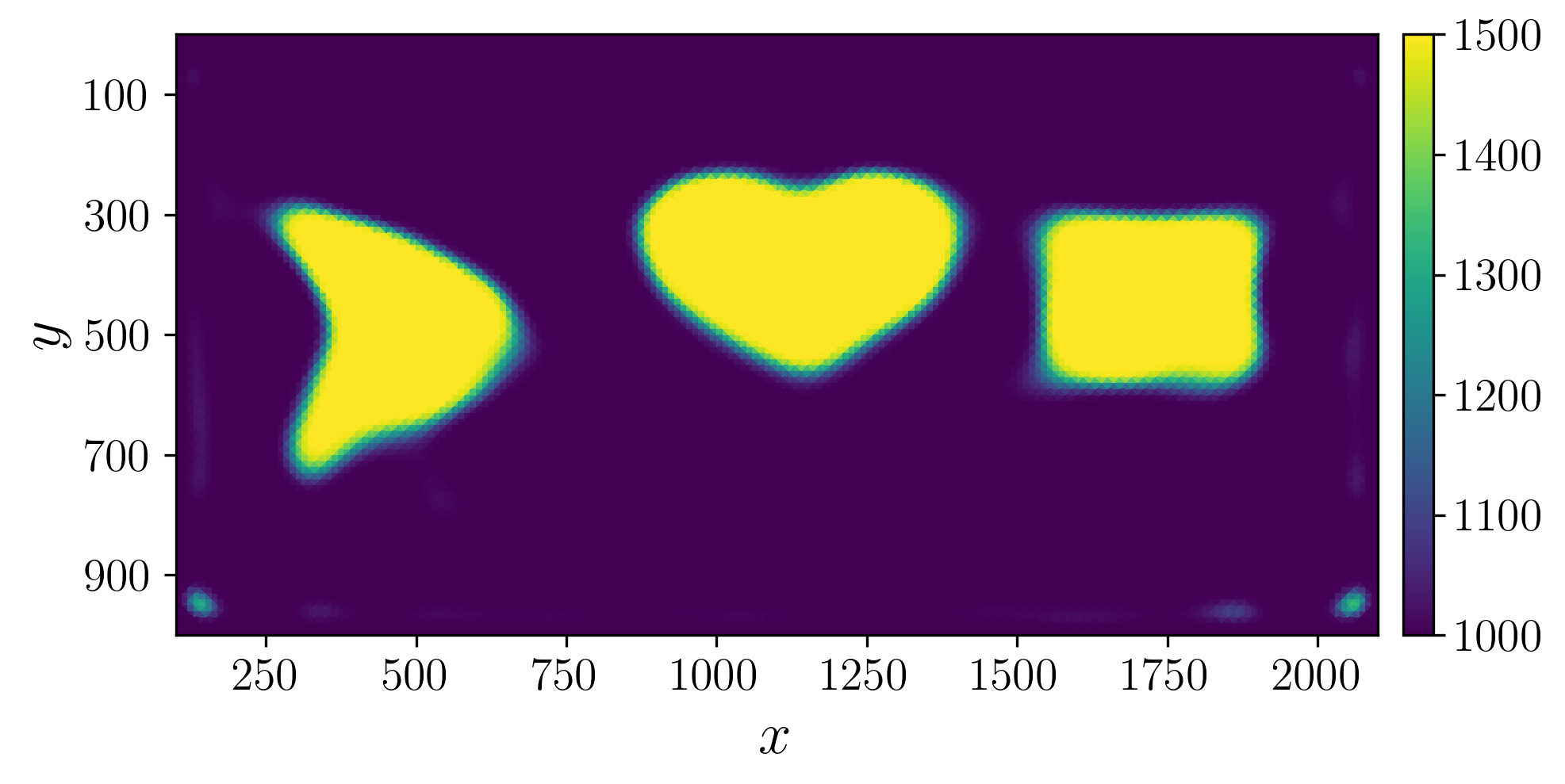}
\par\smallskip{\small (d) MAP-candidate velocity.\par}
\end{minipage}\hfill
\begin{minipage}[t]{0.33\linewidth}
\centering
\includegraphics[width=\linewidth]{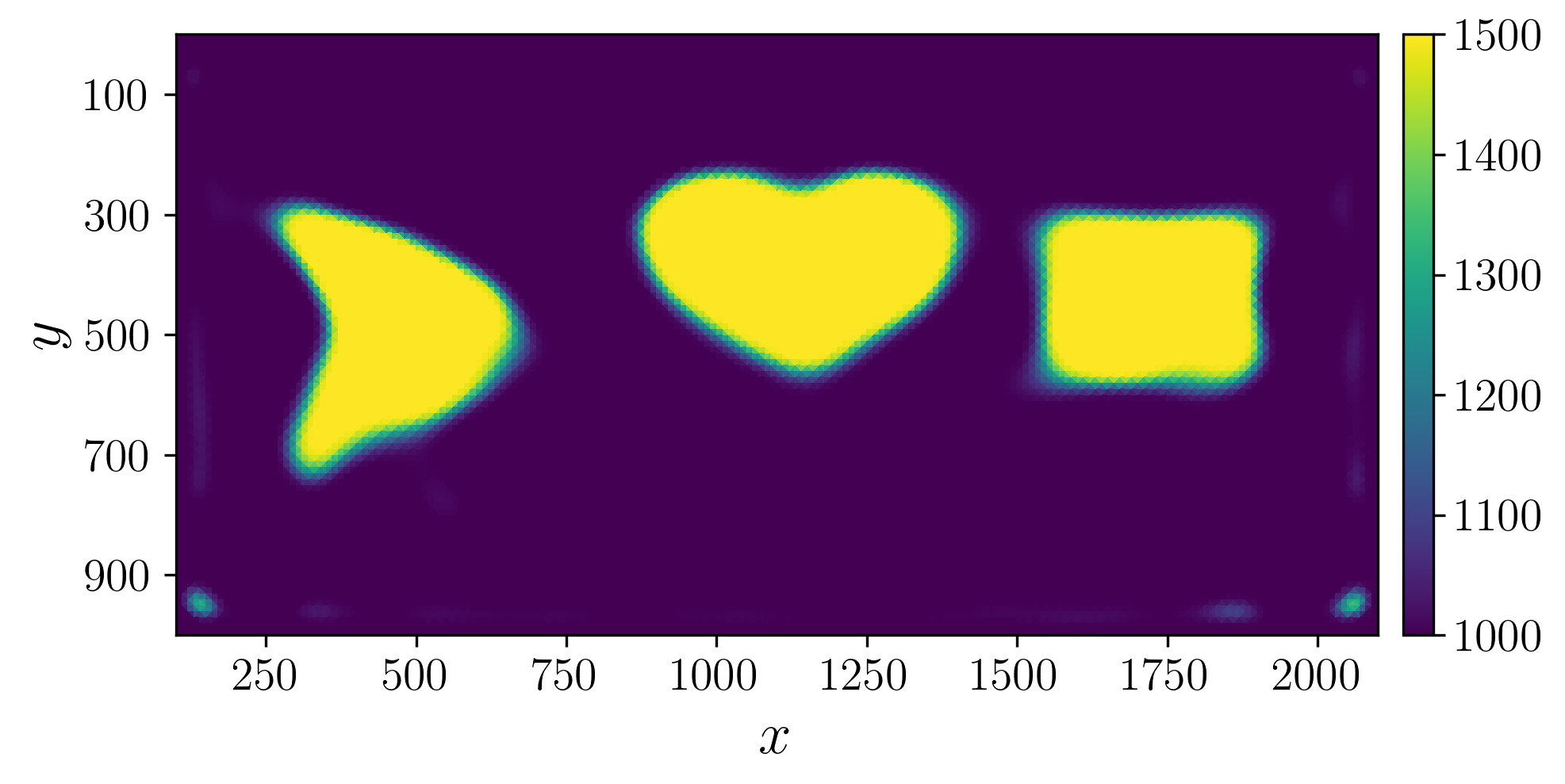}
\par\smallskip{\small (e) Posterior mean of velocity.\par}
\end{minipage}\hfill
\begin{minipage}[t]{0.33\linewidth}
\centering
\includegraphics[width=\linewidth]{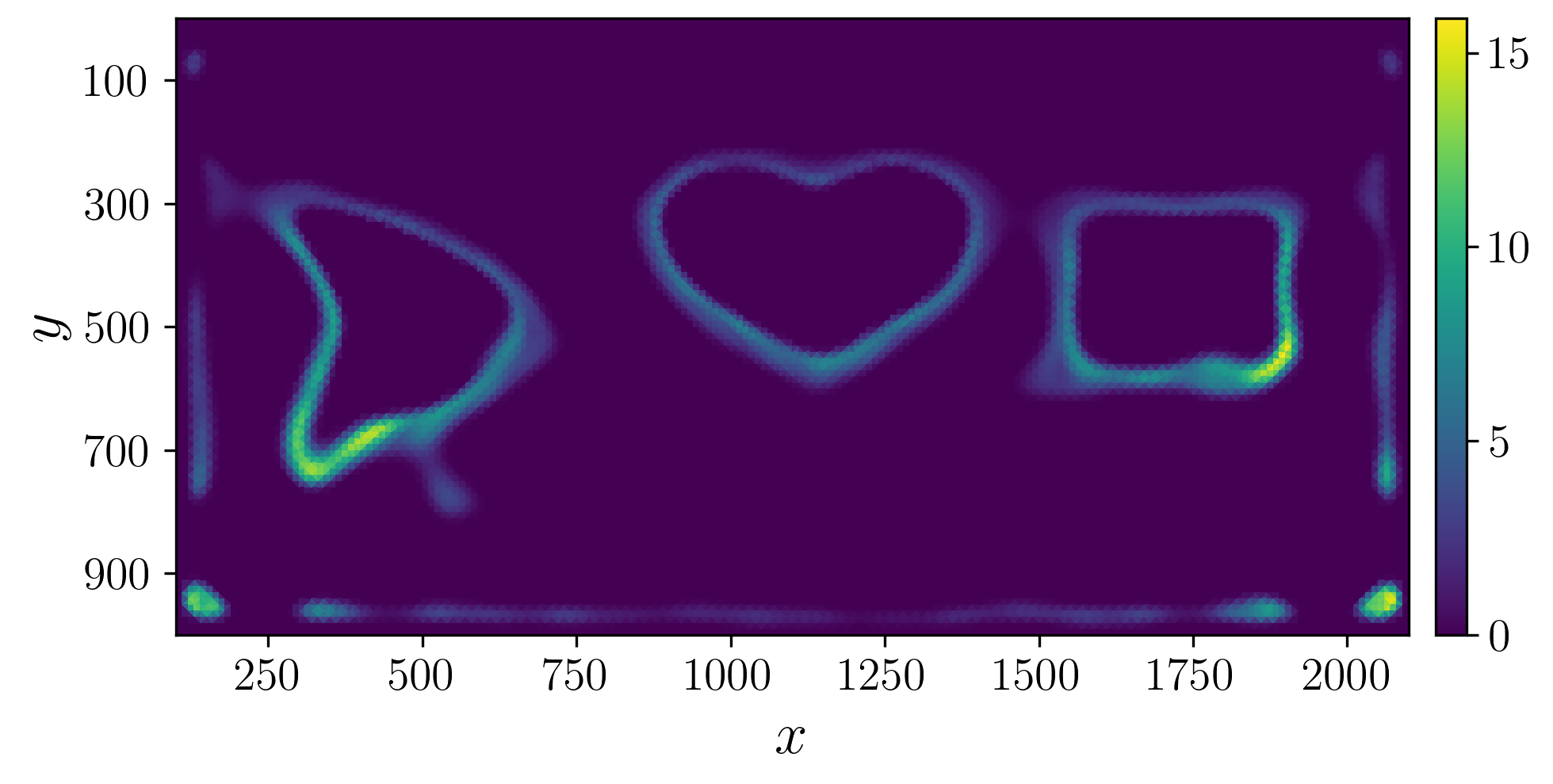}
\par\smallskip{\small (f) Posterior standard deviation of velocity.\par}
\end{minipage}

\caption{Standard single-component Laplace results for the
three-inclusion experiment (Example~\ref{example2}). The top and bottom rows show squared slowness and velocity, respectively. From left to right: the selected MAP candidate, posterior mean, and pointwise posterior standard deviation.}
\label{fig:ex3-single-laplace}
\end{figure}

\begin{table}[H]
\centering
\caption{Quantitative comparison of gPCN (MCMC), the standard
single-component Laplace approximation, and the proposed multiscale
multimodal Laplace method for the nonconvex-inclusion experiment
(Example~\ref{example2}). The best value in each row is shown in bold..}
\label{tab:ex3-three-method-comparison}
\begingroup
\small
\renewcommand{\arraystretch}{1.20}
\setlength{\tabcolsep}{5pt}
\renewcommand{\tabularxcolumn}[1]{m{#1}}
\begin{tabularx}{\linewidth}{
@{}>{\raggedright\arraybackslash}m{0.43\linewidth}
*{3}{>{\centering\arraybackslash}X}@{}
}
\toprule
Evaluation metric
& \shortstack{gPCN\\(MCMC)}
& \shortstack{Single\\Laplace}
& \shortstack{Multiscale\\multimodal\\Laplace} \\
\midrule
Squared-slowness mean error $E_{\mathrm{rel}}^{\mu}$ (\%)
& 6.4498 & 4.4569 & \textbf{4.0938} \\

Velocity mean error $E_{\mathrm{rel}}^{v}$ (\%)
& 5.6935 & 3.8054 & \textbf{3.0816} \\

Hausdorff distance $d_H$ (m)
& 42.43 m & 412.31 m & \textbf{30.00 m} \\

Boundary concentration $R_{\mathrm{bdry}}(20\,\mathrm m)$ (\%)
& 29.3508 & 62.9747 & \textbf{95.3956} \\

Node-based coverage
$\mathrm{BCov}^{\mathrm{node}}_{0.05,\,20\,\mathrm m}$ (\%)
& \textbf{89.5719} & 19.8683 & 81.4490 \\

Elapsed time (s)
& 37498.61 & \textbf{712.69} & 1013.90 \\
\bottomrule
\end{tabularx}
\endgroup
\end{table}

Table~\ref{tab:ex3-three-method-comparison} summarizes the accuracy, uncertainty measure, and computational cost of the three methods. Relative to the single-component Laplace approximation, the proposed method reduces the squared-slowness and velocity mean errors from
$4.4569\%$ and $3.8054\%$ to $4.0938\%$ and $3.0816\%$, respectively. The Hausdorff distance decreases from $412.31\,\mathrm m$ to
$30.00\,\mathrm m$; the large single-component value is consistent with
the isolated patches near the ROI margins in
Figure~\ref{fig:ex3-single-laplace}. Boundary concentration increases
from $62.9747\%$ to $95.3956\%$, and node-based coverage from
$19.8683\%$ to $81.4490\%$.

The proposed method also yields smaller errors and Hausdorff
distance than gPCN, together with a substantially higher boundary
concentration ratio. Although gPCN attains a higher node-based coverage
of $89.5719\%$, its low concentration ratio and diffuse standard
deviation fields indicate a broader, less localized uncertainty region.
The recorded runtimes are $712.69\,\mathrm s$, $1013.90\,\mathrm s$,
and $37498.61\,\mathrm s$ for the single-component, proposed, and gPCN
methods, respectively. The proposed method therefore costs about
$1.42$ times as much as the single-component approximation but is
approximately $37$ times faster than gPCN.
}
\end{example}

\begin{example}\label{example3}\rm{
This experiment reconstructs a salt-shaped anomaly adapted from the
2004 BP velocity estimation benchmark model ~\cite{Billette2005BP}. The binary mask is rescaled while preserving its aspect ratio and translated so that its lower-left corner coincides
with the lower-left corner of the ROI. This construction retains the characteristic salt geometry but replaces the spatially varying background by a constant velocity. Thus, the experiment uses a
BP2004-derived two-phase model rather than the original benchmark velocity field.

Let $D_{\mathrm{true}}$ denote the resulting salt region. The exact velocity and squared slowness are
\begin{equation*}
\begin{aligned}
v_{\mathrm{exact}}(\boldsymbol{x})
&=\begin{cases}
1500\,\mathrm{m/s}, & \boldsymbol{x}\in D_{\mathrm{true}},\\
1000\,\mathrm{m/s}, & \boldsymbol{x}\notin D_{\mathrm{true}},
\end{cases}
&\qquad
\mu_{\mathrm{exact}}(\boldsymbol{x})
&=v_{\mathrm{exact}}(\boldsymbol{x})^{-2}.
\end{aligned}
\end{equation*}
Both phase velocities remain fixed during inversion; only the anomaly geometry is inferred. The target combines a broad salt body, a deep left-hand indentation, irregular upper boundaries, and narrow right-hand extensions. The salt outline is used only to generate the observations, not to construct the initial models or constrain the recovered interface.

The computational domain, acquisition geometry, observation setup, noise level, and level-set representation are the same as those in Example~\ref{example1}. The only change is the RBF length scale, which is set to $\ell=60\,\mathrm{m}$; all other prior and inversion parameters remain unchanged. Three fitted circular initializations are used in the frequency-continuation procedure, with cut-offs of $2$, $4$, and $6\,\mathrm{Hz}$. Adam uses $300$, $300$, and $600$ epochs, respectively, with learning rate $1.0$.

Four Gaussian components are retained at the final frequency level. The final recorded weights are
\begin{equation*}
(\pi_1,\pi_2,\pi_3,\pi_4)
\approx
(0.506004,\ 0.295830,\ 0.132759,\ 0.065408).
\end{equation*}
The first component has the largest weight, but the other three together carry about $49.4\%$ of the mixture mass. The means, standard deviations, and sample-based evaluation metrics use $100$ draws from this mixture.

\begin{figure}[H]
\centering
\begin{minipage}[t]{0.33\textwidth}
\centering
\includegraphics[width=\linewidth]{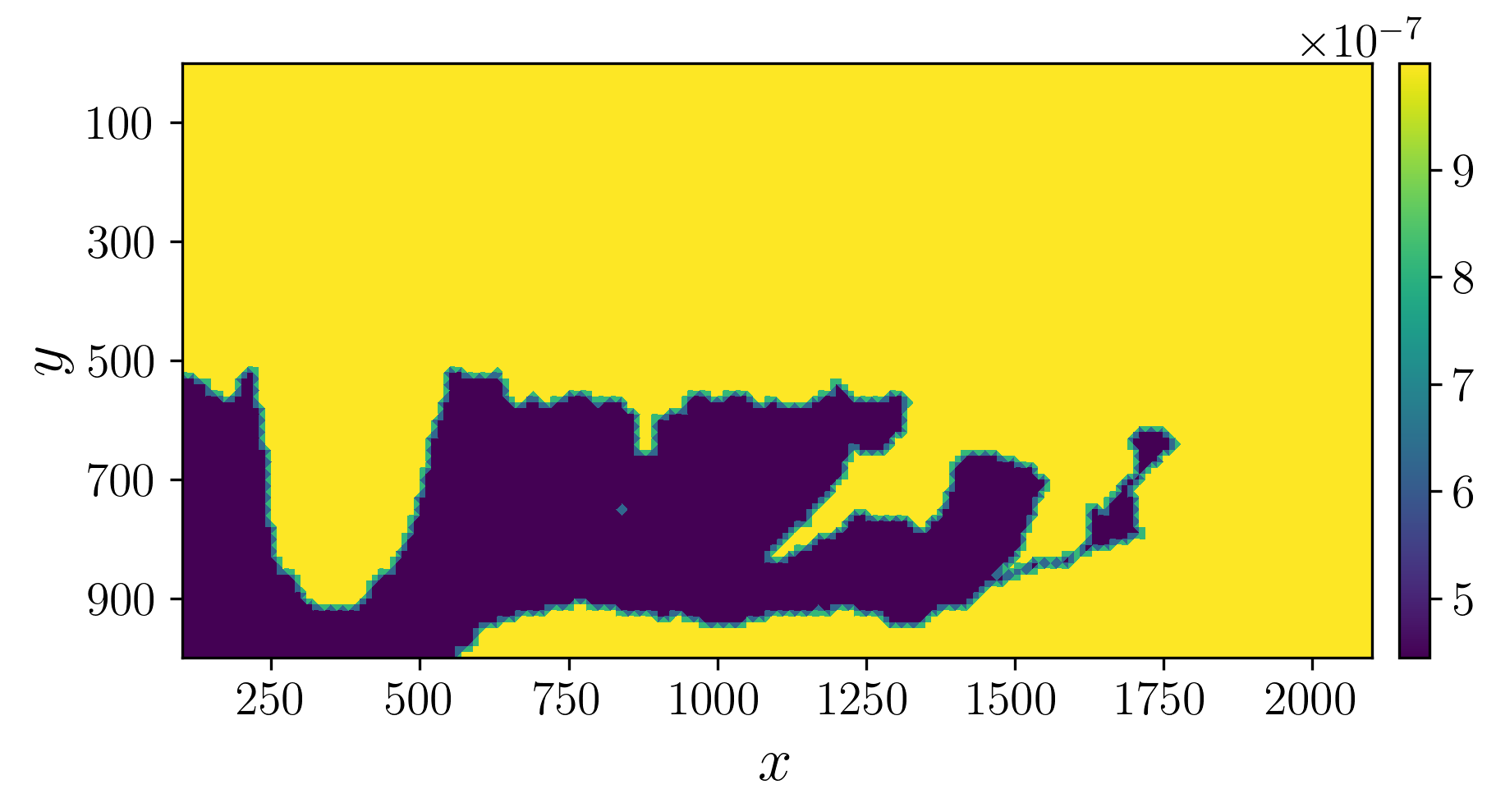}
\par\smallskip{\small (a) Exact squared slowness.\par}
\end{minipage}\hfill
\begin{minipage}[t]{0.33\textwidth}
\centering
\includegraphics[width=\linewidth]{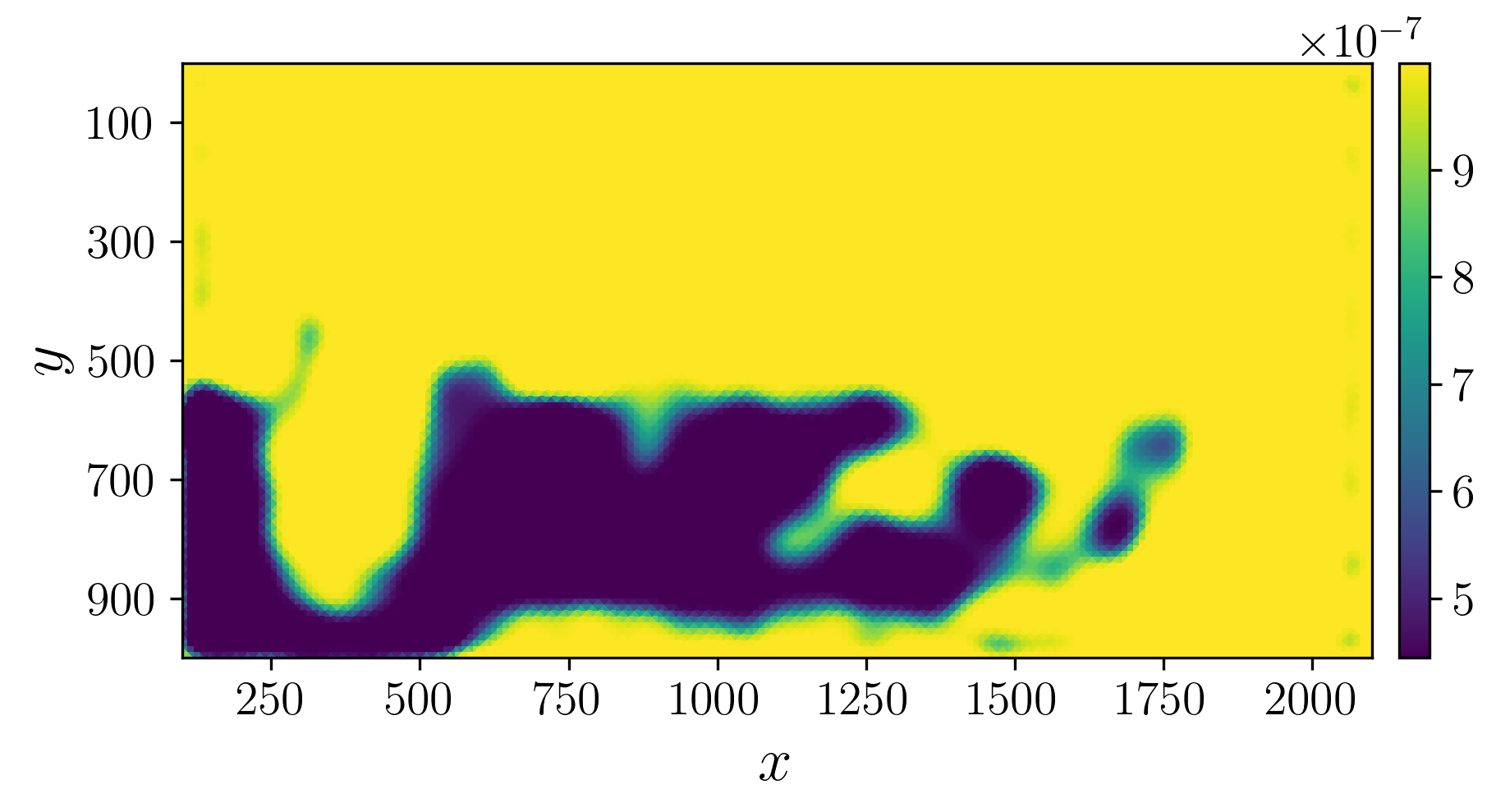}
\par\smallskip{\small (b) Best retained MAP squared slowness.\par}
\end{minipage}\hfill
\begin{minipage}[t]{0.33\textwidth}
\centering
\includegraphics[width=\linewidth]{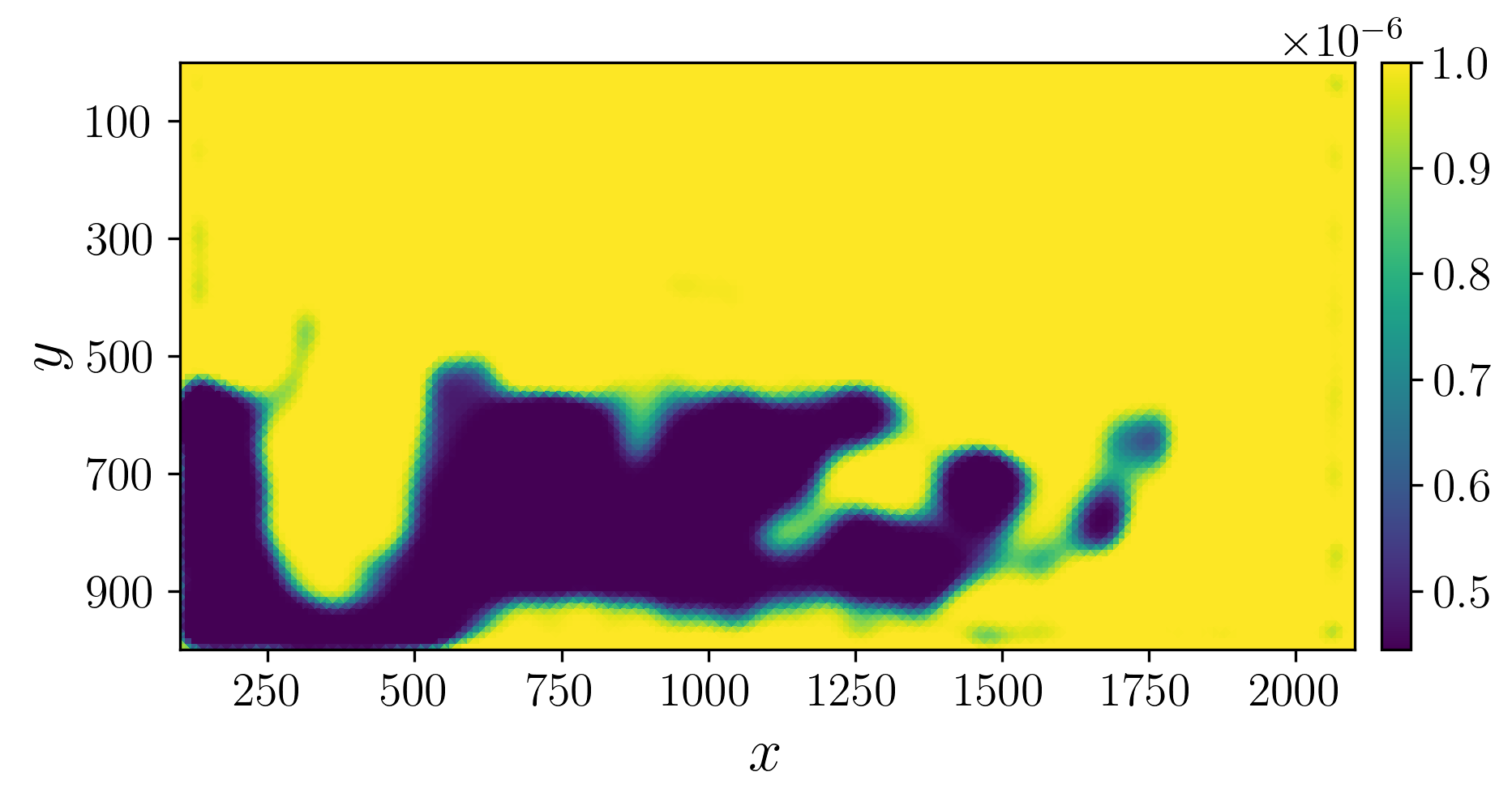}
\par\smallskip{\small (c) Mixture mean squared slowness.\par}
\end{minipage}

\par\medskip

\begin{minipage}[t]{0.33\textwidth}
\centering
\includegraphics[width=\linewidth]{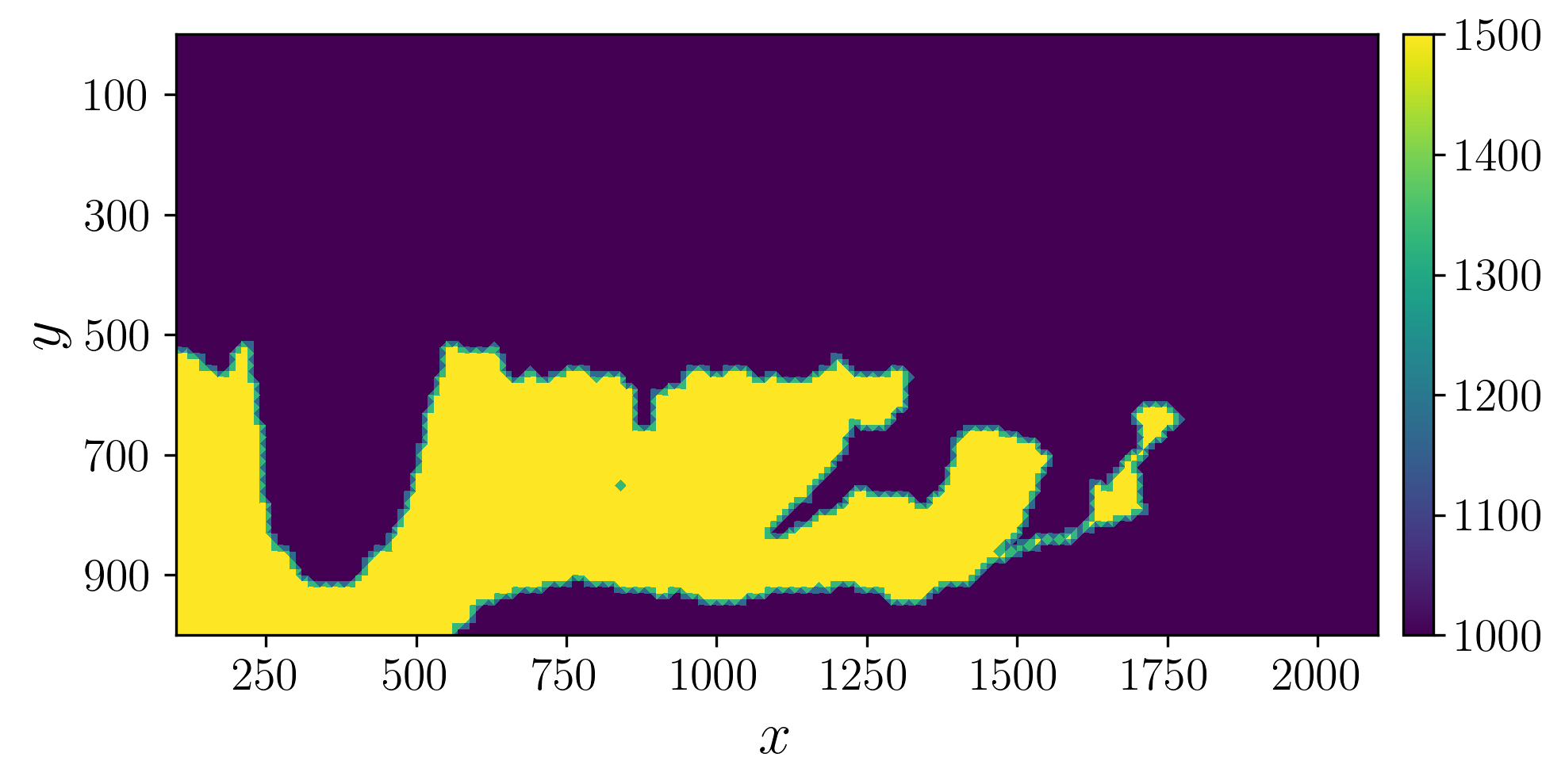}
\par\smallskip{\small (d) Exact velocity.\par}
\end{minipage}\hfill
\begin{minipage}[t]{0.33\textwidth}
\centering
\includegraphics[width=\linewidth]{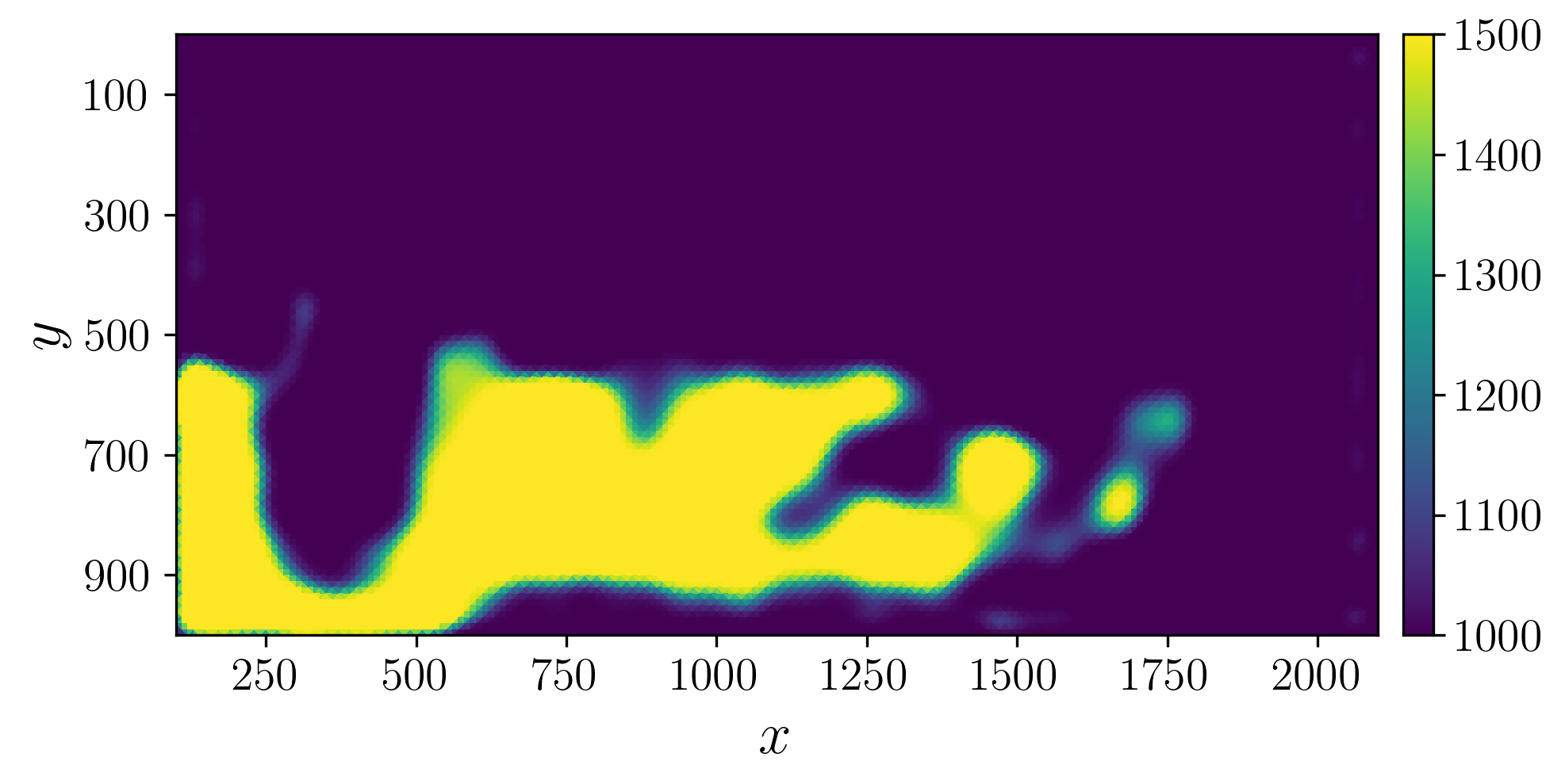}
\par\smallskip{\small (e) Best retained MAP velocity.\par}
\end{minipage}\hfill
\begin{minipage}[t]{0.33\textwidth}
\centering
\includegraphics[width=\linewidth]{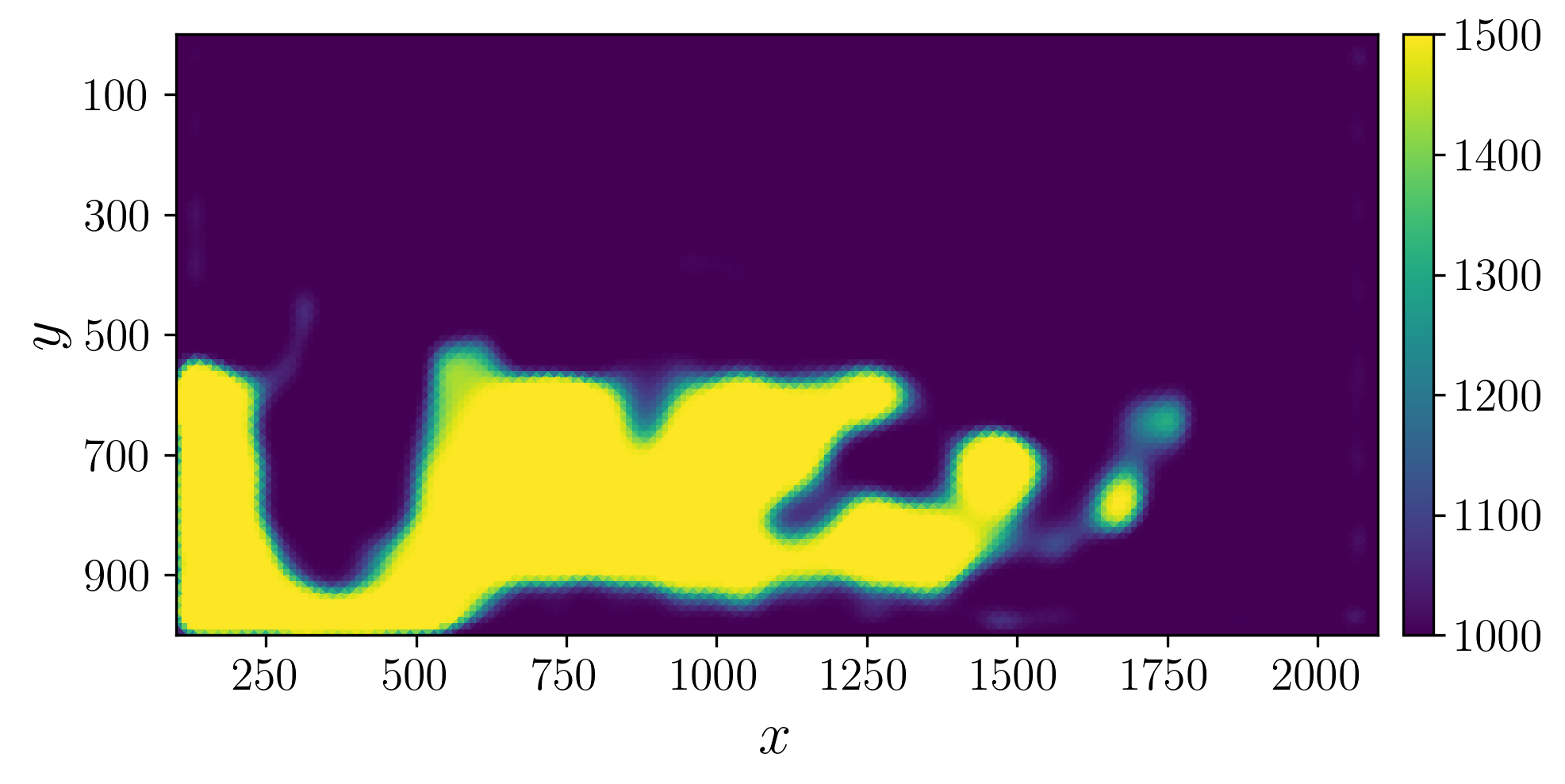}
\par\smallskip{\small (f) Mixture mean velocity.\par}
\end{minipage}
\caption{Exact, best retained MAP, and mixture mean fields for squared slowness (top row) and velocity (bottom row) in Example~\ref{example3}.}
\label{fig:example4-reconstruction}
\end{figure}

Figure~\ref{fig:example4-reconstruction} compares the exact fields with the best retained MAP reconstruction and the mixture means.  Both reconstructions successfully recover the main geometry and spatial extent of the BP2004-derived salt body. In particular, the pronounced left-hand indentation, the broad central structure, and the rightward extension are all well identified, while the reconstructed interfaces closely follow the overall shape of the exact model. The corresponding velocity fields also reproduce the prescribed two-phase contrast clearly and consistently. 
The mixture means remain highly consistent with the MAP reconstruction and preserve the principal geometric features of the salt body, indicating that the multimodal posterior representation retains the dominant structural information. Only minor smoothing is observed near some narrow notches and small-scale boundary features, without affecting the overall reconstruction quality.

\begin{figure}[H]
\centering
\begin{minipage}[t]{0.42\textwidth}
\centering
\includegraphics[width=\linewidth]{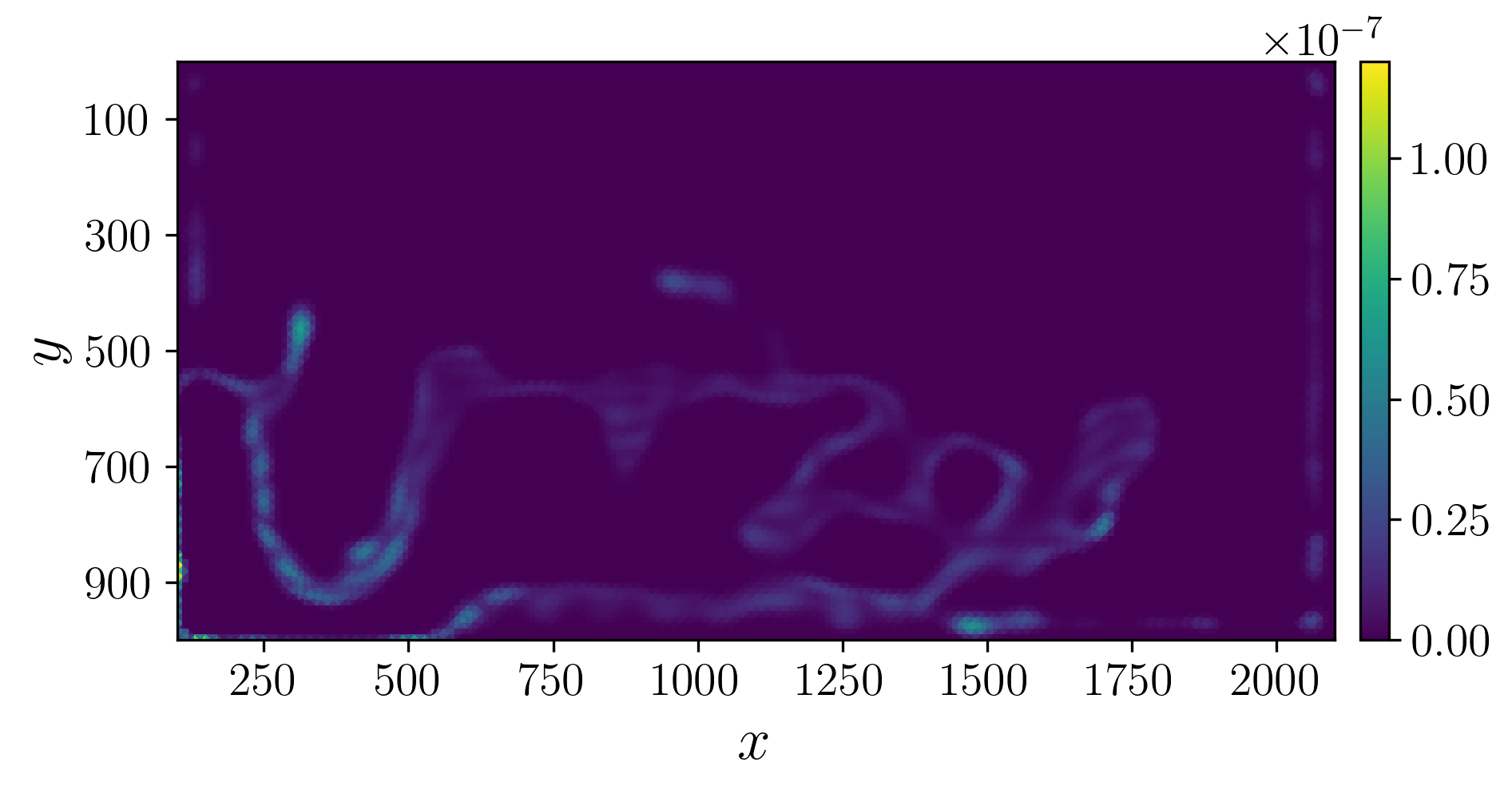}
\par\smallskip{\small (a) Squared-slowness standard deviation.\par}
\end{minipage}\hspace{0.03\textwidth}%
\begin{minipage}[t]{0.42\textwidth}
\centering
\includegraphics[width=\linewidth]{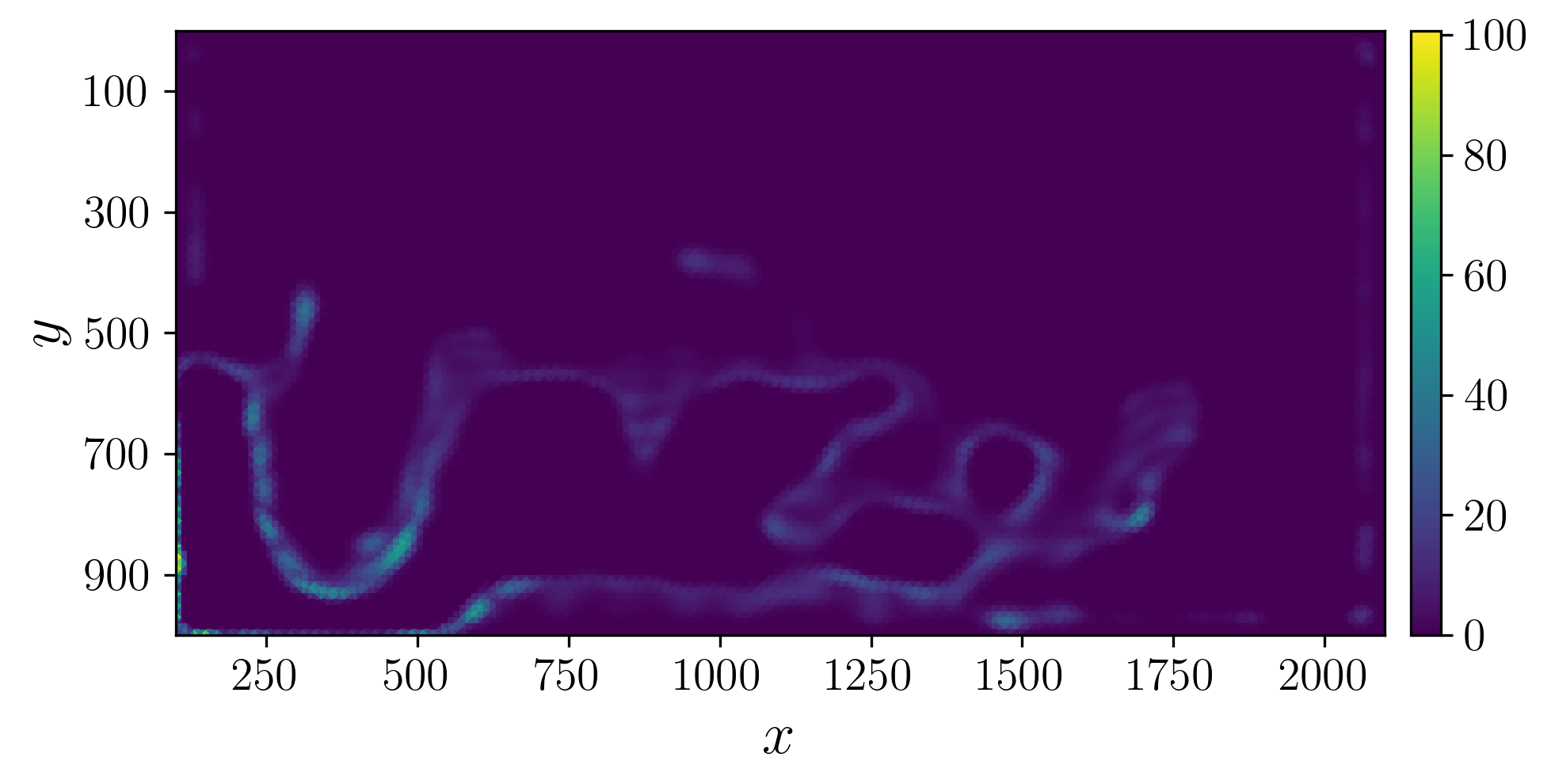}
\par\smallskip{\small (b) Velocity standard deviation.\par}
\end{minipage}
\caption{Pointwise standard deviations of squared slowness (a) and velocity (b) under the constructed Gaussian mixture. (Example~\ref{example3}).}
\label{fig:example4-uncertainty}
\end{figure}

Figure~\ref{fig:example4-uncertainty} shows the pointwise standard deviations. Variability follows much of the reconstructed interface, particularly the concave left boundary, parts of the lower boundary, and the narrow right-hand extension. The salt interior and most of the background have much smaller variability. Isolated uncertain patches
also occur away from the principal interface and near the ROI margins. 

\begin{table}[H]
\centering
\caption{Reconstruction accuracy, uncertainty diagnostics, and recorded
multiscale-stage runtime for Example~\ref{example3}. Sample-based
quantities reuse the same $100$ mixture samples.}
\label{tab:example4-metrics}
\begingroup
\small
\renewcommand{\arraystretch}{1.20}
\setlength{\tabcolsep}{6pt}
\begin{tabularx}{0.94\linewidth}{
@{}>{\raggedright\arraybackslash}X
>{\centering\arraybackslash}p{0.21\linewidth}@{}}
\toprule
Evaluation metric & Value\\
\midrule
Squared-slowness mean error $E_{\mathrm{rel}}^{\mu}$ (\%) & 6.1213\\
Velocity mean error $E_{\mathrm{rel}}^{v}$ (\%) & 4.9512\\
MAP-boundary Hausdorff distance $d_H$ (m) & 126.49\\
Boundary concentration $R_{\mathrm{bdry}}(20\,\mathrm m)$ (\%) & 83.6836\\
Node-based coverage $\mathrm{BCov}^{\mathrm{node}}_{0.05,\,20\,\mathrm m}$ (\%) & 54.5604\\
Multiscale MAP/Laplace time (s) & 1882.96\\
\bottomrule
\end{tabularx}
\endgroup
\end{table}

Table~\ref{tab:example4-metrics} reports relative mean-field errors of $6.1213\%$ for squared slowness and $4.9512\%$ for velocity. The MAP-boundary Hausdorff distance is $126.49\,\mathrm m$, indicating that some local boundary discrepancies remain appreciable despite recovery of the main body. Since this is a maximum-distance metric, it need not represent the typical displacement along the reconstructed interface. The boundary concentration ratio reaches $83.6836\%$, showing that most of the squared-slowness sample variance is concentrated within $20\,\mathrm m$ of the true boundary. The node-based coverage is $54.5604\%$, indicating that the uncertainty band still captures more
than half of the extracted boundary nodes under this strict tolerance. Together, these results show that the Gaussian-mixture approximation localizes the dominant uncertainty around the main interfaces.  
}
\end{example}

\section{Conclusions}\label{sec:conclusions}
In this paper, we have presented a Bayesian level-set framework for geometric full-waveform inversion, with a focus on reconstructing sharp interfaces that separate media with prescribed material properties. We have established the regularity of solutions to the time-domain forward wave problem, derived uniform bounds over the admissible medium coefficients, and shown that level-set-to-observation map is continuous almost surely under the Gaussian prior. Together, these results establish the well-posedness of the posterior measure and its local Lipschitz continuity in the Hellinger metric, providing a rigorous foundation for uncertainty quantification despite the lack of global continuity of the parameter-to-observation map.

To computationally approximate the posterior and capture multiple modes corresponding to distinct plausible geometries, we developed a multiscale multimodal Laplace method. Low-frequency information first guides the recovery of the large-scale location and overall geometry of the anomaly, while progressively higher frequencies refine its boundary details. Multiple posterior modes are identified and propagated through the frequency levels. At the final level, local Gaussian approximations are constructed around the retained modes and combined into a mixture whose nonnegative weights are determined by least-squares fitting to the target posterior. Numerical experiments demonstrate the effectiveness of the proposed method for shape reconstruction and uncertainty quantification.

\section*{Acknowledgments}
The work of Xiong-Bin Yan is supported by NSFC Grant No. 12401563. The work of Guang-Hui Zheng is supported by NSFC Grant No. 12271551. The work of Ting Wei is supported by NSFC Grant No. 12571455. 
\bibliographystyle{elsarticle-num}
\bibliography{reference-final}

\appendix
\section*{Appendix}
\setcounter{section}{1} 
\setcounter{subsection}{0} 
\renewcommand{\thesubsection}{\Alph{section}.\arabic{subsection}} 
\appendix
\renewcommand{\theequation}{\Alph{section}-\arabic{equation}}

\section{Regularized Parametrization and Gradient Computations}
\label{FAP}

This appendix provides the explicit definitions of the regularized coefficient-to-field and forward observation maps, the multiscale objective functionals, and the corresponding adjoint-based derivative formulations used in Section~\ref{sec:Inversion_algorithm}. Throughout this appendix, all vector spaces are real, and adjoints/transposes are defined with respect to standard Euclidean inner products. 

\subsection{Level-set discretization and smoothed medium parameterization}
\label{app:regularized-level-set-forward}

Let $x_1,\ldots,x_N$ be the spatial nodes. The nodal representation associated
with \eqref{eq:finite_levelset_representation} is
\begin{equation}\label{eq:LevelSet_D}
B_{ij}:=\Psi_j(x_i),
\quad\mathbf m_{0,h}:=\bigl(m_0(x_i)\bigr)_{i=1}^{N},
\quad\boldsymbol\phi_h(\mathbf q):=\mathbf m_{0,h}+B\mathbf q,
\quad B\in\mathbb R^{N\times L}.
\end{equation}
Prescribed level-set values are incorporated by setting the corresponding
rows of $B$ to zero and assigning those values to the corresponding entries of $\mathbf m_{0,h}$. These modified arrays are used throughout when such constraints are present.

In the continuous formulation, the squared-slowness coefficient is defined by
\begin{equation}\label{A1:indictor_function}
\mu_\phi=\mu_{\mathrm{in}}\mathbf 1_{\{\phi\geq0\}}
+\mu_{\mathrm{out}}\mathbf 1_{\{\phi<0\}}.
\end{equation}
Equivalently, introducing the discontinuous Heaviside function
\begin{equation*}
H(s)=
\begin{cases}
0, & s<0,\\
1, & s\geq0,
\end{cases}
\end{equation*}
we may write
\begin{equation}\label{eq:app-sharp-levelset-map}
\mu_\phi=\mu_{\mathrm{out}}+(\mu_{\mathrm{in}}-\mu_{\mathrm{out}})H(\phi).
\end{equation}
This representation is equivalent to the indicator-function. However, the map from the level-set coefficients
to $\mu_\phi$ is discontinuous when the zero level set crosses a spatial node. Consequently, it is unsuitable for the gradient-based optimization and local Laplace approximation used in the numerical algorithm. We therefore approximate the Heaviside function when constructing the discrete parameter-to-data map.

For a fixed smoothing width $\varepsilon>0$, replace the discontinuous
Heaviside function by the nondecreasing $C^2$ function
\begin{equation}\label{eq:alg-regularized-heaviside}
H_\varepsilon(s)=
\begin{cases}
0,&s<-\varepsilon,\\[1mm]
\displaystyle\frac12\left[
1+\frac{s}{\varepsilon}
+\frac1\pi\sin\!\left(\frac{\pi s}{\varepsilon}\right)\right],
&|s|\leq\varepsilon,\\[2mm]
1,&s>\varepsilon.
\end{cases}
\end{equation}
Its derivative is
\begin{equation}
\delta_\varepsilon(s):=H_\varepsilon'(s)=
\begin{cases}
\displaystyle\frac{1}{2\varepsilon}
\left[1+\cos\!\left(\frac{\pi s}{\varepsilon}\right)\right],
&|s|\leq\varepsilon,\\[2mm]
0,&|s|>\varepsilon.
\end{cases}
\label{eq:alg-regularized-delta}
\end{equation}
The nodal squared-slowness field is then
\begin{equation}\label{eq:alg-smoothed-medium}
\boldsymbol\mu_{\varepsilon,h}(\mathbf q)
=\mu_{\mathrm{out}}\mathbf1_N
 +(\mu_{\mathrm{in}}-\mu_{\mathrm{out}})
 H_\varepsilon(\boldsymbol\phi_h(\mathbf q)),
\end{equation}
where $H_\varepsilon$ acts componentwise. Since $0\leq H_\varepsilon\leq1$, this construction preserves the positive bounds set by $\mu_{\mathrm{in}}$ and $\mu_{\mathrm{out}}$. Its sensitivity to the level-set values is confined to $|\boldsymbol\phi_h|\leq\varepsilon$. The same $H_\varepsilon$ and $\varepsilon$ are used in
all forward, derivative, and posterior-density evaluations.


\subsection{Source functions and discrete observation map}
\label{app:discrete-observation-map}

For the $s$-th source function, we set
\begin{equation}
S^{(s)}(\boldsymbol{x},t)=A_{\mathrm{src},s}\,
 \rho_{\varepsilon_s}(\boldsymbol{x}-\boldsymbol{x_s})\,w(t),
\qquad
\rho_{\varepsilon_s}(\boldsymbol{x}-\boldsymbol{x_s})
=\frac{1}{2\pi\varepsilon_s^2}
 \exp\!\left[-\frac{\|\boldsymbol{x}-\boldsymbol{x_s}\|^2}
                  {2\varepsilon_s^2}\right],
\label{eq:app-source-profile}
\end{equation}
where $A_{\mathrm{src},s}$ is the source amplitude, $\boldsymbol{x_s}$ denotes location, and $\varepsilon_s>0$ its spatial width. The temporal waveform is the Ricker pulse
$w(t)=\left[1-2\pi^2 f_0^2(t-t_0)^2\right]\exp[-\pi^2 f_0^2(t-t_0)^2]$,
with dominant frequency $f_0$ and time delay $t_0$. The sources and damping are prescribed independently of $\mathbf q$.

On a grid with $N=N_xN_y$ nodes, let $\mathcal U[\mathbf q]$ denote the discrete wavefield histories computed for the coefficient vector $\mathbf q$ using the forward solver summarized in Section~\ref{subsec:experimental-settings}.
After all source batches have been processed, the observation operator
$\mathcal R_{\mathrm{obs}}$ samples or interpolates the wavefield at the
receiver locations, retains the prescribed $N_t$ recording times, and
collects all $N_s$ source experiments. For one coefficient vector,
\begin{equation}
\begin{aligned}
\mathcal G_{L,h}^{\varepsilon}(\mathbf q)
&=\mathcal R_{\mathrm{obs}}\mathcal U[\mathbf q]
\in\mathbb R^{N_s\times N_r\times N_t},
\\
G_{L,h}^{\varepsilon}(\mathbf q)
&=\operatorname{vec}\!\left(
\mathcal G_{L,h}^{\varepsilon}(\mathbf q)\right)\in\mathbb R^M,
\qquad M=N_sN_rN_t.
\end{aligned}
\label{eq:app-numerical-forward-map}
\end{equation}
The receiver ordering, source ordering, and time ordering used by $\operatorname{vec}$ agree with those of the observation vector $d$.

\subsection{Multiscale objective functionals and their derivatives}
\label{app:mml-derivatives}

\paragraph{Frequency filtering and residual injection.}
At frequency level $k$, the discrete Fourier transform is applied along the time dimension of each source--receiver trace. A binary frequency mask is then used to retain the selected frequencies, after which the filtered trace is transformed back to the time domain.

Let $\mathcal F_t$ denote the discrete Fourier transform along the time
dimension and let $\mathcal{M}_k$ denote the binary frequency mask at level
$k$. We define the corresponding time-domain filtering operator by
$P_k =\mathcal F_t^{-1}\mathcal{M}_k\mathcal F_t.$
Thus, for any time-dependent data vector $\mathbf v$,
\begin{equation*}\label{eq:alg-fourier-filtering}
P_k\mathbf v=\mathcal F_t^{-1}\left(\mathcal{M}_k\mathcal F_t\mathbf v\right).
\end{equation*}

The mask $\mathcal{M}_k$ retains conjugate frequency pairs, so that
$P_k\mathbf v$ is real whenever $\mathbf v$ is real. We assume that the retained frequency sets are nested and that the final mask retains all frequencies. Consequently,
\begin{equation*}
P_k^\top=P_k,\qquad P_k^2=P_k,\qquad
\operatorname{ran}P_k\subseteq\operatorname{ran}P_{k+1},
\qquad P_{N_f}=I_M.
\end{equation*}

Let $\mathbf e(\mathbf q)=G_{L,h}^{\varepsilon}(\mathbf q)-d$
be the full time-domain residual. Since $P_k$ is linear, the filtered
residual can be written as
\begin{align*}\label{eq:alg-filtered-residual}
\mathbf e_k(\mathbf q)=P_k\mathbf e(\mathbf q)=P_kG_{L,h}^{\varepsilon}(\mathbf q)-P_kd.
\end{align*}
For a positive definite observation covariance $\Gamma$, define the
covariance of the filtered noise by
\begin{equation*}\label{eq:alg-filtered-covariance}
\Gamma_k =P_k\Gamma P_k^\top.
\end{equation*}
The negative log-posterior at frequency level $k$ is then
\begin{equation*}\label{eq:frequency-posterior}
J_k(\mathbf q)=\frac12 \mathbf e_k(\mathbf q)^\top
\Gamma_k^\dagger \mathbf e_k(\mathbf q)
+J_{\mathrm{prior},L,h}(\mathbf q),
\end{equation*}
where $\Gamma_k^\dagger$ denotes the Moore--Penrose inverse. 

The weighted time-domain residual used in the adjoint calculation is
\begin{equation*}\label{eq:alg-weighted-residual}
\mathbf z_k(\mathbf q)=P_k^\top\Gamma_k^\dagger P_k\mathbf e(\mathbf q).
\end{equation*}
Accordingly, the gradient of the data-misfit term is
\begin{equation*}
\nabla J_{\mathrm{data},k}(\mathbf q)
=
DG_{L,h}^{\varepsilon}(\mathbf q)^\top
\mathbf z_k(\mathbf q).
\label{eq:alg-filtered-data-gradient}
\end{equation*}

For the independent-noise model $\Gamma=\delta^2I_M$, we have
$\Gamma_k=\delta^2P_k,\, \Gamma_k^\dagger=\delta^{-2}P_k.$
Therefore,
\begin{equation*}
J_k(\mathbf q)
=\frac{1}{2\delta^2}\left\|P_k\mathbf e(\mathbf q)\right\|_2^2
+J_{\mathrm{prior},L,h}(\mathbf q),
\end{equation*}
and the weighted residual reduces to
$\mathbf z_k(\mathbf q)=\delta^{-2}P_k\mathbf e(\mathbf q).$

\paragraph{Coefficient gradient.}
Let $\mathbf g_{\phi,k}$ denote the Euclidean gradient of the data-misfit term with respect to the nodal level-set values. The coefficient gradient follows from \eqref{eq:LevelSet_D}:
\begin{equation}
\nabla J_k(\mathbf q)
=B^\top\mathbf g_{\phi,k}
 +\mu_{\mathrm{prior}}(K_{L,h}\mathbf q+\mathbf b_{L,h}),
\label{eq:total-gradient}
\end{equation}
where the $K_{L,h},~\mathbf{b}_{L,h}$ are defined in Appendix~\ref{app:mml-prior}.

\paragraph{Jacobian matrix.}
For a given candidate coefficient vector $\mathbf{q}_a \in \mathbb{R}^L$ and a perturbation direction $\delta\mathbf{q} \in \mathbb{R}^L$, the directional derivative of the squared slowness is given by the chain rule:
\begin{equation}
    \dot{\boldsymbol{\mu}} = (\mu_{\mathrm{in}} - \mu_{\mathrm{out}}) \, \delta_\varepsilon(\boldsymbol{\phi}_h) \odot (B\delta\mathbf{q}), \quad \text{with } \dot{\boldsymbol{\phi}}_h = B\delta\mathbf{q}.
    \label{eq:mu-tangent}
\end{equation}
Linearizing the discrete wave equation at candidate $\mathbf{q}_a$ along the $j$-th canonical direction $\mathbf{e}_j$ yields the tangent wavefield history $\dot{\mathcal{U}}_a[\mathbf{e}_j]$. The $j$-th column of the projected Jacobian $G_{k,a}$ at frequency level $k$ is then
\begin{equation}
    G_{k,a}^{(:,j)} = P_k \operatorname{vec}\bigl(\mathcal{R}_{\mathrm{obs}}\dot{\mathcal{U}}_a[\mathbf{e}_j]\bigr), \quad j = 1, \dots, L.
    \label{eq:jacobian-column}
\end{equation}

\paragraph{Local curvature and factorization.}
Let $\mathbf{f}_k = P_k G_{L,h}^\varepsilon$ and $\mathbf{e}_k(\mathbf{q}) = \mathbf{f}_k(\mathbf{q}) - P_k y$. The Hessian of the frequency-dependent objective $J_k$ expands as
\begin{equation}
    \nabla^2 J_k(\mathbf{q})
    = D\mathbf{f}_k(\mathbf{q})^\top \Gamma_k^\dagger D\mathbf{f}_k(\mathbf{q}) + \Lambda_{\mathrm{prior},h}
    + \sum_{i=1}^M \bigl[\Gamma_k^\dagger \mathbf{e}_k(\mathbf{q})\bigr]_i \nabla^2 f_{k,i}(\mathbf{q}).
    \label{eq:app-exact-hessian}
\end{equation}
Neglecting the residual-weighted second derivatives at a candidate mode $\mathbf{q}_{k,a}^{\mathrm{MAP}}$ yields the Gauss--Newton approximation
$H_{k,a}^{\mathrm{GN}} = G_{k,a}^\top \Gamma_k^\dagger G_{k,a} + \Lambda_{\mathrm{prior},h}$,
which simplifies to \eqref{eq:alg-gn-hessian} under independent noise.
To ensure numerical stability during Cholesky factorization, we form the regularized curvature and local covariance:
\begin{equation}
    H_{k,a} = \frac{1}{2} \bigl( H_{k,a}^{\mathrm{GN}} + (H_{k,a}^{\mathrm{GN}})^\top \bigr) + \epsilon_{H,k,a} I_L,
    \qquad C_{k,a} = H_{k,a}^{-1},
    \label{eq:alg-local-covariance}
\end{equation}
where $\epsilon_{H,k,a} \ge 0$ is a small diagonal shift applied only when factorization fails (and is zero otherwise).

\section{Prior and posterior-mixture implementation}
\label{app:mml-implementation}

This appendix describes the discrete prior and the conversion of coefficient samples into field and geometric uncertainty. 


\subsection{Construction of the discrete Gaussian prior}
\label{app:mml-prior}
Let $\{\varphi_i\}_{i=1}^{N}$ be the continuous piecewise-linear nodal basis
on the triangular mesh used to evaluate the prior. Define
\[
K_{ij}:=\int_\Omega\nabla\varphi_i\cdot\nabla\varphi_j\,dx,
\qquad K=K^\top\in\mathbb R^{N\times N},
\]
and let $\phi_{h,\mathbf q}
=\sum_{i=1}^{N}[\boldsymbol\phi_h(\mathbf q)]_i\varphi_i$ be the
interpolated nodal field. Then
\begin{equation*}
J_{\mathrm{prior},L,h}(\mathbf q)
=\frac{\mu_{\mathrm{prior}}}{2}
\int_\Omega|\nabla\phi_{h,\mathbf q}|^2\,dx
=\frac{\mu_{\mathrm{prior}}}{2}
\boldsymbol\phi_h(\mathbf q)^\top K\boldsymbol\phi_h(\mathbf q).
\label{eq:prior_discrete_phi}
\end{equation*}


Introduce
\begin{equation*}\label{eq_appendix:KB_lambda}
K_{L,h}:=B^\top KB,
\, \mathbf b_{L,h}:=B^\top K\mathbf m_{0,h},
\, \Lambda_{\mathrm{prior},h}:=\mu_{\mathrm{prior}}K_{L,h}.
\end{equation*}
Expanding \eqref{eq:prior_discrete_phi} gives
\begin{equation*}
J_{\mathrm{prior},L,h}(\mathbf q)
=\frac{\mu_{\mathrm{prior}}}{2} \left(\mathbf q^\top K_{L,h}\mathbf q +2\mathbf b_{L,h}^\top\mathbf q\right)+c_h,
\qquad c_h=\frac{\mu_{\mathrm{prior}}}{2}
\mathbf m_{0,h}^\top K\mathbf m_{0,h}.
\end{equation*}
Completing the square yields \eqref{eq:mml-prior-potential}, with
\begin{equation*}
\bar{\mathbf q}_{L,h}=-K_{L,h}^{-1}\mathbf b_{L,h},\quad
C_{\mathbf q,h}=\Lambda_{\mathrm{prior},h}^{-1},
\quad\widetilde c_h
=c_h-\frac{\mu_{\mathrm{prior}}}{2}
\mathbf b_{L,h}^\top K_{L,h}^{-1}\mathbf b_{L,h}.
\label{eq:app-prior-mean-covariance}
\end{equation*}
The corresponding derivatives are
\begin{equation*}
\begin{aligned}
\nabla J_{\mathrm{prior},L,h}(\mathbf q)
&=\mu_{\mathrm{prior}}(K_{L,h}\mathbf q+\mathbf b_{L,h})
=\mu_{\mathrm{prior}}B^\top K\boldsymbol\phi_h(\mathbf q),\\
\nabla^2J_{\mathrm{prior},L,h}(\mathbf q)
&=\Lambda_{\mathrm{prior},h}.
\end{aligned}
\label{eq:alg-prior-derivatives}
\end{equation*}

\subsection{Posterior Sampling and Uncertainty Quantification}
\label{app:mml-sampling}

Independent samples from the final Gaussian mixture are generated by
\begin{equation*}
\begin{aligned}
r_\ell&\sim\operatorname{Categorical}(w_1,\ldots,w_R),\\
\mathbf q^{(\ell)}&=\mathbf q_{r_\ell}^{\mathrm{MAP}}
+L_{r_\ell}\boldsymbol\xi^{(\ell)},
\qquad
\boldsymbol\xi^{(\ell)}\sim\mathcal N(0,I_L),
\qquad
L_{r_\ell}L_{r_{\ell}}^\top=C_{r_\ell},
\end{aligned}
\label{eq:alg-mixture-sampling}
\end{equation*}
where the component labels and Gaussian vectors are sampled
independently for $\ell=1,\ldots,N_{\mathrm{sample}}$ and $C_{r_{\ell}}:=C_{N_f},r_{\ell}$ given by \eqref{eq:alg-local-covariance}.

Each coefficient sample induces the nodal fields
\begin{equation}
\begin{aligned}
\boldsymbol\phi_h^{(\ell)}
&=
\mathbf m_{0,h}+B\mathbf q^{(\ell)},
\\
\boldsymbol\mu_{\varepsilon,h}^{(\ell)}
&=
\mu_{\mathrm{out}}\mathbf 1_N
+
(\mu_{\mathrm{in}}-\mu_{\mathrm{out}})
H_\varepsilon(\boldsymbol\phi_h^{(\ell)}),
\\
\boldsymbol v_h^{(\ell)}
&=
\left(
\boldsymbol\mu_{\varepsilon,h}^{(\ell)}
\right)^{-1/2}.
\end{aligned}
\label{eq:alg-field-samples}
\end{equation}
Here $H_\varepsilon$ is defined in
\eqref{eq:alg-regularized-heaviside}, and all nonlinear operations are applied componentwise.
The means and pointwise standard deviations of squared slowness and
velocity are estimated from the transformed samples in
\eqref{eq:alg-field-samples}.

\section{Evaluation metrics}
\label{app:evaluation-metrics}

We evaluate the reconstruction using four metrics: relative errors of the
posterior mean fields, the Hausdorff distance between the reconstructed and
exact interfaces, the concentration of posterior variance near the exact
interface, and the coverage of the exact interface by the posterior
uncertainty band. 

Let $\{\mathbf q^{(\ell)}\}_{\ell=1}^{N_{\mathrm{sample}}}$ be samples drawn from
the final posterior approximation. The corresponding level set, squared
slowness, and velocity fields are
\begin{equation*}
\begin{aligned}
\phi^{(\ell)}(\boldsymbol{x})
&=\phi(\boldsymbol{x};\mathbf q^{(\ell)}),\\
\mu^{(\ell)}(\boldsymbol{x})
&=\mu_2+(\mu_1-\mu_2)
  H_\varepsilon\!\left(\phi^{(\ell)}(\boldsymbol{x})\right),\\
v^{(\ell)}(\boldsymbol{x})
&=\frac{1}{\sqrt{\mu^{(\ell)}(\boldsymbol{x})}}.
\end{aligned}
\end{equation*}
Their empirical posterior means are
\begin{equation*}
\overline{\mu}(\boldsymbol{x})
=\frac{1}{N_{\mathrm{sample}}}
\sum_{\ell=1}^{N_{\mathrm{sample}}}\mu^{(\ell)}(\boldsymbol{x}),
\qquad
\overline{v}(\boldsymbol{x})
=\frac{1}{N_{\mathrm{sample}}}
\sum_{\ell=1}^{N_{\mathrm{sample}}}v^{(\ell)}(\boldsymbol{x}).
\end{equation*}

\subsection{Relative errors of the posterior means}

Let $\mu_{\mathrm{exact}}$ and $v_{\mathrm{exact}}$ denote the exact squared
slowness and velocity fields, respectively. Their relative posterior mean
errors are defined by
\begin{equation*}
E_{\mathrm{rel}}^{\mu}
=
\frac{\|\overline{\mu}-\mu_{\mathrm{exact}}\|_{L^2(\Omega)}}
     {\|\mu_{\mathrm{exact}}\|_{L^2(\Omega)}},
\qquad
E_{\mathrm{rel}}^{v}
=\frac{\|\overline{v}-v_{\mathrm{exact}}\|_{L^2(\Omega)}}
     {\|v_{\mathrm{exact}}\|_{L^2(\Omega)}}.
\label{eq:num-relative-errors}
\end{equation*}

\subsection{Hausdorff distance between interfaces}

The discrepancy between the reconstructed and exact interfaces is measured
by the symmetric Hausdorff distance \cite{Burago2001}:
\begin{equation}
d_H(\Gamma_{\mathrm{MAP}},\Gamma_{\mathrm{true}})
=\max\Bigl\{
\sup_{\boldsymbol{x}\in\Gamma_{\mathrm{MAP}}}
\inf_{\boldsymbol{y}\in\Gamma_{\mathrm{true}}}
\|\boldsymbol{x}-\boldsymbol{y}\|_2,
\sup_{\boldsymbol{y}\in\Gamma_{\mathrm{true}}}
\inf_{\boldsymbol{x}\in\Gamma_{\mathrm{MAP}}}
\|\boldsymbol{x}-\boldsymbol{y}\|_2
\Bigr\}.
\label{eq:num-continuous-hausdorff}
\end{equation}
A smaller value indicates closer agreement between the two interfaces.

Let
\begin{equation*}
\mathcal B_{\mathrm{MAP}}
=\{\boldsymbol{x}_i^{\mathrm{MAP}}\}_{i=1}^{N_{\mathrm{MAP}}},
\qquad\mathcal B_{\mathrm{true}}
=\{\boldsymbol{x}_j^{\mathrm{true}}\}_{j=1}^{N_{\mathrm{true}}}
\end{equation*}
be the boundary point sets extracted within the ROI. For these finite point
sets, the Hausdorff distance reduces to the standard max--min form used for
comparing sampled shapes and image boundaries
\cite{Huttenlocher1993}:
\begin{equation}
d_H(\mathcal B_{\mathrm{MAP}},\mathcal B_{\mathrm{true}})
=\max\Bigl\{
\max_{1\leq i\leq N_{\mathrm{MAP}}}
\min_{1\leq j\leq N_{\mathrm{true}}}
\|\boldsymbol{x}_i^{\mathrm{MAP}}
-\boldsymbol{x}_j^{\mathrm{true}}\|_2,
\max_{1\leq j\leq N_{\mathrm{true}}}
\min_{1\leq i\leq N_{\mathrm{MAP}}}
\|\boldsymbol{x}_j^{\mathrm{true}}
-\boldsymbol{x}_i^{\mathrm{MAP}}\|_2
\Bigr\}.
\label{eq:num-discrete-hausdorff}
\end{equation}
The resulting distance is reported in metres.

\subsection{Concentration of posterior variance near the interface}
\label{app:variance-interface}

For $N_{\mathrm{sample}}>1$, the empirical pointwise variance of the
squared slowness is defined by
\begin{equation*}
\operatorname{Var}_{\mu}(\boldsymbol{x})
=\frac{1}{N_{\mathrm{sample}}-1}
\sum_{s=1}^{N_{\mathrm{sample}}}\left(
\mu^{(s)}(\boldsymbol{x})-\overline{\mu}(\boldsymbol{x})\right)^2,
\qquad
\overline{\mu}(\boldsymbol{x})
=\frac{1}{N_{\mathrm{sample}}}
\sum_{s=1}^{N_{\mathrm{sample}}}
\mu^{(s)}(\boldsymbol{x}).
\end{equation*}
The corresponding empirical standard deviation is
\begin{equation*}
\operatorname{Std}_{\mu}(\boldsymbol{x})
=\operatorname{Var}_{\mu}(\boldsymbol{x})^{1/2}.
\end{equation*}
The velocity variance and standard deviation are defined analogously.

For piecewise-constant geometric reconstructions, posterior variability is expected to be concentrated primarily near regions where the interface location is uncertain. To quantify this localization, we introduce the
following diagnostic. For a prescribed radius $r>0$, let
\begin{equation*}
B_r(\Gamma_{\mathrm{true}})
=\left\{\boldsymbol{x}\in\Omega:
\operatorname{dist}(\boldsymbol{x},\Gamma_{\mathrm{true}})\le r
\right\}
\end{equation*}
denote the $r$-neighborhood of the exact interface. We define the
interface variance concentration ratio by
\begin{equation}
R_{\mathrm{bdry}}(r)
=\displaystyle\int_{B_r(\Gamma_{\mathrm{true}})}
\operatorname{Var}_{\mu}(\boldsymbol{x})\,d\boldsymbol{x}
\bigl/
\displaystyle\int_{\Omega}
\operatorname{Var}_{\mu}(\boldsymbol{x})\,d\boldsymbol{x}.
\label{eq:boundary-variance-ratio}
\end{equation}
Whenever the denominator is nonzero,
$R_{\mathrm{bdry}}(r)\in[0,1]$.
A value close to one indicates that most of the posterior variance is localized within distance $r$ of the exact interface, whereas a smaller value indicates that a larger fraction of the uncertainty is distributed away from the interface.

In the numerical implementation, a small stabilization parameter
$\varepsilon_{\mathrm{stab}}>0$ is added to the denominator of
\eqref{eq:boundary-variance-ratio} when necessary to avoid division by a numerically vanishing total variance. This parameter is unrelated to the Heaviside smoothing parameter $\varepsilon$.

\subsection{Coverage by the posterior uncertainty band}

Each posterior sample determines an anomaly region
\begin{equation*}
D^{(s)}
=
\left\{
\boldsymbol{x}\in\Omega:
\phi^{(s)}(\boldsymbol{x})>0
\right\},
\qquad
s=1,\ldots,N_{\mathrm{sample}}.
\end{equation*}
The posterior probability that $\boldsymbol{x}$ belongs to the anomaly is
estimated by
\begin{equation*}
p_D(\boldsymbol{x})
=\mathbb P(\boldsymbol{x}\in D^{(s)})
\approx
\frac{1}{N_{\mathrm{sample}}}
\sum_{s=1}^{N_{\mathrm{sample}}}
\mathbf 1_{\{\phi^{(s)}(\boldsymbol{x})>0\}}.
\label{eq:num-inclusion-probability}
\end{equation*}
Values near zero or one indicate agreement among the posterior samples,
whereas intermediate values indicate uncertainty in the local phase
assignment.

For $\alpha\in(0,1/2)$, define the pointwise uncertainty band
\begin{equation*}
\mathcal U_{\alpha}
=\left\{
\boldsymbol{x}\in\Omega:
\alpha\leq p_D(\boldsymbol{x})\leq1-\alpha
\right\}.
\label{eq:num-uncertainty-band}
\end{equation*}
Because both the exact interface and the uncertainty band are represented
on a discrete grid, their direct intersection can be sensitive to grid
alignment. We therefore measure whether each point of the exact interface
lies within a prescribed distance $\eta>0$ of the uncertainty band. The
continuous coverage measure is
\begin{equation}
\operatorname{BCov}_{\alpha,\eta}
=\mathcal H^1\!\left(
\left\{\boldsymbol{x}\in\Gamma_{\mathrm{true}}:
\operatorname{dist}(\boldsymbol{x},\mathcal U_{\alpha})
\leq\eta\right\}\right)/
\mathcal H^1(\Gamma_{\mathrm{true}}),
\label{eq:num-continuous-coverage}
\end{equation}
where $\mathcal H^1$ denotes arc length.

Let $\Omega_h$ be the set of computational grid nodes and define
\begin{equation*}
\mathcal U_{\alpha,h}
=
\left\{
\boldsymbol{x}_j\in\Omega_h:
\alpha\leq p_D(\boldsymbol{x}_j)\leq1-\alpha
\right\}.
\end{equation*}
If the exact interface is represented by
$\mathcal B_{\mathrm{true}}
=\{\boldsymbol{x}_i^{\mathrm{true}}\}_{i=1}^{N_{\mathrm{true}}}$,
an arc-length discretization of \eqref{eq:num-continuous-coverage} is
\begin{equation*}
\operatorname{BCov}_{\alpha,\eta}
\approx
\displaystyle
\sum_{i=1}^{N_{\mathrm{true}}}
\Delta s_i\,
\mathbf 1_{\{
\operatorname{dist}(
\boldsymbol{x}_i^{\mathrm{true}},\mathcal U_{\alpha,h})
\leq\eta\}} / 
\displaystyle
\sum_{i=1}^{N_{\mathrm{true}}}\Delta s_i,
\label{eq:num-weighted-coverage}
\end{equation*}
where $\Delta s_i$ is the arc-length weight associated with the $i$th boundary point.

The numerical implementation assigns equal weight to the extracted
boundary points. The reported quantity is therefore
\begin{equation*}
\operatorname{BCov}^{\mathrm{node}}_{\alpha,\eta}
= \frac{1}{N_{\mathrm{true}}}
\sum_{i=1}^{N_{\mathrm{true}}}
\mathbf 1_{\{
\operatorname{dist}(
\boldsymbol{x}_i^{\mathrm{true}},\mathcal U_{\alpha,h})
\leq\eta\}}.
\label{eq:num-node-coverage}
\end{equation*}
Here the distance is the minimum Euclidean distance from
$\boldsymbol{x}_i^{\mathrm{true}}$ to the nodes in
$\mathcal U_{\alpha,h}$. We set this distance to $+\infty$ when
$\mathcal U_{\alpha,h}$ is empty. 
\end{document}